\theoremstyle{plain}
\newtheorem{theorem}{Theorem}[section]
\newtheorem{corollary}[theorem]{Corollary}
\newtheorem{lemma}[theorem]{Lemma}
\newtheorem{proposition}[theorem]{Proposition}
\numberwithin{equation}{section}
\theoremstyle{definition}
\theoremstyle{remark}
\newtheorem{remark}[theorem]{Remark}
\newtheorem*{acknowledgments}{Acknowledgments}
\begin{document}

\title{The local period integrals and essential vectors}

\author[]{Yeongseong Jo}
\address{Department of Mathematics, The University of Iowa, Iowa City, IA 52242, USA}
\curraddr{Department of Mathematics and Statistics, The University of Maine, Orono, ME 04469, USA}
%\email{jo.59@buckeyemail.osu.edu}
\email{\href{mailto:jo.59@buckeyemail.osu.edu}{jo.59@buckeyemail.osu.edu}}

\subjclass[2020]{Primary 11F70; Secondary 11F85, 22E50}
\keywords{Test vector problems, Local Rankin-Selberg $L$-functions, Local period integrals, Newforms}
%\date{}

\begin{abstract}
By applying the formula for essential Whittaker functions established by Matringe and Miyauchi, we study five integral representations for irreducible admissible generic representations of ${\rm GL}_n$ over $p$-adic fields. In each case, we show that the integrals achieve local formal $L$-functions defined by Langlands parameters, when the test vector is 
associated to the new form. We give the relation between local periods involving essential Whittaker functions and special values of formal $L$-factors at $s=1$ 
for certain distinguished or unitary representations. The period integrals are also served as standard non-zero distinguished forms. 
\end{abstract}

\maketitle

%\vspace{-3ex}

%\tableofcontents
\section{Introduction}
\label{Intro}

Let $F$ be a non-archimedean local field of characteristic zero with the ring of integers $\mathcal{O}$ and residue field of cardinality $q$. The essential vector which is known as the local new form plays an important role in the theory of automorphic forms. The history of the essential vector goes back to at least Casselman, where he established a theory of new forms for ${\rm GL}_2(F)$ \cite{Casselman}. In this paper, by applying the essential Whittaker functions (also called newforms), we study the test vector problem and the non-vanishing of local periods for five integrals representing Rankin-Selberg model \cite{Bern,FLO,Zhang}, Flicker-Rallis model \cite{AM,AKT,Fli91,Zhang}, Jacquet-Shalika model \cite{JaRa,Jo20,Ma14JNT}, Friedberg-Jacquet model \cite{FriJac,JaRa,Ma14JNT,MA15,MA17}, and Bump-Ginzburg model \cite{BuGi92,Kaplan17,KaYa,Yamana}. The local $L$-functions of $GL_n(F)$ that are associated to these integrals include the tensor product $L$-factor of ${\rm GL}_n(F) \times {\rm GL}_n(F)$, the Asai $L$-factor, the exterior square $L$-factor, the Bump-Friedberg $L$-factor, and the symmetric square $L$-factor.

\par 
Let $\pi$ and $\sigma$ be irreducible admissible generic representation of ${\rm GL}_n(F)$ and ${\rm GL}_m(F)$. For simplicity, we only illustrate the first fundamental problem for the Rankin-Selberg integrals of ${\rm GL}_n(F) \times {\rm GL}_n(F)$ in this induction. We fix an additive character $\psi$ of conductor $\mathcal{O}$. Given a pair of Whittaker functions $W_{\pi} \in \mathcal{W}(\pi,\psi)$ and $W_{\sigma} \in \mathcal{W}(\sigma,\psi^{-1})$ and given a Schwartz–Bruhat function $\Phi \in \mathcal{S}(F^n)$, we define the local Rankin-Selberg integral \cite{JPSS3};
\begin{equation}
\label{sameRS}
  \Psi(s,W_{\pi},W_{\sigma},\Phi)=\int_{N_n(F) \backslash {\rm GL}_n(F)} W_{\pi}(g) W_{\sigma}(g) \Phi(e_ng) |\mathrm{det}(g)|^s dg,
\end{equation}
where $e_n=(0,\dotsm,0,1)$ and $N_n(F)$ is the group of unipotent matrices. It is absolute convergent for $\mathrm{Re}(s)$ sufficiently large
and the collection of such integrals generates a fractional ideal $\mathcal{I}(\pi \times \sigma)=\mathbb{C}[q^s,q^{-s}]L(s,\pi \times \sigma)$ of $\mathbb{C}(q^{-s})$. We choose the normalized generator of the form  $L(s,\pi \times \sigma)=P(q^{-s})^{-1}$, where $P(X) \in \mathbb{C}[X]$ is a polynomial with $P(0)=1$. We define a map 
\[
\mathcal{W}(\pi,\psi) \otimes \mathcal{W}(\sigma,\psi^{-1}) \otimes \mathcal{S}(F^n) \rightarrow \mathbb{C}(q^{-s}) \quad \text{by} \quad W_{\pi} \otimes W_{\sigma} \otimes \Phi \mapsto \Psi(s,W_{\pi},W_{\sigma},\Phi).
\]
Then there exists an element in $\mathcal{I}(\pi \times \sigma)$ that transports to $L(s,\pi \times \sigma)$. It is {\it a priori} a finite sum of the form $\sum_i \Psi(s,W_{\pi,i},W_{\sigma,i},\Phi_i)$. The {\it strong test vector problem} is to determine the existence of a triple of pure tensors $W_{\pi} \in \mathcal{W}(\pi,\psi)$, $W_{\sigma} \in \mathcal{W}(\sigma,\psi^{-1})$, and $\Phi \in \mathcal{S}(F^n)$ which yields $\Psi(s,W_{\pi},W_{\sigma},\Phi)=L(s,\pi \times \sigma)$ (cf. \cite[\S 1.6-1]{Cogdell}). In this scenario, $(W_{\pi},W_{\sigma},\Phi)$ is called a {\it strong test vector}.

\par
We write the Godement–Jacquet standard $L$-factors for $\pi$ and $\sigma$ \cite{Jacquet} as
\[
 L(s,\pi)=\prod^r_{i=1} (1-\alpha_iq^{-s})^{-1} \quad \text{and} \quad  L(s,\sigma)=\prod^p_{j=1} (1-\beta_jq^{-s})^{-1},
\]
respectively. We construct unramified standard modules $\pi_{ur}$ and $\sigma_{ur}$ attached to the Langlands parameter $\{ \alpha_i \}_{i=1}^r$ and $\{ \beta_j \}_{j=1}^p$. We define the {\it formal $L$-factor} for a pair $(\pi,\sigma)$ by
\[
  \prod^r_{i=1} \prod^p_{j=1} (1-\alpha_i\beta_jq^{-s})^{-1} 
\]
which coincides with $L(s,\pi_{ur} \times \sigma_{ur})$ (see Proposition \ref{Langlands-RS}). In general, $L(s,\pi_{ur} \times \sigma_{ur})^{-1}$  divides $L(s,\pi \times \sigma)^{-1}$ in $\mathbb{C}[q^{\pm s}]$ (see Corollary \ref{RS-divisibility}). The purpose of this paper is to resolve the test vector problem for various formal $L$-factors, especially, in the case of ramified representations through local means. We call this problem the {\it weak test vector problem} and an associated triple $(W_{\pi},W_{\sigma},\Phi)$ a {\it weak test vector}. The reader consult later sections for details about the unexplained notations in the exact statements below.

\begin{theorem}[Weak Test Vector Problems] \,
\begin{enumerate}[label=$(\mathrm{\roman*})$]
\item\label{TEST-1}$[$Rankin-Selberg $L$-factors$]$ Let $\pi$ and $\sigma$ be irreducible admissible generic representations of ${\rm GL}_n(F)$. Then we have
\[
 L(s,\pi_{ur} \times \sigma_{ur})=\int_{N_n \backslash {\rm GL}_n(F)} W^{\circ}_{\pi}(g)W^{\circ}_{\sigma}(g) \Phi_c(e_ng) |\mathrm{det}(g)|^s dg.
\]
\item\label{TEST-2}$[$Asai $L$-factors$]$ Let $\pi$ be irreducible admissible generic representations of ${\rm GL}_n(E)$ and $E$ a quadratic extension of $F$. Then we have 
\[
L(s,\pi_{ur},As)=\int_{N_n \backslash {\rm GL}_n(F)} W^{\circ}_{\pi}(g) \Phi_c(e_ng) |\mathrm{det}(g)|^s dg.
\]
\item\label{TEST-3}$[$Symmetric square $L$-factors$]$ Let $\pi$ be irreducible admissible generic representations of ${\rm GL}_m(F)$. Then we have 
\[
  \mathcal{L}(s,\pi_{ur},\mathrm{Sym}^2)=\int_{\mathscr{Z}_mN_m \backslash {\rm GL}_m(F)} W^{\circ}_{\pi} (g) W^{{e}^{\circ}}_{\theta_m^{\psi}} (g) f^{K_1(\mathfrak{p}^c)}_{(2s-1)}(g) dg.
\]
\end{enumerate}
\end{theorem}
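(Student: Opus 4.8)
The plan is to prove all three identities by one uniform three-step argument, the engine of which is the Matringe--Miyauchi formula for the essential Whittaker function on the diagonal torus. First I would apply the Iwasawa decomposition $\mathrm{GL}_n(F)=N_n(F)A_n(F)K_n$ with $K_n=\mathrm{GL}_n(\mathcal{O})$ (and its analogue on the smaller group in \ref{TEST-3}) to rewrite each integral as an integral over $A_n(F)$ against an integral over $K_n$, introducing the usual modulus $\delta_{B_n}^{-1}$. Second, I would exploit that the essential vector is right-invariant under the congruence subgroup $K_1(\mathfrak{p}^c)$, together with the design of $\Phi_c$ (resp. of the distinguished section $f^{K_1(\mathfrak{p}^c)}_{(2s-1)}$): the factor $\Phi_c(e_ng)$ is tailored so that, in Iwasawa coordinates, it is supported precisely where the $K_n$-variable lies in $K_1(\mathfrak{p}^c)$ and the bottom entry of the torus part is a unit, so the compact integration collapses to a single volume constant, which the normalizations are arranged to make equal to $1$, and the central direction of $A_n(F)$ is pinned down. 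What remains is a sum over the support of the essential Whittaker function(s) on the torus. Third, I would insert the explicit formula: writing $L(s,\pi)=\prod_{i=1}^{r}(1-\alpha_iq^{-s})^{-1}$, the function $W^{\circ}_{\pi}$ is supported on the dominant cocharacters $a_{\mathbf{m}}$ indexed by partitions $\mathbf{m}=(m_1\geq\cdots\geq m_r\geq 0)$, where $W^{\circ}_{\pi}(a_{\mathbf{m}})=\delta_{B_n}^{1/2}(a_{\mathbf{m}})\,s_{\mathbf{m}}(\alpha_1,\dots,\alpha_r)$ for the Schur polynomial $s_{\mathbf{m}}$, while $\delta_{B_n}^{-1}$ from the measure cancels the two $\delta_{B_n}^{1/2}$ factors and $|\det a_{\mathbf{m}}|^s=q^{-s|\mathbf{m}|}$ supplies the grading.

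For \ref{TEST-1} these steps turn the integral into $\sum_{\mathbf{m}} s_{\mathbf{m}}(\alpha_1,\dots,\alpha_r)\,s_{\mathbf{m}}(\beta_1,\dots,\beta_p)\,q^{-s|\mathbf{m}|}$, summed over all partitions $\mathbf{m}$ (no compatibility between $r$ and $p$ is needed, since $s_{\mathbf{m}}$ in $r$ variables vanishes once $\mathbf{m}$ has more than $r$ parts). The Cauchy identity $\sum_{\mathbf{m}}s_{\mathbf{m}}(x)\,s_{\mathbf{m}}(y)=\prod_{i,j}(1-x_iy_j)^{-1}$, applied with $y_j=\beta_jq^{-s}$ and using homogeneity of $s_{\mathbf{m}}$, then gives $\prod_{i=1}^{r}\prod_{j=1}^{p}(1-\alpha_i\beta_jq^{-s})^{-1}$, which is $L(s,\pi_{ur}\times\sigma_{ur})$ by Proposition \ref{Langlands-RS}.

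Parts \ref{TEST-2} and \ref{TEST-3} run along the same lines with a single essential Whittaker function on the $F$-torus. In \ref{TEST-2} one first separates the cases $E/F$ ramified and unramified, because the diagonal torus of $\mathrm{GL}_n(F)$ sits inside that of $\mathrm{GL}_n(E)$ differently in the two cases and the Matringe--Miyauchi formula must be restricted to the relevant image; the resulting torus sum is then evaluated by the corresponding Littlewood-type identity, whose closed form is the formal Asai factor $L(s,\pi_{ur},As)$. In \ref{TEST-3} the integrand additionally carries the essential vector $W^{e^{\circ}}_{\theta_m^{\psi}}$ of the exceptional representation and the section $f^{K_1(\mathfrak{p}^c)}_{(2s-1)}$; after unfolding and using the companion formula for the essential vector of $\theta_m^{\psi}$, its support restricts the torus sum to the even partitions $\mathbf{m}=2\boldsymbol{\mu}$ and the section supplies the grading $q^{-(2s-1)|\boldsymbol{\mu}|}$, so Littlewood's identity $\sum_{\boldsymbol{\mu}}s_{2\boldsymbol{\mu}}(x)=\prod_{i\leq j}(1-x_ix_j)^{-1}$ yields $\mathcal{L}(s,\pi_{ur},\mathrm{Sym}^2)$. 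The two cases of the full theorem not displayed in the excerpt---exterior square (Jacquet--Shalika) and Bump--Friedberg (Friedberg--Jacquet)---are treated identically, using the dual Littlewood identity $\sum_{\boldsymbol{\mu}}s_{(2\boldsymbol{\mu})'}(x)=\prod_{i<j}(1-x_ix_j)^{-1}$ and, for Bump--Friedberg, the factorization of its formal $L$-factor.

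The main obstacle is the second step: proving that the compact-group integration really does collapse to the intended constant. This is a bookkeeping problem of simultaneously matching the precise level $K_1(\mathfrak{p}^c)$ at which the essential vector is fixed, the support of $\Phi_c$ (resp. of the distinguished section), and the Iwasawa coordinates of $g$; it is most delicate in \ref{TEST-3}, where one works with the exceptional representation and the spectral parameter has been doubled to $2s-1$, and in \ref{TEST-2}, where the ambient group and the group of integration live over different fields. A secondary point is normalizing the Haar measures, $\Phi_c$, and the essential vectors so that the residual volume is exactly $1$ rather than a power of $q$. Finally, because each integral is computed to be an explicit geometric-type series, its absolute convergence for $\mathrm{Re}(s)$ large and the identification of the formal $L$-factor as its value are immediate, so no separate meromorphic-continuation argument is needed.
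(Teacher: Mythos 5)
Your outline coincides with the paper's own proof in all essentials: Iwasawa decomposition, the observation (quoted from Miyauchi--Yamauchi) that $g\mapsto\Phi_c(e_ng)$ is the characteristic function of $P_nK_1(\mathfrak{p}^c)$ (resp.\ the support $\mathscr{Z}_mP_mK_1(\mathfrak{p}^c)$ of the section in \ref{TEST-3}), then the Matringe--Miyauchi formula to collapse everything to a torus sum over the ``unramified part'' of size $r$, evaluated by Shintani's formula and Cauchy/Littlewood-type identities. The only real difference is that the paper, having reduced to $\Psi(s,W^{\circ}_{\pi_{ur}},W^{\circ}_{\sigma_{ur}},\mathbf{1}_{\mathcal{O}^p})$ or $\Psi(s,W^{\circ}_{\pi_{ur}},W^{\circ}_{\sigma_{ur}})$ (note it keeps track of the case $r>p$ of unequal unramified ranks), $I(s,W^{\circ}_{\pi_{ur}},\mathbf{1}_{\mathcal{O}^r})$, and the even-partition Schur sum, cites the known unramified evaluations (Jacquet--Shalika; Flicker and AKMSS for the unramified/ramified quadratic extension; Bump--Ginzburg, Takeda, Kaplan--Yamana) rather than redoing the symmetric-function identities by hand, which is what you propose; that is a harmless substitution.

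One concrete item in your sketch must be corrected when you do the bookkeeping you defer: in \ref{TEST-3} the grading is not $q^{-(2s-1)|\boldsymbol{\mu}|}$. Combining the section's value $\delta_{P_{m-1,1}}^{(2s+1)/4}$ with the measure factor $\delta_{B_{m-1}}^{-1}$, the $\nu$-shift in the Matringe--Miyauchi formula and the torus values of the exceptional representation, the surviving exponent is $|\det(a'_{\lambda})|^{s/2}$, i.e.\ $q^{-s|\boldsymbol{\mu}|}$ for $\lambda=2\boldsymbol{\mu}$; only then does Littlewood's identity $\sum_{\boldsymbol{\mu}}s_{2\boldsymbol{\mu}}(\alpha q^{-s/2})=\prod_{i\le j}(1-\alpha_i\alpha_jq^{-s})^{-1}$ return $\mathcal{L}(s,\pi_{ur},\mathrm{Sym}^2)$ rather than $\mathcal{L}(2s-1,\pi_{ur},\mathrm{Sym}^2)$. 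Similarly, your closed formula $W^{\circ}_{\pi}(a_{\mathbf{m}})=\delta_{B_n}^{1/2}(a_{\mathbf{m}})s_{\mathbf{m}}(\alpha)$ is valid only after interpreting $a_{\mathbf{m}}$ as the truncated torus element $\mathrm{diag}(\varpi^{m_1},\dots,\varpi^{m_r},1,\dots,1)$ with at most $r$ parts, the remaining entries being forced into units by the formula's support conditions (indeed $\delta_{B_r}^{1/2}\nu^{(n-r)/2}=\delta_{B_n}^{1/2}$ on such elements); and in \ref{TEST-2} the ramified/unramified dichotomy enters through the comparison of $\varpi_F,q_F$ with $\varpi_E,q_E$ in the Shintani evaluation (all partitions versus even partitions), not through how the torus embeds. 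These are exactly the computations the paper carries out or cites, so your proposal is correct in substance once those details are made precise.
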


The weak test vector problem has been resolved for formal exterior square $L$-factors $L(s,\pi_{ur},\wedge^2)$ and formal Bump-Friedberg $L$-factors $L(s_1,\pi_{ur})L(s_2,\pi_{ur},\wedge^2)$ by Miyauchi and Yamauchi \cite{MY}.  For different rank cases $n > m$, Booker, Krishnamurthy, and Lee \cite{BKL} work out formal Rankin-Selberg $L$-factors $L(s,\pi_{ur} \times \sigma_{ur})$. A major shortcoming for treating Rankin-Selberg integrals of different rank groups is that Schwartz–Bruhat functions $g \mapsto \Phi(e_mg)$ defined on ${\rm GL}_m(F)$ as seen in \eqref{sameRS} is no longer present so one is not able to control the last row of the element $g$ in the smaller group ${\rm GL}_m(F)$.  In order to incorporate the absence, they develop a novel approach of unipotent averaging to modify the essential Whittaker function defined on the bigger group ${\rm GL}_n(F)$. This feature does not happen to our circumstance and our machinery can be applied uniformly to other ${\rm GL}_n(F)$-type formal local $L$-functions.

  \par
The construction of the strong test vector for Rankin-Selberg $L$-factors $L(s,\pi \times \sigma)$ with $n \geq m$
is carried out as a main part of the Ph.D thesis by Kim \cite[Theorem 2.1.1]{Kim} supervised by Cogdell, when at least one of two representations $\pi$
and $\sigma$ is unramified. The strong test vector problem is pursued by  Kurinczuk and Matringe \cite{KuMa} for a pair of discrete series representations $\pi$ and $\sigma$,
though the space $\mathcal{W}(\sigma,\psi^{-1})$ is enlarged to the Whittaker model for the standard module associated to $\sigma$. 
Recently, Humphries \cite{Humphries} complete the case of standard $L$-factors $L(s,\pi)$.
There has been renowned work \cite{AKMSS} involved in determining an explicit Whittaker function and a characteristic function for local Asai $L$-functions $L(s,\pi,As)$, but again the strong test vector is the Paskunas–Stevens's cut-off Whittaker function relying on the type theory of Bushnell-Kutzko. 
Until this moment, it is unclear that pure tensors arising from newforms solve strong test vector problems even for Rankin-Selberg or Asai $L$-factors and  what the concrete formulas for strong test vectors would look like in the general context. 
Besides general linear groups, adapting newforms as previously discussed, parallel strong test vector problems have been investigated by Miyauchi \cite{Miy18} for $L$-factors of unramified ${\rm U}(2,1)$, and by Roberts and Schmidt \cite[Theorem 7.5.4]{RobertsShmidt} for $L$-factors of ${\rm GSp}(4)$ attached to irreducible generic admissible representations.

\par
Let $H$ be a closed subgroup of ${\rm GL}_n(F)$ and $\chi$ a character of $H$. Let $\mathrm{Hom}_{H}(\pi,\chi)$ be the space of the linear forms $\Lambda : V \rightarrow \mathbb{C}_{\chi}$ such that $\Lambda(\pi(g)v)=\chi(g)\Lambda(v)$ for $g \in H$ and $v \in V$. We say that $\pi$ is $(H,\chi)$-distinguished if $\mathrm{Hom}_{H}(\pi,\chi) \neq 0$. In particular, if $\chi={\bf 1}_H$ is a trivial character of $H$, $\pi$ is called $H$-distinguished. In order to present what to expect, we shall attempt to elaborate the second main problem for by-now well-known as Flicker-Rallis periods \cite[\S 3.2]{Zhang}, which first appeared in \cite[\S 3]{Fil88}. Let $E \slash F$ be a quadratic extension. We specify the case $G={\rm GL}_n(E)$ and $H={\rm GL}_n(F)$. Let $\pi$ be an irreducible admissible representation of $G$. One of important parts
for $H$-distinguished generic representations is that an explicit $H$-invariant linear functional in the space $ \mathrm{Hom}_{H}(\pi|_{H},{\bf 1}_{H})$ can be realized as an integral over $N_n(F) \backslash P_n(F) \simeq N_{n-1}(F) \backslash {\rm GL}_{n-1}(F)$  \citelist{\cite{AM}*{Theorem 1.1} \cite{AKT}*{Corollary 1.2}}, where $P_n(F)$ denotes the mirabolic subgroup consisting of invertible matrices whose last row equals to $e_n$. Up to multiplication by scalars, the one-dimensionality of the space ${\rm Hom}_{P_n(F)}(\pi,{\bf 1}_{P_n(F)})$, as a consequence of \cite[Proposition 11]{Fli91} and \cite[Proposition 2.3]{Ma14} (cf. \citelist{\cite{AKT}*{Theorem 1.1} \cite{Ma10}*{Proposition 1.1}}), ensures that such a unique $H$-invariant form on the Whittaker model $\mathcal{W}(\pi,\psi_E)$ can be precisely written down as
\[
  \Lambda^{FR}(W_{\pi})=\int_{N_n(F) \backslash P_n(F)} W_{\pi}(p) dp=\int_{N_{n-1}(F) \backslash {\rm GL}_{n-1}(F)} W_{\pi} \begin{pmatrix} g & \\ & 1 \end{pmatrix}  dg.
\]
It is known to be convergent under the unitarity assumption \cite[p.306]{Fil88}, but even in the non-unitary case we could make sense of the above integral when $\pi$ is distinguished with respect to $H$ \cite[Remark 2]{AM}. This extension property from $P_n(F)$ to $H$-invariant forms was brought to Bernstein's attention in the framework of Rankin-Selberg periods \cite[Proposition 5.3]{Bern}. 

\par
The second aim of this paper is to formulate a relation between the values of local period integrals at essential Whittaker functions and special values of formal $L$-factors, or their ratios to Tate's $L$-factors at $s=1$, other than the Flicker-Rallis period. As a byproduct we provide a nice application to the non-vanishing of local period integrals. In addition we give a constructive and purely local proof of the existence of some non-zero 
invariant functionals, which reflects on Bernstein's well-known principle.  As before, the reader is advised to consult the following sections for undefined terminologies in the central result of this paper below.

\begin{theorem}[Local Periods] \,
\begin{enumerate}[label=$(\mathrm{\roman*})$]
\item\label{PERIOD-1}$[$Rankin-Selberg Periods$]$ Let $\pi$ and $\sigma$ be irreducible admissible generic representations of ${\rm GL}_n(F)$ such that $\pi \otimes \sigma$ is distinguished with respect to ${\rm GL}_n(F)$. Then we have
\[
 \int_{N_n \backslash P_n} W^{\circ}_{\pi}(p) W^{\circ}_{\pi^{\vee}}(p) dp=
  \begin{cases}
  L(1,\pi_{ur} \times \pi^{\vee}_{ur})& \text{if $\pi$ is ramified,} \\
  \dfrac{L(1,\pi \times \pi^{\vee})}{L(n,{\bf 1}_{F^{\times}})}& \text{otherwise.} 
  \end{cases}
\]
\item\label{PERIOD-2}$[$Jacquet-Shalika Periods$]$ 
Let $\pi$ be a irreducible admissible generic representation of ${\rm GL}_{2n}(F)$ which is distinguished with respect to $(S_{2n},\Theta)$. Then we have
\[
\int_{N_n \backslash P_n} \int_{\mathcal{N}_n \backslash \mathcal{M}_n} W^{\circ}_{\pi} \left( \sigma_{2n} \begin{pmatrix} I_n & X \\ & I_n  \end{pmatrix} \begin{pmatrix} p & \\ & p \end{pmatrix} \right) \psi^{-1}({\rm Tr}(X)) dX dp
=
 \begin{cases}
  L(1,\pi_{ur},\wedge^2)& \text{if $\pi$ is ramified,} \\
  \dfrac{L(1,\pi,\wedge^2)}{L(n,{\bf 1}_{F^{\times}} )}& \text{otherwise.}
  \end{cases}
\]
Moreover, if $\pi$ is unitary, then the integral realizes the unique Jacquet-Shalika functional in the space $\mathrm{Hom}_{S_{2n}}(\pi, \Theta)$.
\item\label{PERIOD-3}$[$Friedberg-Jacquet (Linear) Periods$]$ Let $\pi$ be an irreducible admissible generic representations of ${\rm GL}_{2n}(F)$ which is distinguished with respect to $H_{2n}$. Then we have
\[
 \int_{(N_{2n} \cap H_{2n}) \backslash (P_{2n} \cap H_{2n}) }W^{\circ}_{\pi}(p) dp=
  \begin{cases}
    L(1/2,\pi_{ur})L(1,\pi_{ur},\wedge^2)
   & \text{if $\pi$ is ramified,} \\
 \dfrac{L(1/2,\pi)L(1,\pi,\wedge^2)}{L(n,{\bf 1}_{F^{\times}})}  & \text{otherwise.} 
  \end{cases}
\]
Moreover, if $\pi$ is unitary, then the integral realizes the unique Friedberg-Jacquet functional in the space $\mathrm{Hom}_{H_{2n}}(\pi, {\bf 1}_{H_{2n}})$.
\item\label{PERIOD-4}$[$Bump-Ginzburg Periods$]$ Let $\pi$ be an irreducible admissible unitary generic representation of  ${\rm GL}_m(F)$. Then we have
\[
\int_{\mathscr{Z}_mN_m \backslash G_m} W^{\circ}_{\pi} (g) W^{{e}^{\circ}}_{\theta_m^{\psi}} ({\bf s}(g)) f^{K_1(\mathfrak{p}^c)}_{(1)}({\bf s}(g)) dg=\mathcal{L}(1,\pi_{ur},\mathrm{Sym}^2).
  \]
Moreover, if $\pi$ is $\theta$-distinguished, the integral realizes the unique $G_m$-invariant trilinear form in the space 
  $\mathrm{Hom}_{G_m}(\pi \otimes \theta^{\psi}_m \otimes \theta^{{\psi}^{-1}}_m,{\bf 1}_{G_m})$.
\end{enumerate}
\end{theorem}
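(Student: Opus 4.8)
The plan is to reduce every statement of the Local Periods theorem to the corresponding case of the Weak Test Vector theorem by specialising the integral representation at the essential vector and then evaluating the resulting rational function at the relevant point ($s=1$, or $s_1=1/2, s_2=1$ in the Bump-Friedberg case). First I would recall from the Weak Test Vector theorem that, when the test vector is built from the newform $W^{\circ}_{\pi}$ (and its counterpart $W^{\circ}_{\pi^{\vee}}$, resp. the essential data $W^{e^{\circ}}_{\theta_m^{\psi}}$, $f^{K_1(\mathfrak{p}^c)}$), the local zeta integral computes exactly the formal $L$-factor, e.g. $\Psi(s,W^{\circ}_{\pi},W^{\circ}_{\pi^{\vee}},\Phi_c)=L(s,\pi_{ur}\times\pi^{\vee}_{ur})$. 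The period integrals appearing here are the Mellin-type residues of these zeta integrals: the integration over $N_n\backslash P_n$ is the ``$s$-free slice'' of the integration over $N_n\backslash {\rm GL}_n(F)$, and more precisely one has the standard identity expressing $\Psi(s,\dots)$ as an integral over $N_n\backslash P_n$ against $|\det|^{s}$-twisted data, so that the period is obtained by removing the Schwartz-Bruhat factor $\Phi_c(e_n g)$ — equivalently, by taking $\Phi_c$ to be the characteristic function of $\mathcal{O}^n$ and using $\Phi_c(e_n g)=1$ on $P_n$ — and setting $s=1$.

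The key steps, in order, are: (1) establish the integral identity relating the ${\rm GL}_n$-zeta integral to a $P_n$-integral plus a Tate integral in the extra variable, for each of the four models; this is the mirabolic unfolding $N_n\backslash{\rm GL}_n \simeq (N_n\backslash P_n)\times (\text{torus})$ combined with the formula $\Phi_c(e_n g)$, whose torus integral produces the factor $L(s,\mathbf 1_{F^{\times}})$ or a shift thereof (this is where the $L(n,\mathbf 1_{F^{\times}})$ in the denominators comes from in the unramified case, and where that factor is absent in the ramified case because the essential vector already ``absorbs'' the level structure). (2) Invoke convergence: absolute convergence of the period at $s=1$ holds under the distinction hypothesis by the references cited in the introduction (AM, AKT for Flicker-Rallis, and the analogous statements for the other models), so the value $s=1$ is in the domain and the $L$-factor has no pole there — one checks $L(1,\pi_{ur}\times\pi^{\vee}_{ur})$, $L(1,\pi_{ur},\wedge^2)$, etc.\ are finite because the formal $L$-factor is built from the Satake-type parameters $\{\alpha_i\}$ which have absolute value $<q$ when $\pi$ is not a degenerate point, or by the explicit structure of the parameter. (3) Read off the value at $s=1$ from the Weak Test Vector identity. (4) For the ``moreover'' clauses, use the one-dimensionality of the relevant Hom-space (quoted in the introduction from Flicker, Matringe, etc., and its analogues for Jacquet-Shalika, Friedberg-Jacquet, Bump-Ginzburg) together with non-vanishing of the period at the newform: since the period integral defines an $(H,\chi)$-invariant functional on $\mathcal{W}(\pi,\psi)$ and it does not vanish identically (its value at $W^{\circ}_{\pi}$ is a nonzero $L$-value), it must span the one-dimensional space and hence \emph{is} the distinguished functional.

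I expect the main obstacle to be Step (1) carried out \emph{uniformly}: the mirabolic unfolding and the precise bookkeeping of which Tate factor appears is model-dependent, and for Jacquet-Shalika and Friedberg-Jacquet the ``$P_n$'' in the period is really $P_{2n}\cap H_{2n}$ or is entangled with the $X$-integration over $\mathcal{N}_n\backslash\mathcal{M}_n$, so one must verify that the essential Whittaker function's support and the Matringe-Miyauchi formula interact with the Shalika/linear unipotent integration exactly as in the Rankin-Selberg case — this is precisely the point where the explicit formula for $W^{\circ}_{\pi}$ on diagonal/torus elements (its expression as a sum over partitions with Schur-polynomial coefficients) must be plugged in and the geometric-series identity $\sum_{k\ge 0}\alpha^k q^{-ks}=L(s,\dots)$ applied. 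A secondary subtlety is the Bump-Ginzburg case, where the section $f^{K_1(\mathfrak{p}^c)}_{(2s-1)}$ and the exceptional representation $\theta_m^{\psi}$ enter: here the specialisation $2s-1 \mapsto 1$ i.e.\ $s\mapsto 1$ must be compatible with the metaplectic-type normalisation, and one must ensure the unitary hypothesis on $\pi$ guarantees convergence of the $\mathscr{Z}_mN_m\backslash G_m$ integral at that point, appealing to the Bump-Ginzburg and Kaplan references.
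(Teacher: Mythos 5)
Your overall route—specialize the weak test vector identities at the newform, observe that the mirabolic unfolding produces the Tate factor $L(ns,\cdot)$ only in the unramified case, evaluate at $s=1$ (resp.\ $s=1/2$), and then use multiplicity one plus non-vanishing at the newform for the ``moreover'' clauses—is the same as the paper's. But your step (2) hides the real mathematical content, and as written it would fail. The hypotheses in parts (i)--(iii) are distinction, not unitarity, and for a non-unitary generic representation it is simply not true that the Langlands parameters are bounded so as to make $L(1,\pi_{ur}\times\pi^{\vee}_{ur})$, $L(1,\pi_{ur},\wedge^2)$ or $L(1/2,\pi_{ur})L(1,\pi_{ur},\wedge^2)$ automatically finite, nor that the period integral converges at the relevant point; and the references you invoke (Anandavardhanan--Matringe, AKT) concern the Flicker--Rallis model, which is not among the four cases here. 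The paper has to prove these regularity statements: for Rankin--Selberg it uses Bernstein's theorem on $P_n$-invariant pairings to show $\Psi(s,W_{\pi},W_{\pi^\vee},\Phi)$, hence $L(s,\pi\times\pi^{\vee})$, is regular at $s=1$ (Proposition \ref{Bernstein-s=1}); for Jacquet--Shalika it uses that $(S_{2n},\Theta)$-distinction forces self-duality (Jacquet--Rallis) together with the divisibility $L(s,\pi,\wedge^2)^{-1}\mid L(s,\pi\times\pi)^{-1}$ to transfer that regularity (Proposition \ref{Sdistinction}); and for Friedberg--Jacquet it runs a genuine pole analysis of $L^{BF}(s,\pi,\chi_{-1/2})$ at $s=1/2$ via Matringe's multiplicativity, Lemma \ref{M-O}, self-duality, and the unlinked-segment classification of generic representations, including the exceptional segment $[\nu^{1/2-\ell},\dotsc,\nu^{-1/2}]$. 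None of this is supplied by ``the parameters have absolute value $<q$.''

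A secondary imprecision: in step (4) you say the period ``defines an $(H,\chi)$-invariant functional,'' but a priori it is only invariant under the mirabolic intersection ($P_{2n}\cap S_{2n}$, $P_{2n}\cap H_{2n}$, etc.); the passage to full $S_{2n}$- or $H_{2n}$-invariance is exactly the point of the uniqueness argument, which requires the at-most-one-dimensionality of the \emph{mirabolic} Hom space (proved via Bernstein--Zelevinsky derivatives and needing the unitarity hypothesis in the paper), combined with the nonzero inclusion of the full-group Hom space into it. Also, in the ramified case the test function is $\Phi_c={\bf 1}_{(\mathfrak{p}^c)^{n-1}\times(1+\mathfrak{p}^c)}$, characterized by $g\mapsto\Phi_c(e_ng)$ being the indicator of $P_nK_1(\mathfrak{p}^c)$, not ${\bf 1}_{\mathcal{O}^n}$; this is precisely why no Tate $L$-factor appears in the ramified case, and your phrasing blurs that mechanism.
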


Our result can be considered as weak test vector problems for period integrals and it is
greatly influenced by the work of Anandavardhanan and Matringe \cite{AM} in which the analogue expression for Flicker-Rallis period integrals was established. A similar formalism for Jacquet-Shalika periods appeared in \cite{Grobner}, although Grobner only considered unitary representations as a local component of 
a cuspidal automorphic representation, which is sufficient to their global applications. The preprint \cite{Grobner} was built upon generous explanation of N. Matringe \cite[Abstract]{Grobner} and we take this occasion to announce formally and extend his result to more general settings.

\par
The distinction condition is superfluous if our representations $\pi$ (and if necessary $\sigma$) are assumed to be unitary. However, with the exception of Bump-Ginzburg period cases, we do impose the distinction assumption in non-unitary generic cases. The main rationale to rule out $\theta$-distinguished representations is that the equality $L(s,\pi,\mathrm{Sym}^2)=\mathcal{L}(s,\pi,\mathrm{Sym}^2)$ is solely available for $n=2$ \cite{Jo21}, where $L(s,\pi,\mathrm{Sym}^2)$ is defined by the normalized generator of a fractional ideal as explained in \cite[Definition 3.12]{Yamana}.
Therefore we cannot guarantee the holomorphy of Bump-Ginzburg integrals at $s=1$  (see Remark \ref{rmk2}). We hope that we tackle this issue for representations of higher ranked groups in our work in progress. 
Unlike other period integrals, a keen reader may notice that our expression for Bump and Ginzburg period integrals is taken over general linear groups, which admits trilinear forms in the same space $\mathrm{Hom}_{G_m}(\pi \otimes \theta^{\psi}_m \otimes I_{\psi}(1,\omega^{-1}_{\pi}),{\bf 1}_{G_m})$ as the one in \cite[Theorem 2.14.(2)]{Yamana}. But we can still manipulate period integrals in such a way that they are integrated over mirabolic subgroups (see Remark \ref{rmk1}).

\par
In contrast to Rankin-Selberg and Flicker-Rallis periods, the unitary hypothesis is required for the corresponding results to execute aforementioned Bernstein's extension philosophy \cite[Proposition 5.3]{Bern}, because we do not have at most one-dimensionality of larger spaces $\mathrm{Hom}_{S_{2n} \cap P_{2n}}(\pi, \Theta)$, $\mathrm{Hom}_{H_{2n} \cap P_{2n}}(\pi, {\bf 1}_{H_{2n} \cap P_{2n}})$, and $\mathrm{Hom}_{G_m}(\pi \otimes \theta^{\psi}_m \otimes I_{\psi}(1,\omega^{-1}_{\pi}),{\bf 1}_{G_m})$ for non-unitary generic cases. We anticipate that the unitarity assumption is unnecessary. Thankfully, somehow this stronger uniqueness is only relevant for beautiful applications to the characterization of the occurrence of (exceptional) poles of local $L$-functions in terms of the existence of the unique non-zero invariant form. There has been a flurry of work on exploring this subject extensively by Matringe for Rankin-Selberg $L$-factor \cite[Proposition 4.6]{MA15}, Asai $L$-factor \cite[Theorem 3.1]{Ma10}, and Bump-Friedberg $L$-factor \cite[Proposition 4.12]{MA15}, by the author for exterior square $L$-factor \cite[Lemma 3.2]{Jo20}, and by Yamana for symmetric square $L$-factor \cite[Theorem 3.17]{Yamana}. In practice, 
the connection between poles of local $L$-function and distinctions has to be proven beforehand to establish inductive relations of $L$-factors. Over the course of the discussion, it is raised by Kaplan \cite[Remark 4.18]{Kaplan17} whether $\theta$-distinguished discrete series will be self-dual or not in the frame of positive characteristic fields.
It is our belief that Bump-Ginzburg periods can be transferred to $\theta$-distinguished representations and ultimately the distinction sheds some lights on Kaplan's question by detecting (exceptional) poles of $L(s,\pi,\mathrm{Sym}^2)$. We plan to turn to his observation in the near future.

%\par
%To put things on an equal footing with the global perspective, a non-zero $(H,\chi)$-invariant form can be replaced by the non-vanishing of the $(H,\chi)$-periodic integral of automorphic forms. As important as it is, the underlying philosophy of Jacquet \cite{AM,Yamana15} predicts that automorphic representations  with non-zero $(H,\chi)$-periodic integrals can be characterized in terms of Langlands functorial lift from certain groups $G'$ to ${\rm GL}_n$ \cite{JLR} or in terms of some automorphic $L$-functions attached to the functorial transfer \cite{PWZ}. We will not pursue  this direction but the interested readers refer to \cite{FLO,JLR,PWZ,Yamana15} and the references therein for further analysis and to an excellent survey \cite{Offen} for the relation between local and global distinctions.

\par
Our proof takes the chief ingredient from the formula for essential Whittaker functions associated to essential vectors on the sub-torus \cite{JacquetCorrection,JP-SS81}.  The key formulation is constructed independently by Matringe \cite[Corollary 3.2]{Ma13} and Miyauchi \cite[Theorem 4.1]{Miy}, which generalizes Shintani's method for spherical representations. In the spirit of \cite{Humphries-Preprint}, it would be interesting to find analogous weak test vectors attached to essential Whittaker functions for Archimedean ${\rm GL}_n$-type local Zeta integrals \cite{HJ21} (slightly different problems concerning cohomological vectors have been suggested in \cite[\S 1.6-2]{Cogdell}).  

\par
The brief structure of this paper is the following. Section \ref{NV} contains preliminaries, including the review of Langlads parameters, essential vector and Whittaker functions, and the theory of the local standard $L$-functions. For the remaining sections, the first half of each sections deals with unitary generic cases and the parallel construction for non-generic cases is presented in the second half. The theory of local Rankin-Selberg integrals occupies Section \ref{RS}. This serves to overview our methodology that is repeated throughout the paper. In Section \ref{Asai}, we mainly quote crucial results about Flicker-Rallis period integrals and Asai $L$-factors from \cite{AM}. 
We discuss local exterior square $L$-functions and their related periods by Jacquet and Shalika in Section \ref{JSperiod} - \ref{JSS2n} and by Bump and Friedberg in Section \ref{FJperiod} - \ref{BJH2n}.
Section \ref{BG} is devoted to finding test vectors for the Bump-Ginzburg period and symmetric square $L$-factors associated to Langlands parameters.

\section{Essential Vectors}
\label{NV}

Let $F$ be a non-archimedean local field of characteristic zero, $\mathcal{O}$ the ring of integers of $F$,  $\mathfrak{p}$ a unique prime ideal of $\mathcal{O}$, and $\varpi$ a fixed choice of a uniformizer of the prime ideal. Let $q$ denote the cardinality of its residue field. Let $|\cdot|$ denote the standard normalization so that $|\varpi|=q^{-1}$. The character of ${\rm GL}_n(F)$ given by $g \mapsto |\mathrm{det}(g)|$ is denoted by $\nu$.

\par
We denote by $\mathcal{A}_F(n)$ the set of the equivalent classes of all irreducible admissible complex valued representations of ${\rm GL}_n(F)$ and by $\mathcal{A}_F$ the union $\cup_n \mathcal{A}_F(n)$. We recall that $\Delta \in \mathcal{A}_F(n)$ is called {\it quasi-square integrable} if, after twisting by a character, it is {\it square integrable} (also called {\it discrete series}).
 The quasi-square integrable representations of ${\rm GL}_n(F)$ have been classified by Bernstein and Zelevinsky. According to \cite[Theorem 9.3]{Zel80}, such a representation $\Delta$ is the unique irreducible quotient of the form
\[
 {\rm Ind}^{{\rm GL}_n(F)}_{\rm Q}(\rho\nu^{1-\ell} \otimes \rho\nu^{2-\ell} \otimes \dotsm \otimes \rho)
\]
where the induction is normalized parabolic induction from the standard parabolic subgroup $\rm Q$ attached to the partition $(a,a,\dotsm,a)$ of $n=a\ell$ and $\rho \in \mathcal{A}_F(a)$ is supercuspidal.
Briefly $\Delta$ is parametrized by $\rho$ and we denote by $\Delta(\rho)=[\rho\nu^{1-\ell},\rho\nu^{2-\ell}, \dotsm,\rho]$ such a quotient. Further $\Delta$ is square integrable if and only if the supercuspidal representation $\rho\nu^{-\frac{\ell-1}{2}}$ is unitary. Also $[\rho\nu^{1-\ell},\rho\nu^{2-\ell}, \dotsm,\rho]$ is said to be a {\it segment}. 

\par
Let $\rm P$ be a standard upper parabolic subgroup of ${\rm GL}_n(F)$ of type $(n_1,n_2,\dotsm,n_t)$ with $n=n_1+n_2+\dotsm +n_t$. 
We write $\delta_{\rm P}$ for the modulus character of $\rm P$.
For each $1 \leq i \leq t$, let $\Delta^0_i \in \mathcal{A}_F(n_i)$ be a square integrable representation. Let $(s_1,s_2,\dotsm,s_t)$ be a sequence of ordered real numbers so that $s_1 \geq s_2 \geq \dotsm \geq s_t$. The normalized induced representation
\[
 {\rm Ind}^{{\rm GL}_n(F)}_{\rm P}(\Delta^0_1\nu^{s_1} \otimes \Delta^0_2\nu^{s_2}  \otimes \dotsm \otimes \Delta^0_t\nu^{s_t} )
\]
is called a {\it standard module} of ${\rm GL}_n(F)$. If $\pi \in \mathcal{A}_F(n)$, it is well-known that it is realized as the unique irreducible quotient, called the {\it Langlands quotient}, of some standard module of ${\rm GL}_n(F)$.

\par
We set $G_n={\rm GL}_n(F)$. Let $B_n$ be the Borel subgroup of upper triangular matrices in $G_n$, and $N_n$ its unipotent radical. The maximal torus of $G_n$ consisting of all diagonal matrices is denoted by $A_n$. We write $Z_n$ to denote the center consisting of scalar matrices. We define $P_n$ the mirabolic subgroup of $G_n$ given by
\[
 P_n= \left\{ \begin{pmatrix} g_{n-1}  & x \\ & 1 \end{pmatrix} \; \middle| \; g_{n-1} \in G_{n-1}, x \in F^{n-1} \right\}.
\]
We denote by $U_n$ the unipotent radical of $P_n$ and we put $P_{n-1,1}=Z_nP_n$. As a group, $P_n$ possess a structure of a semi-direct product $P_n = G_{n-1} \ltimes U_n$.

\par
We fix a non-trivial unramified character $\psi$, so $\psi(\mathcal{O})=1$ but $\psi(\varpi^{-1}\mathcal{O})\neq 1$. We let $\psi$ denote by abuse of notation  the character of $N_n$ defined by
\[
  \psi(n)=\psi( \sum_{i=1}^{n-1} n_{i,i+1} ), \quad n=(n_{i,j}) \in N_n.
\]
An admissible representation $\pi$ of $G_n$  is said to be {\it generic} if $\pi$ is $(N_n,\psi)$-distinguished, namely,
\[
\mathrm{Hom}_{G_n}(V, \mathrm{Ind}^{G_n}_{N_n}(\psi))\simeq \mathrm{Hom}_{N_n}(V,\mathbb{C}_{\psi}) \neq 0. 
\]
Then there exist a non-zero linear form called a {\it Whittaker functional} $\lambda : V \rightarrow \mathbb{C}_{\psi}$ satisfying $\lambda(\pi(n)v)=\psi(n)\lambda(v)$ for $n \in N_n$ and $v \in V$. It is known that for either an irreducible representation \cite[Theorem A]{GK} or a standard module $\pi$ \cite[\S 1.2]{JaSh83}, the space of such a functional is of dimension $1$. Let $\mathcal{W}(\pi,\psi)$ denote the {\it Whittaker model} via the space of functions $(W_{\pi})_v$ on $G_n$ defined by $(W_{\pi})_v(g)=\lambda(\pi(g)v)$ for $v \in V$.

\par
For a ramified generic representation $\pi \in \mathcal{A}_F(n)$, the unramified standard module $\pi_{ur} \in  \mathcal{A}_F(p)$ with $p < n$ \cite[Definition 1.3]{Ma13} is associated to $\pi$ as follows. Let 
\[
  \pi={\rm Ind}^{{\rm GL}_n(F)}_{\rm P}(\Delta_1(\rho_1) \otimes \Delta_2(\rho_2) \otimes \dotsm \otimes \Delta_t(\rho_t) ) \in \mathcal{A}_F(n)
\]
be a ramified generic representation. If $\mathfrak{X}_{ur}(\pi):=\{  \rho_i  :  \rho_i \; \text{is an unramified character of $F^{\times}$} \}$, the subset of $\{ \rho_1,\rho_2,\dotsm, \rho_p \}$, is not empty, we denote by $\alpha_1,\alpha_2,\dotsm, \alpha_p$ the ordered (maybe equal) elements of $\mathfrak{X}_{ur}(\pi)$ satisfying $\mathrm{Re}(\alpha_i) \geq \mathrm{Re}(\alpha_{i+1})$. We define $\pi_{ur}$ as the trivial representation $\textbf{1}$ when $\mathfrak{X}_{ur}(\pi)$ is empty and the unramified standard module
\[
  \pi_{ur}:={\rm Ind}^{{\rm GL}_p(F)}_{B_p}(\alpha_1 \otimes \alpha_2 \otimes \dotsm \otimes \alpha_p)
\]
otherwise. A significance for deploying $\pi_{ur}$ is that the shape of the standard $L$-factor for the original representation $\pi$ is encoded in the standard $L$-factor for $\pi_{ur}$   \cite{Jacquet}.

\begin{theorem}[Godement and Jacquet]
\label{G-J} 
Let $\pi \in \mathcal{A}_F(n)$ be a generic representation. Then
\[
  L(s,\pi)=L(s,\pi_{ur})=\prod_{i=1}^p (1-\alpha_i(\varpi)q^{-s})^{-1}.
\]
\end{theorem}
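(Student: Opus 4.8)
The plan is to reduce the statement for general generic $\pi$ to the already-understood case of unramified standard modules, using two inputs: the multiplicativity of the Godement--Jacquet standard $L$-factor under parabolic induction, and the explicit recipe for $\pi_{ur}$ recalled just above the theorem. First I would invoke the classification: write $\pi = {\rm Ind}^{{\rm GL}_n(F)}_{\rm P}(\Delta_1(\rho_1) \otimes \dotsm \otimes \Delta_t(\rho_t))$ as a generic induced representation, so that by the inductive property of the standard $L$-factor (proved by Godement and Jacquet, and valid because full induced representations have the same $L$-factor as any of their irreducible constituents in the generic case) one gets $L(s,\pi) = \prod_{i=1}^t L(s,\Delta_i(\rho_i))$. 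Then I would compute $L(s,\Delta_i(\rho_i))$ segment by segment: for a segment $\Delta(\rho) = [\rho\nu^{1-\ell},\dotsm,\rho]$ with $\rho \in \mathcal{A}_F(a)$ supercuspidal, the standard $L$-factor is $L(s,\rho\nu^{1-\ell})$ when $\rho$ is an unramified character of $F^\times$ (i.e.\ $a=1$ and $\rho$ unramified), and is trivial (identically $1$) otherwise, since a ramified supercuspidal, or any supercuspidal of ${\rm GL}_a$ with $a \geq 2$, contributes no pole.

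Next I would assemble these contributions. Only the $\rho_i$ lying in $\mathfrak{X}_{ur}(\pi)$ survive, and for each such unramified character $\rho_i$ the corresponding segment is a twist of the Steinberg representation whose standard $L$-factor is $(1 - \alpha_i(\varpi)q^{-s})^{-1}$ where $\alpha_i$ is the largest-real-part end of that segment --- precisely the element of $\mathfrak{X}_{ur}(\pi)$ in the notation set up before the theorem. Hence $L(s,\pi) = \prod_{i=1}^p (1 - \alpha_i(\varpi)q^{-s})^{-1}$. Finally, to get the middle equality $L(s,\pi) = L(s,\pi_{ur})$, I would just note that $\pi_{ur} = {\rm Ind}^{{\rm GL}_p(F)}_{B_p}(\alpha_1 \otimes \dotsm \otimes \alpha_p)$ is a full principal series of unramified characters, so again by multiplicativity $L(s,\pi_{ur}) = \prod_{i=1}^p L(s,\alpha_i) = \prod_{i=1}^p (1 - \alpha_i(\varpi)q^{-s})^{-1}$, which matches; in the degenerate case $\mathfrak{X}_{ur}(\pi) = \emptyset$ both sides equal $L(s,\mathbf{1}) = (1-q^{-s})^{-1}$ on the $\pi_{ur}$ side but one must check $L(s,\pi)$ has no pole at all --- wait, in that convention $p = 0$ and the empty product is $1$, so $\pi_{ur} = \mathbf{1}$ of ${\rm GL}_0$ should be read as contributing $L$-factor $1$; I would spell out this bookkeeping so the $p=0$ case is consistent.

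The main obstacle, and really the only non-formal point, is justifying that the standard $L$-factor is insensitive to passing from an irreducible generic representation to the full induced representation realizing it --- i.e.\ that $L(s, \pi) = \prod_i L(s, \Delta_i(\rho_i))$ even when the induced representation is irreducible (so $\pi$ equals it) and also matches the Langlands quotient in the reducible situation. This is exactly the content of the multiplicativity theorem for Godement--Jacquet $L$-factors (equivalently, via the local Langlands correspondence, additivity of $L$-factors of Weil--Deligne representations under direct sums), together with the fact that for generic $\pi$ the relevant induced representation is irreducible or at least shares its $L$-factor. I would cite \cite{Jacquet} for this and treat it as a black box. Everything else --- the segment-by-segment computation, the identification of the surviving $\alpha_i$, and the comparison with the principal series $\pi_{ur}$ --- is a direct unwinding of the definitions recalled in this section, so the proof is short once that input is granted.
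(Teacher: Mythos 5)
Two things, one about the comparison and one about a step that as written would fail. On the comparison: the paper does not prove Theorem \ref{G-J} at all; it is quoted as a result of Godement and Jacquet with \cite{Jacquet} as the reference, so there is no internal argument to match. Your route --- Zelevinsky's classification of generic representations as full (irreducible) inductions from unlinked segments, multiplicativity of the Godement--Jacquet factor taken as a black box from \cite{Jacquet}, and a segment-by-segment computation --- is exactly the standard argument the citation encapsulates; note also that for generic $\pi$ the induced representation from unlinked segments is itself irreducible (Zelevinsky's Theorem 9.7, quoted later in the paper), so your hedging about constituents versus Langlands quotients is unnecessary. Your reading of the degenerate convention, namely that $\pi_{ur}=\mathbf{1}$ for $\mathfrak{X}_{ur}(\pi)=\emptyset$ must contribute the factor $1$ (empty product) rather than $L(s,{\bf 1}_{F^{\times}})$, is correct and consistent with the theorem.

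The genuine problem is your segment computation as first stated: you claim that for $\Delta(\rho)=[\rho\nu^{1-\ell},\dotsc,\rho]$ with $\rho$ an unramified character the standard factor is $L(s,\rho\nu^{1-\ell})$. It is not; it is $L(s,\rho)$, attached to the opposite end of the segment, exactly as your own next paragraph says (``the largest-real-part end''), and the two statements are incompatible. Used consistently, the first recipe yields $L(s,\Delta(\rho))=(1-\rho(\varpi)q^{-(\ell-1)}q^{-s})^{-1}$ and hence a final product $\prod_i(1-\alpha_i(\varpi)q^{-(\ell_i-1)}q^{-s})^{-1}$, which is not the formula in the theorem. A sanity check: for $\ell=2$ and $\rho=\nu^{1/2}$ the segment is the Steinberg representation of ${\rm GL}_2(F)$, whose standard factor is $(1-q^{-1/2}q^{-s})^{-1}=L(s,\nu^{1/2})=L(s,\rho)$; the alternative $L(s,\nu^{-1/2})=(1-q^{1/2}q^{-s})^{-1}$ would have a pole at $\mathrm{Re}(s)=1/2$, impossible for a square-integrable representation. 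In parameter language, $\Delta(\rho)$ corresponds to $\phi_\rho\otimes\mathrm{Sp}(\ell)$ and the $L$-factor is computed on the kernel of the monodromy, which picks out the eigenvalue belonging to the end $\rho$, i.e.\ precisely the element of $\mathfrak{X}_{ur}(\pi)$. Delete the first formulation, keep the second, and your argument assembles correctly to the stated identity, including $L(s,\pi_{ur})=\prod_{i=1}^p L(s,\alpha_i)$ for the unramified standard module.
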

We call the set $\{\alpha_i(\varpi) \}_{i=1}^p$ the {\it Langlands parameter} of $\pi$. If $\pi$ is unramified, they agree with the usual Satake parameters. The standard local $L$-function $L(s,\pi_{ur})$ is nothing but employed to define what is called the {\it naive Rankin-Selberg $L$-factors} in \cite{BKL} and the {\it  formal exterior square $L$-functions} in \cite{MY}.

\par
Let $K_n={\rm GL}_n(\mathcal{O})$ be the maximal compact subgroup of $G_n$. For each non-negative integer $c$, we define the congruence subgroup $K_1(\mathfrak{p}^c)$ of $K_n$ by
\[
 K_1(\mathfrak{p}^c):=\left\{ g \in K_n\, \middle| \,g \equiv \left(\begin{array}{ccc} \multirow{2}{*}{$\ast$}\vspace{-1ex} &\vspace{-1ex} \ast  \\   & \vspace{-1ex} \vdots   \\ & \vspace{-1ex}   \ast \\ 
 0 \dotsm 0 & 1 \end{array}  \right) \; (\mathrm{mod}\; \mathfrak{p}^c)
 %\begin{pmatrix}  & \ast \\ \ast & \vdots \\ &  \ast \\ 
 %0 \dotsm 0 & 1 \end{pmatrix}
  \right\}.
\]
We write $V^{K_1(\mathfrak{p}^c)}$ for $K_1(\mathfrak{p}^c)$-fixed vectors in $V$. One of the main results of \cite{JacquetCorrection,JP-SS81} is that there exists a non-negative $c$ such that $V^{K_1(\mathfrak{p}^c)} \neq \{ 0 \}$. We denote by $c(\pi)$ the smallest integer with this property. The nonnegative integer $c(\pi)$ is called the {\it conductor} of $\pi$ where we set $c(\pi)=0$ if $\pi$ is unramified. Then $V^{K_1(\mathfrak{p}^{c(\pi)})}$ is one-dimensional. There is a unique vector $v^{\circ}$ in the space $V^{K_1(\mathfrak{p}^{c(\pi)})}$ up to scalar multiplication, called the {\it essential vector} or the {\it newform}, with the associated essential Whittaker function $W_{\pi}^{\circ}:=(W_{\pi})_{v^{\circ}}$ satisfying the condition $W_{\pi}^{\circ}(I_n)=1$. If $\pi$ is unramifed, $W_{\pi}^{\circ}$ is what is said to be the normalized spherical Whitaker function (cf. \cite[\S 2]{BKL}). The explicit relation between the essential Whittaker function $W^{\circ}_{\pi}$ and the normalized spherical Whittaker function $W^{\circ}_{\pi_{ur}}$ has been independently unveiled by Matringe \cite[Corollary 3.2]{Ma13} and Miyauchi \cite[Theorem 4.1]{Miy}.

\begin{theorem}[Matringe and Miyauchi]
\label{NewV}
Let $\pi \in \mathcal{A}_F(n)$ be a ramified generic representation. Let $a=\mathrm{diag}(a_1,a_2,\dotsm,a_n)$ be a torus element and $a'$ a truncated element of $a$ given by $a'=\mathrm{diag}(a_1,a_2,\dotsm,a_r) \in A_r$. Then we have
\[
  W^{\circ}_{\pi} \begin{pmatrix} a & \\ & 1 \end{pmatrix}=W^{\circ}_{\pi_{ur}}(a') \nu^{\frac{n-r}{2}}(a') {\bf 1}_{\mathcal{O}}(a_r) \prod_{r < k < n} {\bf 1}_{\mathcal{O}^{\times}}(a_k) 
\]
\end{theorem}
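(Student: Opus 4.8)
\emph{Overview of the plan.} The plan is to determine $W^{\circ}_{\pi}$ on the torus elements $\begin{pmatrix} a & \\ & 1 \end{pmatrix}$ with $a=\mathrm{diag}(a_1,\dots,a_{n-1})\in A_{n-1}$ by pairing $W^{\circ}_{\pi}$, through the ${\rm GL}_n\times {\rm GL}_{n-1}$ Rankin-Selberg integral, against the spherical Whittaker functions of unramified principal series of ${\rm GL}_{n-1}(F)$, and then to recognize the answer via the Casselman-Shalika formula. First I would record a support lemma that uses only $K_1(\mathfrak{p}^{c(\pi)})$-invariance. Since $\mathrm{diag}({\rm GL}_{n-1}(\mathcal{O}),1)\subseteq K_1(\mathfrak{p}^{c(\pi)})$, the value $W^{\circ}_{\pi}\begin{pmatrix} a & \\ & 1 \end{pmatrix}$ depends only on the valuations $\mu=(\mu_1,\dots,\mu_{n-1})$ of $a_1,\dots,a_{n-1}$; write $\varpi^{\mu}=\mathrm{diag}(\varpi^{\mu_1},\dots,\varpi^{\mu_{n-1}})$. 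Moreover, writing $E_{i,i+1}$ for the matrix which differs from $I_n$ only in the $(i,i+1)$-entry (equal to $x$), one has $I_n+xE_{i,i+1}\in K_1(\mathfrak{p}^{c(\pi)})$ for $x\in\mathcal{O}$ and $1\le i\le n-1$ since the last row is left untouched; moving the unipotent factor of $\begin{pmatrix} a & \\ & 1 \end{pmatrix}(I_n+xE_{i,i+1})$ to the left gives $W^{\circ}_{\pi}\begin{pmatrix} a & \\ & 1 \end{pmatrix}\bigl(1-\psi(x\,a_i/a_{i+1})\bigr)=0$ with the convention $a_n=1$, and a suitable choice of $x$ when $a_i/a_{i+1}\notin\mathcal{O}$ forces the value to vanish. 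Hence $W^{\circ}_{\pi}\begin{pmatrix} a & \\ & 1 \end{pmatrix}=0$ unless $\mu_1\ge\mu_2\ge\cdots\ge\mu_{n-1}\ge 0$; thus $W^{\circ}_{\pi}$ is supported inside the dominant cone, on which the spherical Whittaker functions of ${\rm GL}_{n-1}(F)$ are generically nonzero, so the single family of ${\rm GL}_n\times {\rm GL}_{n-1}$ integrals already detects all values of $W^{\circ}_{\pi}$.

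\emph{Unfolding.} Next, for every irreducible unramified generic $\tau\in\mathcal{A}_F(n-1)$ with Satake parameters $z=(z_1,\dots,z_{n-1})$ and normalized spherical Whittaker function $W^{\circ}_{\tau}$, I would invoke the defining property of the essential vector \cite{JacquetCorrection,JP-SS81}:
\[
\int_{N_{n-1}\backslash {\rm GL}_{n-1}(F)}W^{\circ}_{\pi}\begin{pmatrix} g & \\ & 1 \end{pmatrix}W^{\circ}_{\tau}(g)\,|\mathrm{det}(g)|^{s-1/2}\,dg=L(s,\pi\times\tau).
\]
Unfolding the left side by the Iwasawa decomposition, using the support lemma and the Casselman-Shalika formula $W^{\circ}_{\tau}(\varpi^{\mu})=\delta_{B_{n-1}}^{1/2}(\varpi^{\mu})\,s_{\mu}(z)$ for dominant $\mu$ ($s_{\mu}$ the Schur polynomial), turns it into
\[
\sum_{\mu}W^{\circ}_{\pi}\begin{pmatrix} \varpi^{\mu} & \\ & 1 \end{pmatrix}\delta_{B_{n-1}}^{-1/2}(\varpi^{\mu})\,q^{-(s-1/2)|\mu|}\,s_{\mu}(z),
\]
the sum over partitions $\mu$ with at most $n-1$ parts. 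On the right side, by Theorem \ref{G-J} and multiplicativity of Rankin-Selberg $L$-factors, $L(s,\pi\times\tau)$ equals the formal $L$-factor $\prod_{i=1}^{r}\prod_{j=1}^{n-1}(1-\alpha_i z_j q^{-s})^{-1}$, where $\alpha_1,\dots,\alpha_r$ is the Langlands parameter of $\pi$ — equivalently the Satake parameters of $\pi_{ur}$ — and the Cauchy identity rewrites this as $\sum_{\lambda}s_{\lambda}(z)\,s_{\lambda}(\alpha)\,q^{-s|\lambda|}$, the sum over partitions $\lambda$ with at most $r$ parts. Matching coefficients of the linearly independent $s_{\mu}(z)$, and using that $s_{\mu}(\alpha)$ (a Schur polynomial in the $r$ variables $\alpha_1,\dots,\alpha_r$) vanishes as soon as $\mu$ has more than $r$ parts, gives
\[
W^{\circ}_{\pi}\begin{pmatrix} \varpi^{\mu} & \\ & 1 \end{pmatrix}=s_{\mu}(\alpha)\,q^{-|\mu|/2}\,\delta_{B_{n-1}}^{1/2}(\varpi^{\mu})
\]
for every partition $\mu$ with at most $n-1$ parts, the $s$-dependence cancelling automatically.

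\emph{Identification.} To finish I would recognize this expression. For $\mu$ with at most $r$ parts, with truncation $a'=\mathrm{diag}(\varpi^{\mu_1},\dots,\varpi^{\mu_r})$, the Casselman-Shalika formula for $\pi_{ur}$ on ${\rm GL}_r(F)$ reads $s_{\mu}(\alpha)=W^{\circ}_{\pi_{ur}}(a')\,\delta_{B_r}^{-1/2}(a')$; substituting, a short computation with the exponents of $\delta_{B_{n-1}}$, $\delta_{B_r}$ and of the shift $|\mathrm{det}|^{s-1/2}$ collapses the residual power of $q$, namely $\delta_{B_r}^{-1/2}(a')\,q^{-|\mu|/2}\,\delta_{B_{n-1}}^{1/2}(\varpi^{\mu})=\nu^{(n-r)/2}(a')$. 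The two remaining constraints become the two indicator factors in the statement: the constraint $\mu_r\ge 0$ — imposed by the support lemma but not by ${\rm GL}_r$-Casselman-Shalika, whose support leaves $\mu_r$ free — becomes ${\bf 1}_{\mathcal{O}}(a_r)$, and the ``at most $r$ parts'' constraint $\mu_k=0$ for $r<k<n$ becomes $\prod_{r<k<n}{\bf 1}_{\mathcal{O}^{\times}}(a_k)$, both sides vanishing outside the region these cut out. Spreading the equality back out from $\varpi^{\mu_i}$ to arbitrary $a_i\in\mathcal{O}^{\times}\varpi^{\mu_i}$ by right $\mathrm{diag}(A_{n-1}(\mathcal{O}),1)$-invariance yields the claimed formula.

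\emph{Main obstacle.} The main obstacle is the normalization bookkeeping in the last step: one must line up the conventions of the Rankin-Selberg integral, of the two Casselman-Shalika formulas (on ${\rm GL}_{n-1}(F)$ and on ${\rm GL}_r(F)$) and of the Haar measure on $N_{n-1}\backslash {\rm GL}_{n-1}(F)$ so that the surviving twist is exactly $\nu^{(n-r)/2}(a')$ and not an off-by-one variant, together with verifying that the support lemma of the first step really does confine $W^{\circ}_{\pi}$ to the weights detected by the ${\rm GL}_n\times {\rm GL}_{n-1}$ integrals, so that this one family suffices. The rest reduces to the elementary support lemma and the classical Cauchy identity for Schur polynomials.
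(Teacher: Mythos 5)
The paper contains no proof of Theorem \ref{NewV} to compare yours against: it is imported from Matringe \cite[Corollary 3.2]{Ma13} and Miyauchi \cite[Theorem 4.1]{Miy}. Judged on its own terms, your derivation is sound. The support lemma, the unfolding of the ${\rm GL}_n\times{\rm GL}_{n-1}$ integral over the dominant cone with the paper's measure normalizations, the Cauchy identity, and the coefficient matching are all correct, and the final bookkeeping does collapse as you claim: with $\delta_{B_m}(\mathrm{diag}(t_1,\dots,t_m))=\prod_i|t_i|^{m+1-2i}$ one checks $\delta_{B_r}^{-1/2}(a')\,q^{-|\mu|/2}\,\delta_{B_{n-1}}^{1/2}(\varpi^{\mu})=\nu^{(n-r)/2}(a')$, and the two indicator factors exactly reconcile the support of $W^{\circ}_{\pi}$ (forced by $K_1(\mathfrak{p}^{c})$-invariance and $a_n=1$) with the larger support of $W^{\circ}_{\pi_{ur}}$ and with the vanishing of $s_{\mu}(\alpha)$ for $\ell(\mu)>r$. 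Your route --- inverting the Jacquet--Piatetski-Shapiro--Shalika test-vector identity against spherical Whittaker functions via Shintani's formula and linear independence of Schur polynomials --- is close in spirit to Matringe's treatment, which likewise rests on the behaviour of the essential vector in Rankin--Selberg integrals with unramified data, and is genuinely different from Miyauchi's proof, which obtains the torus values from Hecke-operator recursions at level $K_1(\mathfrak{p}^{c})$; it is not circular, since \cite{JP-SS81,JacquetCorrection} are logically prior to and independent of the explicit formula.

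Two points deserve more care than you give them. First, state the input in the form actually established in \cite{JacquetCorrection}: there the right-hand side is the product $\prod_{j}L(s,\pi\otimes\mu_j)$ over the unramified characters $\mu_j$ defining the (possibly reducible) unramified principal series whose Shintani spherical function is paired against $W^{\circ}_{\pi}$; combined with Theorem \ref{G-J} applied to the unramified twists, this is already your formal factor $\prod_{i,j}(1-\alpha_i\mu_j(\varpi)q^{-s})^{-1}$, and no appeal to ``multiplicativity of Rankin--Selberg $L$-factors'' is needed. If you instead insist on the version with $L(s,\pi\times\tau)$ for irreducible spherical $\tau$, then the equality $L(s,\pi\times\tau)=\prod_jL(s,\pi\otimes\mu_j)$ is a nontrivial theorem (equality, not just divisibility) and should be cited precisely, preferably to a source independent of the essential-vector formula, rather than waved at. Second, since $\dim V^{K_1(\mathfrak{p}^{c(\pi)})}=1$, the function characterized by the integral identity is a priori only proportional to the paper's $W^{\circ}_{\pi}$; your computed value $1$ at $I_n$ (the $\mu=0$ term) is what matches it to the normalization $W^{\circ}_{\pi}(I_n)=1$, and this should be said explicitly. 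Finally, a cosmetic remark: the statement as printed has a typo ($a$ must be $\mathrm{diag}(a_1,\dots,a_{n-1})\in A_{n-1}$, consistent with the product over $r<k<n$), and your reading of it is the correct one.
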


Let $\mathcal{S}(F^n)$ be the space of locally constant and compactly supported functions $\Phi : F^n \rightarrow \mathbb{C}$. We define the test function $\Phi_{c} \in \mathcal{S}(F^n)$ by characteristic functions $ {\bf 1}_{\mathcal{O}^n}$ for $c=0$, and $ {\bf 1}_{({\mathfrak{p}^{c})}^{n-1} \times (1+\mathfrak{p}^{c})}$ for $c > 0$.
%\[
%  \Phi_{c}=
%  \begin{cases}
%  {\bf 1}_{\mathcal{O}^n} &    \text{if $c=0$,} \\ 
% {\bf 1}_{\mathfrak{p}^{c} \times \dotsm \times \mathfrak{p}^{c} \times (1+\mathfrak{p}^{c})} &  \text{if $c > 0$.} \\
% \end{cases}
%\]
We highlight that the last entry of $ {\bf 1}_{\mathcal{O}^n}$ may not lie in the unit of the ring of integers for $c=0$. 

\par
For $c \geq 0$, depending on the choice of representations, we normalize the Haar measure on $N_n$, $A_n$, $P_n$ and $K_n$ so that the volumes of $N_n \cap  K_1(\mathfrak{p}^c)$, $A_n \cap  K_1(\mathfrak{p}^c)$, $P_n \cap  K_1(\mathfrak{p}^c)$, and $K_1(\mathfrak{p}^c)$ are all one respectively (cf. \cite[\S 3]{MY}). We embed $G_{n-1}$ into $G_n$ via the map $g \mapsto \begin{pmatrix} g & \\ & 1 \end{pmatrix}$. Regarded as subgroups of $G_{n-1}$, we normalize the Haar measure on $N_{n-1}$, $A_{n-1}$, and $K_{n-1}$ so that $\mathrm{vol}(N_{n-1} \cap K_{n-1})=\mathrm{vol}(A_{n-1} \cap K_{n-1})=\mathrm{vol}(K_{n-1})=1$ (cf. \cite[Proof of Theorem 6.1]{AM}).

\section{Rankin-Selberg $L$-factors}
\label{RS}

We review the definition of local $L$-functions attached to pairs, using the formulation of Jacquet, Piatetski-Shapiro, and Shalika \cite{JPSS3}. Let $\pi \in \mathcal{A}_F(n)$ and $\sigma \in  \mathcal{A}_F(m)$ be generic representations with associated Whittaker models $\mathcal{W}(\pi,\psi)$ and $\mathcal{W}(\sigma,\psi^{-1})$, respectively. 
Let $\{ e_i \;|\; 1 \leq i \leq n \}$ be the standard row basis of $F^n$. For each pair of Whittaker functions $W_{\pi} \in \mathcal{W}(\pi,\psi)$ and $W_{\sigma} \in \mathcal{W}(\sigma,\psi^{-1})$ and in the case $n=m$ each Schwartz-Bruhat function $\Phi \in \mathcal{S}(F^n)$, we associate the local Rankin-Selberg integrals
\[
 \Psi(s,W_{\pi},W_{\sigma})=\int_{N_m \backslash G_m} W_{\pi} \begin{pmatrix} g & \\ & I_{n-m} \end{pmatrix} W_{\sigma}(g)|\mathrm{det}(g)|^{s-\frac{n-m}{2}} dg
\]
in the case $n > m$, and in the case $n=m$
\[
  \Psi(s,W_{\pi},W_{\sigma},\Phi)=\int_{N_n \backslash G_n} W_{\pi}(g) W_{\sigma}(g) \Phi(e_ng) |\mathrm{det}(g)|^s dg.
  \]
  These integral converge absolutely for $\mathrm{Re}(s) \gg 0$. Let $\mathcal{I}(\pi \times \sigma)$ denote the complex linear span of the local integrals $\Psi(s,W_{\pi},W_{\sigma})$
  if $n > m$ and that of $\Psi(s,W_{\pi},W_{\sigma},\Phi)$ if $m=n$. The space $\mathcal{I}(\pi \times \sigma)$ is a $\mathbb{C}[q^{\pm s}]$-fractional ideal of $\mathbb{C}(q^{-s})$  containing the constant $1$. Since the ring $\mathbb{C}[q^{\pm s}]$ is a principal ideal domain and $1 \in \mathcal{I}(\pi \times \sigma)$, we can take a normalized generator of the form $P(q^{-s})^{-1}$ with $P(X) \in \mathbb{C}[X]$ having $P(0)=1$. The local Rankin-Selberg $L$-function attached to $\pi$ is defined by
  \[
   L(s,\pi \times \sigma)=\frac{1}{P(q^{-s})}.
  \]
   In particular, for a pair $(\pi,\sigma)$ of spherical representations, the local $L$-function $L(s,\pi \times \sigma)$ coincides with the formal local $L$-function $L(s,\pi_{ur}\times \sigma_{ur})$.
      
  \begin{proposition}\cite[Proposition 9.4]{JPSS3}
  \label{Langlands-RS}
   Let $\{ \alpha_i \}_{i=1}^r$ and $\{ \beta_j \}_{j=1}^p$ denote the Langlands parameter of $\pi$ and $\sigma$, respectively. Then we have
\[
  L(s,\pi_{ur} \times \sigma_{ur})=\prod_{i=1}^r \prod_{j=1}^p (1-\alpha_i(\varpi)\beta_j(\varpi)q^{-s})^{-1}.
\]
\end{proposition}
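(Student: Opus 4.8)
The plan is to reduce the identity, by multiplicativity of Rankin--Selberg local factors with respect to parabolic induction, to the rank-one case, where the factor is a Tate $L$-function. Writing $\pi_{ur}={\rm Ind}^{{\rm GL}_r(F)}_{B_r}(\alpha_1\otimes\cdots\otimes\alpha_r)$ and $\sigma_{ur}={\rm Ind}^{{\rm GL}_p(F)}_{B_p}(\beta_1\otimes\cdots\otimes\beta_p)$, I first record that these full principal series induced from unramified characters of $F^{\times}$ are generic standard modules with a one-dimensional space of Whittaker functionals, so the integral representation of Section~\ref{RS} applies to the pair $(\pi_{ur},\sigma_{ur})$ even though these modules need not be irreducible.

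The main step is to invoke multiplicativity: for a generic induced representation $\tau={\rm Ind}^{{\rm GL}_n(F)}_{\rm Q}(\tau_1\otimes\cdots\otimes\tau_k)$ and any generic $\sigma'$ one has $L(s,\tau\times\sigma')=\prod_{i=1}^k L(s,\tau_i\times\sigma')$, and symmetrically in the second argument (see \cite{JPSS3}). Iterating over the $r$ blocks of $\pi_{ur}$ and the $p$ blocks of $\sigma_{ur}$ gives
\[
L(s,\pi_{ur}\times\sigma_{ur})=\prod_{i=1}^r\prod_{j=1}^p L(s,\alpha_i\times\beta_j).
\]
For the rank-one factor, the Rankin--Selberg integral for ${\rm GL}_1(F)\times{\rm GL}_1(F)$ is $\int_{F^{\times}}\alpha_i(x)\beta_j(x)\Phi(x)|x|^s\,d^{\times}x$ with $\Phi\in\mathcal{S}(F)$, which is precisely Tate's local zeta integral attached to the character $\alpha_i\beta_j$; hence $L(s,\alpha_i\times\beta_j)=L(s,\alpha_i\beta_j)$, and since $\alpha_i,\beta_j$ are unramified this equals $(1-\alpha_i(\varpi)\beta_j(\varpi)q^{-s})^{-1}$. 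Substituting yields the asserted product.

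Should one prefer a self-contained argument bypassing the general multiplicativity machinery, one can plug the normalized spherical Whittaker functions $W^{\circ}_{\pi_{ur}}$, $W^{\circ}_{\sigma_{ur}}$ and the test function ${\bf 1}_{\mathcal{O}^n}$ (or, when $r\neq p$, the corresponding $\Phi$-free integral of Section~\ref{RS}) into the zeta integral and evaluate it using the Casselman--Shalika formula together with the Cauchy identity $\sum_{\lambda}s_{\lambda}(x)s_{\lambda}(y)=\prod_{i,j}(1-x_iy_j)^{-1}$; this shows that $\prod_{i,j}(1-\alpha_i(\varpi)\beta_j(\varpi)q^{-s})^{-1}$ lies in $\mathcal{I}(\pi_{ur}\times\sigma_{ur})$, giving one divisibility. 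The step I expect to be the main obstacle, either way, is the complementary divisibility: that no zeta integral attached to $(\pi_{ur},\sigma_{ur})$ has a pole outside $\{q^{-s}=(\alpha_i(\varpi)\beta_j(\varpi))^{-1}\}$. This is the hard half of multiplicativity, and I would derive it from the asymptotic expansion of Whittaker functions along the diagonal torus (equivalently, the Bernstein--Zelevinsky filtration of the mirabolic restriction exploited in \cite{JPSS3}), which writes every such integral as a finite sum of Tate-type integrals with poles confined to the predicted set; the two divisibilities then force the equality.
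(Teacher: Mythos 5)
The paper itself does not prove this statement: it is imported verbatim from \cite[Proposition 9.4]{JPSS3}, so your argument has to stand on its own rather than be matched against a proof in the text. Your overall architecture is the right one, and the easy half is fine: plugging in the spherical Whittaker functions and $\mathbf{1}_{\mathcal{O}^n}$ and using Shintani's formula plus the Cauchy identity is exactly \cite[Proposition 2.3]{JaSh81}, and it shows that $\prod_{i,j}(1-\alpha_i(\varpi)\beta_j(\varpi)q^{-s})^{-1}$ lies in $\mathcal{I}(\pi_{ur}\times\sigma_{ur})$, i.e.\ one divisibility. The problems are in the complementary divisibility, in both of your routes. In the main route, the blanket identity $L(s,\tau\times\sigma')=\prod_i L(s,\tau_i\times\sigma')$ for an arbitrary generic induced $\tau$ is not an off-the-shelf result of \cite{JPSS3}: what holds in that generality is multiplicativity of the $\gamma$-factor together with divisibility of $L$-factors, and equality under parabolic induction needs the inducing data in Langlands position; in the unramified situation at hand that equality \emph{is} Proposition 9.4 (and the general standard-module statement is of at least the same depth). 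So the main route essentially assumes the result it is meant to prove.

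The fallback route fails at precisely the step you flag as the main obstacle. The asymptotic expansion (equivalently the Bernstein--Zelevinsky filtration) does \emph{not} write the integrals as finite sums of Tate-type integrals whose poles lie in $\{q^{s}=\alpha_i(\varpi)\beta_j(\varpi)\}$: the exponents of derivatives carry $\nu$-shifts, and, more seriously, the Tate integral in the central direction contributes candidate poles at $q^{ns}=\omega_{\pi_{ur}}\omega_{\sigma_{ur}}(\varpi)$ which in general lie outside the predicted set. Already for $n=2$ and $\pi_{ur}=\sigma_{ur}={\rm Ind}^{G_2}_{B_2}(\mathbf{1}\otimes\mathbf{1})$, the Iwasawa decomposition factors each integral, for fixed $k\in K_2$, as a torus integral times $\int_{F^{\times}}\Phi(ze_2k)\,|z|^{2s}\,d^{\times}z$, whose pole at $q^{s}=-1$ is not in the predicted set $\{q^{s}=1\}$; the equality $L(s,\pi_{ur}\times\sigma_{ur})=(1-q^{-s})^{-4}$ holds only because the residue there is $\Phi(0)$ times a bilinear form that is quasi-invariant under $G_2$ twisted by the unramified character $|\det|^{s_0}$, and no nonzero such functional exists at that point (for the spherical data one sees the cancellation explicitly: the torus part $(1+q^{-s})/(1-q^{-s})^{3}$ vanishes at $q^{s}=-1$ against the pole of $(1-q^{-2s})^{-1}$). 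Ruling out all such central/exceptional and shifted candidate poles is exactly the hard content of the proof in \cite{JPSS3}; your sketch asserts the confinement rather than establishing it, so the second divisibility, and with it the equality, remains unproved.
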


\subsection{The Rankin-Selberg period}

We construct a pair of Whittaker functions associated to newforms and the characteristic function such that the resulting Rankin-Selberg integral attains the formal tensor product $L$-factors.

\begin{theorem}
\label{RSNewVector}
Let $\pi, \sigma \in \mathcal{A}_F(n)$ be generic representations. We set $c=\max\{ c(\pi), c(\sigma) \}$. Then we have
\[
L(s,\pi_{ur} \times \sigma_{ur})=\Psi(s,W^{\circ}_{\pi},W^{\circ}_{\sigma}, \Phi_c).
\]
\end{theorem}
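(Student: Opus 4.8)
The strategy is to compute the left-hand side $\Psi(s,W^{\circ}_{\pi},W^{\circ}_{\sigma},\Phi_c)$ directly, using the Iwasawa-type decomposition of the domain $N_n\backslash G_n$ together with the explicit formula for essential Whittaker functions in Theorem~\ref{NewV}. The point is that both $W^{\circ}_{\pi}$ and $W^{\circ}_{\sigma}$ are right-invariant under $K_1(\mathfrak{p}^c)$ (since $c\geq c(\pi),c(\sigma)$), and $\Phi_c$ has been cooked up precisely so that $g\mapsto\Phi_c(e_ng)$ is well adapted to this same level structure. I would first reduce the integral over $N_n\backslash G_n$ to an integral over $N_n\backslash P_n$ against the $\Phi_c$-factor, or more efficiently unfold directly: write $g=pak$ with a mirabolic/torus/compact decomposition, but the cleanest route is to use the standard manipulation that replaces the $\Phi_c(e_ng)$ integral by an integral over the diagonal torus. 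Concretely, I expect the computation collapses to a sum over the diagonal torus $A_{n-1}$ sitting in $P_n$ via $a'\mapsto\mathrm{diag}(a',1)$, weighted by the modulus character, exactly as in the unramified Rankin-Selberg unfolding of \cite{JPSS3}.

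\textbf{Key steps.} (1) Use the $K_n$-decomposition and the $K_1(\mathfrak{p}^c)$-invariance of the integrand to reduce to an integral over the torus; the function $\Phi_c$ is left so that its contribution is either ${\bf 1}_{\mathcal O^n}$ (when $c=0$) or ${\bf 1}_{(\mathfrak p^c)^{n-1}\times(1+\mathfrak p^c)}$, and one checks — this is the bookkeeping with the chosen Haar measures normalized so that $\mathrm{vol}(K_1(\mathfrak p^c))=1$ etc. — that the $\Phi_c(e_ng)$ factor exactly kills the fractional-volume discrepancy between $c=0$ and $c>0$ and pins the last coordinate. (2) Substitute the Matringe--Miyauchi formula (Theorem~\ref{NewV}) for both $W^{\circ}_{\pi}$ and $W^{\circ}_{\sigma}$: on the truncated torus $a=\mathrm{diag}(a_1,\dots,a_{n-1},1)$ one gets $W^{\circ}_{\pi}(a)W^{\circ}_{\sigma}(a)=W^{\circ}_{\pi_{ur}}(a'_\pi)W^{\circ}_{\sigma_{ur}}(a'_\sigma)\,\nu^{\bullet}(\cdot)\,{\bf 1}_{\mathcal O}(\cdot)\prod{\bf 1}_{\mathcal O^\times}(\cdot)$, where $a'_\pi\in A_{r}$, $a'_\sigma\in A_{r'}$ are the respective truncations. (3) The product of indicator functions forces all but the first few torus coordinates to be units, so those coordinates drop out (each contributes a unit volume), and the surviving integral is exactly the torus integral computing the \emph{unramified} Rankin-Selberg integral $\Psi(s,W^{\circ}_{\pi_{ur}},W^{\circ}_{\sigma_{ur}},{\bf 1}_{\mathcal O^{\bullet}})$. (4) Invoke the known unramified computation (the spherical case of \cite{JPSS3}, already recalled in the sentence preceding Proposition~\ref{Langlands-RS}) to identify this with $L(s,\pi_{ur}\times\sigma_{ur})$.

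\textbf{Main obstacle.} The delicate point is step~(1)–(3): matching the truncation levels $r=r(\pi)$ and $r'=r(\sigma)$, which may differ, and checking that after substituting both formulas the ranges of the surviving torus variables and the $\nu^{(n-r)/2}$, $\nu^{(n-r')/2}$ twists recombine to give precisely the normalized spherical integrand for the \emph{pair} $(\pi_{ur},\sigma_{ur})$ of representations of possibly \emph{different} ranks $p<n$ — here one must be careful that the smaller of the two governs how many coordinates are genuinely integrated, and that the extra ${\bf 1}_{\mathcal O^\times}$ factors from the taller one simply trivialize the corresponding integrations. A secondary but real subtlety is the Haar-measure bookkeeping: verifying that the volume normalizations (all the relevant $K_1(\mathfrak p^c)$-intersections having volume $1$) are exactly compensated by the definition of $\Phi_c$, so that no stray power of $q$ or index $[K_n:K_1(\mathfrak p^c)]$ survives. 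Once these normalizations are pinned down, the reduction to the spherical case and hence to $L(s,\pi_{ur}\times\sigma_{ur})$ is formal. This computation also establishes the template — decomposition, substitution of Theorem~\ref{NewV}, reduction to the unramified integral — that will be reused for the Asai, Jacquet-Shalika, Friedberg-Jacquet, and Bump-Ginzburg cases in the later sections.
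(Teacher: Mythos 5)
Your proposal follows essentially the same route as the paper's proof: partial Iwasawa decomposition combined with the fact (\cite{MY}*{Lemma 2.6}) that $g\mapsto\Phi_c(e_ng)$ is the characteristic function of $P_nK_1(\mathfrak{p}^c)$, substitution of the Matringe--Miyauchi formula (Theorem \ref{NewV}) for both essential Whittaker functions, and reduction to the spherical Rankin--Selberg computation of Jacquet--Shalika. The only organizational difference is that the paper first uses the symmetry $L(s,\pi\times\sigma)=L(s,\sigma\times\pi)$ to assume $\sigma$ ramified with $r\geq p$, and then records that for $r>p$ the surviving integral is the unequal-rank integral $\Psi(s,W^{\circ}_{\pi_{ur}},W^{\circ}_{\sigma_{ur}})$ without a Schwartz--Bruhat function (versus $\Psi(s,W^{\circ}_{\pi_{ur}},W^{\circ}_{\sigma_{ur}},{\bf 1}_{\mathcal{O}^p})$ when $r=p$), exactly the point your ``main obstacle'' paragraph anticipates.
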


\begin{proof}
The computation is almost identical to \cite[Corollary 3.3]{Ma13}, and hence we only convey the key points. Based on the symmetry $L(s,\pi \times \sigma)=L(s,\sigma \times \pi)$, we may assume that $\sigma$ is ramified and $r \geq p$. Appealing to the Iwasawa decomposition, 
\[
\Psi(s,W^{\circ}_{\pi},W^{\circ}_{\sigma}, \Phi_c)=\int_{K_n}\int_{N_n \backslash P_n} W^{\circ}_{\pi}(pk) W^{\circ}_{\sigma}(pk)|\mathrm{det}(p)|^{s-1} \int_{F^{\times}} \Phi_c(e_nzpk) (\omega_{\pi}\omega_{\sigma})(z)|z|^{ns}d^{\times}zdpdk
\]
 As pointed out in \cite[Lemma 2.6]{MY}, $g \mapsto \Phi_c(e_ng)$ is the characteristic function on $P_nK_1(\mathfrak{p}^c)$ and this becomes
\[
\begin{split}
\Psi(s,W^{\circ}_{\pi},W^{\circ}_{\sigma}, \Phi_c)
 &=\int_{N_n \backslash P_n} W^{\circ}_{\pi}(p) W^{\circ}_{\sigma}(p)|\mathrm{det}(p)|^{s-1}  \int_{1+\mathfrak{p}^c}\Phi_c(e_nz) (\omega_{\pi} \omega_{\sigma})(z)|z|^{ns}\;d^{\times}zdp \\
 &=\int_{N_{n-1} \backslash G_{n-1}} W^{\circ}_{\pi}\begin{pmatrix} g& \\ & 1 \end{pmatrix} W^{\circ}_{\sigma}     \begin{pmatrix} g& \\ & 1 \end{pmatrix}     |\mathrm{det}(g)|^{s-1}  dp.
\end{split}
 \]
 We exploit Theorem \ref{NewV} aligned with the Iwasawa decomposition $G_{n-1}=N_{n-1}A_{n-1}K_{n-1}$ to obtain that
\[
\begin{aligned}
\Psi(s,W^{\circ}_{\pi},W^{\circ}_{\sigma}, \Phi_c)
  &=\int_{A_p} W^{\circ}_{\pi_{ur}}    \begin{pmatrix} a^{\prime} & \\ & I_{r-p} \end{pmatrix}   W^{\circ}_{\sigma_{ur}} (a^{\prime})  \delta_{B_{n-1}}^{-1}\begin{pmatrix} a^{\prime} & \\ & I_{n-p-1}\end{pmatrix} \nu^{(p-r)/2}(a^{\prime}) \nu^{n-p}(a^{\prime})   \\ 
  & \quad \times {\bf 1}_{\mathcal{O}}(a_p) |\mathrm{det}(a^{\prime})|^{s-1} da^{\prime}\\
  &=\int_{A_p} W^{\circ}_{\pi_{ur}} \begin{pmatrix} a^{\prime} & \\ & I_{r-p} \end{pmatrix} W^{\circ}_{\sigma_{ur}}(a^{\prime})  \delta_{B_{p}}^{-1}(a^{\prime}) \nu^{(p-r)/2}(a^{\prime})  \nu^{p-(n-1)}(a^{\prime}) \nu^{n-p}(a^{\prime}) \\
  & \quad \times {\bf 1}_{\mathcal{O}}(a_p)|\mathrm{det}(a^{\prime})|^{s-1}  da^{\prime}.
 \end{aligned}
\] 
  Once more by Iwasawa decomposition we get that
\[
\begin{split}
    \Psi(s,W^{\circ}_{\pi},W^{\circ}_{\sigma}, \Phi_c)
    &=\int_{A_p} W^{\circ}_{\pi_{ur}}\begin{pmatrix} a^{\prime} & \\ & I_{r-p} \end{pmatrix}   W^{\circ}_{\sigma_{ur}}(a^{\prime})  \delta_{B_{p}}^{-1}(a^{\prime})   {\bf 1}_{\mathcal{O}}(a_p)|\mathrm{det}(a^{\prime})|^{s-\frac{r-p}{2}}  da^{\prime}\\
    &=\begin{cases}
    \Psi(s,W^{\circ}_{\pi_{ur}},W^{\circ}_{\sigma_{ur}}, {\bf 1}_{\mathcal{O}^p}) &  \text{if $r=p$,} \\
     \Psi(s,W^{\circ}_{\pi_{ur}},W^{\circ}_{\sigma_{ur}} ) &  \text{if $r > p$,} \\
      \end{cases}
\end{split}
\]
  whence, applying the standard computation of local Rankin-Selberg $L$-functions for unramified representations from \cite[Proposition 2.3]{JaSh81} (cf. \cite[(3),(4)]{Ma13}), we have
  \[
   \Psi(s,W^{\circ}_{\pi},W^{\circ}_{\sigma}, \Phi_c)= L(s,\pi_{ur} \times \sigma_{ur})
      \]
      from which the desired result follows.
\end{proof}

Theorem \ref{RSNewVector} generalizes the result \cite[Theorem 2.1.1]{Kim} for the pair of the ramified representation $\pi$ and unramifed representation $\sigma$ in the viewpoint of the agreement $L(s,\pi \times \sigma)=L(s,\pi_{ur} \times \sigma_{ur})$ \cite[(5)]{Ma13} (cf. \cite[\S 7]{Venkatesh}). In what follows, we explain the link between $L(s,\pi_{ur} \times \sigma_{ur})$ and $L(s,\pi \times \sigma)$. 

\begin{corollary} 
\label{RS-divisibility}
Let $\pi, \sigma \in \mathcal{A}_F(n)$ be generic representations. Then we have
\[ 
L(s,\pi_{ur} \times \sigma_{ur})=P(q^{-s})L(s,\pi \times \sigma)
\]
for a polynomial $P(X) \in \mathbb{C}[X]$ satisfying $P(0)=1$.
\end{corollary}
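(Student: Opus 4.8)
The plan is to read off the divisibility directly from Theorem \ref{RSNewVector} together with the fact that $L(s,\pi\times\sigma)$ is, by construction, the \emph{normalized} generator of the fractional ideal $\mathcal{I}(\pi\times\sigma)=\mathbb{C}[q^{\pm s}]L(s,\pi\times\sigma)$. First I would observe that $\Phi_c\in\mathcal{S}(F^n)$ is a legitimate Schwartz--Bruhat function, so $\Psi(s,W^{\circ}_{\pi},W^{\circ}_{\sigma},\Phi_c)$ is one of the integrals spanning $\mathcal{I}(\pi\times\sigma)$, hence lies in $\mathbb{C}[q^{\pm s}]L(s,\pi\times\sigma)$. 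Combining this membership with the identity $L(s,\pi_{ur}\times\sigma_{ur})=\Psi(s,W^{\circ}_{\pi},W^{\circ}_{\sigma},\Phi_c)$ of Theorem \ref{RSNewVector} produces a Laurent polynomial $R\in\mathbb{C}[q^{\pm s}]$ with $L(s,\pi_{ur}\times\sigma_{ur})=R\cdot L(s,\pi\times\sigma)$.

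It then remains to promote $R$ to a genuine polynomial in $X:=q^{-s}$ with constant term $1$, after which one sets $P:=R$. For this I would view both $L$-factors as rational functions of $X$. Writing $L(s,\pi\times\sigma)=D(X)^{-1}$ with $D\in\mathbb{C}[X]$, $D(0)=1$ (the normalization of the generator recalled in Section \ref{RS}), we have $R=L(s,\pi_{ur}\times\sigma_{ur})\cdot D(X)$. By Proposition \ref{Langlands-RS}, $L(s,\pi_{ur}\times\sigma_{ur})=\prod_{i,j}\bigl(1-\alpha_i(\varpi)\beta_j(\varpi)X\bigr)^{-1}$ is regular at $X=0$ with value $1$ there, and $D(X)$ is a polynomial with $D(0)=1$; hence $R$, a priori a Laurent polynomial in $X$, is regular at $X=0$ and satisfies $R|_{X=0}=1$. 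A Laurent polynomial regular at the origin has no negative powers, so $R\in\mathbb{C}[X]$ with $R(0)=1$, which is exactly the asserted formula $L(s,\pi_{ur}\times\sigma_{ur})=P(q^{-s})L(s,\pi\times\sigma)$. Clearing denominators gives the equivalent divisibility statement $L(s,\pi_{ur}\times\sigma_{ur})^{-1}\mid L(s,\pi\times\sigma)^{-1}$ in $\mathbb{C}[q^{\pm s}]$ announced in the introduction.

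I do not expect any serious obstacle here: the corollary is essentially a bookkeeping consequence of Theorem \ref{RSNewVector} and the formalism of \cite{JPSS3}. The only two points deserving a line of care are (i) confirming that the particular triple $(W^{\circ}_{\pi},W^{\circ}_{\sigma},\Phi_c)$ indeed lands its integral inside $\mathcal{I}(\pi\times\sigma)$ (immediate, since $\mathcal{I}(\pi\times\sigma)$ is by definition the span of \emph{all} such integrals and each one is a rational function of $q^{-s}$), and (ii) the elementary fact that a Laurent polynomial in $q^{-s}$ which is regular at $q^{-s}=0$ is an ordinary polynomial. Both are routine, so the argument should be short.
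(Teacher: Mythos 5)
Your proposal is correct and follows exactly the paper's route: the paper likewise deduces the corollary from Theorem \ref{RSNewVector} by noting that $L(s,\pi_{ur}\times\sigma_{ur})=\Psi(s,W^{\circ}_{\pi},W^{\circ}_{\sigma},\Phi_c)$ lies in the fractional ideal $\mathcal{I}(\pi\times\sigma)=\mathbb{C}[q^{\pm s}]L(s,\pi\times\sigma)$. The only difference is that you spell out the (routine) normalization step showing the Laurent-polynomial factor is an honest polynomial with constant term $1$, which the paper leaves implicit.
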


\begin{proof}
The result is an immediate consequence of Theorem \ref{RSNewVector} that $L(s,\pi_{ur} \times \sigma_{ur})$ is an element of $\mathcal{I}(\pi \times \sigma)$.
\end{proof}

For the rest of this section, we navigate the bilinear form by the means of Rankin-Selbeg periods.

\begin{proposition}
Let $\pi, \sigma \in \mathcal{A}_F(n)$ be unitary generic representations. For any $W_{\pi} \in \mathcal{W}(\pi,\psi)$ and $W_{\sigma}  \in \mathcal{W}(\sigma,\psi^{-1})$, the integral
 \[
B(W_{\pi},W_{\sigma}):=\int_{N_n \backslash P_n} W_{\pi}(p) W_{\sigma}(p) dp
 \]
 is absolutely convergent and defines a $P_n$-invariant bilinear form on $\mathcal{W}(\pi,\psi) \times \mathcal{W}(\sigma,\psi^{-1})$.
\end{proposition}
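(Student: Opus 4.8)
The plan is to reduce the convergence to a standard asymptotic estimate for Whittaker functions on the mirabolic subgroup, combined with the gauge/Jacquet finiteness bounds. First I would use the Iwasawa-type decomposition $P_n = N_n A_{n-1} K'$ (where $A_{n-1}$ is the torus of $G_{n-1}$ embedded in $P_n$ and $K'$ a suitable compact set), so that, after unfolding the $\psi$-equivariance, the integral $B(W_\pi,W_\sigma)$ is bounded by a finite sum of integrals of the shape
\[
 \int_{A_{n-1}} \bigl| W_\pi(a) W_\sigma(a) \bigr|\, \delta_{B_{n-1}}^{-1}(a)\, da,
\]
over torus elements $a = \mathrm{diag}(a_1,\dots,a_{n-1},1)$, up to the compact averaging over $K'$ which changes nothing since Whittaker functions are smooth and $K'$ is compact. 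The key input is the Jacquet–Piatetski-Shapiro–Shalika asymptotic: there is a finite set of ``exponents'' (finite functions on $A_{n-1}$ built from the central characters of the derivatives of $\pi$) controlling $|W_\pi(a)|$, and on the region where $W_\pi$ is supported one has $|a_i/a_{i+1}| \le 1$. I would invoke this for both $\pi$ and $\sigma$.

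The second step is to exploit unitarity. For a unitary generic representation, every exponent appearing in the asymptotics of $W_\pi$ near the torus has real part $\ge 0$ in the relevant variables $|a_i/a_{i+1}|$ — more precisely the derivatives of a unitary generic representation have central characters that are unitary up to a bounded twist, so that after multiplying by $\delta_{B_{n-1}}^{-1}$ (which contributes strictly positive powers of $|a_i/a_{i+1}|$) one obtains a product of factors each of which is a unitary character times $|a_i/a_{i+1}|^{\epsilon_i}$ with $\epsilon_i > 0$, against the indicator that $|a_i/a_{i+1}| \le 1$. Summing a geometric-type series over the lattice of such $a$ then gives absolute convergence; I would cite the standard references (Jacquet's ``gauge'' estimates, or the treatment in \cite{JPSS3}) for the precise bound, rather than reprove it. This is essentially the same argument that shows the local Rankin–Selberg integral $\Psi(s,W_\pi,W_\sigma,\Phi)$ converges for $\mathrm{Re}(s) \gg 0$, specialized here to the ``boundary'' value that the mirabolic integral represents; unitarity is exactly what lets us take $s$ down to the point making the $\Phi$-factor disappear.

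Having established absolute convergence, the $P_n$-invariance is formal: the integral is over $N_n \backslash P_n$, the measure $dp$ is a right $P_n$-invariant (i.e.\ right Haar) measure on this homogeneous space — here one uses that $P_n$ is unimodular, or at any rate that $N_n \backslash P_n$ carries an invariant measure — and the integrand $p \mapsto W_\pi(p) W_\sigma(p)$ transforms on the left under $N_n$ by $\psi \cdot \psi^{-1} = \mathbf{1}$, so it descends to $N_n \backslash P_n$. Then for $h \in P_n$, the substitution $p \mapsto ph$ together with right-invariance of $dp$ gives $B(\pi(h)W_\pi, \sigma(h)W_\sigma) = B(W_\pi, W_\sigma)$, which is the claimed invariance; absolute convergence is what licenses this change of variables. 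The main obstacle is purely the convergence step: one must be careful that the asymptotic expansion of a general (non-tempered) unitary generic representation still has all exponents in the closed half-space that makes the $\delta_{B_{n-1}}^{-1}$-weighted sum converge — this uses the classification of the unitary dual (Tadić) only indirectly, via the bound that the Langlands data of a unitary generic representation lie in the critical strip, and I would phrase it as a citation to the existing convergence results for Rankin–Selberg integrals rather than redoing the combinatorics.
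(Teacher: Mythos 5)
Your proposal is correct in substance, but it takes a different route from the paper. The paper avoids any asymptotic analysis of Whittaker functions: it chooses a compact open subgroup $K_n^{\circ}$ fixing $W_{\pi}$ and $W_{\sigma}$, takes $\Phi^{\circ}={\bf 1}_{e_nK_n^{\circ}}$, observes that $\Psi(s,W_{\pi},W_{\sigma},\Phi^{\circ})$ unfolds to a positive multiple of $\int_{N_n\backslash P_n}W_{\pi}(p)W_{\sigma}(p)|\mathrm{det}(p)|^{s-1}\,dp$, and then quotes Jacquet--Shalika (\cite{JaSh81}, Proposition 3.17), which gives absolute convergence of the Rankin--Selberg integral for unitary generic pairs in the \emph{closed} half-plane $\mathrm{Re}(s)\geq 1$; evaluating at $s=1$ yields the claim. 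You instead estimate the mirabolic integral directly: Iwasawa decomposition, the Jacquet--Piatetski-Shapiro--Shalika asymptotics with exponents controlled by the central characters of derivatives, Bernstein's unitarity bound, and a geometric-series summation against $\delta_{B_{n-1}}^{-1}$. That is essentially a re-derivation of the convergence input the paper simply cites, so your approach is more self-contained but heavier; the paper's is shorter because the well-chosen $\Phi^{\circ}$ makes the mirabolic integral literally a boundary value of $\Psi(s,\cdot)$. Two small points to tighten in your write-up: the support condition from $\psi$-equivariance gives $|a_i/a_{i+1}|\leq C$ for some constant depending on the level, not $\leq 1$; and if you defer to the literature, you must cite precisely the closed half-plane statement $\mathrm{Re}(s)\geq 1$ for unitary pairs (as in \cite{JaSh81}), since convergence for $\mathrm{Re}(s)\gg 0$ alone does not reach $s=1$. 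Your treatment of $P_n$-invariance (descent of the integrand and right-invariance of the quotient measure) is fine and is in fact more explicit than the paper, whose proof addresses only convergence.
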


\begin{proof}
A parallel discussion can be found in \cite[Lemma 3.1]{Ma10}. Let $K_n^{\circ}$ be a compact open subgroup of $K_n$ such that each $W_{\pi}$ and $W_{\sigma}$ is invariant under 
$K_n^{\circ}$. We choose $\Phi^{\circ}$ to be a characteristic function of $e_n K_n^{\circ}$. Then the integral $\Psi(s,W_{\pi},W_{\sigma}, \Phi^{\circ})$ reduces a positive multiple of
\begin{equation}
\label{mirabolic}
 \int_{N_n \backslash P_n} W_{\pi}(p) W_{\sigma}(p) |\mathrm{det}(p)|^{s-1} \,dp.
\end{equation}
According to \cite[Proposition 3.17]{JaSh81}, the integral $\Psi(s,W_{\pi},W_{\sigma},\Phi^{\circ})$ converges absolutely in the half plane $\mathrm{Re}(s) \geq 1$. This confirms that the integral \eqref{mirabolic} is holomorphic at $s=1$.
\end{proof}

We embark on proving the non-vanishing of the $P_n$-bilinear form $B(W_{\pi},W_{\sigma})$. This is similar to the canonical inner product considered in \cite[Appendix A.1]{FLO}.

\begin{theorem}
\label{RS-priods}
Let $\pi, \sigma \in \mathcal{A}_F(n)$ be unitary generic representations. Then we have
\[
  \int_{N_n \backslash P_n} W^{\circ}_{\pi}(p) W^{\circ}_{\sigma}(p) dp=
  \begin{cases}
  L(1,\pi_{ur} \times \sigma_{ur})& \text{if either $\pi$ or $\sigma$ is ramified,} \\
  \dfrac{L(1,\pi \times \sigma)}{L(n,\omega_{\pi}\omega_{\sigma})}& \text{if both $\pi$ and $\sigma$ are unramified.} 
  \end{cases}
\]
\end{theorem}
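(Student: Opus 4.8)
The plan is to recognize the period integral as the value at $s=1$ of a rational function of $q^{-s}$ that differs from the integral already computed in the proof of Theorem~\ref{RSNewVector} only by an elementary Tate factor, the one genuinely delicate point being the legitimacy of specializing at $s=1$. Put
\[
 I(s):=\int_{N_n \backslash P_n} W^{\circ}_{\pi}(p) W^{\circ}_{\sigma}(p) |\mathrm{det}(p)|^{s-1}\,dp,
\]
which is the integral \eqref{mirabolic} for the choice $W_{\pi}=W^{\circ}_{\pi}$, $W_{\sigma}=W^{\circ}_{\sigma}$. By the Proposition preceding this theorem, the unitarity of $\pi$ and $\sigma$ makes $I(s)$ converge absolutely for $\mathrm{Re}(s)\geq 1$; in particular $I(s)$ is holomorphic at $s=1$ and $I(1)$ is the integral on the left-hand side of the assertion. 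Thus it suffices to determine $I(s)$ as a rational function and then set $s=1$.

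Suppose first that at least one of $\pi,\sigma$ is ramified, so $c:=\max\{c(\pi),c(\sigma)\}>0$. Then I revisit the Iwasawa unfolding of $\Psi(s,W^{\circ}_{\pi},W^{\circ}_{\sigma},\Phi_c)$ performed in the proof of Theorem~\ref{RSNewVector}: since $g\mapsto\Phi_c(e_ng)$ is the characteristic function of $P_nK_1(\mathfrak{p}^c)$ and $\omega_{\pi}\omega_{\sigma}$ is trivial on $1+\mathfrak{p}^c$, the central integral there contributes a factor $1$, and one gets exactly $\Psi(s,W^{\circ}_{\pi},W^{\circ}_{\sigma},\Phi_c)=I(s)$. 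Theorem~\ref{RSNewVector} identifies the left-hand side with $L(s,\pi_{ur}\times\sigma_{ur})$, so $I(s)=L(s,\pi_{ur}\times\sigma_{ur})$; since the left side is holomorphic at $s=1$ so is the right side (consistently, the unitarity bounds $|\alpha_i(\varpi)|,|\beta_j(\varpi)|<q^{1/2}$ already forbid a pole at $s=1$), and evaluating at $s=1$ yields the first case.

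Suppose next that both $\pi$ and $\sigma$ are unramified, so $c=0$, the functions $W^{\circ}_{\pi}=W^{\circ}_{\pi_{ur}}$ and $W^{\circ}_{\sigma}=W^{\circ}_{\sigma_{ur}}$ are the normalized spherical Whittaker functions, and $\Phi_0={\bf 1}_{\mathcal{O}^n}$. Running the same unfolding along $G_n=Z_nP_nK_n$ — using the right $K_n$-invariance of $W^{\circ}_{\pi},W^{\circ}_{\sigma}$, that $e_np=e_n$ for $p\in P_n$, and that $e_nk$ is a primitive vector of $\mathcal{O}^n$ for $k\in K_n$ — the compact integration collapses and the central variable $z$ now ranges over $\mathcal{O}\setminus\{0\}$ rather than over $1+\mathfrak{p}^c$, so it contributes the Tate integral
\[
 \int_{\mathcal{O}\cap F^{\times}}(\omega_{\pi}\omega_{\sigma})(z)|z|^{ns}\,d^{\times}z=L(ns,\omega_{\pi}\omega_{\sigma})
\]
for the unramified character $\omega_{\pi}\omega_{\sigma}$. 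Hence $\Psi(s,W^{\circ}_{\pi},W^{\circ}_{\sigma},{\bf 1}_{\mathcal{O}^n})=L(ns,\omega_{\pi}\omega_{\sigma})\,I(s)$, and comparing with the classical unramified identity $\Psi(s,W^{\circ}_{\pi},W^{\circ}_{\sigma},{\bf 1}_{\mathcal{O}^n})=L(s,\pi\times\sigma)$ of Theorem~\ref{RSNewVector} (here $\pi_{ur}=\pi$, $\sigma_{ur}=\sigma$) gives $I(s)=L(s,\pi\times\sigma)/L(ns,\omega_{\pi}\omega_{\sigma})$; setting $s=1$ yields the second case, the denominator becoming $L(n,\omega_{\pi}\omega_{\sigma})$.

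The only substantive point beyond routine bookkeeping is the passage to $s=1$: a priori the identities furnished by Theorem~\ref{RSNewVector} are equalities of rational functions obtained by analytic continuation from $\mathrm{Re}(s)\gg 0$, and one must know that the relevant $L$-factor does not acquire a pole at $s=1$. The unitarity hypothesis is exactly what supplies this, through the preceding Proposition and the convergence estimates of Jacquet and Shalika, which pin $I(s)$ — hence both sides of each identity — to be holomorphic at $s=1$. Unitarity also accounts for the dichotomy in the statement: the ramified test function $\Phi_c$ collapses the central contribution to a constant, whereas ${\bf 1}_{\mathcal{O}^n}$ retains the full Tate factor $L(n,\omega_{\pi}\omega_{\sigma})$. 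Everything else reduces to the modulus-character and volume computations already carried out for Theorem~\ref{RSNewVector}.
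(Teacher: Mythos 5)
Your proposal is correct and takes essentially the same route as the paper: the ramified case is exactly the Iwasawa unfolding underlying Theorem \ref{RSNewVector} (with the central integral over $1+\mathfrak{p}^c$ collapsing to $1$), the unramified case is the Tate-factor unfolding of Anandavardhanan--Matringe's Theorem 6.2 that the paper invokes, and the specialization at $s=1$ rests on the same convergence proposition for unitary generic representations. The only difference is organizational: you quote Theorem \ref{RSNewVector} as an identity of rational functions in $q^{-s}$ and then set $s=1$, whereas the paper rechases the same torus computation directly at $s=1$.
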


\begin{proof}
The unramified case is proceeded along the line of the proof of \cite[Theorem 6.2]{AM}. We assume that $\sigma$ is ramified and $r \geq p$. Invoking the Iwasawa decomposition $G_{n-1}=N_{n-1}A_{n-1}K_{n-1}$, we have
\[
\begin{split}
   B(W^{\circ}_{\pi},W^{\circ}_{\sigma})
   &=\int_{N_{n-1} \backslash G_{n-1}} W^{\circ}_{\pi} \begin{pmatrix} g& \\ & 1 \end{pmatrix} W^{\circ}_{\sigma}  \begin{pmatrix} g& \\ & 1 \end{pmatrix} \;dg \\
   &=\int_{A_{n-1}} \int_{K_{n-1}}  W^{\circ}_{\pi} \begin{pmatrix} ak& \\ & 1 \end{pmatrix} W^{\circ}_{\sigma}  \begin{pmatrix} ak& \\ & 1 \end{pmatrix} \delta_{B_{n-1}}^{-1}(a) \;dk da \\
   &=\int_{A_{n-1}} W^{\circ}_{\pi} \begin{pmatrix} a& \\ & 1 \end{pmatrix} W^{\circ}_{\sigma}  \begin{pmatrix} a& \\ & 1 \end{pmatrix} \delta_{B_{n-1}}^{-1}(a) da. \\
\end{split}
\]
Chasing the steps of the proof of Theorem \ref{RSNewVector}, we arrive at
 \[
B(W^{\circ}_{\pi},W^{\circ}_{\sigma})
 =\int_{A_p} W^{\circ}_{\pi_{ur}}\begin{pmatrix} a^{\prime} & \\ & I_{r-p} \end{pmatrix} W^{\circ}_{\sigma_{ur}}(a^{\prime})  \delta_{B_{p}}^{-1}(a^{\prime})  {\bf 1}_{\mathcal{O}}(a_p) |\mathrm{det}(a^{\prime})|^{1-\frac{r-p}{2}} da^{\prime},
% &=\begin{cases}
%    \Psi(1,W^{\circ}_{\pi_{ur}},W^{\circ}_{\sigma_{ur}}, {\bf 1}_{\mathcal{O}^p}) &  \text{if $r=p$,} \\
%     \Psi(1,W^{\circ}_{\pi_{ur}},W^{\circ}_{\sigma_{ur}} ) &  \text{if $r > p$,} \\
%      \end{cases}
%      \end{split}
\]
which equates to $ \Psi(1,W^{\circ}_{\pi_{ur}},W^{\circ}_{\sigma_{ur}}, {\bf 1}_{\mathcal{O}^p})$ if $r=p$, and $ \Psi(1,W^{\circ}_{\pi_{ur}},W^{\circ}_{\sigma_{ur}} ) $ if $r > p$.
The integral is simply a reformulation of \cite[Proposition 2.3]{JaSh81} (see \cite[(3),(4)]{Ma13}).
 \end{proof}

\subsection{The self-dual representation} 

A representation $\pi$ of $G_n$ is called {\it self-dual}, if $\pi^{\vee} \simeq \pi$.
We will see later in \S \ref{JSS2n}, \S \ref{BJH2n}, and \S \ref{BG} that either $(S_{2n},\Theta)$-distinguished, $H_{2n}$-distinguished, or $\theta$-distinguished representation is self-dual.

\begin{proposition}
\label{Bernstein-s=1}
Let $\pi \in \mathcal{A}_F(n)$ be a generic representation of $G_n$. Then $L(s,\pi \times \pi^{\vee})$ is holomorphic at $s=1$. In particular the bilinear form $B(W_{\pi},W_{\pi^{\vee}})$ on $\mathcal{W}(\pi,\psi) \times \mathcal{W}(\pi^{\vee},\psi^{-1})$ is well-defined.
 \end{proposition}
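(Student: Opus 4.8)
The plan is to deduce the holomorphy at $s=1$ from the archimedean-style positivity/convergence argument already used for $B(W_\pi,W_\sigma)$ in the unitary case, combined with the fact that $\pi \times \pi^\vee$ behaves, for the purposes of the Rankin-Selberg integral, like a pairing of a representation with its contragredient. First I would recall the standard fact (Jacquet-Shalika \cite[Proposition 3.17]{JaSh81}, or \cite[\S 8]{JPSS3}) that for \emph{generic} $\pi$ and $\sigma$ the integrals $\Psi(s,W_\pi,W_\sigma,\Phi)$ converge absolutely in a right half-plane whose left edge is governed by the exponents of $\pi$ and $\sigma$; when $\sigma = \pi^\vee$ the relevant exponents cancel in pairs, so the half-plane of absolute convergence contains $\mathrm{Re}(s) > 0$, hence in particular a neighborhood of $s=1$. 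Concretely, write $\pi$ as the Langlands quotient of a standard module ${\rm Ind}(\Delta_1\nu^{s_1}\otimes\dots\otimes\Delta_t\nu^{s_t})$ with $s_1 \ge \dots \ge s_t$; then $\pi^\vee$ is the Langlands quotient of ${\rm Ind}(\Delta_t^\vee\nu^{-s_t}\otimes\dots\otimes\Delta_1^\vee\nu^{-s_1})$, and the worst pair of exponents contributing to the growth of $W_\pi(a)W_{\pi^\vee}(a)$ on the torus is $\nu^{s_i}$ against $\nu^{-s_i}$, which is bounded. This is exactly the phenomenon that makes the ``diagonal'' integral converge without a unitarity hypothesis.

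The key steps, in order, would be: (1) reduce $B(W_\pi,W_{\pi^\vee})$ to a torus integral over $A_{n-1}$ via the Iwasawa decomposition $G_{n-1} = N_{n-1}A_{n-1}K_{n-1}$, exactly as in the proof of Theorem \ref{RS-priods}, so that one only needs to control $W_\pi\!\begin{pmatrix} a & \\ & 1\end{pmatrix} W_{\pi^\vee}\!\begin{pmatrix} a & \\ & 1\end{pmatrix}\delta_{B_{n-1}}^{-1}(a)$ for $a \in A_{n-1}$; (2) invoke the Casselman-Shalika / Jacquet-Shalika estimates on Whittaker functions: $W_\pi(a)$ is supported on $|a_i/a_{i+1}| \le C$ and bounded there by a constant times $\prod |a_i/a_{i+1}|^{e_i}$ where the $e_i$ are determined by the real parts of the Langlands data of $\pi$, with the mirror estimate for $W_{\pi^\vee}$; (3) observe that the product of these two bounds, against the modulus character, is dominated term-by-term by the corresponding unramified majorant, which is precisely the computation showing $L(s,\pi_{ur}\times\pi^\vee_{ur})$ has its rightmost pole at a point $\le 1$ — here one uses that $\alpha_i(\varpi)\beta_j(\varpi)$ ranges over products $\alpha_i(\varpi)\overline{\alpha_j(\varpi)}$-type quantities whose absolute values are $q^{-(s_i - s_j)}$ with $|s_i - s_j| < 1$ by the bound on exponents of a generic irreducible (the standard module for a generic representation has $s_1 - s_t < 1$, cf. the Zelevinsky classification recalled above). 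Then the geometric series converges at $s=1$, giving absolute convergence of $B(W_\pi,W_{\pi^\vee})$ and hence holomorphy of $\Psi(s,W_\pi,W_{\pi^\vee},\Phi)$ there for every choice of data; since $L(s,\pi\times\pi^\vee)$ is a generator of the fractional ideal spanned by these integrals, it too is holomorphic at $s=1$.

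The main obstacle I anticipate is step (3): making rigorous the claim that the exponents $s_i$ appearing in the standard module of a \emph{generic irreducible} (not necessarily unitary) representation satisfy $s_1 - s_t < 1$, so that no product $\alpha_i\beta_j$ has absolute value $\ge q^{-1}$. This is where one must use that $\pi$ is generic — a non-generic Langlands quotient can have exponents spread arbitrarily far apart, and then $L(s,\pi\times\pi^\vee)$ genuinely has a pole at $s=1$ or beyond (e.g. $\pi$ a one-dimensional non-unitary character). For generic $\pi$ the constituents are segments $\Delta_i(\rho_i)$ with $\rho_i$ supercuspidal, and the genericity forces the ``linked segment'' condition to fail, which bounds the real parts of the central characters of the $\Delta_i$; combined with the temperedness of each $\Delta_i^0$ this yields the needed strict inequality. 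An alternative, cleaner route that sidesteps the exponent bookkeeping is to cite Jacquet's result that for generic $\pi$ the integral $\Psi(s,W_\pi,W_{\pi^\vee},\Phi)$ converges for $\mathrm{Re}(s) > 0$ directly (this is in the Jacquet-Piatetski-Shapiro-Shalika circle of ideas and is the non-archimedean analogue of \cite[Proposition 3.17]{JaSh81} applied with $\sigma = \pi^\vee$), and then holomorphy at $s=1$ is immediate; I would likely present the short version and relegate the exponent computation to a remark.
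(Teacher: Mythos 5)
The decisive step (3) of your plan is false, and with it the whole convergence-based strategy. Genericity of an irreducible representation means precisely that it is a full induced representation from pairwise unlinked segments, and unlinkedness imposes no bound at all on the real exponents: segments are linked only when their cuspidal supports lie on the same $\nu$-line at integral distance, so for instance $\pi={\rm Ind}^{G_2}_{B_2}(\nu^{a}\otimes\chi)$ with $\chi$ a ramified unitary character and $a>1$ arbitrary is irreducible and generic, with exponent spread $a$. Your ``cancellation in pairs'' only treats the diagonal terms $\Delta_i\times\Delta_i^{\vee}$; the cross terms $\Delta_i\times\Delta_j^{\vee}$ contribute exponent differences $s_i-s_j$ that can be arbitrarily large. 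Concretely, in the example above the asymptotic expansion of $W_{\pi}(\mathrm{diag}(t,1))\,W_{\pi^{\vee}}(\mathrm{diag}(t,1))$ as $t\to 0$ contains a term of size $|t|^{1-a}$, so $B(W_{\pi},W_{\pi^{\vee}})$ is in general not absolutely convergent and $\Psi(s,W_{\pi},W_{\pi^{\vee}},\Phi)$ converges only for ${\rm Re}(s)>a$, not for ${\rm Re}(s)>0$. For the same reason your proposed ``cleaner route'' is unavailable: \cite[Proposition 3.17]{JaSh81} (used earlier in this section) requires unitarity, and no analogue holds for general generic $\pi$. The proposition asserts holomorphy at $s=1$ of the rational continuation, which is strictly weaker than absolute convergence at $s=1$ and cannot be reached by a positivity/convergence argument.

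The paper's proof sidesteps this entirely: after the Iwasawa decomposition it writes $\Psi(s,W_{\pi},W_{\pi^{\vee}},\Phi)$ as a finite sum of products of Tate integrals $\int_{F^{\times}}\Phi(e_nzk_i)|z|^{ns}\,d^{\times}z$, convergent for ${\rm Re}(s)>0$, with mirabolic integrals $\int_{N_n\backslash P_n}(\pi(k_i)W_{\pi})(p)(\pi^{\vee}(k_i)W_{\pi^{\vee}})(p)|\mathrm{det}(p)|^{s-1}dp$, whose regularity at $s=1$ is exactly Bernstein's theorem (\cite[Theorem 6.4]{Bern}, \cite[Appendix A]{FLO}); since $L(s,\pi\times\pi^{\vee})$ lies in the span of such $\Psi$'s, it is holomorphic at $s=1$, and the value at $s=1$ of the mirabolic integral is what gives meaning to $B$. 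If you prefer an argument in the classification spirit you attempted, the correct substitute for the (false) exponent bound is the pole criterion: the factors $L(s,\Delta_i\times\Delta_j^{\vee})$ of the Rankin--Selberg $L$-function have a pole at $s=1$ only if $\Delta_i\prec\Delta_j$ (Lemma \ref{M-O}), which is impossible because the segments of a generic $\pi$ are unlinked and no segment precedes itself; this recovers the holomorphy of $L(s,\pi\times\pi^{\vee})$ at $s=1$, but the well-definedness of $B(W_{\pi},W_{\pi^{\vee}})$ still requires the Bernstein-type input, so some such result must be invoked in any case.
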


\begin{proof}
For $\mathrm{Re}(s) \gg 0$, we may decompose our integral as
\[
 \Psi(s,W_{\pi},W_{\pi^{\vee}},\Phi)=\int_{K_n} \int_{N_n \backslash P_n} W_{\pi}(pk) W_{\pi^{\vee}}(pk) |\mathrm{det}(p)|^{s-1} \int_{F^{\times}} \Phi(e_nzk) |z|^{ns} d^{\times} z dp dk.
\]
We take $K_n^{\circ} \subseteq K_n$ a compact open subgroup which stabilizes $W_{\pi}$, $W_{\pi^{\vee}}$, and $\Phi$. We write $K_n=\underset{{i: \,{\rm finite}}}{\cup} k_i K_n^{\circ}$. 
The integral $\Psi(s,W_{\pi},W_{\pi^{\vee}},\Phi)$ can be decomposed as a finite sum of the form
\[
 \Psi(s,W_{\pi},W_{\pi^{\vee}},\Phi)=\sum_{i} v \int_{F^{\times}} \Phi(e_nzk_i) |z|^{ns} d^{\times} z  \int_{N_n \backslash P_n}  (\pi(k_i)W_{\pi})(p) (\pi^{\vee}(k_i)W_{\pi^{\vee}})(p)  |\mathrm{det}(p)|^{s-1}dp
\]
with $v > 0$ a volume term. Bernstein's Theorem \citelist{\cite{Bern}*{Theorem 6.4} \cite{FLO}*{Appendix A}} guarantees that 
\[
\int_{N_n \backslash P_n}  (\pi(k_i)W_{\pi})(p) (\pi^{\vee}(k_i)W_{\pi^{\vee}})(p)  |\mathrm{det}(p)|^{s-1}dp
\]
are holomorphic at $s=1$ and we note that Tate integrals
\[
\int_{F^{\times}} \Phi(e_nzk_i) |z|^{ns} d^{\times} z
\]
are absolutely convergent for $\mathrm{Re}(s) > 0$. This amounts to saying that the integral $\Psi(s,W_{\pi}, W_{\pi^{\vee}},\Phi)$ is regular at $s=1$.
\end{proof}

We say that $\pi \otimes \sigma$ is $G_n$-{\it distinguished} if ${\rm Hom}_{G_n}(\pi \otimes \sigma,{\textbf 1}_{G_n}) \neq 0$. It is worthwhile to point out that our condition of the distinction for pairs $(\pi,\sigma)$ is equivalent to the condition $\sigma \simeq \pi^{\vee}$ in \cite[\S 6.2]{MO} (cf. \cite[Remark 7.4]{Yamana15}). 
In particular for $\pi \simeq \sigma$, $\pi$ is self-dual if $\pi \otimes \pi$ is $G_n$-distinguished.
Additionally any $G_n$-distinguished representation $\pi \otimes \sigma$ always satisfies $\omega_{\pi}\omega_{\sigma}={\bf 1}_{F^{\times}}$. The unitarity hypothesis is redundant once we demand the distinction criterion on the pairs $(\pi,\sigma)$.

% (cf. \cite[Exercise 14.2]{GH})-perfect pairing, evaluation

\begin{theorem}
\label{RS-distinction}
Let $\pi, \sigma \in \mathcal{A}_F(n)$ be a generic representation such that $\pi \otimes \sigma$ is $G_n$-distinguished. Then we have
\[
  \int_{N_n \backslash P_n} W^{\circ}_{\pi}(p) W^{\circ}_{\pi^{\vee}}(p) dp=
  \begin{cases}
  L(1,\pi_{ur} \times \pi^{\vee}_{ur})&  \text{if $\pi$ is ramified,} \\
  \dfrac{L(1,\pi \times \pi^{\vee})}{L(n, {\bf 1}_{F^{\times}}   )}& \text{otherwise.} 
  \end{cases}
\]
\end{theorem}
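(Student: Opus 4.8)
The plan is to deduce Theorem \ref{RS-distinction} from the two preceding results by treating the ramified and unramified cases separately, in each case removing any unitarity hypothesis in favor of the distinction hypothesis. First I would observe that $G_n$-distinction of $\pi\otimes\sigma$ forces $\sigma\simeq\pi^\vee$, so the integral in question is exactly $B(W^\circ_\pi,W^\circ_{\pi^\vee})$; moreover $\omega_\pi\omega_\sigma={\bf 1}_{F^\times}$, so the Tate factor $L(n,\omega_\pi\omega_\sigma)$ appearing in Theorem \ref{RS-priods} becomes $L(n,{\bf 1}_{F^\times})$. This matches the claimed right-hand side, so the only thing to verify is that the computation of $B(W^\circ_\pi,W^\circ_{\pi^\vee})$ carried out in Theorem \ref{RS-priods} (and in turn the absolute convergence underlying it) remains valid when $\pi$ is not assumed unitary but $\pi\otimes\pi^\vee$ is $G_n$-distinguished.

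Next I would note that the key point supplied by the distinction hypothesis is precisely Proposition \ref{Bernstein-s=1}: applied with $\sigma=\pi^\vee$, it shows that $L(s,\pi\times\pi^\vee)$ is holomorphic at $s=1$ and that the bilinear form $B(W_\pi,W_{\pi^\vee})$ is well-defined. Actually, in the generic case the holomorphy at $s=1$ of $L(s,\pi\times\pi^\vee)$ holds for \emph{any} generic $\pi$ by Proposition \ref{Bernstein-s=1}, so the role of distinction is really to pin down $\omega_\pi\omega_{\pi^\vee}={\bf 1}_{F^\times}$ and to match the normalization in the Langlands-quotient language with that of \cite[\S 6.2]{MO}. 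With $B(W^\circ_\pi,W^\circ_{\pi^\vee})$ legitimate, I would re-run the chain of manipulations in the proof of Theorem \ref{RS-priods}: by symmetry assume $\pi^\vee$ (equivalently $\pi$) is ramified with $r\ge p$, apply the Iwasawa decomposition $G_{n-1}=N_{n-1}A_{n-1}K_{n-1}$ to reduce the mirabolic integral to an integral over $A_{n-1}$ against $\delta_{B_{n-1}}^{-1}$, then insert Theorem \ref{NewV} to replace the essential Whittaker functions by the spherical Whittaker functions $W^\circ_{\pi_{ur}}$, $W^\circ_{\pi^\vee_{ur}}$ of the associated unramified standard modules, collapsing the torus integral onto $A_p$ with the indicator ${\bf 1}_{\mathcal O}(a_p)$. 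This lands on $\Psi(1,W^\circ_{\pi_{ur}},W^\circ_{\pi^\vee_{ur}},{\bf 1}_{\mathcal O^p})$ when $r=p$ and $\Psi(1,W^\circ_{\pi_{ur}},W^\circ_{\pi^\vee_{ur}})$ when $r>p$; the unramified computation \cite[Proposition 2.3]{JaSh81} (cf. \cite[(3),(4)]{Ma13}) evaluates either one to $L(1,\pi_{ur}\times\pi^\vee_{ur})$, giving the ramified branch.

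For the unramified branch, both $\pi$ and $\pi^\vee$ are spherical, so $W^\circ_\pi=W^\circ_{\pi_{ur}}$ and $W^\circ_{\pi^\vee}=W^\circ_{\pi^\vee_{ur}}$; here the Tate factor is not trivialized by the same reduction, and as in the proof of Theorem \ref{RS-priods} (following \cite[Theorem 6.2]{AM}) the mirabolic period produces $L(1,\pi\times\pi^\vee)/L(n,\omega_\pi\omega_{\pi^\vee})$, which under $\omega_\pi\omega_{\pi^\vee}={\bf 1}_{F^\times}$ is exactly $L(1,\pi\times\pi^\vee)/L(n,{\bf 1}_{F^\times})$ as claimed. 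The main obstacle I anticipate is justifying absolute convergence of $B(W^\circ_\pi,W^\circ_{\pi^\vee})$ (and hence that all the Iwasawa manipulations are legitimate rearrangements) in the non-unitary ramified case: the convergence argument in the preceding Proposition used \cite[Proposition 3.17]{JaSh81} with the unitarity assumption, so here one must instead invoke Bernstein's theorem \citelist{\cite{Bern}*{Theorem 6.4}\cite{FLO}*{Appendix A}} via Proposition \ref{Bernstein-s=1}, together with the fact that $L(s,\pi\times\pi^\vee)$ is holomorphic at $s=1$, to see that the integral defining $B(W^\circ_\pi,W^\circ_{\pi^\vee})$ converges and equals the value at $s=1$ of the holomorphically continued $\Psi(s,W^\circ_\pi,W^\circ_{\pi^\vee},\Phi^\circ)$-type integral; once this is in place, the formal chain of equalities above goes through verbatim.
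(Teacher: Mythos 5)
Your proposal is correct and follows essentially the same route as the paper: identify $\sigma\simeq\pi^\vee$ from the distinction hypothesis, reduce to the computation of Theorem \ref{RS-priods}, and use Proposition \ref{Bernstein-s=1} (Bernstein's theorem) to supply the convergence and holomorphy of $L(s,\pi\times\pi^\vee)$ at $s=1$ in place of unitarity. The only minor quibble is that $\omega_\pi\omega_{\pi^\vee}={\bf 1}_{F^\times}$ is automatic once $\sigma\simeq\pi^\vee$, so distinction is not needed for that normalization, but this does not affect the argument.
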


\begin{proof}
The assumption ${\rm Hom}_{G_n}(\pi \otimes \sigma,{\bf 1}_{G_m}) \neq 0$ implies that $\pi^{\vee} \cong \sigma$. The result is a special case of Theorem \ref{RS-priods} once we identify $\sigma$ with $\pi^{\vee}$ and then utilize the reguliarty of $L(s,\pi \times \pi^{\vee})$ at $s=1$, as in Proposition \ref{Bernstein-s=1}.
\end{proof}

Although the following result has been known to experts \citelist{\cite{Bern} \cite{FLO}}, we cannot locate in the form that we need in the paper. Hence we include the statement and its proof.

\begin{proposition}
\label{RS-multiplicityone}
Let $\pi, \sigma \in \mathcal{A}_F(n)$ be unitary representations. Then
\[
  {\rm dim}_{\mathbb{C}} {\rm Hom}_{P_n}(\pi \otimes \sigma, {\bf 1}_{P_{n}}) \leq 1.
\]
The equality holds when $\pi$ and $\sigma$ are generic. In particular, if $\pi \otimes \sigma$ is $G_n$-distinguished, then $(W_{\pi},W_{\sigma}) \mapsto B(W_{\pi},W_{\sigma})$ gives a unique non-trivial $G_n$-invariant bilinear form belonging to the space ${\rm Hom}_{G_n}(\pi \otimes \sigma,{\bf 1}_{G_n})$.  
\end{proposition}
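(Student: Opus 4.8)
The plan is to establish the upper bound $\dim_{\mathbb{C}}\mathrm{Hom}_{P_n}(\pi\otimes\sigma,\mathbf 1_{P_n})\le 1$ by the standard Bernstein--Zelevinsky filtration/Mackey-theory argument, and then to obtain the equality in the generic case from the nonvanishing bilinear form $B$ constructed above.

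First I would unwind the mirabolic structure $P_n=G_{n-1}\ltimes U_n$ and consider the restriction of $\pi\otimes\sigma$ to $P_n$. The key tool is the Bernstein--Zelevinsky theory of derivatives: restricting a representation of $G_n$ to $P_n$ and then to $P_{n-1}$, etc., one obtains a filtration whose successive quotients are built (via the functors $\Phi^+$, $\Psi^+$) from the derivatives $\pi^{(k)}$ and $\sigma^{(k)}$. Applying this to $\pi\otimes\sigma|_{P_n}$, the space $\mathrm{Hom}_{P_n}(\pi\otimes\sigma,\mathbf 1_{P_n})$ is controlled by a Mackey-type decomposition over the $P_n$-orbits on the relevant flag/affine space; concretely, one shows that the only orbit contributing a nonzero $P_n$-invariant functional is the one that eventually forces a Whittaker-type functional on the ``top'' derivatives of $\pi$ and $\sigma$. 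Because $\pi$ and $\sigma$ are irreducible (and unitary, so the derivatives behave well and the relevant $\mathrm{Ext}$/Hom vanishing holds), the highest derivative $\pi^{(n)}$ is at most one-dimensional, and uniqueness of the Whittaker functional (Theorem~\ref{G-J}'s ambient framework, i.e. Gelfand--Kazhdan) pins the dimension down to at most one. Alternatively, and perhaps more cleanly, one can invoke Bernstein's result \cite{Bern}*{Theorem 6.4}, \cite{FLO}*{Appendix A} directly: the pairing $(W_\pi,W_\sigma)\mapsto\int_{N_n\backslash P_n}W_\pi(p)W_\sigma(p)\,dp$ generates the whole Hom-space, and the uniqueness is part of that package for unitary $\pi,\sigma$; I would cite this and only sketch the BZ reduction.

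Next, for the equality when $\pi,\sigma$ are generic: the bilinear form $B(W_\pi,W_\sigma)=\int_{N_n\backslash P_n}W_\pi(p)W_\sigma(p)\,dp$ is well-defined (absolute convergence was shown in the Proposition preceding Theorem~\ref{RS-priods}) and manifestly $P_n$-invariant since $W_\pi$ and $W_\sigma$ transform by $\psi$ and $\psi^{-1}$ respectively under left translation by $N_n$, and the integral is over $N_n\backslash P_n$. To see $B\not\equiv 0$, I would specialize to the newform vectors: Theorem~\ref{RS-priods} gives $B(W_\pi^\circ,W_\sigma^\circ)$ equal to either $L(1,\pi_{ur}\times\sigma_{ur})$ or $L(1,\pi\times\sigma)/L(n,\omega_\pi\omega_\sigma)$, both of which are nonzero (an Euler product of factors $(1-x)^{-1}$ evaluated at a point where it converges, hence a nonzero complex number, and a ratio of such). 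Thus $B$ is a nonzero element of $\mathrm{Hom}_{P_n}(\pi\otimes\sigma,\mathbf 1_{P_n})$, forcing $\dim=1$ together with the upper bound.

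Finally, for the ``in particular'' clause: assume $\pi\otimes\sigma$ is $G_n$-distinguished, so $\mathrm{Hom}_{G_n}(\pi\otimes\sigma,\mathbf 1_{G_n})\ne 0$; as noted in the text this forces $\sigma\simeq\pi^\vee$ and $\omega_\pi\omega_\sigma=\mathbf 1_{F^\times}$. Any $G_n$-invariant form is a fortiori $P_n$-invariant, so $\mathrm{Hom}_{G_n}(\pi\otimes\sigma,\mathbf 1_{G_n})\hookrightarrow\mathrm{Hom}_{P_n}(\pi\otimes\sigma,\mathbf 1_{P_n})$, which is one-dimensional by the above; hence $\dim_{\mathbb C}\mathrm{Hom}_{G_n}(\pi\otimes\sigma,\mathbf 1_{G_n})=1$, and since $B$ spans the one-dimensional $P_n$-Hom space and the $G_n$-Hom space is a nonzero subspace of it, $B$ must itself be $G_n$-invariant and spans $\mathrm{Hom}_{G_n}(\pi\otimes\sigma,\mathbf 1_{G_n})$. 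I expect the main obstacle to be giving a self-contained justification of the upper bound $\dim\mathrm{Hom}_{P_n}\le 1$ in the non-generic unitary case; rather than reproving the full Bernstein--Zelevinsky machinery, I would lean on \cite{Bern}*{Theorem 6.4} and \cite{FLO}*{Appendix A} (the same inputs already used in Proposition~\ref{Bernstein-s=1}), indicating how the derivative filtration collapses to the single Whittaker contribution and citing Gelfand--Kazhdan uniqueness for the final step.
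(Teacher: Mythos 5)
Your overall architecture coincides with the paper's (a Bernstein--Zelevinsky filtration argument for the bound, nonvanishing of $B$ at newform vectors for the equality, and the injection $\mathrm{Hom}_{G_n}(\pi\otimes\sigma,{\bf 1}_{G_n})\hookrightarrow\mathrm{Hom}_{P_n}(\pi\otimes\sigma,{\bf 1}_{P_n})$ for the final clause), but there is a genuine gap in your treatment of the upper bound for \emph{non-generic} unitary $\pi,\sigma$. Your sketch terminates in ``the highest derivative $\pi^{(n)}$ is at most one-dimensional'' plus Gelfand--Kazhdan uniqueness of Whittaker functionals; this only works when both representations are generic, since $\pi^{(n)}\neq 0$ exactly characterizes genericity. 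In the general unitary case the filtration (via \cite[Proposition 1.4]{JPSS3} and the duality $\tau_1^{(k)}\simeq\nu^{-1}\otimes(\sigma_1^{(k)})^{\vee}$ from \cite{BeZe}) reduces $\mathrm{Hom}_{P_n}(\pi\otimes\sigma,{\bf 1}_{P_n})$ to the spaces $\mathrm{Hom}_{G_{n-k}}(\pi_1^{[k]}\otimes\sigma_1^{[k]},{\bf 1}_{G_{n-k}})$ for $1\le k\le h$, where $h$ indexes the highest nonvanishing derivative, and one must still eliminate the contributions with $k<h$. That is precisely where the paper invokes Bernstein's unitarity criterion \cite[\S 7.4]{Bern}: the top shifted derivatives $\pi_1^{[h]},\sigma_1^{[h]}$ are irreducible and unitary, while the central characters of the irreducible subquotients of the intermediate shifted derivatives have strictly positive real part, so those layers admit no invariant functional; the surviving space is then bounded by induction on the rank $n-h$ (essentially Schur's lemma for the irreducible top pieces). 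Your fallback citation of \cite[Theorem 6.4]{Bern} and \cite[Appendix A]{FLO} does not supply this: those results concern the canonical $P_n$-invariant pairing between $\pi$ and its contragredient (its convergence, holomorphy and $G_n$-invariance), not a dimension bound for an arbitrary unitary pair $\pi\otimes\sigma$; indeed the paper credits Bernstein's Theorem A only for the second assertion, not for the bound.

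The rest of your argument is sound and matches the paper: $B$ is a $P_n$-invariant form, its value at $(W^{\circ}_{\pi},W^{\circ}_{\sigma})$ is a nonzero $L$-value by Theorem \ref{RS-priods}, which gives the equality in the generic unitary case; and under the distinction hypothesis $\sigma\simeq\pi^{\vee}$, so the nonzero space $\mathrm{Hom}_{G_n}(\pi\otimes\pi^{\vee},{\bf 1}_{G_n})$ sits inside the at most one-dimensional space $\mathrm{Hom}_{P_n}(\pi\otimes\pi^{\vee},{\bf 1}_{P_n})$, forcing $B$ to span it and to be $G_n$-invariant, exactly as in the paper's proof.
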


\begin{proof}
The proof is a variation on those of \citelist{\cite{Ma14}*{Proposition 2.3}  \cite{MA15}*{Corollary 4.2} \cite{Yamana}*{Theorem 2.14} } and falls out of that of \cite[Proposition 2.10]{JPSS3}. 
Both representations $\pi|_{P_n}$ and $(\sigma|_{P_n})^{\vee}$ carry Bernstein-Zelevinsky filtrations of $P_n$-submodules
\[
 0 \subseteq \pi_n \subseteq \pi_{n-1} \subseteq \dotsm \subseteq \pi_1:=\pi|_{P_n} \quad \text{and} \quad
  0 \subseteq \tau_n \subseteq \tau_{n-1} \subseteq \dotsm \subseteq \tau_1:=(\sigma|_{P_n})^{\vee} 
\]
such that $\pi_k / \pi_{k+1} = (\Phi^+)^{k-1}\Psi^+(\pi_1^{(k)})$ and $\tau_k / \tau_{k+1} = (\Phi^+)^{k-1}\Psi^+(\tau_1^{(k)})$. On the one hand, $\Psi^+$ is normalized inflation and   
$\Phi^+$ is normalized compactly supported induction. On the other hand, $\Psi^-$ is the normalized Jacquet functor and $\Phi^-$ is  the normalized $\psi$-twisted Jacquet functor. For the rigorous definition of the four functors $\Phi^+$, $\Psi^+$, $\Phi^-$, and $\Psi^-$, the reader should consult \cite[\S 3.2]{BeZe}. $\pi_1^{(k)}$ is so-called {\it Bernstein-Zelevinsky $k^{th}$-derivatives}, and for our purpose it is convenient to introduce the shifted derivatives $\pi_1^{[k]}:=\pi_1^{(k)}\otimes\nu^{1/2}$. We conclude from \cite[Proposition 1.4]{JPSS3} that
\[
  {\rm Hom}_{P_n}((\Phi^+)^{i-1}\Psi^+(\pi_1^{(i)}), (\Phi^+)^{j-1}\Psi^+(\tau_1^{(j)}))=0
\]
except when $i=j$. Hence if ${\rm Hom}_{P_n}(\pi \otimes \sigma,\mathbb{C})$ is non-zero, it is clear from \cite[Proposition 3.2]{BeZe} that the space must 
induce a non-zero space
\[
  {\rm Hom}_{P_n}((\Phi^+)^{k-1}\Psi^+(\pi_1^{(k)}), (\Phi^+)^{k-1}\Psi^+(\tau_1^{(k)})) \simeq 
   {\rm Hom}_{G_{n-k}}(\pi_1^{(k)},\tau_1^{(k)})
\]
for some $1 \leq k \leq n$. We put $\sigma_1:=\sigma|_{P_n}$. Exchanging the order of functors $\Psi^-$ and $\Phi^-$, and the duality \cite[Proposition 3.4]{BeZe} gives rise to 
\[
\tau_1^{(k)}=\Psi^-(\Phi^-)^{k-1}((\sigma|_{P_n})^{\vee} ) \simeq \nu^{-1} \otimes (\Psi^-(\Phi^-)^{k-1}(\sigma|_{P_n} ))^{\vee}
\]
from which we deduce the isomorphism;
\[
  {\rm Hom}_{G_{n-k}}(\pi_1^{(k)},\tau_1^{(k)}) \simeq {\rm Hom}_{G_{n-k}}(\pi_1^{(k)},\nu^{-1} \otimes (\sigma_1^{(k)})^{\vee})
  \simeq  {\rm Hom}_{G_{n-k}}(\pi_1^{[k]} \otimes \sigma_1^{[k]} ,{\bf 1}_{G_{n-k}}).
\]
Without loss of generality, we may assume that $\pi_1^{(h)} \neq 0$, $\sigma_1^{(h)} \neq 0$ and $\pi_1^{(k)}=\sigma_1^{(k)}=0$ for all $k > h$. The Bernstein's criterion in \cite[\S 7.4]{Bern} states that $\pi_1^{[h]}$ and $\sigma_1^{[h]}$ are irreducible and unitary, and central characters of the irreducible subquotients of $\pi_1^{[k]}$ and $\sigma_1^{[k]}$ have real parts greater than zero for all $0 < k < h$. In this way, we find that
\[
   {\rm dim}_{\mathbb{C}} {\rm Hom}_{P_n}(\pi \otimes \sigma,{\bf 1}_{P_n}) \leq 
   {\rm dim}_{\mathbb{C}}  {\rm Hom}_{G_{n-h}} (\pi_1^{[h]} \otimes \sigma_1^{[h]} ,{\bf 1}_{G_{n-h}}).
\]
Now our proof is completed by induction on the rank $n-h$ of $G_{n-h}$.

\par
To prove the uniqueness of the bilinear form, we substitute ${\rm Hom}_{G_n}(\pi \otimes \sigma,{\bf 1}_{G_n})$ by ${\rm Hom}_{G_n}(\pi \otimes \pi^{\vee},{\bf 1}_{G_n})$ as in Theorem \ref{RS-distinction}. The injection
$\mathrm{Hom}_{G_n}(\pi \otimes \pi^{\vee},{\bf 1}_{G_n}) \hookrightarrow \mathrm{Hom}_{P_n}(\pi \otimes\pi^{\vee},{\bf 1}_{P_n}) $
is interpreted as the isomorphism in view of
$
  0 \neq \mathrm{Hom}_{G_n}(\pi \otimes \sigma, {\bf 1}_{G_n})  \simeq \mathrm{Hom}_{G_n}(\pi \otimes \pi^{\vee},{\bf 1}_{G_n}) \simeq \mathrm{Hom}_{P_n}(\pi \otimes \pi^{\vee},{\bf 1}_{P_n}). $
\end{proof}

We do not claim the originality of the second assertion in Proposition \ref{RS-multiplicityone} as it is a special case of Bernstein's Theorem \cite[Theorem A]{Bern}.

%it does not require that both $\pi$ and $\pi^{\vee}$ are unitary as described in Bernstein \cite[Theorem A]{Bern}.

\section{Asai $L$-factors}
\label{Asai}
We overview the theory of Asai $L$-factors in the appendix to \cite{Fil93}. Let $E$ be a quadratic extension of $F$. Thanks to \cite[Lemma 4.2.]{AM}, we take $\psi_E$ to be a non-trivial unramifed additive character of $E$ that is trivial on $F$.
Let $\pi \in \mathcal{A}_E(n)$ be a generic representation with an associated Whittaker model $\mathcal{W}(\pi,\psi_E)$. For each Whittaker function $W_{\pi} \in \mathcal{W}(\pi,\psi_E)$
and each Schwartz-Bruhat function $\Phi \in S(F^n)$, we define local Flicker integral by
\[
 I(s,W_{\pi},\Phi)=\int_{N_n \backslash {\rm GL}_n(F)} W_{\pi}(g)\Phi(e_ng)|\mathrm{det}(g)|^s dg
\]
which is absolutely convergent when the real part of $s$ is sufficiently large enough. Each $I(s,W_{\pi},\Phi)$ is a rational function of $q^{-s}$ and hence extends meromorphically to all of $\mathbb{C}$. These integrals $I(s,W_{\pi},\Phi)$ span a fractional ideal $\mathcal{I}(\pi,As)$ of $\mathbb{C}[q^{\pm s}]$ generated by a normalized generator of the form $P(q^{-s})^{-1}$ where the polynomial $P(X) \in \mathbb{C}[X]$ satisfies $P(0)=1$. The local Asai $L$-function attached to $\pi$ is defined by such a unique normalized generator;
  \[
   L(s,\pi, As)=\frac{1}{P(q^{-s})}.
  \]
   In particular, for a spherical representation $\pi$, the local $L$-function $L(s,\pi, As)$ is equal to the formal local $L$-function $L(s,\pi_{ur},As)$.
\begin{proposition}\cite[Theorem 4.26]{Ma09}
Let $\{ \alpha_i \}_{i=1}^r$ denote the Langlands parameter of $\pi$. Then we have
\[
L(s,\pi_{ur},As)=\prod_{k=1}^r (1-\alpha_k(\varpi_F)q_F^{-s})^{-1} \prod_{1 \leq i < j \leq r} (1-\alpha_i(\varpi_E)\alpha_j(\varpi_E)q_E^{-s})^{-1}
%\begin{split}
%&\\
%  &L(s,\pi_{ur},As)\\
%  &=
%  \begin{cases}
% \displaystyle \prod_{k=1}^r (1-\alpha_k(\varpi_F)q_F^{-s})^{-1} \prod_{1 \leq i < j \leq r} (1-\alpha_i(\varpi_F)\alpha_j(\varpi_F)q_F^{-2s})^{-1} & \text{if $E \slash F$ is unramified,}  \\
%  \displaystyle \prod_{k=1}^r (1-\alpha_k(\varpi^2_E)q_F^{-s})^{-1} \prod_{1 \leq i < j \leq r} (1-\alpha_i(\varpi_E)\alpha_j(\varpi_E)q_F^{-s})^{-1} & \text{if $E \slash F$ is ramified.}  \\
%   \end{cases}
%\end{split}
\]
\end{proposition}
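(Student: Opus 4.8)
The plan is to reduce the computation to rank one via multiplicativity of the Asai factor. Since $\pi \in \mathcal{A}_E(n)$ is ramified generic, $\pi_{ur}$ is by construction the unramified standard module $\mathrm{Ind}_{B_r}^{{\rm GL}_r(E)}(\alpha_1 \otimes \cdots \otimes \alpha_r)$ over $E$, with the $\alpha_i$ arranged so that $\mathrm{Re}(\alpha_i) \ge \mathrm{Re}(\alpha_{i+1})$; this representation is generic, so $L(s,\pi_{ur},As)$ is defined by the Flicker integral attached to its normalized spherical Whittaker function. First I would record the multiplicativity identity: writing $\gamma$ for the nontrivial element of ${\rm Gal}(E/F)$ and $\tau^{\gamma}$ for the corresponding Galois conjugate, one has for $\tau_1 \in \mathcal{A}_E(n_1)$ and $\tau_2 \in \mathcal{A}_E(n_2)$
\[
 L(s, \tau_1 \times \tau_2, As) = L(s, \tau_1, As)\, L(s, \tau_2, As)\, L(s, \tau_1 \times \tau_2^{\gamma}),
\]
where $L(s, \tau_1 \times \tau_2^{\gamma})$ is the Rankin--Selberg $L$-factor over $E$. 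On the Galois side this mirrors the splitting $\mathrm{As}(\phi_1 \oplus \phi_2) = \mathrm{As}(\phi_1) \oplus \mathrm{As}(\phi_2) \oplus \mathrm{Ind}_{W_E}^{W_F}(\phi_1 \otimes \phi_2^{\gamma})$ of Asai (multiplicative-induction) representations combined with inductivity of $L$-factors, and on the automorphic side it is exactly the content of \cite[Theorem 4.26]{Ma09} (proved there through Flicker's integral, with a cross-check against the Langlands--Shahidi $L$-factor for quasi-split unitary groups). Iterating over the $r$ inducing characters then yields
\[
 L(s, \pi_{ur}, As) = \prod_{i=1}^r L(s, \alpha_i, As) \prod_{1 \le i < j \le r} L(s, \alpha_i \times \alpha_j^{\gamma}).
\]

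Next I would evaluate the rank-one factors appearing on the right. For a character $\chi$ of $E^{\times}$, the rank-one Flicker integral $I(s,W_{\chi},\Phi)$ is literally the Tate integral $\int_{F^{\times}} \chi(t)\Phi(t)|t|_F^{s}\,d^{\times}t$ over $F$, so $L(s,\chi,As) = L(s,\chi|_{F^{\times}})$; applying this to the unramified character $\alpha_i$ gives $L(s,\alpha_i,As) = (1 - \alpha_i(\varpi_F)q_F^{-s})^{-1}$. For the mixed terms, $L(s,\alpha_i \times \alpha_j^{\gamma})$ is the ${\rm GL}_1 \times {\rm GL}_1$ Rankin--Selberg factor over $E$, namely $(1 - \alpha_i(\varpi_E)\,\alpha_j^{\gamma}(\varpi_E)\,q_E^{-s})^{-1}$; since $\alpha_j$ is unramified and $\gamma$ sends a uniformizer of $E$ to a uniformizer, $\alpha_j^{\gamma}(\varpi_E) = \alpha_j(\varpi_E)$, so this equals $(1 - \alpha_i(\varpi_E)\alpha_j(\varpi_E)q_E^{-s})^{-1}$. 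Substituting both evaluations into the displayed product gives precisely the claimed formula.

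The hard part is the multiplicativity identity, and specifically pinning down that the mixed term is the Rankin--Selberg $L$-factor of $\tau_1$ against the Galois twist $\tau_2^{\gamma}$ rather than against $\tau_2$ itself; this is the substance of \cite{Ma09} and uses more than just the unramified case needed here. A self-contained alternative that avoids multiplicativity would be to compute $I(s, W^{\circ}_{\pi_{ur}}, {\bf 1}_{\mathcal{O}^r})$ directly, inserting the Casselman--Shalika formula for the spherical Whittaker function of $\pi_{ur}$ over $E$ and integrating over the $F$-rational diagonal torus; in that route the obstacle shifts to the combinatorial evaluation of the resulting sum of Hall--Littlewood/Schur-type expressions, which has to be carried out separately according to whether $E/F$ is inert or ramified.
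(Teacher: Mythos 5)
The paper gives no internal proof of this Proposition: it is quoted wholesale from \cite[Theorem 4.26]{Ma09}, so the only comparison available is with the literature it points to. Judged on its own terms, the endpoints of your computation are correct: for $n=1$ the Flicker integral is the Tate integral of $\chi|_{F^\times}$, so $L(s,\chi,As)=(1-\chi(\varpi_F)q_F^{-s})^{-1}$ for unramified $\chi$; the mixed ${\rm GL}_1\times{\rm GL}_1$ factor over $E$ is $(1-\alpha_i(\varpi_E)\alpha_j^{\gamma}(\varpi_E)q_E^{-s})^{-1}$; and $\alpha_j^{\gamma}(\varpi_E)=\alpha_j(\varpi_E)$ since unramified characters only see the valuation, which $\gamma$ preserves. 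These assembled give exactly the displayed product, in both the inert and ramified cases.

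The genuine gap is the multiplicativity identity on which everything else rests. Exact equality (not merely divisibility) of the integral-defined Asai factor under parabolic induction is a substantive theorem about the fractional ideal $\mathcal{I}(\pi,As)$; it does not follow formally from the decomposition $\mathrm{As}(\phi_1\oplus\phi_2)=\mathrm{As}(\phi_1)\oplus\mathrm{As}(\phi_2)\oplus\mathrm{Ind}_{W_E}^{W_F}(\phi_1\otimes\phi_2^{\gamma})$, which lives on the parameter side. Moreover, attributing that identity to \cite[Theorem 4.26]{Ma09} makes your argument circular in this context: that theorem is precisely the statement being established here (it is what the paper cites for the Proposition), not an independent inductivity result you may feed into it. In the unramified setting at hand, the non-circular route is the one you relegate to an ``alternative'': compute $I(s,W^{\circ}_{\pi_{ur}},{\bf 1}_{\mathcal{O}^r})$ directly from the spherical Whittaker function --- this is \cite[Proposition 3]{Fil88} for $E/F$ unramified and \cite[Proposition 9.5]{AKMSS} for $E/F$ ramified, exactly the computations this paper itself invokes in the proof of Theorem \ref{test-Asai} --- and then supply the remaining half of the argument, namely that every integral $I(s,W,\Phi)$ for $\pi_{ur}$ lies in $\mathbb{C}[q^{\pm s}]$ times that product, so that the product is the normalized generator of $\mathcal{I}(\pi_{ur},As)$ and not just one element of it. That gcd step is the content of Matringe's theorem that neither of your two routes can bypass, and it is the part your proposal leaves unaddressed.
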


\subsection{The Flicker-Rallis period}

We are now going to produce an essential Whittaker function and the characteristic function so that
the resulting Flicker integral accomplishes the formal Asai $L$-factors.

\begin{theorem}
\label{test-Asai}
Let $\pi \in \mathcal{A}_E(n)$ be a generic representation. We set $c=c(\pi)$. Then we have
\[
L(s,\pi_{ur},As)=I(s,W^{\circ}_{\pi},\Phi_c).
\]
\end{theorem}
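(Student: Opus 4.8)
The plan is to mimic the computation carried out for the Rankin-Selberg integral in Theorem~\ref{RSNewVector}, replacing the pair $(W^{\circ}_\pi, W^{\circ}_\sigma)$ by the single essential Whittaker function $W^{\circ}_\pi \in \mathcal{W}(\pi,\psi_E)$ and recording that the integration over the mirabolic is now taken over $F$-points. First I would apply the Iwasawa decomposition ${\rm GL}_n(F) = N_n(F) A_n(F) K_n$, together with the observation from \cite[Lemma 2.6]{MY} that $g \mapsto \Phi_c(e_n g)$ is, up to a positive constant, the characteristic function of $P_n(F) K_1(\mathfrak{p}^c)$. Since $W^{\circ}_\pi$ is right-invariant under $K_1(\mathfrak{p}^c) \supseteq K_1(\mathfrak{p}^c) \cap {\rm GL}_n(F)$, the $K_n$-integral collapses and, after the central variable is absorbed into the Tate factor over $1+\mathfrak{p}^c$, the integral $I(s,W^{\circ}_\pi,\Phi_c)$ reduces to
\[
 \int_{N_{n-1}(F) \backslash {\rm GL}_{n-1}(F)} W^{\circ}_{\pi} \begin{pmatrix} g & \\ & 1 \end{pmatrix} |\mathrm{det}(g)|^{s-1}\, dg,
\]
exactly as in the first display of the proof of Theorem~\ref{RSNewVector}, except that $g$ now ranges over the $F$-group.

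Next I would feed in the Matringe--Miyauchi formula (Theorem~\ref{NewV}): writing $g = a k$ with $a = \mathrm{diag}(a_1,\dots,a_{n-1}) \in A_{n-1}(F)$ and truncating to $a' = \mathrm{diag}(a_1,\dots,a_p)$ where $p$ is the rank of $\pi_{ur}$, the essential Whittaker function becomes $W^{\circ}_{\pi_{ur}}(a')$ times the explicit powers of $\nu$ and the indicator functions ${\bf 1}_{\mathcal{O}}(a_p)$, $\prod_{p<k<n}{\bf 1}_{\mathcal{O}^\times}(a_k)$. The indicators on the middle coordinates kill the corresponding integrations (each contributes volume $1$ by our normalization), the modulus characters $\delta_{B_{n-1}}^{-1}$ and $\delta_{B_p}^{-1}$ get matched up via a further Iwasawa step on ${\rm GL}_p$, and the accumulated powers of $\nu(a')$ telescope. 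One is left with
\[
 I(s,W^{\circ}_{\pi},\Phi_c) = \int_{A_p(F)} W^{\circ}_{\pi_{ur}}(a')\, \delta_{B_p}^{-1}(a')\, {\bf 1}_{\mathcal{O}}(a_p)\, |\mathrm{det}(a')|^{s-\frac{r-p}{2}}\, da',
\]
which is precisely the Flicker integral $I(s,W^{\circ}_{\pi_{ur}},{\bf 1}_{\mathcal{O}^p})$ for the unramified standard module (or the no-$\Phi$ variant when the extra coordinate is absent). The unramified computation of \cite{Ma09} — the analogue of the Shintani/Casselman--Shalika evaluation used in \cite{JaSh81} for Rankin-Selberg — then identifies this with $L(s,\pi_{ur},As)$, finishing the argument.

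The step I expect to be the main obstacle is the bookkeeping of the various $\delta_B$ and $\nu$ exponents across the successive Iwasawa decompositions — in particular checking that the shift $\frac{n-r}{2}$ coming from Theorem~\ref{NewV}, the modulus character of $P_n(F)$ versus $P_n(E)$, and the normalization of the $F$-Haar measures all combine to leave exactly the exponent $s - \frac{r-p}{2}$ that the unramified Asai formula expects. There is also a small subtlety that, since $\pi$ lives over $E$ but the integral is over ${\rm GL}_n(F)$, the truncated torus $a'$ sits in $A_p(F)$ rather than $A_p(E)$, so I would want to confirm that the Matringe--Miyauchi formula restricted to $F$-torus elements still pairs correctly with the normalized spherical Whittaker function of $\pi_{ur}$ evaluated at $F$-points; this is implicit in \cite{AM} but deserves an explicit remark. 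Once the exponent and measure normalizations are pinned down, the rest is a direct transcription of the Rankin-Selberg argument, so I would keep the write-up short and refer to the proof of Theorem~\ref{RSNewVector} and to \cite[Theorem 4.26]{Ma09} for the details.
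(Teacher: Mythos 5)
Your skeleton is the paper's: use \cite[Lemma 2.6]{MY} and the partial Iwasawa decomposition to collapse $I(s,W^{\circ}_{\pi},\Phi_c)$ to the mirabolic integral $\int_{N_n\backslash P_n}W^{\circ}_{\pi}(p)|\mathrm{det}(p)|^{s-1}\,dp$, then feed in Theorem \ref{NewV} along the $F$-torus (this is exactly the procedure of \cite[Theorem 6.1]{AM} that the paper invokes), and finally quote the spherical Asai computation. However, two details you carried over from the Rankin--Selberg template do not belong here. There is only one representation in the Asai setting, so there is no second rank, no exponent $s-\tfrac{r-p}{2}$, and no ``no-$\Phi$ variant'': the reduction always lands on $\int_{A_r}W^{\circ}_{\pi_{ur}}(a)\,\delta_{B_r}^{-1}(a)\,{\bf 1}_{\mathcal{O}}(a_r)\,|\mathrm{det}(a)|^{s}\,da=I(s,W^{\circ}_{\pi_{ur}},{\bf 1}_{\mathcal{O}^r})$, with exponent exactly $s$ and with the Schwartz function present. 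Second, the closing step is not the Shintani/Jacquet--Shalika unramified Rankin--Selberg evaluation of \cite{JaSh81}, and \cite[Theorem 4.26]{Ma09} only supplies the expression of the formal Asai factor in terms of Langlands parameters, not the value of the spherical integral; what is actually needed is the unramified Asai integral computation, which splits according to whether $E/F$ is unramified or ramified --- the paper cites \cite[Proposition 3]{Fil88} in the former case and \cite[Proposition 9.5]{AKMSS} in the latter. Your write-up does not register this dichotomy, and it is the one genuinely Asai-specific input of the proof.

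The subtlety you flag about evaluating $W^{\circ}_{\pi}$, a Whittaker function on ${\rm GL}_n(E)$, at $F$-torus points is real, and it is also where your exponent confusion resolves itself: for $x\in F^{\times}$ one has $|x|_E=|x|_F^{2}$, so the factor $\nu^{(n-r)/2}(a')$ from Theorem \ref{NewV} (with $\nu$ the $E$-norm character) becomes $|\mathrm{det}(a')|_F^{\,n-r}$, and together with $\delta_{B_{n-1}}^{-1}$ versus $\delta_{B_r}^{-1}$ and the $|\mathrm{det}(p)|^{s-1}$ already present, the total exponent telescopes to $s$. This bookkeeping is precisely what the proof of \cite[Theorem 6.1]{AM} carries out and what the paper refers to. So the route you propose is the paper's route and is viable, but the intermediate formula and the final citation must be corrected as above before the argument closes.
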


\begin{proof} 
Having in hand \cite[Lemma 2.6]{MY} that $g \mapsto \Phi_c(e_ng)$ is the characteristic function on $P_nK_1(\mathfrak{p}^c)$, the partial Iwasawa decomposition $G_n=N_nP_nK_n$ implies that
\begin{align*}
I(s,W^{\circ}_{\pi},\Phi_c)&=\int_{K_1(\mathfrak{p}^c)}\int_{N_n \backslash P_n} W^{\circ}_{\pi}(pk)  |\mathrm{det}(p)|^{s-1} \int_{F^{\times}} \Phi_c(e_nzpk) \omega_{\pi}(z)|z|^{ns} d^{\times}zdpdk\\
&=\int_{N_n \backslash P_n} W^{\circ}_{\pi}(p)  |\mathrm{det}(p)|^{s-1} \int_{1+\mathfrak{p}^c}
 \Phi_c(e_nz) \omega_{\pi}(z)|z|^{ns} d^{\times}zdp\\
 &=\int_{N_n \backslash P_n} W^{\circ}_{\pi}(p)  |\mathrm{det}(p)|^{s-1}dp.
\end{align*}
Repeating the procedure of the proof of \cite[Theorem 6.1]{AM}, the integral becomes
\[
I(s,W^{\circ}_{\pi},\Phi_c)=\int_{A_r} W^{\circ}_{\pi_{ur}}(a)\delta_{B_{r}}^{-1}(a)  {\bf 1}_{\mathcal{O}}(a_r) |\mathrm{det}(a)|^s da
=I(s,W^{\circ}_{\pi_{ur}},{\bf 1}_{\mathcal{O}^r}).
\]
Our expected result is an immediate consequence of the computation by \cite[Proposition 3]{Fil88} for $E \slash F$ the unramified extension and by  \cite[Proposition 9.5]{AKMSS} for $E \slash F$ the ramified extension.
\end{proof}

In general, $L(s,\pi_{ur}, As)$ is not the same as $L(s,\pi, As)$.

\begin{corollary} Let $\pi \in \mathcal{A}_E(n)$ be a generic representation. Then we have
\[ 
L(s,\pi_{ur}, As)=P(q^{-s})L(s,\pi, As)
\]
for a polynomial $P(X) \in \mathbb{C}[X]$ satisfying $P(0)=1$.
\end{corollary}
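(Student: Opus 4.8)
The plan is to deduce the corollary directly from Theorem \ref{test-Asai} in exactly the same way Corollary \ref{RS-divisibility} was deduced from Theorem \ref{RSNewVector}. First I would recall that the local Asai integrals $I(s,W_{\pi},\Phi)$ span the fractional ideal $\mathcal{I}(\pi,As)$ of $\mathbb{C}[q^{\pm s}]$, whose normalized generator is, by definition, $L(s,\pi,As)=P_1(q^{-s})^{-1}$ with $P_1(X)\in\mathbb{C}[X]$ and $P_1(0)=1$. Theorem \ref{test-Asai} exhibits the specific value $I(s,W^{\circ}_{\pi},\Phi_c)=L(s,\pi_{ur},As)$, so $L(s,\pi_{ur},As)$ is a bona fide element of $\mathcal{I}(\pi,As)$.

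Next I would use the structure of the fractional ideal: since every element of $\mathcal{I}(\pi,As)$ is a $\mathbb{C}[q^{\pm s}]$-multiple of its generator $L(s,\pi,As)$, membership of $L(s,\pi_{ur},As)$ forces $L(s,\pi_{ur},As)=Q(q^{s},q^{-s})L(s,\pi,As)$ for some Laurent polynomial $Q$. To see that $Q$ is in fact an ordinary polynomial $P(X)$ in $X=q^{-s}$ with $P(0)=1$, I would invoke Proposition \ref{G-J} (Godement--Jacquet) for the two factors of the Asai $L$-function, or more directly the explicit product formula in \cite[Theorem 4.26]{Ma09}: both $L(s,\pi_{ur},As)$ and $L(s,\pi,As)$ are of the form (polynomial in $q^{-s}$)$^{-1}$ with constant term $1$, so their ratio $L(s,\pi_{ur},As)/L(s,\pi,As)=P(q^{-s})$ with $P(0)=1$; being simultaneously a Laurent polynomial with no negative powers and value $1$ at $X=0$, it is a genuine polynomial. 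Alternatively one cites the known divisibility $L(s,\pi,As)^{-1}\mid L(s,\pi_{ur},As)^{-1}$ in $\mathbb{C}[q^{\pm s}]$, which is itself immediate from the ideal membership just established.

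Concretely the write-up is one short paragraph: ``By Theorem \ref{test-Asai}, $L(s,\pi_{ur},As)=I(s,W^{\circ}_{\pi},\Phi_c)\in\mathcal{I}(\pi,As)$. Since $\mathcal{I}(\pi,As)=\mathbb{C}[q^{\pm s}]\,L(s,\pi,As)$, we can write $L(s,\pi_{ur},As)=f(q^{-s})L(s,\pi,As)$ with $f\in\mathbb{C}[q^{\pm s}]$. Both $L$-factors have the form $P_i(q^{-s})^{-1}$ with $P_i(0)=1$, so $f(q^{-s})=P_2(q^{-s})/P_1(q^{-s})$ has a finite limit $1$ as $q^{-s}\to0$; hence $f$ has no pole at $q^{-s}=0$, i.e. $f=P$ for a polynomial $P$ with $P(0)=1$.'' There is essentially no obstacle here: the only point requiring a sentence of care is ruling out negative powers of $q^{-s}$ in the ratio and pinning down the normalization $P(0)=1$, which follows from the explicit shapes of the Asai $L$-factors recalled just above the corollary. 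This mirrors verbatim the proof of Corollary \ref{RS-divisibility}, and I would phrase it telegraphically for that reason.
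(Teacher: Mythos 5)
Your proposal is correct and follows exactly the paper's route: the paper likewise deduces the corollary from Theorem \ref{test-Asai} by noting that $L(s,\pi_{ur},As)=I(s,W^{\circ}_{\pi},\Phi_c)$ lies in the fractional ideal $\mathcal{I}(\pi,As)$, whose normalized generator is $L(s,\pi,As)$. The only difference is that you spell out the (routine) verification that the resulting Laurent-polynomial factor is a genuine polynomial with constant term $1$, which the paper leaves implicit.
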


\begin{proof}
The result immediately follows from Theorem \ref{test-Asai} that $L(s,\pi_{ur}, As)$ is an element of $\mathcal{I}(\pi,As)$.
\end{proof}

We now wish to begin summarizing the main result on the $P_n(F)$-invariant form and the Flicker-Rallis integral periods from \cite[Theorem 1.1, Theorem 6.2]{AM}.

\begin{theorem}[Anandavardhanan and Matringe] 
\label{Asai-FJ}
Let $\pi \in \mathcal{A}_E(n)$ be a unitary generic representation. 
\label{unitary-flicker}
\[
 \int_{N_n \backslash P_n} W^{\circ}_{\pi}(p) dp
 =
  \begin{cases}
  L(1,\pi_{ur},As )& \text{if $\pi$ is ramified,} \\
  \dfrac{L(1,\pi, As )}{L(n,\omega_{\pi}|_{F^{\times}})}& \text{if $\pi$ is unramified.} 
  \end{cases}
\]
\end{theorem}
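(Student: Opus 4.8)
\textbf{Proof proposal for Theorem \ref{Asai-FJ} (the Flicker-Rallis period).}

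The plan is to reduce the period $\int_{N_n \backslash P_n} W^{\circ}_{\pi}(p)\,dp$ to the special value at $s=1$ of the Flicker integral $I(s,W^{\circ}_{\pi},\Phi_c)$ that was already computed in Theorem \ref{test-Asai}, and then to invoke the holomorphy of the resulting $L$-factor at $s=1$. First I would record that in the proof of Theorem \ref{test-Asai} we in fact showed $I(s,W^{\circ}_{\pi},\Phi_c)=\int_{N_n \backslash P_n} W^{\circ}_{\pi}(p)|\mathrm{det}(p)|^{s-1}\,dp$, so the period in question is the formal substitution $s=1$ in that integral. The key input is convergence: one needs that $\int_{N_n\backslash P_n} W^{\circ}_{\pi}(p)|\mathrm{det}(p)^{s-1}|\,dp$ converges absolutely in a neighbourhood of $s=1$, which is where the unitarity hypothesis enters. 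This is exactly the Flicker-Rallis analogue of the convergence statement used for Rankin-Selberg periods via Bernstein's theorem; for $\pi$ unitary the mirabolic integral of a Whittaker function against $|\det|^{s-1}$ converges for $\mathrm{Re}(s)\ge 1$, using the Bernstein-Zelevinsky filtration and the bounds on the exponents of the derivatives $\pi_1^{[k]}$ exactly as in the proof of Proposition \ref{RS-multiplicityone} (or by citing \cite[p.306]{Fil88} and \cite[Remark 2]{AM} in the unitary case). Combined with the fact that $I(s,W^{\circ}_{\pi},\Phi_c)=L(s,\pi_{ur},As)$ is holomorphic at $s=1$ — in the ramified case $L(s,\pi_{ur},As)$ has no pole at $s=1$ since its poles come from the unramified part and are controlled, and in the unramified case $L(s,\pi,As)/L(ns,\omega_{\pi}|_{F^\times})$ is the normalized generator which is likewise regular at $s=1$ for unitary $\pi$ — we may set $s=1$ directly.

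So the steps are: (1) quote from the proof of Theorem \ref{test-Asai} the identity $I(s,W^{\circ}_{\pi},\Phi_c)=\int_{N_n\backslash P_n}W^{\circ}_{\pi}(p)|\mathrm{det}(p)|^{s-1}\,dp$; (2) establish absolute convergence of this mirabolic integral near $s=1$ using unitarity of $\pi$, via the Bernstein-Zelevinsky filtration of $\pi|_{P_n}$ as in Proposition \ref{RS-multiplicityone} (the exponents of the top derivative are purely imaginary by Bernstein's criterion \cite[\S 7.4]{Bern}, and those of the lower shifted derivatives have positive real part, forcing convergence for $\mathrm{Re}(s)\ge 1$); (3) invoke Theorem \ref{test-Asai} to identify $I(s,W^{\circ}_{\pi},\Phi_c)$ with $L(s,\pi_{ur},As)$ when $\pi$ is ramified and with $L(s,\pi,As)$ when $\pi$ is unramified — in the latter case tracing through the unramified computation of \cite[Proposition 3]{Fil88} shows the spherical Flicker integral equals $L(s,\pi,As)/L(ns,\omega_{\pi}|_{F^\times})$ before normalization, whence the denominator $L(n,\omega_{\pi}|_{F^\times})$ appears upon setting $s=1$; (4) specialize at $s=1$, legitimate by the holomorphy from step (2) together with the regularity of the $L$-factor.

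The main obstacle I expect is step (2): pinning down the convergence of $\int_{N_n\backslash P_n}W^{\circ}_{\pi}(p)|\mathrm{det}(p)|^{s-1}\,dp$ at $s=1$ precisely under the unitarity hypothesis and no distinction assumption. Unlike the Rankin-Selberg case, here there is a single Whittaker function rather than a product of two, so one cannot directly cite \cite[Proposition 3.17]{JaSh81}; instead one argues through the $P_n$-module filtration, bounding $|W^{\circ}_{\pi}(p)|$ on the torus by the gauge attached to the exponents of the derivatives. The subtlety is that for $\mathrm{Re}(s)=1$ the integral is only conditionally controlled by the top derivative's contribution — being a unitary character twist it sits exactly on the boundary of absolute convergence — so one has to argue the boundary case carefully, or alternatively appeal to the known holomorphy of $L(s,\pi_{ur},As)$ at $s=1$ (which is a pole-free region for unitary $\pi$ by the classification of Asai $L$-factor poles) to deduce that the analytic continuation of $I(s,W^{\circ}_{\pi},\Phi_c)$ is regular there and that the defining integral, being dominated by an absolutely convergent Tate-type integral, extends continuously to $s=1$. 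Once convergence is secured, the rest is bookkeeping with the two base-case computations already cited in Theorem \ref{test-Asai}.
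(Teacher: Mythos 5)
Your proposal is essentially correct, but note that the paper itself gives no proof of this statement: it is quoted verbatim from Anandavardhanan--Matringe \cite{AM}*{Theorems 1.1, 6.2}, and the argument you reconstruct is precisely the one used there and, in parallel form, in the paper's own proofs of Theorems \ref{RSNewVector}, \ref{RS-priods}, \ref{Shalika-unitary} and \ref{Hm-unitary}: reduce the Flicker integral $I(s,W^{\circ}_{\pi},\Phi_c)$ to the mirabolic integral (splitting off the Tate factor $L(ns,\omega_{\pi}|_{F^{\times}})$ in the unramified case), evaluate via Theorem \ref{NewV} and the unramified computations of \cite{Fil88} and \cite{AKMSS}, and specialize at $s=1$ after securing convergence from unitarity. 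The one imprecision worth flagging is your step (2): the convergence argument of the paper's Rankin--Selberg proposition rests on \cite{JaSh81}*{Proposition 3.17}, which concerns a product of \emph{two} Whittaker functions, and Proposition \ref{RS-multiplicityone} is a multiplicity-one statement, not a convergence bound; for a single Whittaker function the absolute convergence of $\int_{N_n\backslash P_n}W_{\pi}(p)\,dp$ for unitary generic $\pi$ is exactly the external input the paper cites to \cite{Fil88}*{p.~306} (see also \cite{AM}), which you do also cite, so this is a matter of attribution rather than a gap. Your phrasing in step (3) should also be tightened: the full spherical Flicker integral equals $L(s,\pi,As)$; it is the mirabolic integral obtained after removing the Tate factor that equals $L(s,\pi,As)/L(ns,\omega_{\pi}|_{F^{\times}})$, which is what produces the denominator at $s=1$.
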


\subsection{The Galois self-conjugate dual representation} 
A representation $\pi$ of ${\rm GL}_n(E)$ is called $G_n$-{\it distinguished} if $\mathrm{Hom}_{G_n}(\pi|_{G_n},{\bf 1}_{G_n}) \neq 0$. We denote by $\sigma_{E \slash F} : x \mapsto \overline{x}, x \in E$ the non-trivial associated Galois action. The conjugation $\sigma_{E \slash F}$ extends naturally to an automorphism of ${\rm GL}_n(E)$ which we also denote by $\sigma_{E \slash F}$. Then we denote by $\pi^{\sigma_{E \slash F}}$ the representation $\pi^{\sigma_{E \slash F}}(g)=\pi(\sigma_{E \slash F}(g))$ for $g \in {\rm GL}_n(E)$ and $\pi \in \mathcal{A}_E(n)$.
We say that a representation $\pi$ of ${\rm GL}_n(E)$ is the {\it Galois self-conjugate dual} representation if $\pi^{\sigma_{E \slash F}} \simeq \pi^{\vee}$. In \cite[Proposition 12]{Fli91}, Flicker proved that if $\pi \in \mathcal{A}_E(n)$ is $G_n$-distinguished, then it is Galois self-conjugate dual. 
A kind of converse can be established for discrete series representations by the work of Anandavardhanan and  Rajan \cite[Theorem 4]{AR05}, and Kable \cite[Theorem 6]{Kal04}.
We refer the reader to \cite[Theorem 5.2]{Mat11} for the complete classification of $G_n$-distinguished generic representations, in terms of inducing discrete series representations.

\begin{theorem}[Anandavardhanan and Matringe] Let $\pi \in \mathcal{A}_E(n)$ be a generic representation which is distinguished with respect to $G_n$. Then we have
\[
 \Lambda^{FR}(W^{\circ}_{\pi}):
= \int_{N_n \backslash P_n} W^{\circ}_{\pi}(p) dp
  =
  \begin{cases}
  L(1,\pi_{ur},As )& \text{if $\pi$ is ramified,} \\
  \dfrac{L(1,\pi, As )}{L(n, {\bf 1}_{F^{\times}})}& \text{if $\pi$ is unramified.} 
  \end{cases}
\]
\end{theorem}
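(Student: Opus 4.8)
The plan is to deduce this theorem directly from the two ingredients already assembled in the excerpt: the test-vector computation in Theorem \ref{test-Asai} (which gives $I(s,W^{\circ}_\pi,\Phi_c)=L(s,\pi_{ur},As)$, or equivalently the mirabolic integral $\int_{N_n\backslash P_n}W^{\circ}_\pi(p)|\det(p)|^{s-1}dp$ computes $L(s,\pi_{ur},As)$), and the unitary Flicker--Rallis result Theorem \ref{Asai-FJ}. The only genuinely new content is to remove the unitarity hypothesis and replace it by the $G_n$-distinction hypothesis. First I would recall that, by Flicker's Proposition 12 and the discussion just preceding the statement, a $G_n$-distinguished generic $\pi$ is Galois self-conjugate dual, and by \cite[Proposition 11]{Fli91} together with \cite[Proposition 2.3]{Ma14} the space $\mathrm{Hom}_{P_n(F)}(\pi,{\bf 1}_{P_n(F)})$ is at most one-dimensional. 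Thus $\Lambda^{FR}$, defined a priori on the mirabolic by $W\mapsto\int_{N_n\backslash P_n}W(p)\,dp$, spans this Hom-space and (by Bernstein's extension principle, as in \cite[Remark 2]{AM} and \cite[Proposition 5.3]{Bern}) extends to the unique $H$-invariant functional; in particular the integral defining $\Lambda^{FR}(W^{\circ}_\pi)$ is convergent even without unitarity, which is the point that needs care.

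Next I would carry out the same unfolding as in the proof of Theorem \ref{test-Asai}: using \cite[Lemma 2.6]{MY} that $g\mapsto\Phi_c(e_ng)$ is the characteristic function of $P_nK_1(\mathfrak{p}^c)$ together with the partial Iwasawa decomposition $G_n=N_nP_nK_n$, one gets
\[
I(s,W^{\circ}_\pi,\Phi_c)=\int_{N_n\backslash P_n}W^{\circ}_\pi(p)\,|\det(p)|^{s-1}\,dp,
\]
and then the Matringe--Miyauchi formula (Theorem \ref{NewV}) together with the Iwasawa decomposition on $G_{n-1}$ reduces this to an unramified Flicker integral $I(s,W^{\circ}_{\pi_{ur}},{\bf 1}_{\mathcal{O}^r})$, which by \cite[Proposition 3]{Fil88} (unramified $E/F$) and \cite[Proposition 9.5]{AKMSS} (ramified $E/F$) equals $L(s,\pi_{ur},As)$. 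The upshot is that the mirabolic period integral with the parameter $s$ is holomorphic and equals $L(s,\pi_{ur},As)$ for all $s$; in particular it is holomorphic at $s=1$, so setting $s=1$ is legitimate and yields $\Lambda^{FR}(W^{\circ}_\pi)=L(1,\pi_{ur},As)$ in the ramified case.

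For the unramified case $c(\pi)=0$ one cannot simply set $s=1$ because the unramified Asai integral $I(s,W^{\circ}_{\pi_{ur}},{\bf 1}_{\mathcal{O}^r})$ carries the extra Tate factor coming from the central variable, which here becomes $L(ns,\omega_\pi|_{F^\times})$ evaluated along the diagonal; since $\pi$ is $G_n$-distinguished we have $\omega_\pi|_{F^\times}={\bf 1}_{F^\times}$ (the central character restricted to $F^\times$ is trivial, cf.\ the Flicker--Rallis setup), so the correcting factor is $L(n,{\bf 1}_{F^\times})$ and the ratio $I(s,W^\circ_\pi,\Phi_c)\cdot\Phi_c$-removal produces $L(1,\pi,As)/L(n,{\bf 1}_{F^\times})$ at $s=1$, exactly matching the unramified branch of Theorem \ref{Asai-FJ} with $\omega_\pi|_{F^\times}$ replaced by ${\bf 1}_{F^\times}$. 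Concretely I would just invoke Theorem \ref{Asai-FJ} for the unramified case — there $\pi$ unramified forces unitarity up to an unramified twist, and the distinction condition pins $\omega_\pi|_{F^\times}={\bf 1}_{F^\times}$ — so the value is $L(1,\pi,As)/L(n,{\bf 1}_{F^\times})$ as claimed.

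The main obstacle is the convergence/holomorphy issue in the ramified non-unitary case: a priori the integral $\int_{N_n\backslash P_n}W^{\circ}_\pi(p)\,dp$ need not converge for a general generic $\pi$, and one must justify that under the $G_n$-distinction hypothesis it does. The cleanest route is the one above — show the integral with parameter $|\det(p)|^{s-1}$ is actually a finite unramified computation equal to the polynomial-denominator rational function $L(s,\pi_{ur},As)$, hence visibly holomorphic at $s=1$ — rather than trying to prove convergence of the $s=1$ integral by estimating the essential Whittaker function directly. An alternative, more conceptual justification is to cite the at-most-one-dimensionality of $\mathrm{Hom}_{P_n(F)}(\pi,{\bf 1})$ and Bernstein's extension principle to conclude the $H$-invariant functional (and hence $\Lambda^{FR}$) is well-defined on all of $\mathcal{W}(\pi,\psi_E)$, as in \cite[Remark 2]{AM}; either way the numerical value then comes from the explicit unfolding. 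I would present the explicit-unfolding argument as the proof and remark on the Bernstein-principle interpretation.
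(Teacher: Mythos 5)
Your unfolding step is fine and it coincides with what the paper actually proves (Theorem \ref{test-Asai}); but be aware that for the statement in question the paper offers no proof at all — it is imported verbatim from Anandavardhanan--Matringe \cite{AM} — so what you are really doing is reconstructing their argument, and the reconstruction has a genuine analytic gap. Theorem \ref{test-Asai} gives the identity $\int_{N_n\backslash P_n}W^{\circ}_{\pi}(p)\,|\mathrm{det}(p)|^{s-1}\,dp=L(s,\pi_{ur},As)$ as an identity of \emph{convergent} integrals only for $\mathrm{Re}(s)\gg 0$. The quantity in the theorem is the integral at $s=1$, and to identify it with the value of the right-hand side at $s=1$ you need absolute convergence of the integral at $s=1$, not merely holomorphy of the rational function $L(s,\pi_{ur},As)$ there: a geometric series $\sum_k a^kq^{-ks}=(1-aq^{-s})^{-1}$ whose continuation is regular at $s=1$ still diverges at $s=1$ when $|a|>q$. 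For a non-unitary distinguished $\pi$ this convergence is exactly the delicate point; it has to be extracted from the distinction hypothesis (via the structure of distinguished generic representations controlling the exponents of $\pi_{ur}$, i.e. the content of \cite{AM}), and neither of your two justifications supplies it — the ``visibly holomorphic at $s=1$'' argument proves nothing about the integral, and the one-dimensionality of ${\rm Hom}_{P_n}(\pi,{\bf 1}_{P_n})$ plus Bernstein's extension principle only produces an abstract invariant functional, not the convergence (or the regularized interpretation) of this specific integral, which is what the displayed equality asserts.

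The unramified branch is also unjustified as written. The assertion that ``$\pi$ unramified forces unitarity up to an unramified twist'' is false (an unramified principal series with large real exponents is neither unitary nor an unramified twist of a unitary representation, and it can be $G_n$-distinguished), so you cannot simply invoke Theorem \ref{Asai-FJ}, which is stated under the unitarity hypothesis. Under the bare distinction hypothesis you would again need convergence of the mirabolic integral at $s=1$ and regularity of $L(s,\pi,As)$ at $s=1$ (an analogue of Proposition \ref{Bernstein-s=1}) before separating the Tate factor $L(ns,\omega_{\pi}|_{F^{\times}})=L(ns,{\bf 1}_{F^{\times}})$ and evaluating. In short: the algebraic unfolding is correct and parallels the paper's Theorem \ref{test-Asai}, but the analytic input that makes the theorem nontrivial for non-unitary distinguished representations — the convergence of the period itself at $s=1$ — is missing, and that is precisely the part the paper outsources to \cite{AM}.
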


As illustrated in \S \ref{Intro} Introduction, the non-trivial linear functional $\Lambda^{FR}$ yields non-zero $P_n$-invariant forms on $\mathcal{W}(\pi,\psi_E)$, which in turn can be uniquely extended to $G_n$-invariant forms on  $\mathcal{W}(\pi,\psi_E)$. 

\begin{proposition} \citelist{\cite{AKT}*{Theorem 1.1} \cite{Ma14}*{Proposition 2.3}}
Let $\pi \in \mathcal{A}_E(n)$ be a unitary representation. Then 
\[
{\rm dim}_{\mathbb{C}} {\rm Hom}_{P_n}(\pi|_{P_n},{\bf 1}_{P_n}) \leq 1.
\]
The equality holds when $\pi$ is generic. In particular, if $\pi$ is $G_n$-distinguished, then $W_{\pi} \mapsto \Lambda^{FR}(W_{\pi})$ gives a unique non-trivial $G_n$-invariant bilinear form belonging to ${\rm Hom}_{G_n}(\pi|_{G_n},{\bf 1}_{G_n})$. 
\end{proposition}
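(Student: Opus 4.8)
The plan is to establish the inequality $\dim_{\mathbb C}\operatorname{Hom}_{P_n}(\pi|_{P_n},{\bf 1}_{P_n})\le 1$ by the Bernstein--Zelevinsky machinery, exactly mirroring the proof of Proposition \ref{RS-multiplicityone} but with the second factor replaced by the trivial character of $P_n$ (equivalently, with $\sigma=|\det|^{-?}$ collapsed so that only the geometric lemma filtration of $\pi|_{P_n}$ itself is used). First I would write the Bernstein--Zelevinsky filtration $0\subseteq\pi_n\subseteq\cdots\subseteq\pi_1=\pi|_{P_n}$ with successive quotients $\pi_k/\pi_{k+1}=(\Phi^+)^{k-1}\Psi^+(\pi_1^{(k)})$, and analyze $\operatorname{Hom}_{P_n}((\Phi^+)^{k-1}\Psi^+(\pi_1^{(k)}),{\bf 1}_{P_n})$. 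Using the adjunction formulas of \cite[\S 3]{BeZe} together with \cite[Proposition 1.4]{JPSS3}, only the bottom layer $k=n$ can contribute a copy of the trivial character of $P_n$ through $(\Phi^+)^{n-1}\Psi^+$, and that layer contributes exactly $\operatorname{Hom}_{G_0}(\pi_1^{(n)},{\bf 1})$, which is at most one-dimensional; a dévissage through the filtration then forces $\dim\operatorname{Hom}_{P_n}(\pi|_{P_n},{\bf 1}_{P_n})\le 1$. This is precisely the content of \cite[Theorem 1.1]{AKT} and \cite[Proposition 2.3]{Ma14}, so I would simply invoke those references for the upper bound rather than reprove it.

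Next I would address the equality when $\pi$ is generic. Here I would combine two facts: on the one hand, $\Lambda^{FR}$ as constructed in Theorem \ref{unitary-flicker} (or rather its convergence statement under the unitarity hypothesis, together with the computation $\Lambda^{FR}(W^{\circ}_{\pi})=L(1,\pi_{ur},As)$ or its unramified variant, which is nonzero since $L$-factors are nonvanishing) produces a nonzero element of $\operatorname{Hom}_{P_n}(\pi|_{P_n},{\bf 1}_{P_n})$; on the other hand, the upper bound just established shows this space is then exactly one-dimensional and $\Lambda^{FR}$ spans it. One should note that genericity is what guarantees $\pi_1^{(n)}\ne 0$ (the top derivative of a generic representation is the trivial representation of $G_0$), so the bottom layer actually does contribute, giving the lower bound $\dim\ge 1$.

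Finally, for the "in particular" clause I would invoke Bernstein's extension principle \cite[Proposition 5.3]{Bern}: when $\pi$ is $G_n$-distinguished, the restriction map $\operatorname{Hom}_{G_n}(\pi|_{G_n},{\bf 1}_{G_n})\to\operatorname{Hom}_{P_n}(\pi|_{P_n},{\bf 1}_{P_n})$ is injective, and since the source is nonzero by hypothesis while the target is one-dimensional by the first part, the map is an isomorphism; hence the unique (up to scalar) $P_n$-invariant functional $\Lambda^{FR}$ extends uniquely to a $G_n$-invariant one, and that $G_n$-invariant form is still represented by $W_{\pi}\mapsto\Lambda^{FR}(W_{\pi})$ on the Whittaker model. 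The main obstacle — which is really already handled in the cited literature — is the combinatorial bookkeeping in the Bernstein--Zelevinsky argument establishing $\dim\operatorname{Hom}_{P_n}(\pi|_{P_n},{\bf 1}_{P_n})\le 1$; everything after that is a short formal deduction, so in the write-up I would keep that part to a pointer to \cite{AKT} and \cite{Ma14} and spend the remaining lines on the genericity-gives-nonvanishing and Bernstein-extension steps.
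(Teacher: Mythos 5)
Your proposal is, in its load-bearing steps, the same as the paper's treatment: the paper offers no proof of this proposition, stating it as a quotation of \cite[Theorem 1.1]{AKT} and \cite[Proposition 2.3]{Ma14} (Ok's theorem), with the non-vanishing of $\Lambda^{FR}$ at the essential vector supplied by Theorem \ref{unitary-flicker} and the passage from $P_n$-invariance to $G_n$-invariance being exactly the inclusion-plus-dimension-count you describe (the Introduction phrases it this way, and the remark following the proposition notes that for the last clause unitarity can even be dropped via \cite[Proposition 11]{Fli91} together with Ok's theorem). Your deduction of the equality in the generic case from $\Lambda^{FR}(W^{\circ}_{\pi})=L(1,\pi_{ur},As)\neq 0$, and of the ``in particular'' clause from the containment ${\rm Hom}_{G_n}(\pi,{\bf 1}_{G_n})\subseteq {\rm Hom}_{P_n}(\pi,{\bf 1}_{P_n})$ with nonzero source and at most one-dimensional target, is correct; note that for this you do not really need Bernstein's extension principle, only the trivial fact that a $G_n$-invariant functional is in particular $P_n$-invariant.

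One caution about your sketch of the upper bound: in this proposition $\pi$ is a representation of ${\rm GL}_n(E)$ while $P_n$ is the mirabolic subgroup of $G_n={\rm GL}_n(F)$, so the Bernstein--Zelevinsky argument of Proposition \ref{RS-multiplicityone} does not transfer verbatim. The filtration you write down is a filtration by $P_n(E)$-modules; one must restrict each layer further to $P_n(F)$, and the resulting analysis (as carried out in \cite{AKT} and in Ok's theorem quoted in \cite{Ma14}) involves distinction-type conditions on the derivatives and uses the unitarity hypothesis in an essential way. Your claim that only the bottom layer can contribute, yielding the bound $\dim\leq 1$ with no appeal to unitarity, is therefore not correct as stated. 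Since you ultimately cite \cite{AKT} and \cite{Ma14} for this step---precisely as the paper does---this does not invalidate your argument, but the heuristic should not be presented as if it were a proof of the first assertion.
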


Once more, the unitarity assumption on the second statement can be dropped, appealing to Ok's Theorem \cite[Proposition 2.3]{Ma14} coupled with \cite[Proposition 11]{Fli91}.

\section{The Exterior Square $L$-factors}

\subsection{The Jacquet-Shalika period}
\label{JSperiod}

We give a short discussion on the theory of local exterior square $L$-functions due to Jacquet and Shalika \citelist{\cite{JaSh88} \cite{Jo20}*{\S 2.2,\,\S 2.4}}.
Let $\sigma_m$ be the permutation matrix given by
\[
 \sigma_{2n}=\begin{pmatrix} 1 & 2 & \dotsm & n & | & n+1 & n+2 & \dotsm & 2n \\ 
                                                1 & 3 & \dotsm & 2n-1 & | &  2 & 4 & \dotsm &2n \\ 
                                                \end{pmatrix}
\]
when $m=2n$ is even, and by
\[
 \sigma_{2n+1}=\begin{pmatrix} 1 & 2 & \dotsm & n & | & n+1 & n+2 & \dotsm & 2n & 2n+1 \\ 
                                                1 & 3 & \dotsm & 2n-1 & | &  2 & 4 & \dotsm &2n & 2n+1 \\ 
                                                \end{pmatrix}
\]
when $m=2n+1$ is odd. We let $\mathcal{M}_n$ be the $n \times n$ matrices, $\mathcal{N}_n$ the subspace of upper triangular matrices of $\mathcal{M}_n$. For a Whittaker functions $W_{\pi} \in \mathcal{W}(\pi,\psi)$ and in the even case $m=2n$ each Schwartz-Bruhat function $\Phi \in \mathcal{S}(F^n)$, we define local Jacquet-Shalika integrals
\begin{equation}
\label{JS-odd}
  J(s,W_{\pi}):=\int_{N_n \backslash G_n}  \int_{\mathcal{N}_n \backslash \mathcal{M}_n} W_{\pi} \left( \sigma_{2n+1} \begin{pmatrix} I_n &X& \\ &I_n& \\ &&1 \end{pmatrix} \begin{pmatrix} g && \\ &g& \\ &&1 \end{pmatrix} \right)\psi^{-1}({\rm Tr}(X)) |\mathrm{det}(g)|^{s-1}  dX dg
\end{equation}
in the odd case $m=2n+1$ and in the even case $m=2n$
\begin{equation}
\label{JS-even}
J(s,W_{\pi},\Phi):= \int_{N_n \backslash G_n} \int_{\mathcal{N}_n \backslash \mathcal{M}_n} W_{\pi} \left( \sigma_{2n} \begin{pmatrix} I_n & X \\ & I_n  \end{pmatrix} \begin{pmatrix} g & \\ & g \end{pmatrix} \right)  \psi^{-1}({\rm Tr}(X)) \Phi(e_ng) |\mathrm{det}(g)|^s dX dg
\end{equation}
all integrals being convergent for $\mathrm{Re}(s) \gg 0$. Let $\mathcal{J}(\pi,\wedge^2)$ denote the complex linear span of the local Jacquet-Shalika integrals $J(s,W_{\pi})$
  if $m=2n+1$ is odd and that of $J(s,W_{\pi},\Phi)$ if $m=2n$ is even. The space $\mathcal{J}(\pi,\wedge^2)$ is a $\mathbb{C}[q^{\pm s}]$-fractional ideal of $\mathbb{C}(q^{-s})$  containing the constant $1$. This ideal $\mathcal{J}(\pi,\wedge^2)$ is principal, and has a unique generator of the form $P(q^{-s})^{-1}$, where $P(X)$ is a polynomial in $\mathbb{C}[X]$
with $P(0)=1$. The local exterior square $L$-function attached to $\pi$ is defined by the unique normalized generator;
  \[
   L(s,\pi, \wedge^2)=P(q^{-s})^{-1}.
  \]
   In particular, for a spherical representation $\pi$, the local $L$-function $L(s,\pi, \wedge^2)$ agrees with the formal local $L$-function $L(s,\pi_{ur},\wedge^2)$.

 \begin{proposition}\cite[Theorem 5.7]{Jo20}
 \label{formal-exterior}
  Let $\{ \alpha_i \}_{i=1}^r$ denote the Langlands parameter of $\pi$. Then we have
\[
  L(s,\pi_{ur},\wedge^2)= \prod_{1 \leq i < j \leq r} (1-\alpha_i(\varpi)\alpha_j(\varpi)q^{-s})^{-1}.
\]
\end{proposition}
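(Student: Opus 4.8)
The plan is to follow the template established in the proof of Theorem \ref{RSNewVector}, namely to reduce the unramified computation of the local exterior square $L$-factor (for a standard module $\pi_{ur}$) to the known unramified computation of Jacquet and Shalika. More precisely, I would start from the definition of $L(s,\pi_{ur},\wedge^2)$ as the normalized generator of the fractional ideal $\mathcal{J}(\pi_{ur},\wedge^2)$, and evaluate the Jacquet-Shalika integral $J(s,W^{\circ}_{\pi_{ur}})$ (in the odd case) or $J(s,W^{\circ}_{\pi_{ur}},{\bf 1}_{\mathcal{O}^n})$ (in the even case) against the normalized spherical Whittaker function. The key input is the Casselman-Shalika formula, which expresses $W^{\circ}_{\pi_{ur}}$ on the torus in terms of Schur polynomials in the Satake parameters $\{\alpha_i(\varpi)\}$.

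The main computational step is to perform the Iwasawa decomposition on both the $g$-integral over $N_n \backslash G_n$ and to carry out the integration over $\mathcal{N}_n \backslash \mathcal{M}_n$ against $\psi^{-1}(\mathrm{Tr}(X))$. Using right $K$-invariance of $W^{\circ}_{\pi_{ur}}$, the integral collapses to a sum over dominant cocharacters of the torus, and the $X$-integration pins the relevant torus elements to an appropriate cone, forcing the lattice model of Jacquet-Shalika. At that point one invokes the unramified evaluation — the relevant statement is the identity of Jacquet and Shalika expressing the spherical Jacquet-Shalika integral as $\prod_{1 \leq i < j \leq r}(1-\alpha_i(\varpi)\alpha_j(\varpi)q^{-s})^{-1}$; this is precisely the content quoted in the paper's references to \cite{JaSh88} and is the same identity underlying \cite[Theorem 5.7]{Jo20}. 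Combining the two gives the desired product formula.

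Alternatively, and perhaps more cleanly, I would argue as follows: since $\pi_{ur}={\rm Ind}^{{\rm GL}_r(F)}_{B_r}(\alpha_1 \otimes \dotsm \otimes \alpha_r)$ is an induced representation from unramified characters, its exterior square $L$-factor is dictated by the exterior square of the Langlands parameter, which as a representation of the Weil group decomposes as $\bigoplus_{1 \leq i < j \leq r} \alpha_i \otimes \alpha_j$. One then uses the multiplicativity of the local exterior square $L$-factor under parabolic induction — established via the Jacquet-Shalika integral and the theory of derivatives, see \cite{JaSh88} and \cite[\S 2]{Jo20} — together with the elementary fact that $L(s,\alpha_i \times \alpha_j)=(1-\alpha_i(\varpi)\alpha_j(\varpi)q^{-s})^{-1}$ for a pair of unramified characters. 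This reduces the proposition to bookkeeping of which summands $\alpha_i \otimes \alpha_j$ appear, namely $i < j$, matching the Artin formalism $\wedge^2(\oplus_i \alpha_i) = \oplus_{i<j} \alpha_i \otimes \alpha_j$.

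The hard part is ensuring the combinatorial bookkeeping is exactly right and that the analytic continuation is legitimate. In particular, one must be careful that the multiplicativity statement one uses for $L(s,\pi_{ur},\wedge^2)$ holds with the $L$-factor defined via the normalized generator of $\mathcal{J}(\pi,\wedge^2)$ and not merely up to a unit in $\mathbb{C}[q^{\pm s}]$; this is where the Jacquet-Shalika machinery, and the fact that $\pi_{ur}$ is a genuine (irreducible, or at least standard) module so that the $L$-factor is well-behaved, is essential. Since the excerpt flags this as \cite[Theorem 5.7]{Jo20}, the cleanest route in the paper is simply to cite that result; the expected obstacle — if one were to reprove it — is the precise tracking of the exterior-square combinatorics under the permutation $\sigma_m$ and the lattice-point count in the Jacquet-Shalika unramified integral.
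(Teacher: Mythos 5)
The paper offers no argument for this proposition: it is imported wholesale as \cite[Theorem 5.7]{Jo20}, so your closing observation that the cleanest route is simply to cite that theorem is precisely what the paper does, and on that point your proposal and the paper coincide. Your two sketches are reasonable blueprints of how such a result is proved, but each quietly delegates the genuinely hard half to the cited literature, and it is worth being explicit about where. In the first route, evaluating the Jacquet--Shalika integral at the normalized spherical Whittaker function of $\pi_{ur}$ (Casselman--Shalika/Shintani plus the lattice computation of \cite{JaSh88}) only shows that the product $\prod_{1\le i<j\le r}(1-\alpha_i(\varpi)\alpha_j(\varpi)q^{-s})^{-1}$ lies in the fractional ideal $\mathcal{J}(\pi_{ur},\wedge^2)$; exactly as in Corollary \ref{RS-divisibility}, this gives one divisibility only, namely that $L(s,\pi_{ur},\wedge^2)$ has at least the poles of the product. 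The missing half is the upper bound: one must show that \emph{every} Jacquet--Shalika integral for $\pi_{ur}$, multiplied by $\prod_{i<j}(1-\alpha_i(\varpi)\alpha_j(\varpi)q^{-s})$, is a Laurent polynomial, and this pole-bounding step (carried out in \cite{Jo20} through Bernstein--Zelevinsky derivatives and the analysis of exceptional poles) is the actual content of the cited theorem, not a corollary of the spherical evaluation. Likewise, in the second route, the ``multiplicativity'' of the Jacquet--Shalika exterior square factor under parabolic induction is not a formal consequence of the Artin identity $\wedge^2(\oplus_i\alpha_i)=\oplus_{i<j}\alpha_i\otimes\alpha_j$; with the $L$-factor defined as the normalized generator of $\mathcal{J}(\pi,\wedge^2)$ it is a theorem requiring the same derivative machinery, which is again what \cite[Theorem 5.7]{Jo20} packages. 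So: as a justification matching the paper, the citation suffices; as independent proofs, both sketches currently have the same gap, namely the control of poles of the full family of integrals beyond the single spherical test vector.
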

We take the Haar measure on the quotient space $\mathcal{N}_n \backslash \mathcal{M}_n$ so that the volume of $\mathcal{N}_n \backslash (\mathcal{N}_n +\mathcal{M}_n(\mathcal{O}))$ is one. The test vector problem for exterior square $L$-factors has been carried out in Theorem 3.1 and Theorem 4.1 of \cite{MY}.

\begin{theorem}[Miyauchi and Yamauchi]
\label{JSEQ}
Let $\pi \in \mathcal{A}_F(m)$ be a generic representation. We set $c=c(\pi)$.
\begin{enumerate}[label=$(\mathrm{\alph*})$]
\item\label{JSEQ-1} When $m=2n$ is even, we have
$
 L(s,\pi_{ur}, \wedge^2)=
  J(s,W^{\circ}_{\pi},\Phi_c).
$
\item\label{JSEQ-2} When $m=2n+1$ is odd, we obtain
$
L(s,\pi_{ur}, \wedge^2)=J(s,W^{\circ}_{\pi}).
$
\end{enumerate}
\end{theorem}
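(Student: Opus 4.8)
The plan is to follow the argument of Theorem~\ref{RSNewVector} almost verbatim: first unfold \eqref{JS-even} (resp.\ \eqref{JS-odd}) by an Iwasawa decomposition $g=ak$ with $a\in A_n$, $k\in K_n$; then collapse the $K_n$-integral onto $K_1(\mathfrak p^c)$, on which $W^\circ_\pi$ is right-invariant; carry out the inner integration over $\mathcal N_n\backslash\mathcal M_n$; apply the Matringe--Miyauchi formula (Theorem~\ref{NewV}) to replace $W^\circ_\pi$ on the torus by $W^\circ_{\pi_{ur}}$; and recognize what remains as the unramified Jacquet--Shalika integral, which computes $L(s,\pi_{ur},\wedge^2)$ by the unramified calculation underlying Proposition~\ref{formal-exterior}.

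Disposing of the $K_n$-integral is immediate in the odd case~\ref{JSEQ-2}: the element $\mathrm{diag}(k,k,1)$ has last row exactly $e_{2n+1}$, hence lies in $K_1(\mathfrak p^c)$ and drops out by right-invariance; then the substitution $X\mapsto aXa^{-1}$ (which leaves $\mathrm{Tr}\,X$, and hence the character, unchanged) turns the argument of $W^\circ_\pi$ into $t(a)\,\sigma_{2n+1}$ times the Shalika unipotent, where $t(a)=\sigma_{2n+1}\,\mathrm{diag}(a,a,1)\,\sigma_{2n+1}^{-1}=\mathrm{diag}(a_1,a_1,\dots,a_n,a_n,1)$ is the ``interleaved'' torus element. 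In the even case~\ref{JSEQ-1} there is no such coordinate to absorb; here I would instead invoke \cite[Lemma~2.6]{MY}, that $g\mapsto\Phi_c(e_ng)$ is the characteristic function of $P_nK_1(\mathfrak p^c)$, to confine $g$ to $P_nK_1(\mathfrak p^c)$ (the $K_1(\mathfrak p^c)$-part again dropping out), and then run a further Iwasawa decomposition of $P_n$ exactly as in Theorem~\ref{RSNewVector} to bring the outer variable down to $A_{n-1}$; the argument of $W^\circ_\pi$ is then $t\,\sigma_{2n}$ times the Shalika unipotent with $t=\mathrm{diag}(a_1,a_1,\dots,a_{n-1},a_{n-1},1,1)$.

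For the inner integral I would note first that $\sigma_m\begin{pmatrix} I_n & X_0\\ & I_n\end{pmatrix}\sigma_m^{-1}\in N_m$ for upper-triangular $X_0$ and that $\psi_m$ takes the value $\psi(\mathrm{Tr}\,X_0)$ there, so the product $W^\circ_\pi(\cdots)\psi^{-1}(\mathrm{Tr}\,X)$ genuinely descends to $\mathcal N_n\backslash\mathcal M_n$; on strictly lower-triangular representatives the character is trivial. The crucial observation is that $\sigma_m$ fixes the last index and the Shalika unipotent has last row $e_m$, so $\sigma_m$ times the Shalika unipotent lies in $K_1(\mathfrak p^c)$ as soon as $X\in\mathcal M_n(\mathcal O)$; hence the integrand equals $W^\circ_\pi(t)$ identically on $\mathcal N_n\backslash(\mathcal N_n+\mathcal M_n(\mathcal O))$, which has normalized volume $1$, and vanishes off this lattice by the support properties of the essential Whittaker function. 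What survives is $W^\circ_\pi(t)$ with $t$ the interleaved torus element; feeding this into Theorem~\ref{NewV}, rewriting it through $W^\circ_{\pi_{ur}}$ of the truncated torus with the explicit power of $\nu$ and the indicator factors appearing there, and bookkeeping the modulus characters precisely as in the passage from $\delta_{B_{n-1}}$ to $\delta_{B_p}$ in the proof of Theorem~\ref{RSNewVector}, collapses the whole expression to $J(s,W^\circ_{\pi_{ur}},{\bf 1}_{\mathcal O^r})$ in the even case and $J(s,W^\circ_{\pi_{ur}})$ in the odd case; by the unramified Jacquet--Shalika computation \cite{JaSh88} this equals $\prod_{1\le i<j\le r}(1-\alpha_i(\varpi)\alpha_j(\varpi)q^{-s})^{-1}=L(s,\pi_{ur},\wedge^2)$ (Proposition~\ref{formal-exterior}), which is the assertion.

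The main obstacle is the vanishing claim used above: that $W^\circ_\pi$ evaluated at $t$ times the Weyl-conjugated Shalika unipotent vanishes whenever $X$ has an entry of negative valuation. Pinning this down requires understanding how $\sigma_m$ carries the Shalika unipotent to a generally non-upper-triangular unipotent of $G_m$ and how its entries interact both with the congruence conditions defining $K_1(\mathfrak p^c)$ and with the support of $W^\circ_\pi$. This is the exterior-square analogue of the elementary support statement \cite[Lemma~2.6]{MY} that made the Rankin--Selberg reduction in Theorem~\ref{RSNewVector} so transparent; the difference is that here it genuinely rests on the finer vanishing of the essential Whittaker function rather than only on that of $\Phi_c$, and it is precisely this point that carries the substance of \cite[Theorems~3.1 and 4.1]{MY}.
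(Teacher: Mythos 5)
The paper itself contains no proof of Theorem \ref{JSEQ}: it is an attributed statement, justified only by the citation to Theorems 3.1 and 4.1 of \cite{MY}, so your outline has to be measured against that reference. In its general shape it does follow the same route as \cite{MY} (and the template of Theorem \ref{RSNewVector}): Iwasawa decomposition, absorption of the compact part into $K_1(\mathfrak{p}^c)$, conjugation of $X$ past the (interleaved) torus, the Matringe--Miyauchi formula of Theorem \ref{NewV}, and the unramified Jacquet--Shalika computation behind Proposition \ref{formal-exterior}. Your preparatory observations are correct: $\mathrm{diag}(k,k,1)$ has last row exactly $e_{2n+1}$, $\mathrm{diag}(k,k)$ lies in $K_1(\mathfrak{p}^c)$ of $G_{2n}$ when $k\in K_1(\mathfrak{p}^c)$ of $G_n$ (after \cite[Lemma 2.6]{MY}), $\sigma_m u(X)\in K_1(\mathfrak{p}^c)$ for $X\in\mathcal{M}_n(\mathcal{O})$ since $\sigma_m$ fixes the last index, and the integrand genuinely descends to $\mathcal{N}_n\backslash\mathcal{M}_n$.

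As a proof, however, the proposal has a genuine gap, and it sits exactly where the theorem is nontrivial: the collapse of the inner integral onto $\mathcal{N}_n\backslash(\mathcal{N}_n+\mathcal{M}_n(\mathcal{O}))$. You assert that $W^{\circ}_{\pi}(t\,\sigma_m u(X))$ contributes nothing when a below-diagonal entry of $X$ is non-integral ``by the support properties of the essential Whittaker function,'' but no such property is available to you: Theorem \ref{NewV} evaluates $W^{\circ}_{\pi}$ only on torus elements (times $K_1(\mathfrak{p}^c)$ on the right) and says nothing about these points, and --- unlike the Rankin--Selberg case, where the truncation is performed entirely by $\Phi_c$ --- the character $\psi^{-1}(\mathrm{Tr}\,X)$ does not oscillate in the below-diagonal coordinates, so no cancellation comes for free. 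The needed vanishing must be extracted from the $(N_m,\psi)$-equivariance combined with the right $K_1(\mathfrak{p}^c)$-invariance (for instance by exhibiting, for each offending $X$, an $n\in N_m$ with $\psi(n)\neq 1$ and $(t\,\sigma_m u(X))^{-1}n\,(t\,\sigma_m u(X))\in K_1(\mathfrak{p}^c)$, or by an equivalent partial-integration argument), and this interacts nontrivially with the torus part $t$; it is precisely the technical core of \cite{MY}, which you acknowledge but do not supply. A secondary inaccuracy: the terminal identification should be organized by the rank $r$ of $\pi_{ur}$, not by the parity of $m$ --- for $m=2n$ even the unramified part may have odd rank, and the surviving sum is then not literally $J(s,W^{\circ}_{\pi_{ur}},{\bf 1}_{\mathcal{O}^r})$; compare the split $r=p$ versus $r>p$ at the end of the proof of Theorem \ref{RSNewVector} and the split on the parity of $r$ in the proof of Theorem \ref{Hm-unitary}.
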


In general, $L(s,\pi_{ur},\wedge^2)$ does not match with $L(s,\pi,\wedge^2)$.

\begin{corollary} Let $\pi \in \mathcal{A}_F(m)$ be a generic representation. Then we have
\[ 
L(s,\pi_{ur},\wedge^2)=P(q^{-s})L(s,\pi,\wedge^2)
\]
for a polynomial $P(X) \in \mathbb{C}[X]$ satisfying $P(0)=1$.
\end{corollary}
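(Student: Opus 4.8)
The plan is to mimic exactly the corollary that followed Theorem \ref{test-Asai} in the Asai section: the statement is a purely formal consequence of the test-vector theorem for even and odd exterior-square integrals. The point is that Theorem \ref{JSEQ} exhibits the formal $L$-factor $L(s,\pi_{ur},\wedge^2)$ as a specific value of a Jacquet–Shalika integral, namely $J(s,W^{\circ}_{\pi},\Phi_c)$ in the even case and $J(s,W^{\circ}_{\pi})$ in the odd case. By the very definition of $\mathcal{J}(\pi,\wedge^2)$ — the $\mathbb{C}[q^{\pm s}]$-fractional ideal spanned by all such integrals — this means $L(s,\pi_{ur},\wedge^2) \in \mathcal{J}(\pi,\wedge^2)$. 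Since $\mathcal{J}(\pi,\wedge^2)$ is the principal fractional ideal generated by $L(s,\pi,\wedge^2) = P_0(q^{-s})^{-1}$ for a polynomial $P_0$ with $P_0(0)=1$, every element of the ideal is of the form $Q(q^{\pm s}) L(s,\pi,\wedge^2)$ for some Laurent polynomial $Q$.

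First I would recall from Theorem \ref{JSEQ} (splitting into the even case \ref{JSEQ-1} and the odd case \ref{JSEQ-2}) that $L(s,\pi_{ur},\wedge^2)$ equals a single Jacquet–Shalika integral with the newform data as test vector, hence lies in $\mathcal{J}(\pi,\wedge^2)$. Then, since $L(s,\pi,\wedge^2) = P_0(q^{-s})^{-1}$ generates this ideal, there is a Laurent polynomial $R(X) \in \mathbb{C}[X,X^{-1}]$ with $L(s,\pi_{ur},\wedge^2) = R(q^{-s}) L(s,\pi,\wedge^2)$. The remaining task is to upgrade $R$ from a Laurent polynomial to an honest polynomial $P$ with $P(0)=1$. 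For this I would use Proposition \ref{formal-exterior}, which gives the explicit product formula $L(s,\pi_{ur},\wedge^2) = \prod_{i<j}(1-\alpha_i(\varpi)\alpha_j(\varpi)q^{-s})^{-1}$, and compare with the fact that $L(s,\pi,\wedge^2)^{-1} = P_0(q^{-s})$ is a polynomial in $q^{-s}$ vanishing to order $0$ at $q^{-s}=0$: the quotient $L(s,\pi_{ur},\wedge^2)/L(s,\pi,\wedge^2) = P_0(q^{-s}) \prod_{i<j}(1-\alpha_i(\varpi)\alpha_j(\varpi)q^{-s})^{-1}$ is a priori a rational function of $q^{-s}$, but since it lies in $\mathbb{C}[q^{\pm s}]$ and has no pole and no zero of $q^{\pm s}$ forced at $0$, and since the numerator and denominator both have constant term $1$, it must be a polynomial $P(q^{-s})$ with $P(0)=1$.

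The only mild subtlety — and what I expect to be the main (though small) obstacle — is the normalization $P(0)=1$: one must check that the Laurent-polynomial factor produced by the fractional-ideal argument has no negative powers of $q^{-s}$ and has constant term $1$. This is handled exactly as in the corresponding Rankin–Selberg and Asai corollaries (Corollary \ref{RS-divisibility} and the corollary after Theorem \ref{test-Asai}): both $L(s,\pi_{ur},\wedge^2)$ and $L(s,\pi,\wedge^2)$, written as $P_{ur}(q^{-s})^{-1}$ and $P_0(q^{-s})^{-1}$ with $P_{ur}(0)=P_0(0)=1$, have inverses that are genuine polynomials with constant term $1$; the containment $L(s,\pi_{ur},\wedge^2) \in \mathcal{J}(\pi,\wedge^2) = P_0(q^{-s})^{-1}\mathbb{C}[q^{\pm s}]$ forces $P_0 \mid P_{ur}$ in $\mathbb{C}[q^{\pm s}]$, and since $P_0, P_{ur}$ are ordinary polynomials with unit constant term, the quotient $P = P_{ur}/P_0$ is an ordinary polynomial with $P(0)=1$. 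Thus the proof is three lines: cite Theorem \ref{JSEQ} for membership in the ideal, invoke principality of $\mathcal{J}(\pi,\wedge^2)$ with generator $L(s,\pi,\wedge^2)$, and read off the normalization from $P_{ur}(0)=P_0(0)=1$.
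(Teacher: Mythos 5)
Your proposal is exactly the paper's argument: the paper's entire proof is that Theorem \ref{JSEQ} exhibits $L(s,\pi_{ur},\wedge^2)$ as a Jacquet--Shalika integral with newform data, hence as an element of the fractional ideal $\mathcal{J}(\pi,\wedge^2)$ generated by $L(s,\pi,\wedge^2)$, and your extra care with the normalization $P(0)=1$ is a correct filling-in of what the paper leaves implicit. One small slip: in your last paragraph the divisibility is stated backwards --- the membership forces $P_{ur}\mid P_0$ in $\mathbb{C}[q^{\pm s}]$ and $P=P_0/P_{ur}$ (as in your middle paragraph, and consistent with the claim that $L(s,\pi_{ur},\wedge^2)^{-1}$ divides $L(s,\pi,\wedge^2)^{-1}$), not $P_0\mid P_{ur}$ with $P=P_{ur}/P_0$.
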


\begin{proof}
We only need to observe from Theorem \ref{JSEQ} that $L(s,\pi_{ur},\wedge^2)$ belongs to the $\mathbb{C}[q^{\pm s}]$-fractional ideal $\mathcal{J}(\pi,\wedge^2)$ of $\mathbb{C}(q^{-s})$.
\end{proof}

We concern with the following integrals, of which the convergence will be elaborated shortly after;
\[
 \Lambda^{JS}(W_{\pi}):= \int_{N_n \backslash P_n}  \int_{\mathcal{N}_n \backslash \mathcal{M}_n} W_{\pi} \left( \sigma_{2n} \begin{pmatrix} I_n & X \\ & I_n  \end{pmatrix} \begin{pmatrix} p & \\ & p \end{pmatrix} \right) \psi^{-1}({\rm Tr}(X)) dX dp 
\]
if $m=2n$ is even, and
\[
\Lambda^{JS}(W_{\pi}):=\int_{N_n \backslash G_n}  \int_{\mathcal{N}_n \backslash \mathcal{M}_n} W_{\pi} \left( \sigma_{2n+1} \begin{pmatrix} I_n &X& \\ &I_n& \\ &&1 \end{pmatrix} \begin{pmatrix} g && \\ &g& \\ &&1 \end{pmatrix} \right)\psi^{-1}({\rm Tr}(X)) dX dg 
 \]
if $m=2n+1$ is odd. We define the Shalika subgroup $S_{2n}$ of $G_{2n}$ by
\[
  S_{2n}=\left\{ \begin{pmatrix} I_n & Z \\ & I_n \end{pmatrix} \begin{pmatrix} g &  \\ & g \end{pmatrix}\; \middle|\;  Z \in \mathcal{M}_n, g \in G_{n} \right\}.
\]
Let $\Theta$ be a Shalika character of $S_{2n}$ given by
\[
 \Theta \left( \begin{pmatrix} I_n & Z \\ & I_n \end{pmatrix} \begin{pmatrix} g &  \\ & g \end{pmatrix} \right)=\psi(\mathrm{Tr}(Z)).
\]

\begin{proposition}
\label{Shalika-convergence}
 Let $\pi \in \mathcal{A}_F(m)$ be a unitary generic representation. For any $W_{\pi} \in \mathcal{W}(\pi,\psi)$, the integrals $\Lambda^{JS}(W_{\pi})$
converge absolutely. In particular, for $m=2n$ even, $W_{\pi} \mapsto \Lambda^{JS}(W_{\pi})$ defines a $(P_{2n} \cap S_{2n},\Theta)$-invariant linear functional on $\mathcal{W}(\pi,\psi)$.
\end{proposition}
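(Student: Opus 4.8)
The plan is to recognise $\Lambda^{JS}(W_\pi)$, up to a positive constant, as the Jacquet--Shalika zeta integral of \eqref{JS-odd} or \eqref{JS-even} evaluated at $s=1$, and then to invoke absolute convergence of those zeta integrals for unitary $\pi$ in a half-plane containing $s=1$. For $m=2n+1$ odd the identification is immediate: setting $s=1$ in \eqref{JS-odd} removes the factor $|\mathrm{det}(g)|^{s-1}$, so $\Lambda^{JS}(W_\pi)=J(1,W_\pi)$ verbatim. For $m=2n$ even I would mimic the reduction used above for the Rankin--Selberg period: choose a compact open subgroup $K_n^{\circ}\subseteq K_n$ small enough that $W_\pi$ is right-invariant under $\bigl\{\begin{pmatrix} k & \\ & k\end{pmatrix}:k\in K_n^{\circ}\bigr\}$ and on which $\omega_\pi$ is trivial, and set $\Phi_0={\bf 1}_{e_nK_n^{\circ}}$. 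Writing $g=pzk$ with $p\in N_n\backslash P_n$, $z\in Z_n$, $k\in K_n$ in the Iwasawa-type decomposition of \eqref{JS-even}, the function $g\mapsto\Phi_0(e_ng)=\Phi_0(ze_nk)$ forces $z$ into $\mathcal{O}^{\times}$ and $k$ into a fixed compact set; absorbing the $K_n^{\circ}$-part into the invariance of $W_\pi$, the $z$-integral into a positive constant, and the remaining pieces into the $p$-integral, and keeping track of the Jacobian factor $|\mathrm{det}(p)|^{-1}$ (the same $|\mathrm{det}|$-shift by $1$ as in the Rankin--Selberg case), one obtains a positive constant $c_0$ with
\[
J(s,W_\pi,\Phi_0)=c_0\int_{N_n\backslash P_n}\int_{\mathcal{N}_n\backslash\mathcal{M}_n}W_\pi\!\left(\sigma_{2n}\begin{pmatrix} I_n & X \\ & I_n\end{pmatrix}\begin{pmatrix} p & \\ & p\end{pmatrix}\right)\psi^{-1}(\mathrm{Tr}(X))|\mathrm{det}(p)|^{s-1}\,dX\,dp ,
\]
and hence $\Lambda^{JS}(W_\pi)=c_0^{-1}J(1,W_\pi,\Phi_0)$, the identity persisting after replacing $W_\pi$ by $|W_\pi|$ and the additive characters by $1$.

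The only external input needed is the absolute convergence of the Jacquet--Shalika integrals for a unitary generic representation: both $J(s,W_\pi)$ and $J(s,W_\pi,\Phi)$ converge absolutely in the half-plane $\mathrm{Re}(s)\geq 1$, the exterior-square analogue of \cite[Proposition 3.17]{JaSh81}, which rests on the standard gauge estimates for Whittaker functions of unitary representations (compare \cite{JaSh88} and \cite[\S 2.2,\,\S 2.4]{Jo20}). Granting this, both zeta integrals converge absolutely at $s=1$, so by the identifications of the previous paragraph $\Lambda^{JS}(W_\pi)$ converges absolutely in either parity; linearity in $W_\pi$ is clear. I expect this convergence statement to be the one delicate point: one has to know that $\mathrm{Re}(s)=1$ already lies in the closed domain of absolute convergence for unitary $\pi$, not merely in $\mathrm{Re}(s)\geq 1+\varepsilon$. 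Everything else is bookkeeping with Iwasawa decompositions and changes of variables.

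Finally, for the $(P_{2n}\cap S_{2n},\Theta)$-equivariance in the even case, recall that $P_{2n}\cap S_{2n}=\bigl\{\begin{pmatrix} I_n & Z \\ & I_n\end{pmatrix}\begin{pmatrix} p_0 & \\ & p_0\end{pmatrix}:Z\in\mathcal{M}_n,\ p_0\in P_n\bigr\}$, on which $\Theta$ takes the value $\psi(\mathrm{Tr}(Z))$. Given such an $h$, I would compute $\Lambda^{JS}(\pi(h)W_\pi)=\int\!\int W_\pi\!\bigl(\sigma_{2n}\begin{pmatrix} I_n & X \\ & I_n\end{pmatrix}\begin{pmatrix} p & \\ & p\end{pmatrix}h\bigr)\psi^{-1}(\mathrm{Tr}(X))\,dX\,dp$ and use
\[
\begin{pmatrix} I_n & X \\ & I_n\end{pmatrix}\begin{pmatrix} p & \\ & p\end{pmatrix}h=\begin{pmatrix} I_n & X+pZp^{-1} \\ & I_n\end{pmatrix}\begin{pmatrix} pp_0 & \\ & pp_0\end{pmatrix},
\]
followed by the substitutions $X\mapsto X-pZp^{-1}$ on $\mathcal{N}_n\backslash\mathcal{M}_n$ and $p\mapsto pp_0$ on $N_n\backslash P_n$. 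The first substitution multiplies the integrand by $\psi(\mathrm{Tr}(pZp^{-1}))=\psi(\mathrm{Tr}(Z))=\Theta(h)$, and the second is neutral because $dp$ is right-$P_n$-invariant (via $N_n\backslash P_n\simeq N_{n-1}\backslash G_{n-1}$); here one uses the standard fact that $\sigma_{2n}\begin{pmatrix} I_n & N \\ & I_n\end{pmatrix}\sigma_{2n}^{-1}$ lies in $N_{2n}$ with $\psi$-value $\psi(\mathrm{Tr}(N))$ for $N\in\mathcal{N}_n$, which is precisely what makes $W_\pi\!\bigl(\sigma_{2n}\begin{pmatrix} I_n & X \\ & I_n\end{pmatrix}\begin{pmatrix} p & \\ & p\end{pmatrix}\bigr)\psi^{-1}(\mathrm{Tr}(X))$ well defined on $\mathcal{N}_n\backslash\mathcal{M}_n$ and compatible with the shift by $pZp^{-1}$. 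This yields $\Lambda^{JS}(\pi(h)W_\pi)=\Theta(h)\Lambda^{JS}(W_\pi)$, so $W_\pi\mapsto\Lambda^{JS}(W_\pi)$ lies in $\mathrm{Hom}_{P_{2n}\cap S_{2n}}(\pi,\Theta)$.
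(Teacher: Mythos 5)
Your argument is correct and follows essentially the same route as the paper: in the even case you reduce $J(s,W_\pi,\Phi^{\circ})$ with $\Phi^{\circ}={\bf 1}_{e_nK_n^{\circ}}$ to a positive multiple of the mirabolic Jacquet--Shalika integral and evaluate at $s=1$, and in the odd case you identify $\Lambda^{JS}(W_\pi)$ with $J(1,W_\pi)$ outright, exactly as in the paper's proof. The convergence input you flag as the delicate point is precisely what the paper cites, namely \cite[\S 7.1 Proposition 1]{JaSh88} for the even (mirabolic) integral and \cite[\S 9.3 Proposition 3]{JaSh88} for the odd case, so your appeal to the unitary gauge estimates is covered by the literature; your explicit $(P_{2n}\cap S_{2n},\Theta)$-equivariance computation is a standard verification the paper leaves implicit.
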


\begin{proof}
The odd case can be extracted from \cite[\S 9.3 Proposition 3]{JaSh88}. Pertaining to even cases, we choose $K_n^{\circ}$ a compact open subgroup of $K_n$ such that $W_{\pi}$ is invariant under $\begin{pmatrix} K^{\circ}_n & \\ & K^{\circ}_n \end{pmatrix}$. We take $\Phi^{\circ}$ to be a characteristic function $e_nK_n^{\circ}$. By the partial Iwasawa decomposition $G_n=Z_nN_nK_n$, the integral $J(s,W_{\pi},\Phi^{\circ})$ is written as
\[
\begin{split}
J(s,W_{\pi},\Phi^{\circ})
=&\int_{K_n^{\circ}} \int_{N_n \backslash P_n} \int_{\mathcal{N}_n \backslash \mathcal{M}_n} W_{\pi} \left( \sigma_{2n} \begin{pmatrix} I_n & X \\ & I_n  \end{pmatrix} \begin{pmatrix} pk & \\ & pk \end{pmatrix} \right) \psi^{-1}({\rm Tr}(X))  |\mathrm{det}(p)|^{s-1} \\
&\times \int_{F^{\times}}  \Phi^{\circ}(e_nzk) |z|^{ns} d^{\times} z dX dp dk.
\end{split}
\]
By the right $K_n^{\circ}$ invariance of integrands, the integral $J(s,W_{\pi},\Phi^{\circ})$ is converted to
\[
 J(s,W_{\pi},\Phi^{\circ})=v \int_{N_n \backslash P_n}  \int_{\mathcal{N}_n \backslash \mathcal{M}_n} W_{\pi} \left( \sigma_{2n} \begin{pmatrix} I_n & X \\ & I_n  \end{pmatrix} \begin{pmatrix} p & \\ & p \end{pmatrix} \right) \psi^{-1}({\rm Tr}(X))  |\mathrm{det}(p)|^{s-1} dX dp 
\]
with $v > 0$ a volume term. \cite[\S 7.1 Proposition 1]{JaSh88} assures that the above integral is holomorphic at $s=1$.
\end{proof}

Let us now pay our attention to the non-vanishing of Jacquet-Shalika functional $\Lambda^{JS}(W_{\pi})$.

\begin{theorem}
\label{Shalika-unitary}
Let $\pi \in \mathcal{A}_F(m)$ be a unitary generic representation.
\begin{enumerate}[label=$(\mathrm{\alph*})$]
\item\label{JS-1} Suppose that $m=2n$ is even. Then we have
\[
\int_{N_n \backslash P_n} \int_{\mathcal{N}_n \backslash \mathcal{M}_n} W^{\circ}_{\pi} \left( \sigma_{2n} \begin{pmatrix} I_n & X \\ & I_n  \end{pmatrix} \begin{pmatrix} p & \\ & p \end{pmatrix} \right) \psi^{-1}({\rm Tr}(X)) dX dp
=
 \begin{cases}
  L(1,\pi_{ur},\wedge^2)& \text{if $\pi$ is ramified,} \\
  \dfrac{L(1,\pi,\wedge^2)}{L(n,\omega_{\pi})}& \text{otherwise.} 
  \end{cases}
\]
\item\label{JS-2} Suppose that $m=2n+1$ is odd. Then we obtain
\[
\int_{N_n \backslash G_n}  \int_{\mathcal{N}_n \backslash \mathcal{M}_n} W^{\circ}_{\pi} \left( \sigma_{2n+1} \begin{pmatrix} I_n &X& \\ &I_n& \\ &&1 \end{pmatrix} \begin{pmatrix} g && \\ &g& \\ &&1 \end{pmatrix} \right)\psi^{-1}({\rm Tr}(X)) dX dg=L(1,\pi_{ur},\wedge^2).
%&=
%\begin{cases}
%L(1,\pi_{ur},\wedge^2) & \text{if $\pi$ is ramified,} \\
%L(1,\pi,\wedge^2) & \text{otherwise.} \\
%  \end{cases}
%\end{split}
\]
\end{enumerate}
\end{theorem}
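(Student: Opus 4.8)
The plan is to derive both assertions from the test-vector identities of Theorem~\ref{JSEQ} together with the convergence and holomorphy recorded in Proposition~\ref{Shalika-convergence}; no unfolding beyond the ones already carried out in \S\ref{RS} and \S\ref{Asai} will be needed. The odd case \ref{JS-2} is almost immediate: by definition $\Lambda^{JS}(W_\pi)$ is the value at $s=1$ of the integral $J(s,W_\pi)$ of \eqref{JS-odd}, and Theorem~\ref{JSEQ}\ref{JSEQ-2} gives $J(s,W^{\circ}_{\pi})=L(s,\pi_{ur},\wedge^2)$ as elements of $\mathbb{C}(q^{-s})$, which in particular is holomorphic at $s=1$. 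Since $\pi$ is unitary, Proposition~\ref{Shalika-convergence} guarantees that the integral defining $\Lambda^{JS}(W^{\circ}_{\pi})=J(1,W^{\circ}_{\pi})$ converges absolutely, so it coincides with the analytic continuation of $J(s,W^{\circ}_{\pi})$ to $s=1$, namely $L(1,\pi_{ur},\wedge^2)$. (When $\pi$ is moreover unramified this equals $L(1,\pi,\wedge^2)$, so the single formula in \ref{JS-2} is correct.)

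For the even case \ref{JS-1}, set $c=c(\pi)$ and begin with the identity $L(s,\pi_{ur},\wedge^2)=J(s,W^{\circ}_{\pi},\Phi_c)$ of Theorem~\ref{JSEQ}\ref{JSEQ-1}. I unfold $J(s,W^{\circ}_{\pi},\Phi_c)$ exactly as in the proofs of Theorems~\ref{RSNewVector} and \ref{test-Asai}: using the Iwasawa decomposition $G_n=Z_nP_nK_n$, the fact \cite[Lemma 2.6]{MY} that $g\mapsto\Phi_c(e_ng)$ is the characteristic function of $P_nK_1(\mathfrak{p}^c)$, and the right $K_1(\mathfrak{p}^c)$-invariance of $W^{\circ}_{\pi}$ on $G_{2n}$ --- with the remark that $\begin{pmatrix}k&\\&k\end{pmatrix}$ lies in $K_1(\mathfrak{p}^c)\subset G_{2n}$ whenever $k\in K_1(\mathfrak{p}^c)\subset G_n$, its last row being congruent to $e_{2n}$ modulo $\mathfrak{p}^c$ --- one reaches
\[
J(s,W^{\circ}_{\pi},\Phi_c)=T(s)\int_{N_n\backslash P_n}\int_{\mathcal{N}_n\backslash\mathcal{M}_n}W^{\circ}_{\pi}\left(\sigma_{2n}\begin{pmatrix}I_n&X\\&I_n\end{pmatrix}\begin{pmatrix}p&\\&p\end{pmatrix}\right)\psi^{-1}({\rm Tr}(X))\,|\mathrm{det}(p)|^{s-1}\,dX\,dp.
\]
Here $T(s)$ is the Tate-type factor contributed by the central variable $z$ in $g=zpk$: when $\pi$ is ramified, $\Phi_c$ is precisely the characteristic function of $e_nK_1(\mathfrak{p}^c)$ and, with the $c$-adapted measure normalizations of Section~\ref{NV}, $T(s)\equiv1$; when $\pi$ is unramified, $\Phi_0={\bf 1}_{\mathcal{O}^n}$ spreads the $z$-integral over $\mathcal{O}\setminus\{0\}$, yielding $T(s)=(1-\omega_\pi(\varpi)q^{-ns})^{-1}=L(ns,\omega_\pi)$ --- this is the same mechanism that produces the denominator $L(n,\omega_\pi\omega_\sigma)$ in Theorem~\ref{RS-priods}. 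The integral on the right-hand side specializes at $s=1$ to $\Lambda^{JS}(W^{\circ}_{\pi})$ and, by Proposition~\ref{Shalika-convergence}, is holomorphic there; the left-hand side $L(s,\pi_{ur},\wedge^2)$ is holomorphic at $s=1$ as well. Setting $s=1$ gives $\Lambda^{JS}(W^{\circ}_{\pi})=L(1,\pi_{ur},\wedge^2)$ when $\pi$ is ramified and $L(n,\omega_\pi)\,\Lambda^{JS}(W^{\circ}_{\pi})=L(1,\pi_{ur},\wedge^2)$ when $\pi$ is unramified; since $L(s,\pi,\wedge^2)=L(s,\pi_{ur},\wedge^2)$ for unramified $\pi$, the stated formula follows.

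The Iwasawa unfolding is word-for-word parallel to those in \S\ref{RS}--\S\ref{Asai}, so the only genuinely delicate points are two. First, specializing the displayed rational identity to $s=1$ is legitimate only because $L(s,\pi_{ur},\wedge^2)$ is regular at $s=1$; this regularity is precisely what the unitarity of $\pi$ supplies --- via Proposition~\ref{Shalika-convergence}, which exhibits the pertinent integrals at $s=1$ as absolutely convergent ones --- rather than via any explicit estimate on the Langlands parameters $\{\alpha_i(\varpi)\}$. Second, one has to check that the volume constants generated by the decomposition $G_n=Z_nP_nK_n$ equal $1$ under the normalizations of Section~\ref{NV}, so that $T(s)$ is exactly $1$ (resp.\ $L(ns,\omega_\pi)$) with no stray constant left over; this is done just as in the proofs of Theorems~\ref{test-Asai} and \ref{RS-priods}, and I expect this measure bookkeeping, rather than any conceptual issue, to be the main nuisance.
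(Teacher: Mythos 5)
Your proposal is correct and follows essentially the same route as the paper: the odd case is Theorem~\ref{JSEQ}\ref{JSEQ-2} evaluated at $s=1$, and the even case unfolds $J(s,W^{\circ}_{\pi},\Phi_c)$ via the Iwasawa decomposition and the support of $\Phi_c$ to the mirabolic integral, then specializes at $s=1$ using the convergence/holomorphy supplied by Proposition~\ref{Shalika-convergence}. The only (minor) deviation is the unramified even subcase, where you extract the denominator by keeping track of the central Tate factor $L(ns,\omega_{\pi})$ inside the unfolding of Theorem~\ref{JSEQ}\ref{JSEQ-1}, whereas the paper refers to the direct spherical computation as in \cite[Theorem 6.2]{AM}; both produce the same factor $L(n,\omega_{\pi})$.
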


\begin{proof}
Evaluating the equality in Theorem \ref{JSEQ}-\ref{JSEQ-2} at $s=1$ demonstrates the second statement. The first assertion is a consequence of the proof of \cite[Lemma 3.4]{Jo20-2}, addressed in \cite[\S 3.2]{Grobner}. For the sake of completeness, we provide an alternative straightfoward approach. The unramified case is almost identical to the proof of \cite[Theorem 6.2]{AM} hence we omit the complete details. Mimicking the essential point made in the proof of Proposition \ref{Shalika-convergence}, the integral $J(s,W^{\circ}_{\pi},\Phi_c)$ turns out to be
\[
 J(s,W^{\circ}_{\pi},\Phi_c)=\int_{N_n \backslash P_n}  \int_{\mathcal{N}_n \backslash \mathcal{M}_n} W^{\circ}_{\pi} \left( \sigma_{2n} \begin{pmatrix} I_n & X \\ & I_n  \end{pmatrix} \begin{pmatrix} p & \\ & p \end{pmatrix} \right) \psi^{-1}({\rm Tr}(X))  |\mathrm{det}(p)|^{s-1} dX dp
\]
which is equivalent to $L(s,\pi_{ur},\wedge^2)$ as outlined in \ref{JSEQ}-\ref{JSEQ-1}. The result that we search for then follows from Proposition \ref{Shalika-convergence}, plugging in $1$ for $s$.
\end{proof}

As opposed to Theorems \ref{RS-priods} and \ref{Asai-FJ}, the right hand side of \ref{JS-1} and \ref{JS-2} in Theorem \ref{Shalika-unitary} takes different shapes,
depending on the presence of Schwartz-Bruhat functions in Jacquet-Shalika integrals \eqref{JS-odd} and \eqref{JS-even}.

\subsection{The $S_{2n}$-distinguished representation}
\label{JSS2n}

For this section, we restrict ourselves to the case for $m=2n$ even. We say that a representation $\pi$ of $G_{2n}$ is $(S_{2n},\Theta)$-{\it distinguished}, if $\mathrm{Hom}_{S_{2n}}(\pi,\Theta) \neq 0$. The central character $\omega_{\pi}$ of the $(S_{2n},\Theta)$-distinguished representation $\pi$ is always trivial. 

\par
For our convenience, we introduce an auxiliary symmetric square $L$-factors $\mathcal{L}(s,\pi,\mathrm{Sym}^2)$. It is proven in \cite[\S 5.2]{Jo20} that
$L(s,\pi,\wedge^2)^{-1}$ divides $L(s,\pi \times \pi)^{-1}$ in $\mathbb{C}[q^{\pm s}]$. Hence we can find a polynomial $Q(X) \in \mathbb{C}[X]$ satisfying $Q(0)=1$
and $L(s,\pi \times \pi)^{-1}=Q(q^{-s})L(s,\pi,\wedge^2)^{-1}$.
We define $\mathcal{L}(s,\pi,\mathrm{Sym}^2)$ by
\begin{equation}
\label{Ext-Sym-identity}
 \mathcal{L}(s,\pi,\mathrm{Sym}^2):=\frac{1}{Q(q^{-s})}.
\end{equation}

\begin{proposition}
\label{Sdistinction}
Let $\pi \in \mathcal{A}_F(2n)$ be a generic representation which is $(S_{2n},\Theta)$-distinguished. Then the $(P_{2n} \cap S_{2n},\Theta)$-invariant linear functional $\Lambda^{JS}(W_{\pi})$ is well-defined in that $L(s,\pi, \wedge^2)$ is holomorphic at $s=1$.
\end{proposition}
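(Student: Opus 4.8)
The plan is to reduce the holomorphy of $L(s,\pi,\wedge^2)$ at $s=1$ to a known convergence statement for Jacquet--Shalika integrals on the mirabolic subgroup, exploiting the $(S_{2n},\Theta)$-distinction hypothesis exactly as the unitary case of Proposition \ref{Shalika-convergence} exploited unitarity. First I would recall that, since $L(s,\pi,\wedge^2)$ is the normalized generator of the fractional ideal $\mathcal{J}(\pi,\wedge^2)$, it suffices to show that every integral $J(s,W_{\pi},\Phi)$ in \eqref{JS-even} is holomorphic at $s=1$; equivalently, after the partial Iwasawa decomposition $G_n=Z_nN_nK_n$ carried out in the proof of Proposition \ref{Shalika-convergence}, it suffices to show that the mirabolic integral
\[
 \int_{N_n \backslash P_n}\int_{\mathcal{N}_n \backslash \mathcal{M}_n} W_{\pi}\left(\sigma_{2n}\begin{pmatrix} I_n & X \\ & I_n \end{pmatrix}\begin{pmatrix} p & \\ & p \end{pmatrix}\right)\psi^{-1}({\rm Tr}(X))\,|\mathrm{det}(p)|^{s-1}\,dX\,dp
\]
is holomorphic at $s=1$ for every $W_{\pi}\in\mathcal{W}(\pi,\psi)$. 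The Tate-integral factor $\int_{F^\times}\Phi^\circ(e_nzk)|z|^{ns}\,d^\times z$ is entire in the relevant range and plays no role.

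The key step is to replace the appeal to \cite[\S 7.1 Proposition 1]{JaSh88} — which required unitarity — by an argument using the $(S_{2n},\Theta)$-distinction of $\pi$. Since $\mathrm{Hom}_{S_{2n}}(\pi,\Theta)\neq 0$, the restriction $\pi|_{P_{2n}}$ (or more precisely the relevant twisted Jacquet module along the unipotent radical of the Shalika parabolic) carries a $(P_{2n}\cap S_{2n},\Theta)$-invariant functional, and the Bernstein--Zelevinsky filtration of $\pi|_{P_{2n}}$ lets one read off the asymptotics of $W_\pi$ restricted to $S_{2n}$-orbits in terms of exponents of the derivatives $\pi^{(k)}$. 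Concretely I would: (i) use the Bernstein--Zelevinsky theory to write the inner integrand, as a function of $p$, as a finite sum of functions whose asymptotic behaviour along the torus is governed by central characters of subquotients of the shifted derivatives $\pi^{[k]}$; (ii) invoke Bernstein's criterion — as quoted in the proof of Proposition \ref{RS-multiplicityone}, from \cite[\S 7.4]{Bern} — which, under the distinction hypothesis that guarantees a nonzero invariant form survives, forces the top derivative to be unitary and the lower ones to have exponents with strictly positive real part; (iii) conclude that the $p$-integral converges absolutely for $\mathrm{Re}(s)\geq 1$, hence is holomorphic at $s=1$. This is the same mechanism by which \cite{AM} handles the Flicker--Rallis period in the merely-distinguished (non-unitary) case, and by which Proposition \ref{Bernstein-s=1} handles $L(s,\pi\times\pi^\vee)$ at $s=1$.

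The main obstacle I anticipate is bookkeeping the exact exponents appearing in the Bernstein--Zelevinsky expansion of the Jacquet--Shalika integrand and verifying that the distinction hypothesis — rather than unitarity — is genuinely enough to place them in the half-plane that yields convergence at $s=1$; this requires identifying the correct derivative/Jacquet-module along the Shalika unipotent and matching it with the filtration of $\pi|_{P_{2n}}$. A cleaner alternative, if the analytic estimate proves delicate, is to argue indirectly: the holomorphy of $L(s,\pi,\wedge^2)$ at $s=1$ is equivalent to the non-occurrence of a pole there, and by Matringe's characterization of poles of the exterior-square $L$-factor in terms of distinction (the analogue of \cite[Proposition 4.6]{MA15} for $\wedge^2$, cf. \cite[Lemma 3.2]{Jo20}), a pole at $s=1$ would force a distinction property of $\pi\nu^{-1/2}$ or $\pi\nu^{1/2}$ incompatible with $\pi$ itself being $(S_{2n},\Theta)$-distinguished. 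Either route delivers the claim; I would present the direct Bernstein-principle argument as the main proof and mention the pole-theoretic reformulation as a remark.
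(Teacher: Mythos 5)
The crux of your direct argument is step (ii), and it does not hold up: you claim that $(S_{2n},\Theta)$-distinction, fed through the Bernstein--Zelevinsky filtration, forces the top shifted derivative to be unitary and the lower exponents to have positive real part, hence absolute convergence of the mirabolic Jacquet--Shalika integral on $\mathrm{Re}(s)\geq 1$. Bernstein's criterion in \cite[\S 7.4]{Bern} is a statement about \emph{unitary} representations, and distinction gives no control on exponents. Indeed, $(S_{2n},\Theta)$-distinguished generic representations can be arbitrarily far from unitary: representations of the shape ${\rm Ind}(\sigma\nu^{a}\otimes\sigma^{\vee}\nu^{-a})$ carry Shalika functionals (and have trivial central character) for any real $a$, and for $a$ large the Jacquet--Shalika integrals do not converge anywhere near $\mathrm{Re}(s)=1$; only the meromorphically continued value at $s=1$ makes sense, which is precisely what must be shown to be finite. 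So the convergence mechanism you propose cannot work, and the ``main obstacle'' you flag is a genuine obstruction, not bookkeeping. Your fallback route is also incomplete as stated: poles of the full factor $L(s,\pi,\wedge^2)$ are governed by exceptional poles attached to the derivatives $\pi^{(2k)}$ (as in \cite{Jo20,MA15}), not by a distinction property of $\pi\nu^{\pm 1/2}$ itself, so the central-character incompatibility you would like to invoke does not apply directly and would require the whole derivative/exceptional-pole apparatus to be carried out at $s=1$.

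The paper's proof is different and much shorter: by \cite[Proposition 6.1]{JaRa}, $(S_{2n},\Theta)$-distinction forces $\pi\simeq\pi^{\vee}$; by the divisibility underlying \eqref{Ext-Sym-identity} one has $L(s,\pi\times\pi)=\mathcal{L}(s,\pi,\mathrm{Sym}^2)L(s,\pi,\wedge^2)$, so any pole of $L(s,\pi,\wedge^2)$ at $s=1$ would already be a pole of $L(s,\pi\times\pi)=L(s,\pi\times\pi^{\vee})$ there; and the latter is holomorphic at $s=1$ by Proposition \ref{Bernstein-s=1}, which rests on Bernstein's theorem for the integrals over $N_n\backslash P_n$ and needs neither unitarity nor any convergence of the Jacquet--Shalika integrals at $s=1$. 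To repair your write-up, replace steps (ii)--(iii) by this self-duality plus divisibility argument (or confine the absolute-convergence claim to the unitary case, which is exactly Proposition \ref{Shalika-convergence}).
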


\begin{proof}
It is shown in \cite[Proposition 6.1]{JaRa} that if an irreducible representation $\pi$ of $G_{2n}$ is $S_{2n}$-distinguished, $\pi$ is self-dual. In the aspect of \eqref{Ext-Sym-identity}, the local Rankin-Selberg $L$-function enjoys a 
factorization 
$L(s,\pi \times \pi)=\mathcal{L}(s,\pi,\mathrm{Sym}^2)L(s,\pi,\wedge^2)$. 
As a result, $L(s,\pi, \wedge^2)$ is holomorphic at $s=1$ as soon as $L(s,\pi \times \pi)$ is so. However this will be the case, according to Proposition \ref{Bernstein-s=1}.
\end{proof}

As mentioned earlier, we can ease the unitarity restriction in Theorem \ref{Shalika-unitary} if we assume that $\pi$ is $(S_{2n},\Theta)$-distinguished.

\begin{theorem}
\label{MirabolicShalika}
Let $\pi \in \mathcal{A}_F(2n)$ be a generic representation which is distinguished with respect to $(S_{2n},\Theta)$. Then we have
\[
\int_{N_n \backslash P_n} \int_{\mathcal{N}_n \backslash \mathcal{M}_n} W^{\circ}_{\pi} \left( \sigma_{2n} \begin{pmatrix} I_n & X \\ & I_n  \end{pmatrix} \begin{pmatrix} p & \\ & p \end{pmatrix} \right) \psi^{-1}({\rm Tr}(X)) dX dp
=
 \begin{cases}
  L(1,\pi_{ur},\wedge^2)& \text{if $\pi$ is ramified,} \\
  \dfrac{L(1,\pi,\wedge^2)}{L(n,{\bf 1}_{F^{\times}} )}& \text{otherwise.}  
  \end{cases}
\]
\end{theorem}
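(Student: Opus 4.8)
The plan is to mirror the argument of Theorem \ref{RS-distinction}: replace the unitarity hypothesis in Theorem \ref{Shalika-unitary}\ref{JS-1} by the distinction hypothesis, the point being that $(S_{2n},\Theta)$-distinction forces self-duality (hence $\omega_\pi = {\bf 1}_{F^\times}$, explaining the denominator $L(n,{\bf 1}_{F^\times})$), and, via the factorization $L(s,\pi\times\pi)=\mathcal{L}(s,\pi,\mathrm{Sym}^2)L(s,\pi,\wedge^2)$ from Proposition \ref{Sdistinction}, guarantees holomorphy of $L(s,\pi,\wedge^2)$ at $s=1$, which is exactly the regularity input that in the unitary case was supplied by \cite[\S 7.1 Proposition 1]{JaSh88} via Proposition \ref{Shalika-convergence}.

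First I would record, as in the proof of Proposition \ref{Sdistinction}, that by \cite[Proposition 6.1]{JaRa} the representation $\pi$ is self-dual and that $\omega_\pi = {\bf 1}_{F^\times}$; combined with Proposition \ref{Bernstein-s=1} applied to $\pi\times\pi^\vee \simeq \pi\times\pi$ and the factorization $L(s,\pi\times\pi) = \mathcal{L}(s,\pi,\mathrm{Sym}^2)L(s,\pi,\wedge^2)$, this yields that $L(s,\pi,\wedge^2)$ is holomorphic at $s=1$. Next I would run the same computation that proves Theorem \ref{Shalika-unitary}\ref{JS-1}: using the argument in the proof of Proposition \ref{Shalika-convergence} (partial Iwasawa decomposition $G_n = Z_n N_n K_n$, right $K_n^\circ$-invariance of the essential Whittaker function, and \cite[Lemma 2.6]{MY} to identify $g\mapsto\Phi_c(e_ng)$ with the characteristic function of $P_nK_1(\mathfrak{p}^c)$), the even Jacquet-Shalika integral $J(s,W^\circ_\pi,\Phi_c)$ collapses to
\[
 J(s,W^{\circ}_{\pi},\Phi_c)=\int_{N_n \backslash P_n}  \int_{\mathcal{N}_n \backslash \mathcal{M}_n} W^{\circ}_{\pi} \left( \sigma_{2n} \begin{pmatrix} I_n & X \\ & I_n  \end{pmatrix} \begin{pmatrix} p & \\ & p \end{pmatrix} \right) \psi^{-1}({\rm Tr}(X))  |\mathrm{det}(p)|^{s-1} dX dp.
\]
By Theorem \ref{JSEQ}\ref{JSEQ-1} the left side equals $L(s,\pi_{ur},\wedge^2)$, and by Corollary following Theorem \ref{JSEQ} we have $L(s,\pi_{ur},\wedge^2) = P(q^{-s})L(s,\pi,\wedge^2)$ for a polynomial $P$ with $P(0)=1$, so the right side — and in particular $\Lambda^{JS}(W^\circ_\pi)$ once we can set $s=1$ — is holomorphic at $s=1$ as a product of a polynomial and the now-known-regular $L(s,\pi,\wedge^2)$.

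Finally I would take the limit $s\to 1$. The point needing care is that collapsing $J(s,W^\circ_\pi,\Phi_c)$ to the displayed mirabolic integral is an identity of meromorphic functions valid a priori for $\mathrm{Re}(s)\gg 0$; since the right-hand mirabolic integral defines $\Lambda^{JS}(W^\circ_\pi)$ at $s=1$ and the whole expression is holomorphic there, evaluating both sides at $s=1$ gives
\[
\Lambda^{JS}(W^\circ_\pi) = L(1,\pi_{ur},\wedge^2) =
\begin{cases}
L(1,\pi_{ur},\wedge^2) & \text{if $\pi$ is ramified,}\\[1ex]
\dfrac{L(1,\pi,\wedge^2)}{L(n,{\bf 1}_{F^\times})} & \text{otherwise,}
\end{cases}
\]
where in the unramified case $\pi_{ur}=\pi$ and the factor $L(n,{\bf 1}_{F^\times})$ arises exactly as in Theorem \ref{Shalika-unitary}\ref{JS-1} (it is $L(n,\omega_\pi)$ with $\omega_\pi = {\bf 1}_{F^\times}$), tracked through the proof of \cite[Theorem 6.2]{AM}. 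The main obstacle, and the only genuinely non-routine step, is justifying that the mirabolic integral is absolutely convergent at $s=1$ without unitarity: this is precisely what Proposition \ref{Sdistinction} buys us, since without a convergence-at-$s=1$ statement à la \cite[\S 7.1 Proposition 1]{JaSh88} one only knows holomorphy of the rational function $J(s,W^\circ_\pi,\Phi_c)$, and one must argue that the integral itself (not merely its analytic continuation) converges there — which follows because the local functional equation and the structure of the Bernstein–Zelevinsky filtration force the integral defining $\Lambda^{JS}$ to converge whenever the associated $L$-factor is regular at that point, exactly the mechanism underlying Proposition \ref{Shalika-convergence}.
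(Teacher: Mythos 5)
Your proposal is correct and follows essentially the same route as the paper, which gives no separate proof of Theorem \ref{MirabolicShalika} precisely because it is the combination you describe: Proposition \ref{Sdistinction} (self-duality and trivial central character via \cite{JaRa}, hence holomorphy of $L(s,\pi,\wedge^2)$ at $s=1$ through Proposition \ref{Bernstein-s=1} and the factorization \eqref{Ext-Sym-identity}) together with the collapse of $J(s,W^{\circ}_{\pi},\Phi_c)$ to the mirabolic integral as in Proposition \ref{Shalika-convergence} and Theorem \ref{JSEQ}\ref{JSEQ-1}, evaluated at $s=1$, with the unramified case treated as in \cite[Theorem 6.2]{AM}. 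Two small caveats: your closing claim that regularity of the $L$-factor forces absolute convergence of the defining integral at $s=1$ is stronger than what the paper asserts or needs (Proposition \ref{Sdistinction} only makes $\Lambda^{JS}$ well-defined in the sense that $L(s,\pi,\wedge^2)$ is holomorphic at $s=1$), and the middle equality $\Lambda^{JS}(W^{\circ}_{\pi})=L(1,\pi_{ur},\wedge^2)$ in your final display should be dropped in the unramified case, where the extra Tate factor $L(n,{\bf 1}_{F^{\times}})$ appears exactly as your prose explains.
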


We record the straightforward consequence from the proof of \cite[Proposition 3.4]{Jo20} that is built on \cite{MA15}.

\begin{proposition}
\label{Shalika-one-dimension}
Let $\pi \in \mathcal{A}_F(2n)$ be a unitary representation. Then
\[
  {\rm dim}_{\mathbb{C}} {\rm Hom}_{P_{2n} \cap S_{2n}}(\pi,\Theta) \leq 1.
\]
The equality holds when $\pi$ is generic. In particular, if $\pi$ is $(S_{2n},\Theta)$-distinguished, then $W_{\pi} \mapsto \Lambda^{JS}(W_{\pi} )$ gives a unique non-trivial $S_{2n}$-quasi-invariant linear functional belonging to the space ${\rm Hom}_{P_{2n} \cap S_{2n}}(\pi,\Theta)$.
\end{proposition}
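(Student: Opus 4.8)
The plan is to reduce the at-most-one-dimensionality of $\mathrm{Hom}_{P_{2n} \cap S_{2n}}(\pi,\Theta)$ to a statement about ordinary $\mathrm{Hom}$-spaces for smaller general linear groups by running a Bernstein--Zelevinsky filtration argument, entirely parallel to the proof of Proposition \ref{RS-multiplicityone} (and as in \cite[Proposition 3.4]{Jo20}, which itself adapts \cite{MA15}). First I would observe that the subgroup $P_{2n} \cap S_{2n}$ is, up to the unipotent piece $\left(\begin{smallmatrix} I_n & Z \\ & I_n \end{smallmatrix}\right)$ on which $\Theta$ is the character $\psi(\mathrm{Tr}(Z))$, isomorphic to the group of pairs $\left(\begin{smallmatrix} p & \\ & p \end{smallmatrix}\right)$ with $p \in P_n$; so a $(P_{2n} \cap S_{2n},\Theta)$-functional on $\pi$ is the same as a $P_n$-invariant functional on the twisted Jacquet module of $\pi|_{S_{2n}}$ along the character $\psi(\mathrm{Tr}(Z))$, where $P_n$ acts diagonally. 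This is exactly the local analogue of the linear-period/Shalika-period comparison, and it lets me import the derivative machinery: I would write the Bernstein--Zelevinsky filtration of $\pi|_{P_{2n}}$, intersect with $S_{2n}$, and apply the functors $\Phi^\pm,\Psi^\pm$ of \cite[\S 3.2]{BeZe} to peel off layers, using \cite[Proposition 1.4, Proposition 3.2]{JPSS3} (exactly the vanishing statement $\mathrm{Hom}_{P}((\Phi^+)^{i-1}\Psi^+(\cdot),(\Phi^+)^{j-1}\Psi^+(\cdot))=0$ for $i\neq j$) to kill all but one graded piece, which then contributes a space of the form $\mathrm{Hom}_{G_{n-k}}(\text{shifted derivative}, \text{Shalika-type functional})$.

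The next step is the inductive bookkeeping: having localized to the top nonvanishing derivative, Bernstein's criterion \cite[\S 7.4]{Bern} tells us that this derivative is irreducible and unitary and that the lower ones have central characters with positive real part, so the relevant $\mathrm{Hom}$-space for $G_{n-h}$ is controlled and one closes the loop by induction on the rank, the base case being trivial. This gives $\dim_{\mathbb C}\mathrm{Hom}_{P_{2n}\cap S_{2n}}(\pi,\Theta)\le 1$. For the equality when $\pi$ is generic, I would invoke that a generic representation has a nonzero top derivative equal to the character $\nu^{1/2}$ (the ``$(k=2n)$-layer''), which produces the Jacquet--Shalika functional $\Lambda^{JS}$; alternatively, Proposition \ref{Shalika-convergence} already exhibits $\Lambda^{JS}(W_\pi)$ as a nonzero element of this space in the unitary generic case (nonzero because Theorem \ref{Shalika-unitary}\ref{JS-1} evaluates it to an $L$-value, which is a nonzero complex number), and then the upper bound forces dimension exactly one. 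Finally, the ``in particular'' clause: if $\pi$ is $(S_{2n},\Theta)$-distinguished, then the restriction map $\mathrm{Hom}_{S_{2n}}(\pi,\Theta)\hookrightarrow \mathrm{Hom}_{P_{2n}\cap S_{2n}}(\pi,\Theta)$ is injective into a one-dimensional space, hence an isomorphism onto it, so the genuine Shalika functional restricts to $\Lambda^{JS}$ up to scalar, proving it spans.

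I expect the main obstacle to be the correct identification of the intersection $P_{2n}\cap S_{2n}$ and the compatibility of the $\Theta$-twist with the Bernstein--Zelevinsky filtration: the character $\Theta$ is nontrivial on a large unipotent, so one must check carefully which derivative layers survive the twisted Jacquet functor, that the functors $\Phi^-,\Psi^-$ interact with the diagonal $P_n$-action and with $\psi(\mathrm{Tr}(Z))$ in the way needed for \cite[Proposition 1.4]{JPSS3} to apply, and that the ``shifted derivative'' $\pi_1^{[k]}$ pairs against a Shalika-type rather than a contragredient-type functional --- the duality step $\tau_1^{(k)}\simeq \nu^{-1}\otimes(\sigma_1^{(k)})^\vee$ from the Rankin--Selberg case must be replaced by the corresponding self-duality input, which is where \cite[Proposition 6.1]{JaRa} (self-duality of $S_{2n}$-distinguished $\pi$) and the factorization $L(s,\pi\times\pi)=\mathcal{L}(s,\pi,\mathrm{Sym}^2)L(s,\pi,\wedge^2)$ enter. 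Since the statement is explicitly flagged as following from the proof of \cite[Proposition 3.4]{Jo20}, the cleanest write-up is to cite that argument for the inequality and only supply the short convergence-plus-nonvanishing argument for the equality and the extension clause.
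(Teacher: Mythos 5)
Your opening reformulation is fine: an element of ${\rm Hom}_{P_{2n}\cap S_{2n}}(\pi,\Theta)$ is indeed a $P_n$-invariant functional on the $\psi\circ{\rm Tr}$-twisted Jacquet module, since $P_{2n}\cap S_{2n}\simeq \mathcal{M}_n\rtimes P_n$ with $P_n$ embedded diagonally. But the heart of your plan --- running the Bernstein--Zelevinsky filtration of $\pi|_{P_{2n}}$ directly against the Shalika character --- is precisely the step you flag as an obstacle and never supply, and it does not go through as written: the vanishing statement \cite[Proposition 1.4]{JPSS3} and the duality step used in Proposition \ref{RS-multiplicityone} are tailored to the mirabolic restriction with its standard (untwisted or Whittaker-type) characters, whereas $\Theta$ is non-trivial on the unipotent radical $\mathcal{M}_n$ of the $(n,n)$-parabolic, which sits transversally to the filtration $P_{2n}=G_{2n-1}\ltimes U_{2n}$; the twisted Jacquet functor along $\psi({\rm Tr}(Z))$ does not respect the layers $(\Phi^+)^{k-1}\Psi^+(\pi_1^{(k)})$ in the naive way, so no graded piece is eliminated for free and the induction never gets started. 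The paper avoids this entirely by a different reduction: it first invokes \cite[Proposition 4.3]{Ma14JNT} to embed ${\rm Hom}_{P_{2n}\cap S_{2n}}(\pi,\Theta)$ into ${\rm Hom}_{P_{2n}\cap M_{2n}}(\pi,{\bf 1})$ (a Shalika-to-linear comparison on the mirabolic group, which is exactly the missing idea in your sketch), then conjugates by $w_{2n}$ to identify the latter with ${\rm Hom}_{P_{2n}\cap H_{2n}}(\pi,{\bf 1}_{P_{2n}\cap H_{2n}})$, and only there quotes the derivative argument --- for the trivial character, where it does work --- in the form of Proposition \ref{linear-dimone}, i.e. \cite[Corollary 4.2]{MA15}. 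Your fallback of simply citing the proof of \cite[Proposition 3.4]{Jo20} is consistent with the paper's framing, but it amounts to quoting the result rather than proving it, and as written your proposal neither carries out the filtration argument nor identifies the intermediate comparison, so the bound ${\rm dim}_{\mathbb{C}}\,{\rm Hom}_{P_{2n}\cap S_{2n}}(\pi,\Theta)\le 1$ is not actually established.

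The remaining clauses of your write-up do agree with the paper: non-vanishing of $\Lambda^{JS}$ at the essential Whittaker function via Theorem \ref{Shalika-unitary} (or Theorem \ref{MirabolicShalika} in the distinguished case) yields the equality for generic $\pi$, and the inclusion ${\rm Hom}_{S_{2n}}(\pi,\Theta)\subseteq{\rm Hom}_{P_{2n}\cap S_{2n}}(\pi,\Theta)$ becomes an isomorphism once the right-hand side is at most one-dimensional and the left-hand side is non-zero, which is exactly how the paper obtains the ``in particular'' statement.
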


\begin{proof}
Thanks to \cite[Proposition 4.3]{Ma14JNT}, the space ${\rm Hom}_{P_{2n} \cap S_{2n}}(\pi,\Theta)$ embeds as the subspace of ${\rm Hom}_{P_{2n} \cap M_{2n}}(\pi,{\bf 1})$. Conjugating by $w_{2n}$ provides the isomorphism  
\[
{\rm Hom}_{P_{2n} \cap M_{2n}}(\pi,{\bf 1}_{P_{2n} \cap M_{2n}}) \simeq {\rm Hom}_{P_{2n} \cap H_{2n}}(\pi,{\bf 1}_{P_{2n}\cap H_{2n}}).
\]
We kindly refer the reader to \S \ref{FJperiod} for the exact definitions of $w_{2n}$ and $H_{2n}$. It is evident from the proof of \cite[Corollary 4.2]{MA15} that the latter space ${\rm Hom}_{P_{2n} \cap H_{2n}}(\pi,{\bf 1}_{P_{2n}\cap H_{2n}})$ has dimension at most one (cf. Proposition \ref{linear-dimone}). Keeping in mind the assumption ${\rm Hom}_{S_{2n}}(\pi,\Theta) \neq 0$, the inclusion ${\rm Hom}_{S_{2n}}(\pi,\Theta) \subseteq {\rm Hom}_{P_{2n} \cap S_{2n}}(\pi,\Theta) $ induces the isomorphism 
\[
{\rm Hom}_{P_{2n} \cap S_{2n}}(\pi,\Theta) \simeq {\rm Hom}_{S_{2n}}(\pi,\Theta)
\]
which we utilize to construct a non-trivial $S_{2n}$-quasi-invariant linear functional, from Theorem \ref{MirabolicShalika}.
\end{proof}

%We put $n=\lfloor (m+1)/2 \rfloor$ and $n'=\lfloor m/2 \rfloor$ 

\subsection{The Friedberg-Jacquet (linear) period}
\label{FJperiod}

We briefly remind the reader of the integral representation introduced by Bump and Friedberg \cite{BF,MY}.
We define the embedding $J: G_n \times G_{n} \rightarrow G_m$ by
\[
 J(g,g^{\prime})_{k,l}=
  \begin{cases}
  g_{i,j} &  \text{if $k=2i-1$, $l=2j-1$, } \\
  g^{\prime}_{i,j} &  \text{if $k=2i$, $l=2j$,} \\
 0 &  \text{otherwise,} \\
  \end{cases}
\]
for $m=2n$ even
and  $J: G_{n+1} \times G_n \rightarrow G_m$ by
\[
 J(g,g^{\prime})_{k,l}=
  \begin{cases}
  g_{i,j} &  \text{if $k=2i-1$, $l=2j-1$, } \\
  g^{\prime}_{i,j} &  \text{if $k=2i$, $l=2j$,} \\
 0 &  \text{otherwise,} \\
  \end{cases}
\]
for $m=2n+1$ odd. As for the purpose of holding onto coherent terminology with Matringe \cite{MA15,MA17}, the reader should perceive that the role of $g$ and $g'$ in \cite{BF,MY} is swapped for even cases. The test vector problem for Bump-Friedberg exterior square $L$-factor has been settled in \cite[Theorem 5.1]{MY}.

\begin{theorem}[Miyauchi and Yamauchi] Let $\pi \in \mathcal{A}_F(m)$ be a generic representation. We set $c=c(\pi)$.
\label{BFEQ}
\begin{enumerate}[label=$(\mathrm{\alph*})$]
\item\label{BFL-1} Suppose that $m=2n$ is even. Then we have
\begin{equation}
\label{BF-original-even}
\begin{split}
&L(s_1,\pi_{ur})L(s_2,\pi_{ur},\wedge^2)\\
&=\int_{N_{n} \backslash G_{n}} \int_{N_n \backslash G_n} W^{\circ}_{\pi}(J(g,g^{\prime})) \Phi_c(e_ng') |\mathrm{det}(g)|^{s_1-1/2} |\mathrm{det}(g^{\prime})|^{1/2+s_2-s_1} dg dg^{\prime}.
\end{split}
\end{equation}
\item\label{BFL-2} Suppose that $m=2n+1$ is odd. Then we obtain
\begin{equation}
\label{BF-original-odd}
\begin{split}
&L(s_1,\pi_{ur})L(s_2,\pi_{ur},\wedge^2)\\
&=\int_{N_{n} \backslash G_{n}} \int_{N_{n+1} \backslash G_{n+1}} W^{\circ}_{\pi}(J(g,g^{\prime})) \Phi_c(e_{n+1}g) |\mathrm{det}(g)|^{s_1} |\mathrm{det}(g^{\prime})|^{s_2-s_1} dg dg^{\prime}.
\end{split}
\end{equation}
\end{enumerate}
\end{theorem}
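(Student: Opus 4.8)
The plan is to run the same machine as in the proof of Theorem~\ref{RSNewVector} and Theorem~\ref{test-Asai}: collapse the Schwartz--Bruhat variable onto a mirabolic, apply Iwasawa, and then invoke the essential Whittaker formula of Theorem~\ref{NewV} to reduce the ramified Bump--Friedberg integral to the unramified one attached to $\pi_{ur}$, whose value is known. I treat the even case $m=2n$ in detail; the odd case $m=2n+1$ is parallel, the only change being that the factor $\Phi_c(e_{n+1}g)$ now sits on the $G_{n+1}$-variable and that the embedding is $J\colon G_{n+1}\times G_n\to G_{2n+1}$ rather than $J\colon G_n\times G_n\to G_{2n}$.

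First, by \cite[Lemma~2.6]{MY} the function $g'\mapsto\Phi_c(e_ng')$ on $G_n$ is the characteristic function of $P_nK_1(\mathfrak p^c)$. Since $J(g,g'k')=J(g,g')J(I_n,k')$ and $J(I_n,k')$ lies in $K_1(\mathfrak p^c)$ of $G_{2n}$ whenever $k'\in K_1(\mathfrak p^c)$, the right $K_1(\mathfrak p^c)$-invariance of $W^\circ_\pi$ lets us carry out the partial Iwasawa decomposition $G_n=Z_nN_n(A_n\cap P_n)K_n$ in the $g'$-slot, peel the centre off against a Tate integral $\int_{1+\mathfrak p^c}\omega_\pi(z)|z|^{\bullet}\,d^\times z$ (a volume, equal to $1$ under our normalisations), and thereby replace $\int_{N_n\backslash G_n}(\cdots)\Phi_c(e_ng')\,dg'$ by $\int_{N_n\backslash P_n}(\cdots)\,dp'$, exactly as in the reductions near \eqref{mirabolic}. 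Next, apply Iwasawa to the outer variable, $g=ak$ with $a\in A_n$, and to $p'=u'a'k'$ with $a'\in A_n\cap P_n$, so $a'$ has last entry $1$; the compact integrations are absorbed by right invariance, leaving a torus integral over $A_n\times(A_n\cap P_n)$ against $\delta_{B_n}^{-1}\otimes\delta_{B_n}^{-1}$ and the two $|\det|^{\bullet}$-factors. For diagonal $a,a'$ the matrix $J(a,a')$ is the diagonal matrix with interleaved entries $(a_1,a'_1,\dots,a_{n-1},a'_{n-1},a_n,1)$, hence of the form $\begin{pmatrix}b&\\&1\end{pmatrix}$ with $b\in A_{2n-1}$, and Theorem~\ref{NewV} (with ``$n$''$=2n$ and ``$r$''$=p'$ the rank of $\pi_{ur}$) rewrites $W^\circ_\pi(J(a,a'))$ as $W^\circ_{\pi_{ur}}$ evaluated on the first $p'$ interleaved coordinates, times $\nu^{(2n-p')/2}$ of them, times ${\bf 1}_{\mathcal O}$ on the $p'$-th coordinate and ${\bf 1}_{\mathcal O^\times}$ on coordinates $p'+1,\dots,2n-1$.

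Finally, the indicator functions ${\bf 1}_{\mathcal O^\times}$ freeze the excess torus coordinates to units (volume $1$), while the first $p'$ coordinates retain the interleaved pattern: $(a_1,a'_1,\dots,a_\ell,a'_\ell)$ if $p'=2\ell$ and $(a_1,a'_1,\dots,a_\ell,a'_\ell,a_{\ell+1})$ if $p'=2\ell+1$. Collecting the surviving $\delta_{B_{p'}}^{-1}$, the $\nu$-shift from Theorem~\ref{NewV}, and the ${\bf 1}_{\mathcal O}$, the integral is precisely the unramified (truncated) Bump--Friedberg zeta integral attached to $W^\circ_{\pi_{ur}}$ on $G_{p'}$, of even or odd type according to the parity of $p'$, and this evaluates to $L(s_1,\pi_{ur})L(s_2,\pi_{ur},\wedge^2)$ by the spherical computation of Bump and Friedberg (cf. \cite{BF} and \cite[\S5]{MY}); the odd case is identical after collapsing $\Phi_c(e_{n+1}g)$ onto $P_{n+1}$.

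I expect the one genuinely delicate point to be the last paragraph's bookkeeping of modulus characters and $|\det|$-exponents across the parity change: an even-type ramified integral with $m=2n$ may truncate to an odd-type unramified integral when $p'$ is odd, and one must verify that $|\det g|^{s_1-1/2}|\det g'|^{1/2+s_2-s_1}$ becomes $|\det g|^{s_1}|\det g'|^{s_2-s_1}$ exactly because of the factor $\nu^{(2n-p')/2}$ together with the discrepancy between $\delta_{B_{2n-1}}$ and $\delta_{B_{p'}}$. Once this bookkeeping is in place the proof, like that of Theorem~\ref{RSNewVector}, is ``almost identical'' to its spherical prototype in \cite{MY}, so I would only spell out the points above and refer to \cite{MY} for the remaining routine computation.
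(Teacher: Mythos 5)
The first thing to say is that the paper contains no proof of Theorem \ref{BFEQ} to compare against: it is imported wholesale from \cite[Theorem 5.1]{MY}, and the author's own computations of this type appear only for the related statements (Theorem \ref{RSNewVector}, Theorem \ref{test-Asai}, and, at the special point $(s_1,s_2)=(1/2,1)$, Theorem \ref{Hm-unitary}). Your plan --- collapse the Schwartz--Bruhat variable onto the mirabolic, apply Iwasawa in both slots, invoke Theorem \ref{NewV}, and reduce to the spherical Bump--Friedberg computation --- is exactly the strategy used there and in \cite{MY}, so the approach is the right one. Two inaccuracies in the reduction step should be fixed, though. There is no ``Tate integral $\int_{1+\mathfrak{p}^c}\omega_\pi(z)|z|^{\bullet}d^{\times}z$'': the centre of the $g'$-slot is not central in $G_{2n}$, since $J(I_n,zI_n)=\mathrm{diag}(1,z,\dots,1,z)$, so no $\omega_\pi(z)$ can be pulled out of $W^{\circ}_{\pi}$. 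What saves the manipulation is that for $c>0$ the support of $\Phi_c$ forces $z\in 1+\mathfrak{p}^c$, and then $J(I_n,zk')$ lies in $K_1(\mathfrak{p}^c)$ of $G_{2n}$ (only the last row is constrained there), so $W^{\circ}_{\pi}$ simply does not see it and the $z$-integral is a normalized volume. Relatedly, your appeal to \cite[Lemma 2.6]{MY} is only valid for $c>0$: for $c=0$ the function $g'\mapsto\Phi_0(e_ng')$ is not the characteristic function of $P_nK_n$ (the paper explicitly highlights this), so the unramified case should be treated separately by quoting the spherical Bump--Friedberg computation directly.

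The more substantive issue is that the step you yourself flag as ``the one genuinely delicate point'' is in fact the entire content of the theorem, and you assert rather than verify it. After Theorem \ref{NewV} freezes the trailing interleaved coordinates to units, one must check, in all four parity cases ($m$ even or odd, $r$ even or odd), that the surviving product of $\delta_{B_n}^{-1}\otimes\delta_{B_{n-1}}^{-1}$, the shift $\nu^{(m-r)/2}$, and the exponents $|\det g|^{s_1-1/2}|\det g'|^{1/2+s_2-s_1}$ (resp.\ $|\det g|^{s_1}|\det g'|^{s_2-s_1}$) recombine into precisely the unramified Bump--Friedberg integrand on $G_r$ at the same $(s_1,s_2)$, via identities such as $\delta_{B_{r/2}}^{-1}(b)\,\delta_{B_{r/2}}^{-1}(b')\,\nu^{-1/2}(b)\,\nu^{1/2}(b')=\delta_{B_{r}}^{-1/2}(J(b,b'))$ and its odd-$r$ analogue. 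The paper's proof of Theorem \ref{Hm-unitary} carries this out at $(s_1,s_2)=(1/2,1)$, and you would need the same bookkeeping with general $(s_1,s_2)$; until that computation is written out (or the statement is simply cited from \cite[Theorem 5.1]{MY}, as the paper does), the proposal is a correct outline rather than a proof.
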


To be more compatible with the standard language, we retain notations from Matringe \cite{MA15,MA17}. For $m=2n$ even, we denote by $M_{2n}$ the standard Levi of $G_{2n}$ associated with the partition $(n,n)$ of $2n$. Let $w_{2n}=\sigma_{2n}$ and then we set $H_{2n}=w_{2n}M_{2n}w^{-1}_{2n}$. Let $w_{2n+1}=w_{2n+2}|_{G_{2m+1}}$ so that
\[
 w_{2n+1}=\begin{pmatrix} 1 & 2 & \dotsm & n+1 & | & n+2 & n+3 & \dotsm &2n & 2n+1 \\ 
                                                1 & 3 & \dotsm & 2n+1 & | &  2 & 4 & \dotsm & 2n-2 &2n \\ 
                                                \end{pmatrix}.
\]
In the odd case, $w_{2n+1} \neq \sigma_{2n+1}$ and we let $M_{2n+1}$ denote the standard Levi associated to the partition $(n+1,n)$ of $2n+1$. We set $H_{2n+1}=w_{2n+1}M_{2n+1}w^{-1}_{2n+1}$. We note that $H_m$ is compatible in the sense that $H_m \cap G_{m-1}=H_{m-1}$ and we can easily see that $J(g,g')=w_m \mathrm{diag}(g,g') w^{-1}_m$ for $\mathrm{diag}(g,g') \in M_m$. If $\alpha$ is a real number, we denote by $\chi_{\alpha}$ the character 
\[
  \chi_{\alpha} : J(g,g^{\prime}) \mapsto \left| \frac{\mathrm{det}(g)}{\mathrm{det}(g^{\prime})} \right|^{\alpha}
\]
of $H_m$. We denote by $\chi_m$ and $\mu_m$ characters of $H_m$;
\[
  \chi_m(J(g,g'))=
  \begin{cases}
{\bf 1}_{H_m} & \text{for $m=2n$,} \\
\displaystyle  \left| \frac{\mathrm{det}(g)}{\mathrm{det}(g^{\prime})} \right| & \text{for $m=2n+1$,} \\
\end{cases}
\quad
 \mu_m(J(g,g'))=
  \begin{cases}
\displaystyle  \left| \frac{\mathrm{det}(g)}{\mathrm{det}(g^{\prime})} \right| & \text{for $m=2n$.} \\
{\bf 1}_{H_m} & \text{for $m=2n+1$,} \\
\end{cases}
\]
We now turn toward the case for $s_1=s$ and $s_2=2s$. By taking all these accounts, primary Bump-Friedberg integrals \eqref{BF-original-even} and \eqref{BF-original-odd} can be unified as one single integral $\mathcal{Z}(s,\chi_{-1/2}, W_{\pi},\Phi)$ of the form;
\[
\begin{split}
  &\mathcal{Z}(s,\chi_{-1/2}, W_{\pi},\Phi)
  =\int_{(N_m \cap H_m) \backslash H_m} W_{\pi}(h)\chi_{-1/2}(h)\chi_m^{1/2}(h) \Phi(l_m(h)) |\mathrm{det}(h)|^s dh\\
  &=\begin{cases}
  \displaystyle
  \int_{N_{n} \backslash G_{n}} \int_{N_n \backslash G_n} W_{\pi}(J(g,g^{\prime})) \Phi(e_ng')  \left|\frac{\mathrm{det}(g)}{\mathrm{det}(g^{\prime})} \right|^{-\frac{1}{2}}\ |\mathrm{det}(gg')|^s dg dg^{\prime} & \text{for $m=2n$,} \\
   \displaystyle
  \int_{N_{n} \backslash G_{n}} \int_{N_{n+1} \backslash G_{n+1}} W_{\pi}(J(g,g^{\prime})) \Phi(e_{n+1}g) |\mathrm{det}(gg')|^s dg dg^{\prime} & \text{for $m=2n+1$,}
\end{cases}
\end{split}
\]
where $W_{\pi} \in \mathcal{W}(\pi,\psi)$, $\Phi \in \mathcal{S}(F^{[(m+1)/2]})$, and $l_m(J(g,g'))$ is $e_ng'$ for $m=2n$ even and $e_{n+1}g$ for $m=2n+1$ odd. The integral $\mathcal{Z}(s,\chi_{-1/2}, W_{\pi},\Phi)$ converges absolutely for $s$ of real part large enough. The $\mathbb{C}$-vector space generated by the local Bump-Friedberg integrals 
\[
 \langle \mathcal{Z}(s,\chi_{-1/2}, W_{\pi},\Phi)\;|\; W_{\pi} \in \mathcal{W}(\pi,\psi), \Phi \in \mathcal{S}(F^{[(m+1)/2]}) \rangle
\]
is in fact a $\mathbb{C}[q^{\pm s}]$-fractional ideal $\mathcal{I}(\pi,\chi_{-1/2},\wedge^2)$ of $\mathbb{C}(q^{-s})$. The ideal $\mathcal{I}(\pi,\chi_{-1/2},\wedge^2)$ is principal, and has a unique generator of the form $P(q^{-s})^{-1}$, where $P(X)$ is a polynomial in $\mathbb{C}[X]$ with $P(0)=1$. The local Bump-Friedberg $L$-function associated to $\pi$ is defined by the unique normalized generator;
  \[
   L^{BF}(s,\pi,\chi_{-1/2})=\frac{1}{P(q^{-s})}.
  \]
The formal local $L$-function $L(s,\pi_{ur})(2s,\pi_{ur},\wedge^2)$ does not always agree with $L^{BF}(s,\pi,\chi_{-1/2})$.

\begin{corollary} Let $\pi \in \mathcal{A}_F(m)$ be a generic representation. Then we have
\[
 L(s,\pi_{ur})(2s,\pi_{ur},\wedge^2)=P(q^{-s})L^{BF}(s,\pi,\chi_{-1/2})
\]
for a polynomial $P(X) \in \mathbb{C}[X]$ satisfying $P(0)=1$.
\end{corollary}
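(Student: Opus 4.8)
The plan is to follow the template of Corollary \ref{RS-divisibility} (and of its Asai and exterior square analogues above): realize the formal Bump--Friedberg $L$-factor as a distinguished value of the unified local integral attached to the newform, conclude that it is an element of the relevant fractional ideal, and then read off the shape of the quotient by the normalized generator.

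First I would specialize Theorem \ref{BFEQ} of Miyauchi and Yamauchi to the line $s_1=s$, $s_2=2s$. Comparing the right-hand sides of \eqref{BF-original-even} and \eqref{BF-original-odd} after this substitution with the definition of $\mathcal{Z}(s,\chi_{-1/2},W_{\pi},\Phi)$, the weight factors agree term by term: in the even case $|\mathrm{det}(g)|^{s_1-1/2}|\mathrm{det}(g')|^{1/2+s_2-s_1}$ becomes $|\mathrm{det}(g)|^{s-1/2}|\mathrm{det}(g')|^{s+1/2}=|\mathrm{det}(g)/\mathrm{det}(g')|^{-1/2}|\mathrm{det}(gg')|^{s}$, and in the odd case $|\mathrm{det}(g)|^{s_1}|\mathrm{det}(g')|^{s_2-s_1}=|\mathrm{det}(gg')|^{s}$, while the Schwartz--Bruhat arguments $e_ng'$ and $e_{n+1}g$ are exactly $l_m$. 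Hence Theorem \ref{BFEQ} with this choice of parameters reads
\[
 L(s,\pi_{ur})L(2s,\pi_{ur},\wedge^2)=\mathcal{Z}(s,\chi_{-1/2},W^{\circ}_{\pi},\Phi_c),
\]
so the left-hand side lies in the $\mathbb{C}[q^{\pm s}]$-fractional ideal $\mathcal{I}(\pi,\chi_{-1/2},\wedge^2)$, being the value of the integral at the pure tensor $(W^{\circ}_{\pi},\Phi_c)$.

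Since $L^{BF}(s,\pi,\chi_{-1/2})$ is the normalized generator of the principal ideal $\mathcal{I}(\pi,\chi_{-1/2},\wedge^2)$, membership yields a Laurent polynomial $R\in\mathbb{C}[q^{\pm s}]$ with $L(s,\pi_{ur})L(2s,\pi_{ur},\wedge^2)=R\cdot L^{BF}(s,\pi,\chi_{-1/2})$. To upgrade $R$ to a polynomial $P(q^{-s})$ in $q^{-s}$ with $P(0)=1$, I would use that both $L^{BF}(s,\pi,\chi_{-1/2})^{-1}$ and, by Theorem \ref{G-J} and Proposition \ref{formal-exterior}, $\big(L(s,\pi_{ur})L(2s,\pi_{ur},\wedge^2)\big)^{-1}=\prod_i(1-\alpha_i(\varpi)q^{-s})\prod_{i<j}(1-\alpha_i(\varpi)\alpha_j(\varpi)q^{-2s})$ are genuine polynomials in $q^{-s}$ with constant term $1$; rewriting the identity as $L^{BF}(s,\pi,\chi_{-1/2})^{-1}=R\cdot\big(L(s,\pi_{ur})L(2s,\pi_{ur},\wedge^2)\big)^{-1}$ and matching the lowest-order monomials in $q^{-s}$ on the two sides forces $R$ to carry no negative power of $q^{-s}$ and to have constant term $1$. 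There is no real obstacle here; the only point needing the short bookkeeping just indicated is excluding negative powers of $q^{-s}$ in the quotient, which is precisely what the normalization $P(0)=1$ records and which is dictated by the matching normalizations of the formal factor and of $L^{BF}$.
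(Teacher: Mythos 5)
Your proposal is correct and follows essentially the same route as the paper: specialize Theorem \ref{BFEQ} at $s_1=s$, $s_2=2s$, identify the result with $\mathcal{Z}(s,\chi_{-1/2},W^{\circ}_{\pi},\Phi_c)$, and conclude that the formal factor lies in $\mathcal{I}(\pi,\chi_{-1/2},\wedge^2)$, whose normalized generator is $L^{BF}(s,\pi,\chi_{-1/2})$. The only difference is that you spell out the lowest-degree bookkeeping showing the quotient is a genuine polynomial in $q^{-s}$ with constant term $1$, which the paper (as in Corollary \ref{RS-divisibility}) leaves implicit.
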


\begin{proof}
We only need to check from Theorem \ref{BFEQ} that $ L(s,\pi_{ur})(2s,\pi_{ur},\wedge^2)$ which is the sam as $\mathcal{Z}(s,\chi_{-1/2}, W^{\circ}_{\pi},\Phi_c)$ in turn belongs to the $\mathbb{C}[q^{\pm s}]$-fractional ideal $\mathcal{I}(\pi,\chi_{-1/2},\wedge^2)$ of $\mathbb{C}(q^{-s})$.
\end{proof}

The following integral
\[
  \Lambda^{FJ}(W_{\pi}):=\int_{(N_m \cap H_m) \backslash (P_m \cap H_m) }W_{\pi}(p)\chi_{-1/2}(p)\mu_m^{1/2}(p) dp.
\]
makes sense at least formally. The following proposition gives a meaning to this integral.

\begin{proposition}
Let $\pi \in \mathcal{A}_F(m)$ be a unitary generic representation.  For any $W_{\pi} \in \mathcal{W}(\pi,\psi)$, the integral 
\[
\int_{(N_m \cap H_m) \backslash (P_m \cap H_m) }W_{\pi}(p)\chi_{-1/2}(p)\mu_m^{1/2}(p) |\mathrm{det}(p)|^{s-1/2} dp
%  =\int_{(N_{m-1} \cap H_m) \backslash (G_{m-1} \cap H_m) }W_{\pi}(g)\chi_{m=1}^{1/2}(g) |\mathrm{det}(g)|^{s-1/2} \; dhg
  \]
  converges absolutely for the closed right half-plane $\mathrm{Re}(s) \geq 1/2$. In particular, for $m=2n$ even, $W_{\pi} \mapsto \Lambda^{FJ}(W_{\pi})$ defines a $P_{2n} \cap H_{2n}$-invariant linear functional on $\mathcal{W}(\pi,\psi)$.

\end{proposition}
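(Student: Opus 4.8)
## Proof proposal

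The plan is to mimic the argument used for the Rankin–Selberg bilinear form (the proposition preceding Theorem \ref{RS-priods}) and for the Jacquet–Shalika functional in Proposition \ref{Shalika-convergence}: introduce a well-chosen Schwartz–Bruhat function $\Phi^{\circ}$ so that the Bump–Friedberg integral $\mathcal{Z}(s,\chi_{-1/2},W_{\pi},\Phi^{\circ})$ collapses, up to a positive volume constant, onto the mirabolic integral under consideration, and then quote the known absolute convergence of $\mathcal{Z}(s,\chi_{-1/2},W_{\pi},\Phi)$ in a right half-plane together with an analytic-continuation/holomorphy input on the half-plane $\mathrm{Re}(s)\geq 1/2$.

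First I would fix $W_{\pi}\in\mathcal{W}(\pi,\psi)$ and choose a compact open subgroup $K^{\circ}\subseteq K_{[(m+1)/2]}$ small enough that $W_{\pi}$ is invariant under the relevant diagonal copy of $K^{\circ}$ inside $H_m$ (i.e.\ under $J(k,k')$ for $k,k'\in K^{\circ}$, appropriately sized), and set $\Phi^{\circ}$ to be the characteristic function of $e_{[(m+1)/2]}K^{\circ}$. Next I would perform the partial Iwasawa decomposition of the $G$-factor carrying the Schwartz–Bruhat variable — writing that factor as $Z\,N\,K$ — exactly as in the proof of Proposition \ref{Shalika-convergence}. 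The central integration against $\Phi^{\circ}$ produces a Tate-type integral $\int_{F^{\times}}\Phi^{\circ}(e z k)|z|^{?}\,d^{\times}z$ which is absolutely convergent for $\mathrm{Re}(s)>0$, and the right $K^{\circ}$-invariance of $W_{\pi}$ lets the $K$-integration be carried out, contributing only a positive volume factor $v$. What remains is precisely $v$ times the integral
\[
\int_{(N_m\cap H_m)\backslash(P_m\cap H_m)}W_{\pi}(p)\,\chi_{-1/2}(p)\,\mu_m^{1/2}(p)\,|\mathrm{det}(p)|^{s-1/2}\,dp,
\]
so $\mathcal{Z}(s,\chi_{-1/2},W_{\pi},\Phi^{\circ})=v\cdot(\text{this integral})$ as meromorphic functions. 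Since $\mathcal{Z}(s,\chi_{-1/2},W_{\pi},\Phi^{\circ})$ lies in the fractional ideal $\mathcal{I}(\pi,\chi_{-1/2},\wedge^2)=\mathbb{C}[q^{\pm s}]L^{BF}(s,\pi,\chi_{-1/2})$, it is in particular a rational function of $q^{-s}$; to get absolute convergence on $\mathrm{Re}(s)\geq 1/2$ I would invoke the convergence bounds for Bump–Friedberg integrals of unitary generic $\pi$ — the Jacquet–Shalika–type estimates (cf.\ the half-plane $\mathrm{Re}(s)\geq 1$ convergence statement in \cite[Proposition 3.17]{JaSh81} used in the Rankin–Selberg case and the analogous \cite[\S 7.1 Proposition 1]{JaSh88} used for Jacquet–Shalika) adapted to the Bump–Friedberg setting, whose key feature is that unitarity of $\pi$ pushes the region of absolute convergence to include $s=1/2$. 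Specializing, the mirabolic integral $\Lambda^{FJ}(W_{\pi})$ (the value at $s=1/2$) is then well-defined and finite.

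For the final sentence, in the even case $m=2n$ one has $\mu_{2n}=\chi_{-2}^{-1}$... more precisely $\mu_{2n}(J(g,g'))=|\mathrm{det}(g)/\mathrm{det}(g')|$, so the product $\chi_{-1/2}\mu_{2n}^{1/2}$ restricted to $P_{2n}\cap H_{2n}$ is a well-defined character, and the formula $J(g,g')=w_{2n}\,\mathrm{diag}(g,g')\,w_{2n}^{-1}$ together with the left $N_{2n}\cap H_{2n}$-equivariance $W_{\pi}(up)=\psi(u)W_{\pi}(p)$ — noting $\psi|_{N_{2n}\cap H_{2n}}$ restricts appropriately — shows that right translation by $P_{2n}\cap H_{2n}$ preserves the integral up to the modulus correction built into $\chi_{-1/2}\mu_{2n}^{1/2}$; hence $W_{\pi}\mapsto\Lambda^{FJ}(W_{\pi})$ is $(P_{2n}\cap H_{2n})$-invariant. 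I expect the main obstacle to be pinning down the precise convergence region: one must verify that the Bump–Friedberg analogue of the Jacquet–Shalika gauge estimate genuinely yields absolute convergence at the boundary value $s=1/2$ for unitary $\pi$ — this is where the unitarity hypothesis is essential and where a careful bookkeeping of the exponents $\chi_{-1/2}\chi_m^{1/2}$ versus the Whittaker decay on the torus is needed, rather than merely quoting rationality of $\mathcal{Z}$.
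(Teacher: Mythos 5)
There is a genuine gap, and it sits exactly where you flag it yourself: the absolute convergence on the closed half-plane $\mathrm{Re}(s)\geq 1/2$ is the entire content of the proposition, and your proposal defers it to an unnamed ``Bump--Friedberg analogue of the Jacquet--Shalika gauge estimate'' that you admit still needs verification. The paper does not obtain this from the full integral $\mathcal{Z}(s,\chi_{-1/2},W_{\pi},\Phi)$ at all (whose convergence is only asserted for $\mathrm{Re}(s)\gg 0$); it proves convergence of the mirabolic-restricted integral directly, by quoting the convergence criterion of \cite[Proposition 4.7]{MA15}, which controls such integrals in terms of the exponents (central characters of the Bernstein--Zelevinsky derivatives) of $\pi$, and then feeding in Bernstein's criterion \cite[\S 7.3]{Bern} that for unitary $\pi$ these exponents are nonnegative, exactly as in the proof of Proposition \ref{RS-multiplicityone}. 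Without supplying this exponent bookkeeping or a citable substitute, your argument proves nothing beyond convergence for $\mathrm{Re}(s)$ large, where the statement is trivial.

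There is also a structural problem with the reduction you propose. In the Rankin--Selberg and Jacquet--Shalika cases (the propositions you are imitating) the partial Iwasawa decomposition produces a genuinely central element $zI_n$, resp.\ $zI_{2n}$, so the Whittaker functions split off a central character and the $z$-integral decouples as a Tate integral. In the Bump--Friedberg setting the Schwartz--Bruhat variable is attached to the $g'$-copy only, so the element produced by writing $g'=pzk$ is $J(I_n,zI_n)$, which is \emph{not} central in $G_{2n}$; hence $W_{\pi}(J(g,pzk))$ does not factor as $\omega_{\pi}(z)W_{\pi}(J(g,pk))$ and the integral does not collapse to ``Tate integral $\times$ mirabolic integral $\times$ volume'' as claimed. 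One can patch this by a finite coset decomposition and right translates of $W_{\pi}$, in the spirit of Proposition \ref{Bernstein-s=1}, but even then the patched reduction only transfers the problem to the absolute convergence of the full Bump--Friedberg integral near $s=1/2$, which again requires the same exponent analysis via \cite[Proposition 4.7]{MA15} and \cite[\S 7.3]{Bern} that your proposal leaves out.
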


\begin{proof}
It is a consequence of \cite[Proposition 4.7]{MA15} and Bernstein's criterion \cite[\S 7.3]{Bern} for the exponent of central characters that can be taken verbatim from the proof of Proposition \ref{RS-multiplicityone}.
\end{proof}

We now take up the issue of the non-vanishing of the Friedberg-Jacquet linear functional on the Whittaker model $\mathcal{W}(\pi,\psi)$.

\begin{theorem}
\label{Hm-unitary}
Let $\pi \in \mathcal{A}_F(m)$ be a unitary generic representation. Then we have
\[
  \int_{(N_m \cap H_m) \backslash (P_m \cap H_m) }W^{\circ}_{\pi}(p)\chi_{-1/2}(p)\mu_m^{1/2}(p)  dp=
   \begin{cases}
    L(1/2,\pi_{ur})L(1,\pi_{ur},\wedge^2)
   & \text{if $\pi$ is ramified,} \\
 \dfrac{L(1/2,\pi)L(1,\pi,\wedge^2)}{L(m/2,\omega_{\pi})}  & \text{otherwise.} 
  \end{cases}
\]
%\begin{enumerate}[label=$(\mathrm{\alph*})$]
%\item\label{BFPriod-1} Suppose that $m=2n$ is even. Then we have
%\[
%  \int_{(N_{2n} \cap H_{2n}) \backslash (P_{2n} \cap H_{2n}) }W^{\circ}_{\pi}(p) dp=
% \begin{cases}
%    L(1/2,\pi_{ur})L(1,\pi_{ur},\wedge^2)
%   & \text{if $\pi$ is ramified,} \\
% \dfrac{L(1/2,\pi)L(1,\pi,\wedge^2)}{L(n/2,\omega_{\pi})}  & \text{otherwise.} 
%  \end{cases}
%\]
%\item\label{BFPriod-2} Suppose that $m=2n+1$ is odd and $\alpha=-1/2$. Then we obtain
%\[
%  \int_{(N_{2n+1} \cap H_{2n+1}) \backslash (P_{2n+1} \cap H_{2n+1}) }W_{\pi}(p)\chi_{\alpha}(p)\mu_{2n+1}^{1/2}(p) dp=
%   \begin{cases}
%    L(1/2,\pi_{ur})L(1,\pi_{ur},\wedge^2)
%   & \text{if $\pi$ is ramified,} \\
% \dfrac{L(1/2,\pi)L(1,\pi,\wedge^2)}{L((n+1)/2,\omega_{\pi})}  & \text{otherwise.} 
%  \end{cases}
%\]
%\end{enumerate}
%\[
%  \int_{(N_{m} \cap H_{m}) \backslash (P_{m} \cap H_{m}) }W^{\circ}_{\pi}(p)\mu_{m}^{1/2}(p) dp=
% \begin{cases}
%    L(1,\pi_{ur})L(1,\pi_{ur},\wedge^2)
%   & \text{if $\pi$ is ramified,} \\
% \dfrac{L(1,\pi)L(1,\pi,\wedge^2)}{L(m/2,\omega_{\pi})}  & \text{otherwise.} 
%  \end{cases}
%\]
\end{theorem}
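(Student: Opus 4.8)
The plan is to mimic the now-familiar template used for Theorems \ref{RSNewVector}, \ref{test-Asai}, and \ref{Shalika-unitary}: start from the known test-vector identity for the full Bump–Friedberg integral (Theorem \ref{BFEQ}), collapse it to an integral over a mirabolic-type subgroup by introducing an auxiliary Schwartz–Bruhat function supported on a small compact open set, and then specialize at the appropriate point in $s$ using an absolute-convergence statement. Concretely, I would fix a compact open $K^{\circ}$ stabilizing $W^{\circ}_{\pi}$ and choose $\Phi^{\circ}$ to be the characteristic function of the relevant $e$-orbit of $K^{\circ}$, exactly as in the proof of Proposition \ref{Shalika-convergence}. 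Running the partial Iwasawa decomposition (on $G_n$ for $m=2n$, on $G_{n+1}$ for $m=2n+1$) turns $\mathcal{Z}(s,\chi_{-1/2},W^{\circ}_{\pi},\Phi^{\circ})$ into a positive volume multiple of
\[
\int_{(N_m \cap H_m)\backslash(P_m\cap H_m)} W^{\circ}_{\pi}(p)\chi_{-1/2}(p)\mu_m^{1/2}(p)|\mathrm{det}(p)|^{s-1/2}\,dp,
\]
while the leftover Tate-type factor $\int_{F^\times}\Phi^{\circ}(e\,z)|z|^{\ast s}\,d^\times z$ is entire and nonzero near the point of interest.

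**The two halves.** For the \emph{unramified} case I would follow the proof of \cite[Theorem 6.1]{AM} / \cite[Theorem 5.1]{MY} verbatim: the integral over $(N_m\cap H_m)\backslash(P_m\cap H_m)$ reduces via Iwasawa to a sum over the torus $A_{[(m+1)/2]}$ against $\delta_B^{-1}$ and a characteristic function $\mathbf{1}_{\mathcal{O}}$ on the truncated coordinate, and the resulting finite geometric-type sum is precisely the Shintani–Casselman–Shalika evaluation, yielding $L(1/2,\pi)L(1,\pi,\wedge^2)$ divided by the local zeta factor $L(m/2,\omega_\pi)=(1-\omega_\pi(\varpi)q^{-m/2})^{-1}$ coming from the $d^\times z$-integral (the $ns$-type exponent there, evaluated at $s=1$ with $s_1=s$, $s_2=2s$, produces $m/2$). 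For the \emph{ramified} case I would use Theorem \ref{NewV}: substituting the Matringe–Miyauchi formula for $W^{\circ}_{\pi}$ restricted to the sub-torus collapses the $A$-integral to one over $A_r$ (with $r$ the rank of $\pi_{ur}$), the modulus characters and the $\nu^{(n-r)/2}$ factor conspire exactly as in the proof of Theorem \ref{RSNewVector}, the extra $\mathbf{1}_{\mathcal{O}^\times}$ factors kill all but the truncated torus, and one is left with the unramified Bump–Friedberg computation for $\pi_{ur}$, namely $L(1/2,\pi_{ur})L(1,\pi_{ur},\wedge^2)$ — and here there is no denominator because the support of $\Phi_c$ forces $z\in 1+\mathfrak p^{c}$, trivializing the Tate integral.

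**Convergence/holomorphy at the specialization point.** The one genuinely non-formal input is that the left-hand side makes sense at $s=1/2$ (equivalently that $\mathcal{Z}$ is holomorphic there after removing the Tate factor). This I would get from the preceding proposition on absolute convergence of $\int W_\pi\chi_{-1/2}\mu_m^{1/2}|\det|^{s-1/2}$ for $\mathrm{Re}(s)\ge 1/2$, which in turn rests on \cite[Proposition 4.7]{MA15} together with Bernstein's exponent criterion \cite[\S 7.3]{Bern}; since $\pi$ is unitary, all the relevant central-character exponents have nonnegative (indeed positive, off the top derivative) real part, so the edge point $s=1/2$ is inside the closed domain of convergence.

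**Main obstacle.** The delicate point is bookkeeping rather than conceptual: correctly tracking the shift between the two conventions for $(s_1,s_2)$ versus the single variable $s$ (with $s_1=s$, $s_2=2s$ and the twist $\chi_{-1/2}$), so that the exponent appearing in the residual Tate integral is exactly $m/2$ and the modulus/$\delta_B$ powers in the ramified reduction cancel cleanly against the $\nu^{(n-r)/2}$ of Theorem \ref{NewV}; getting the half-integral shift wrong would corrupt the denominator $L(m/2,\omega_\pi)$. A secondary check is that in the odd case $m=2n+1$ the relevant Schwartz function lives on $F^{n+1}$ and is evaluated at $e_{n+1}g$ (the larger factor), so the roles of $g$ and $g'$ in the Iwasawa step are reversed relative to the even case — this must be handled separately but is otherwise routine given Theorem \ref{BFEQ}\ref{BFL-2}.
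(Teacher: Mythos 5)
Your proposal is correct in substance, but it takes a genuinely different route from the paper's own argument in the ramified case. The paper does \emph{not} pass through Theorem \ref{BFEQ} at all: it computes $\Lambda^{FJ}(W^{\circ}_{\pi})$ directly, applying the Iwasawa decomposition to both factors of $P_m\cap H_m\cong G_n\times P_n$ (the $K$-parts being absorbed since $J(K_n,K_{n-1})\subset K_1(\mathfrak{p}^c)$), inserting the Matringe--Miyauchi formula of Theorem \ref{NewV} for the interleaved torus element $J(a,\mathrm{diag}(a',1))$, and then splitting into the two cases $r$ even and $r$ odd --- a case analysis forced by where the truncation point of Theorem \ref{NewV} falls inside the interleaving --- before using modulus identities such as $\delta_{B_{r/2}}^{-1}(b)\delta_{B_{r/2}}^{-1}(b')\nu^{-1/2}(b)\nu^{1/2}(b')=\delta_{B_{r}}^{-1/2}(J(b,b'))$ to recognize the period as $\mathcal{Z}(1/2,\chi_{-1/2},W^{\circ}_{\pi_{ur}},{\bf 1}_{\mathcal{O}^r})$, whose value is then quoted from \cite{BF} and \cite{MY}. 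Your route instead treats Theorem \ref{BFEQ} as a black box, collapses $\mathcal{Z}(s,\chi_{-1/2},W^{\circ}_{\pi},\Phi_c)$ onto $(N_m\cap H_m)\backslash(P_m\cap H_m)$ using the support of $\Phi_c$, and specializes at $s=1/2$ via the convergence proposition --- exactly the style the paper itself uses for Theorem \ref{Shalika-unitary}\ref{JS-1} via Proposition \ref{Shalika-convergence}. What your approach buys is that the parity-of-$r$ bookkeeping is hidden inside \cite{MY}; what the paper's approach buys is an explicit identification of the period with the unramified Bump--Friedberg integral at $s=1/2$, which is also what drives the unramified case (handled there, as in your plan, following \cite{AM} --- note it is Theorem 6.2, not 6.1, of \cite{AM} that is relevant).

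Two caveats on your execution. First, in the Friedberg--Jacquet setting the element $J(1,z)$ coming from the Iwasawa decomposition of the $g'$-factor is \emph{not} central in $G_m$, unlike in the Rankin--Selberg, Asai and Jacquet--Shalika cases; so the ``leftover Tate factor'' does not separate for a general $\Phi^{\circ}$. In the ramified case this is harmless because $\Phi_c$ confines $z$ to $1+\mathfrak{p}^c$ and $J(1,zk)\in K_1(\mathfrak{p}^c)$, but in the unramified case the denominator $L(m/2,\omega_{\pi})$ arises only after routing $z$ through the genuine center $J(z,z)=zI_m$ with a compensating change of variables in $g$ (equivalently, through the symmetric-function manipulation of \cite{AM}), not from a naive $d^{\times}z$-integral in the $g'$-variable. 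Second, the exponent attached to that central direction is $ms$, and the specialization point is $s=1/2$ (so that $s_1=1/2$, $s_2=1$), which is how $m/2$ appears; your parenthetical ``evaluated at $s=1$'' is a slip of exactly the bookkeeping kind you flag as the main obstacle.
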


\begin{proof} 
We deal with the case $m=2n$ even and $\pi$ ramified. Exploiting the partial Iwasawa decomposition $G_{n-1}=N_{n-1}A_{n-1}K_{n-1}$, we obtain
\[
\Lambda^{FJ}(W^{\circ}_{\pi} )=\int_{A_{n-1}} \int_{A_n} W^{\circ}_{\pi} \left( J\left( a, \begin{pmatrix} a' & \\ & 1 \end{pmatrix} \right) \right) \delta_{B_n}^{-1}(a) \delta_{B_{n-1}}^{-1}(a')  da da'. 
\]
We first deal with the case $r$ even. In virtue of Theorem \ref{NewV}, we insert the expression of $W^{\circ}_{\pi}$ into the above integral
\[
\begin{aligned}
\Lambda^{FJ}(W^{\circ}_{\pi} )
=&\int_{A_{r\slash 2}} \int_{A_{r\slash 2}} W^{\circ}_{\pi_{ur}} (J(b,b'))\delta_{B_{n}}^{-1}\begin{pmatrix} b & \\ & I_{n-r/2}\end{pmatrix} \delta_{B_{n-1}}^{-1}\begin{pmatrix} b^{\prime } & \\ & I_{n-r/2-1}\end{pmatrix}\nu^{\frac{m-r}{2}}(bb^{\prime})  \\
&\times {\bf 1}_{\mathcal{O}}(b^{\prime}_{r/2}) db db' \\
=&\int_{A_{r\slash 2}} \int_{A_{r\slash 2}} W^{\circ}_{\pi_{ur}} (J(b,b'))\delta_{B_{r/2}}^{-1}(b) \delta_{B_{r/2}}^{-1}(b')\nu^{\frac{m-r}{2}}(bb^{\prime})
\nu^{r/2-n}(b)\nu^{r/2-(n-1)}(b')\\
&\times {\bf 1}_{\mathcal{O}}(b^{\prime}_{r/2}) db db'. \\
  \end{aligned}
  \]
  But we have the identity $\delta_{B_{r/2}}^{-1}(b) \delta_{B_{r/2}}^{-1}(b') \nu^{-\frac{1}{2}}(b)\nu^{\frac{1}{2}}(b')=\delta_{B_{r}}^{-\frac{1}{2}}(J(b,b'))$ which gives rise to
  \begin{equation}
  \label{BF-even}
\Lambda^{FJ}(W^{\circ}_{\pi}) = \int_{A_{r\slash 2}} \int_{A_{r\slash 2}} W^{\circ}_{\pi_{ur}} (J(b,b'))\delta_{B_{r}}^{-\frac{1}{2}}(J(b,b')) {\bf 1}_{\mathcal{O}}(b^{\prime}_{r/2})  |\mathrm{det}(bb')|^{\frac{1}{2}} db db'.
  \end{equation}
 Provided that $r$ is odd, Theorem \ref{NewV} leads us to the integral of the form
\[
\begin{aligned}
\Lambda^{FJ}(W^{\circ}_{\pi})
 &=\int_{A_{(r+1)\slash 2}} \int_{A_{(r-1)\slash 2}} W^{\circ}_{\pi_{ur}} (J(b,b'))\delta_{B_{n}}^{-1}\begin{pmatrix} b & \\ & I_{n-(r+1)/2}\end{pmatrix} \delta_{B_{n-1}}^{-1}\begin{pmatrix} b^{\prime } & \\ & I_{n-(r-1)/2-1}\end{pmatrix} \\
 &\quad \times  \nu^{\frac{m-r}{2}}(bb^{\prime}) {\bf 1}_{\mathcal{O}}(b_{(r+1)/2}) db' db \\
 &=\int_{A_{(r+1)\slash 2}} \int_{A_{(r-1)\slash 2}} W^{\circ}_{\pi_{ur}} (J(b,b'))\delta_{B_{(r+1)/2}}^{-1}(b) \delta_{B_{(r-1)/2}}^{-1}(b')
\nu^{(r+1)/2-n}(b)\nu^{(r-1)/2-(n-1)}(b')\\
&\quad \times \nu^{\frac{m-r}{2}}(bb^{\prime}){\bf 1}_{\mathcal{O}}(b_{(r+1)/2}) db' db.  \\
 \end{aligned}
 \]
 Using the relation $\delta_{B_{(r+1)/2}}^{-1}(b) \delta_{B_{(r-1)/2}}^{-1}(b')=\delta_{B_{r}}^{-\frac{1}{2}}(J(b,b'))$, we see that
 %$\delta_{B_{(r+1)/2}}^{-1}(b) \delta_{B_{(r-1)/2}}^{-1}(b')\nu^{\frac{1}{2}}(b)\nu^{-\frac{1}{2}}(b')=\delta_{B_{r}}^{-\frac{1}{2}}(J(b,b'))$
\begin{equation}
\label{BF-odd}
 \Lambda^{FJ}(W^{\circ}_{\pi})
 =\int_{A_{(r+1)\slash 2}} \int_{A_{(r-1)\slash 2}} W^{\circ}_{\pi_{ur}} (J(b,b'))\delta_{B_{r}}^{-\frac{1}{2}}(J(b,b')) {\bf 1}_{\mathcal{O}}(b_{(r+1)/2})
|\mathrm{det}(bb')|^{\frac{1}{2}} db' db.  
\end{equation}
Further, \eqref{BF-even} combined with \eqref{BF-odd} yields that the integral $\Lambda^{FJ}(W^{\circ}_{\pi})$ is in accord with the expression for $\mathcal{Z}(1/2, \chi_{-1/2}, W^{\circ}_{\pi_{ur}},{\bf 1}_{\mathcal{O}^r} )$. It is easy from \citelist{\cite{BF}*{\S 3}\cite{MY}*{Theorem 5.1}} to verify that the integral $\mathcal{Z}(1/2, \chi_{-1/2}, W^{\circ}_{\pi_{ur}},{\bf 1}_{\mathcal{O}^r} )$ is nothing else but $L(1/2,\pi_{ur})L(1,\pi_{ur},\wedge^2)$ that we search for. A similar process applies to the $m=2n+1$ odd case. The unramified case proceeds as in the proof of \cite[Theorem 6.2]{AM} hence we omit the complete details.
\end{proof}

\subsection{The $H_{2n}$-distinguished representation}
\label{BJH2n}

For this section, we only concentrate on the case for $m=2n$ even. We say that $\Delta=[\rho\nu^{1-\ell},\rho\nu^{2-\ell}, \dotsm,\rho]$ {\it precedes} $\Delta'=[\rho'\nu^{1-\ell'},\rho'\nu^{2-\ell'}, \dotsm,\rho']$ if $\rho'=\rho\nu^i$ for 
some $\max\{1, \ell'-\ell+1 \} \leq i \leq \ell'$, which we denote by $\Delta \prec {\Delta'}$. We say that $\Delta$ and $\Delta'$ are {\it linked} if either $\Delta \prec {\Delta'}$ or $\Delta' \prec \Delta$, and {\it non-linked} otherwise. According to \cite[Theorem 9.7]{Zel80}, $\pi  \in \mathcal{A}_F(n)$ is a generic representation if and only if there exist unlinked segments $\Delta_1, \Delta_2, \dotsm, \Delta_t$ such that $\pi  \simeq {\rm Ind}^{G_n}_{\rm P}(\Delta_1 \otimes \Delta_2  \otimes \dotsm \otimes \Delta_t )$ with the induction being normalized parabolic induction.
We embark on the following characterization of poles \cite[Lemma 2.3]{MO} which is a consequence of \cite[Proposition 8.2]{JPSS3}.

%The irreducibility condition can be paraphrased as
\begin{lemma}
\label{M-O}
Let $\Delta, \Delta' \in \mathcal{A}_F$ be quasi-square integrable representations. Then 
$L(s,\Delta \times \Delta')$ has a pole at $s=1$ if and only if $\Delta \prec {\Delta'}^{\vee}$. 
\end{lemma}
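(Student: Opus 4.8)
The plan is to reduce the statement to the standard analytic properties of Rankin--Selberg $L$-functions for quasi-square integrable representations, which are recorded in \cite[Proposition 8.2]{JPSS3}. Recall that a quasi-square integrable $\Delta$ is of the form $\Delta(\rho) = [\rho\nu^{1-\ell},\dots,\rho]$ for a supercuspidal $\rho$, and similarly $\Delta' = \Delta(\rho')$ with $\rho' \in \mathcal{A}_F(a')$ supercuspidal and length $\ell'$. The first step is to write down $L(s,\Delta\times\Delta')$ explicitly. By the multiplicativity of the local $L$-factor along the segments (i.e.\ $L(s,\Delta(\rho)\times\Delta(\rho'))=\prod_{i,j} L(s+\cdots,\rho\nu^{\cdot}\times\rho'\nu^{\cdot})$, the computation of \cite[\S 8.2]{JPSS3}), one finds that $L(s,\Delta\times\Delta')$ is a nonempty product of factors of the form $(1-\gamma q^{-(s+k)})^{-1}$, and a single such factor $L(s+k,\rho\times\rho')$ is nontrivial precisely when $\rho'\nu^{k}\simeq\rho^{\vee}$ (the supercuspidal case), in which case it equals $(1-q^{-(s+k)})^{-1}$. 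Thus the set of poles of $L(s,\Delta\times\Delta')$ is governed entirely by the arithmetic of the exponents $k$ for which $\rho'\nu^k \simeq \rho^\vee$.

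The second step is bookkeeping: collect the exponents. Writing $\Delta = \Delta(\rho)$ with segment exponents running over $1-\ell,\dots,0$ and $\Delta' = \Delta(\rho')$ with segment exponents $1-\ell',\dots,0$, and assuming $\rho' \simeq \rho^\vee \nu^{c}$ for the unique integer $c$ making this possible (if no such $c$ exists, $L(s,\Delta\times\Delta')$ is identically $1$ and has no pole, while also $\Delta \not\prec {\Delta'}^\vee$ since the cuspidal supports are incompatible — so the equivalence holds vacuously), one gets that $L(s,\Delta\times\Delta')$ is a product of factors $(1-q^{-(s+k)})^{-1}$ as $k$ ranges over a set of consecutive integers determined by $\ell,\ell',c$. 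A pole at $s=1$ occurs iff $k=-1$ lies in this range. Translating the range condition $k=-1 \in \{\text{allowed exponents}\}$ back into a relation between the segments is exactly the condition $\Delta \prec {\Delta'}^\vee$: one checks that ${\Delta'}^\vee = [\rho'^\vee \nu^{1-\ell'},\dots,\rho'^\vee] = \Delta(\rho'^\vee)$, and the precedence $\Delta(\rho) \prec \Delta(\rho'^\vee)$ unwinds, via the definition given just before the lemma (namely $\rho'^\vee = \rho\nu^i$ for some $\max\{1,\ell'-\ell+1\}\le i \le \ell'$), to precisely the arithmetic inequality that places $k=-1$ in the pole range. This is the computational heart and amounts to carefully matching two combinatorial descriptions of the same set of integers.

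The third step is simply to cite \cite[Proposition 8.2]{JPSS3} (equivalently \cite[Lemma 2.3]{MO}) for the identity of the $L$-factor in terms of segments, so that only the index-chasing in step two needs to be carried out by hand. The main obstacle I anticipate is purely notational: keeping the three shifts straight — the intrinsic shift within $\Delta$, the intrinsic shift within $\Delta'$, and the twist $c$ relating $\rho'$ to $\rho^\vee$ — and making sure the asymmetric bound $\max\{1,\ell'-\ell+1\}\le i\le \ell'$ in the definition of $\prec$ comes out correctly (in particular that the case distinction $\ell \le \ell'$ versus $\ell > \ell'$ is handled uniformly by the $\max$). Once the exponent ranges are written down cleanly, the equivalence "pole at $s=1$" $\iff$ "$-1$ in the range" $\iff$ "$\Delta \prec {\Delta'}^\vee$" is immediate. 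No convergence or uniqueness input is needed; this is entirely a statement about explicit rational functions.
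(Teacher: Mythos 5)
Your overall strategy is the right one, and in fact it is all the paper itself does: Lemma \ref{M-O} is quoted from \cite{MO}*{Lemma 2.3} as a consequence of \cite{JPSS3}*{Proposition 8.2}, so expanding that proposition for a pair of segments and doing the index bookkeeping is exactly the intended proof. The problem is that the one explicit formula you commit to is wrong, and it is precisely the shift on which the whole lemma turns. With the paper's normalization $\Delta'=[\rho'\nu^{1-\ell'},\dotsc,\rho']=\Delta(\rho')$, the contragredient is ${\Delta'}^{\vee}=[\rho'^{\vee},\rho'^{\vee}\nu,\dotsc,\rho'^{\vee}\nu^{\ell'-1}]=\Delta(\rho'^{\vee}\nu^{\ell'-1})$, \emph{not} $[\rho'^{\vee}\nu^{1-\ell'},\dotsc,\rho'^{\vee}]=\Delta(\rho'^{\vee})$ as you assert. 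Writing $\rho,\rho'$ for the tops of the two segments, \cite{JPSS3}*{Proposition 8.2} gives $L(s,\Delta\times\Delta')=\prod_{k=0}^{\min(\ell,\ell')-1}L(s-k,\rho\times\rho')$, so a pole at $s=1$ occurs if and only if $\rho'\simeq\rho^{\vee}\nu^{e}$ with $e\in\{-1,0,\dotsc,\min(\ell,\ell')-2\}$; unwinding the precedence $\Delta\prec{\Delta'}^{\vee}$ with the \emph{correct} dual (top $\rho'^{\vee}\nu^{\ell'-1}$, length $\ell'$) produces exactly the same set of exponents, which proves the lemma. Your condition ``$\rho'^{\vee}=\rho\nu^{i}$ with $\max\{1,\ell'-\ell+1\}\le i\le\ell'$'' is off by the twist $\nu^{\ell'-1}$ and genuinely fails: take $\Delta=\mathbf{1}$ on ${\rm GL}_1$ and $\Delta'=[\nu^{-2},\nu^{-1}]$ on ${\rm GL}_2$. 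Then $L(s,\Delta\times\Delta')=L(s,\nu^{-1})$ has a pole at $s=1$ and indeed $\Delta\prec{\Delta'}^{\vee}=[\nu,\nu^{2}]$, but $\Delta$ does not precede your candidate dual $[\mathbf{1},\nu]$, so your criterion returns the wrong answer.

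Two further points. First, the step you describe as the ``computational heart'' — checking that the precedence condition matches the pole range — is only asserted, and it is exactly where the error above would have surfaced had it been carried out; the argument is not complete until that matching is written down with the correct ${\Delta'}^{\vee}$. Second, a nonvanishing cuspidal factor $L(s,\rho\times\rho')$ is in general $\prod_{u}(1-q^{-(s+u)})^{-1}$ over the finitely many $u$ (modulo the self-twist lattice) with $\rho'\simeq\rho^{\vee}\nu^{u}$, not a single factor $(1-q^{-(s+k)})^{-1}$; this is harmless for the question at hand because only the unique real exponent can contribute a pole at the real point $s=1$, but it should be said rather than elided, as should the observation that ``the unique integer $c$'' is unique only among real exponents.
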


We recall from \S \ref{FJperiod} that $H_{2n}=w_{2n}M_{2n}w^{-1}_{2n}$, where $M_{2n}$ is the standard Levi subgroup associated with the partition $(n,n)$
and $w_{2n}=\sigma_{2n}$. We say that a representation $\pi$ of $G_{2n}$ is $H_{2n}$-{\it distinguished}, if  $\mathrm{Hom}_{H_{2n}}(\pi,{\bf 1}) \neq 0$.
As commented earlier, the unitarity hypothesis in Theorem \ref{Hm-unitary} can be removed if we assume that $\pi$ is $H_{2n}$-distinguished. The central character $\omega_{\pi}$ of the $H_{2n}$-distinguished representation $\pi$ is always trivial.

\begin{proposition}
Let $\pi \in \mathcal{A}_F(2n)$ be a generic representation which is $H_{2n}$-distinguished. Then the $P_{2n} \cap H_{2n}$-invariant linear functional $W_{\pi} \mapsto \Lambda^{FJ}(W_{\pi})$ is well-defined in that  $L^{BF}(s,\pi,\chi_{-1/2})$ is holomorphic at $s=1/2$.
\end{proposition}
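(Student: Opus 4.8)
The plan is to exclude a pole of $L^{BF}(s,\pi,\chi_{-1/2})$ at $s=1/2$ by the mechanism already used in Proposition~\ref{Sdistinction}: first upgrade the distinction hypothesis to self-duality of $\pi$, and then reduce the holomorphy of $L^{BF}(s,\pi,\chi_{-1/2})$ at $s=1/2$ to the holomorphy of the standard factor $L(s,\pi)$ at $s=1/2$ and of the exterior square factor $L(s,\pi,\wedge^2)$ at $s=1$, both of which are accessible from results already in hand.

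First I would record that an $H_{2n}$-distinguished generic $\pi\in\mathcal{A}_F(2n)$ is self-dual, $\pi^{\vee}\simeq\pi$. For cuspidal $\pi$ this goes back to Friedberg--Jacquet \cite{FriJac}, and in general it is read off from the classification of $H_{2n}$-distinguished generic representations (Matringe \cite{MA17}, cf.\ \cite{MO}), whose segments come in pairs $\{\Delta,\Delta^{\vee}\}$ together with self-dual discrete series of symplectic type; in particular $\omega_{\pi}$ is trivial. Next I would invoke Matringe's determination of the local Bump--Friedberg $L$-factor \cite{MA15}, in the form $L^{BF}(s,\pi,\chi_{-1/2})=L(s,\pi)L(2s,\pi,\wedge^2)$ (or at least that every pole of the left-hand side is a pole of one of the two factors on the right). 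Together these reduce the statement to showing that $L(s,\pi)$ is holomorphic at $s=1/2$ and that $L(s,\pi,\wedge^2)$ is holomorphic at $s=1$, the latter governing $L(2s,\pi,\wedge^2)$ at $s=1/2$.

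The exterior square factor is handled exactly as in Proposition~\ref{Sdistinction}, now driven by self-duality alone: since $\pi\simeq\pi^{\vee}$ one has $L(s,\pi\times\pi)=L(s,\pi\times\pi^{\vee})$, which is holomorphic at $s=1$ by Proposition~\ref{Bernstein-s=1}, and since $L(s,\pi,\wedge^2)^{-1}$ divides $L(s,\pi\times\pi)^{-1}$ in $\mathbb{C}[q^{\pm s}]$ (recorded in \S\ref{JSS2n}, from \cite[\S 5.2]{Jo20}), $L(s,\pi,\wedge^2)$ is holomorphic at $s=1$. For the standard factor, Godement--Jacquet (Theorem~\ref{G-J}) gives $L(s,\pi)=\prod_{i=1}^{p}(1-\alpha_i(\varpi)q^{-s})^{-1}$, so a pole at $s=1/2$ would force some $\alpha_i$ to equal the unramified character $\nu^{-1/2}$, i.e.\ one of the segments of $\pi$ (generic, hence a product of unlinked segments, \cite[Theorem 9.7]{Zel80}) to be $\Delta(\nu^{-1/2})=[\nu^{1/2-\ell},\dotsm,\nu^{-1/2}]$ for some $\ell\ge 1$. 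But then $\Delta(\nu^{-1/2})^{\vee}=\Delta(\nu^{\ell-1/2})=[\nu^{1/2},\dotsm,\nu^{\ell-1/2}]$, and one checks directly from the definition of $\prec$ (take $i=\ell$) that $\Delta(\nu^{-1/2})\prec\Delta(\nu^{\ell-1/2})$, so the two are linked, and distinct since their constituents lie in disjoint sets of exponents. Self-duality $\pi\simeq\pi^{\vee}$ forces $\Delta(\nu^{\ell-1/2})$ also to occur among the segments of $\pi$, contradicting unlinkedness. Hence $L(s,\pi)$ has no pole at $s=1/2$, and the claim follows.

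The main obstacle is the first step: deriving self-duality of $\pi$ from $H_{2n}$-distinction for a general (not merely cuspidal) generic representation, which rests on the full classification of $H_{2n}$-distinguished generic representations rather than on a soft argument. A secondary point of care is quoting Matringe's Bump--Friedberg factorization in precisely the form needed over ramified representations, so that the reduction to ``$L(s,\pi)$ at $s=1/2$ and $L(s,\pi,\wedge^2)$ at $s=1$'' is literally valid; granting that, the two reductions above are the short verifications indicated.
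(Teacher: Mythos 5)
Your proposal is correct and follows essentially the paper's own argument: factor $L^{BF}(s,\pi,\chi_{-1/2})=L(s,\pi)L(2s,\pi,\wedge^2)$, use self-duality of an $H_{2n}$-distinguished generic $\pi$, eliminate a pole of $L(s,\pi)$ at $s=1/2$ by the linkage contradiction with the segment $[\nu^{1/2-\ell},\dotsm,\nu^{-1/2}]$, and eliminate a pole of $L(2s,\pi,\wedge^2)$ at $s=1/2$ via the divisibility into $L(s,\pi\times\pi^{\vee})$ together with Proposition \ref{Bernstein-s=1}. The only real differences are that the paper obtains self-duality at once from \cite[Theorem 1.1]{JaRa} (so the step you flag as the main obstacle is just a citation), and that it argues segment-by-segment through Matringe's multiplicativity \cite[Theorem 5.1]{MA15}, disposing of the cross factors $L(2s,\Delta_i\times\Delta_j)$ with Lemma \ref{M-O}, whereas you absorb those into $L(2s,\pi,\wedge^2)$ and run the Proposition \ref{Sdistinction} argument at the level of $\pi$ itself.
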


\begin{proof}
The main result of \cite{Jo20} together with \cite{MA15,MA17} tells us that
 \[
 L^{BF}(s,\pi,\chi_{-1/2})=L(s,\pi)L(2s,\pi,\wedge^2).
 \]
The multiplicative relation of the Bump-Friedberg $L$-factors has been achieved by Matringe \cite[Theorem 5.1]{MA15};
\[
 L^{BF}(s,\pi,\chi_{-1/2})=\prod_{k=1}^t L(s,\Delta_k)\prod_{k=1}^t L(2s,\Delta_k,\wedge^2) \prod_{1 \leq i < j \leq t} L(2s, \Delta_i \times \Delta_j).
\]
Indeed, if $L^{BF}(s,\pi,\chi_{-1/2})$ admits a pole at $s=1/2$, then either $L(s,\Delta_k)$ or $L(2s,\Delta_k,\wedge^2)$ has a pole at $s=1/2$ for some $k$, or $L(s, \Delta_i \times \Delta_j)$ has a pole at $s=1$ for some $(i,j)$. 

\par
We first suppose that $L(s, \Delta_i \times \Delta_j)$ has a pole at $s=1$. Then by Lemma \ref{M-O} we have $\Delta_i \prec \Delta^{\vee}_j$, but by our assumption that $\pi$ is $H_{2n}$-distinguished, we know from \cite[Theorem 1.1]{JaRa} (cf. see the paragraph after \cite[Definition 2.1]{MA15}) that $\pi$ is self-dual. As a result, $\Delta^{\vee}_j$ is $\Delta_l$ for some $l \neq i$ which contradicts the fact that $\Delta_i$'s are unlinked. In the spirit of Proposition \ref{Sdistinction}, we can exclude the case when $L(2s,\Delta_k,\wedge^2)$ has a pole at $s=1/2$. We observe from Theorem \ref{G-J} that $L(s,\Delta_k) \equiv 1$ unless $\Delta_k$ is an unramified character of $F^{\times}$. This would in turn imply that $L(s,\Delta_k)$ is holomorphic at $s=1/2$ except $\Delta_k=[\nu^{1/2-\ell},\nu^{3/2-\ell}, \dotsm,\nu^{-1/2}]$, $\ell \geq 1$. Although this might be the case, we still reach the exactly same contradiction, because $\Delta_{k}^{\vee}=[\nu^{1/2},\dotsm,\nu^{\ell-3/2},\nu^{\ell-1/2}]$ is $\Delta_{l'}$ for some $l' \neq k$ which makes $\Delta_i$'s linked.
\end{proof}

It is worthwhile noting that for $s=1/2$ and $m=2n$ even, the integral
\[
 \int_{(N_m \cap H_m) \backslash (P_m \cap H_m) }W_{\pi}(p)\chi_{-1/2}(p)\mu_m^{1/2}(p) |\mathrm{det}(p)|^{s-1/2} dp
 =\int_{(N_{2n} \cap H_{2n}) \backslash (P_{2n} \cap H_{2n}) } W_{\pi}(p)  dp
\]
corresponds to what is introduced as the non-trivial $H_{2n}$-linear functional (up to conjugation) in \cite[Proposition 3.1]{LapidMao}.

\begin{theorem}
\label{H2n-generic-distinguished}
Let $\pi \in \mathcal{A}_F(2n)$ be a generic representation which is distinguished with respect to $H_{2n}$. Then we have
\[
 \int_{(N_{2n} \cap H_{2n}) \backslash (P_{2n} \cap H_{2n}) }W^{\circ}_{\pi}(p) dp=
  \begin{cases}
    L(1/2,\pi_{ur})L(1,\pi_{ur},\wedge^2)
   & \text{if $\pi$ is ramified,} \\
 \dfrac{L(1/2,\pi)L(1,\pi,\wedge^2)}{L(n,{\bf 1}_{F^{\times}} )}  & \text{otherwise.} 
  \end{cases}
\]
\end{theorem}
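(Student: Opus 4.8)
The plan is to run the proof of Theorem~\ref{Hm-unitary} in the case $m=2n$ essentially verbatim, replacing the unitarity hypothesis --- which there serves only to place the edge of absolute convergence of the Friedberg--Jacquet period at $s=1/2$ --- by the $H_{2n}$-distinction hypothesis. Two preliminary reductions align the two statements. First, an $H_{2n}$-distinguished $\pi$ has trivial central character, so $L(n,\omega_\pi)=L(n,{\bf 1}_{F^\times})$ and the right-hand side above is literally the right-hand side of Theorem~\ref{Hm-unitary} with $m=2n$. Second, as observed just before the statement, for $m=2n$ one has $\chi_{-1/2}\mu_{2n}^{1/2}\equiv{\bf 1}$ on $P_{2n}\cap H_{2n}$, so that $\int_{(N_{2n}\cap H_{2n})\backslash(P_{2n}\cap H_{2n})}W^\circ_\pi(p)\,dp=\Lambda^{FJ}(W^\circ_\pi)$, and more generally the same integral carrying a factor $|\mathrm{det}(p)|^{s-1/2}$ is the object whose meromorphic continuation must be controlled at $s=1/2$.

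First I would settle that $\Lambda^{FJ}(W^\circ_\pi)$ is meaningful. Inserting a characteristic function $\Phi^\circ$ supported near $e_n$ that comes from a small compact open subgroup fixing $W^\circ_\pi$ and applying the Iwasawa decomposition, $\mathcal{Z}(s,\chi_{-1/2},W^\circ_\pi,\Phi^\circ)$ collapses to a positive multiple of $\int_{(N_{2n}\cap H_{2n})\backslash(P_{2n}\cap H_{2n})}W^\circ_\pi(p)\chi_{-1/2}(p)\mu_{2n}^{1/2}(p)|\mathrm{det}(p)|^{s-1/2}\,dp$, just as \eqref{mirabolic} was extracted from the Rankin--Selberg integral. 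The Proposition immediately preceding the statement shows $L^{BF}(s,\pi,\chi_{-1/2})$ is holomorphic at $s=1/2$; combining this with \cite[Proposition 4.7]{MA15} and Bernstein's criterion \cite[\S 7.3]{Bern} on the exponents of the Bernstein--Zelevinsky derivatives of $\pi|_{P_{2n}}$ --- the same input used for the Shalika period in Theorem~\ref{MirabolicShalika} --- forces the displayed integral to converge absolutely on $\mathrm{Re}(s)\geq 1/2$, so $\Lambda^{FJ}(W^\circ_\pi)$ is well-defined by specializing $s=1/2$.

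Then I would carry out the evaluation, which is the purely formal part. Using the Matringe--Miyauchi formula (Theorem~\ref{NewV}) together with the Iwasawa decomposition $G_{n-1}=N_{n-1}A_{n-1}K_{n-1}$, and splitting according to the parity of $r=\dim\pi_{ur}$ exactly as in \eqref{BF-even}--\eqref{BF-odd} (via the identity $\delta_{B_{r/2}}^{-1}(b)\delta_{B_{r/2}}^{-1}(b')\nu^{-1/2}(b)\nu^{1/2}(b')=\delta_{B_r}^{-1/2}(J(b,b'))$ and its odd-$r$ analogue), one rewrites $\Lambda^{FJ}(W^\circ_\pi)$, when $\pi$ is ramified, as $\mathcal{Z}(1/2,\chi_{-1/2},W^\circ_{\pi_{ur}},{\bf 1}_{\mathcal{O}^r})$; the unramified Bump--Friedberg computation \citelist{\cite{BF}*{\S 3}\cite{MY}*{Theorem 5.1}}, equivalently Theorem~\ref{BFEQ} applied to $\pi_{ur}$, identifies this with $L(1/2,\pi_{ur})L(1,\pi_{ur},\wedge^2)$. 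When $\pi$ is unramified the same manipulation, followed by the unramified evaluation as in the proof of \cite[Theorem 6.2]{AM}, produces the denominator $L(n,{\bf 1}_{F^\times})$, giving the stated formula.

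The only genuinely non-formal point --- hence the expected obstacle --- is the boundary behaviour at $s=1/2$: one must ensure that the holomorphy of $L^{BF}(s,\pi,\chi_{-1/2})$ there upgrades to \emph{absolute convergence} of the mirabolic period integral, not merely to a meromorphic continuation, i.e.\ that $H_{2n}$-distinction constrains the leading exponents of the relevant derivatives of $\pi|_{P_{2n}}$ in the same way unitarity does in the unitary Proposition preceding Theorem~\ref{Hm-unitary}. Everything else is bookkeeping already performed in the proofs of Theorems~\ref{Hm-unitary} and \ref{MirabolicShalika}.
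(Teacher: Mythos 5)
Your proposal is correct and is essentially the paper's argument: the paper gives no separate proof of Theorem \ref{H2n-generic-distinguished}, deriving it exactly as you do from the triviality of $\omega_{\pi}$, the vanishing of $\chi_{-1/2}\mu_{2n}^{1/2}$ for $m=2n$, the Proposition preceding the theorem (holomorphy of $L^{BF}(s,\pi,\chi_{-1/2})$ at $s=1/2$ via self-duality and unlinked segments), and the computation already carried out in the proof of Theorem \ref{Hm-unitary}. Regarding the obstacle you flag, the paper does not claim absolute convergence in the non-unitary case either; it takes the functional to be well-defined precisely in the sense that the meromorphically continued integral, i.e.\ $L^{BF}(s,\pi,\chi_{-1/2})$, is regular at $s=1/2$, in parallel with Theorem \ref{RS-distinction} and Proposition \ref{Bernstein-s=1}.
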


The proof of \cite[Corollary 4.2]{MA15} implicitly contains the following assertion.

\begin{proposition} 
\label{linear-dimone}
Let $\pi \in \mathcal{A}_F(2n)$ be a unitary representation. Then
\[
  {\rm dim}_{\mathbb{C}} {\rm Hom}_{P_{2n} \cap H_{2n}}(\pi,{\bf 1}_{P_{2n} \cap H_{2n}}) \leq 1.
\]
The equality holds when $\pi$ is generic. In particular, if $\pi$ is $H_{2n}$-distinguished, then $W_{\pi} \mapsto \Lambda^{BF}(W_{\pi} )$ gives a unique non-trivial $H_{2n}$-invariant linear functional in the space  ${\rm Hom}_{H_{2n}}(\pi,{\bf 1}_{H_{2n}})$.
\end{proposition}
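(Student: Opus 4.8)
The plan is to run the Bernstein--Zelevinsky argument of Proposition \ref{RS-multiplicityone} in the setting of the linear period; as the remark after the statement indicates, the dimension bound is already implicit in the proof of \cite[Corollary 4.2]{MA15}, and \cite[Proposition 4.3]{Ma14JNT} supplies the comparison between the Shalika and linear mirabolic models. First I would reduce exactly as in the proof of Proposition \ref{Shalika-one-dimension}: since $e_{2n}w_{2n}=e_{2n}$, the Weyl element $w_{2n}$ normalizes $P_{2n}$, so conjugating by $w_{2n}$ gives
\[
 {\rm Hom}_{P_{2n} \cap H_{2n}}(\pi,{\bf 1}_{P_{2n}\cap H_{2n}}) \simeq {\rm Hom}_{P_{2n} \cap M_{2n}}(\pi,{\bf 1}_{P_{2n}\cap M_{2n}}),
\]
and $P_{2n} \cap M_{2n}$ is visibly the subgroup $\{\,\mathrm{diag}(g,g') : g \in G_n,\ g'\in P_n\,\}$ of $P_{2n}$, which I identify with $G_n \times P_n$. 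Hence it suffices to bound $\dim_{\mathbb{C}}{\rm Hom}_{G_n \times P_n}(\pi|_{P_{2n}},{\bf 1})$ by one.

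Next I would restrict the Bernstein--Zelevinsky filtration $0 \subseteq \pi_{2n} \subseteq \dotsm \subseteq \pi_1 = \pi|_{P_{2n}}$, whose graded pieces are $\pi_k/\pi_{k+1} = (\Phi^+)^{k-1}\Psi^+(\pi_1^{(k)})$, to $G_n \times P_n$ and compute ${\rm Hom}_{G_n \times P_n}(-,{\bf 1})$ piece by piece. Each piece is compactly induced from a subgroup $R_k \subseteq P_{2n}$ having $G_{2n-k}$ as a Levi-type quotient together with a generic character on a unipotent subgroup, so by Mackey theory ${\rm Hom}_{G_n \times P_n}$ of it decomposes over the double cosets $(G_n\times P_n)\backslash P_{2n}/R_k$. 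I would expect that, for all but one of the relevant cosets, the induced character is nontrivial on some unipotent subgroup met by $G_n \times P_n$, killing the corresponding term, while for the surviving representative $x$ the group $(G_n\times P_n)\cap xR_kx^{-1}$ projects onto a subgroup of $G_{2n-k}$ conjugate to $H_{m'}$ or to $P_{m'}\cap H_{m'}$, with $m'=2n-k$; in either case this yields an injection
\[
 {\rm Hom}_{G_n \times P_n}\big((\Phi^+)^{k-1}\Psi^+(\pi_1^{(k)}),\,{\bf 1}\big) \hookrightarrow {\rm Hom}_{P_{m'} \cap H_{m'}}\big(\pi_1^{[k]},\,{\bf 1}\big),
\]
where $\pi_1^{[k]}=\pi_1^{(k)}\otimes\nu^{1/2}$ is the shifted derivative. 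Taking $h$ maximal with $\pi_1^{(h)}\neq 0$, Bernstein's criterion \cite[\S 7.3, \S 7.4]{Bern} --- the only place the unitarity of $\pi$ is used --- shows $\pi_1^{[h]}$ is irreducible and unitary, while for $0<k<h$ the central exponents of the irreducible subquotients of $\pi_1^{[k]}$ are strictly positive; the latter excludes a nonzero form into ${\bf 1}$ (which would force a trivial central character), so only $k=h$ contributes, and an induction on the rank $m'=2n-h$ (over both parities of $H_{m'}$, the odd case being entirely parallel), with the immediate base cases of rank $\le 1$, gives $\dim \le 1$. I expect the genuine difficulty to reside in the geometric-lemma bookkeeping of this second step --- isolating the one surviving double coset and identifying the residual subgroup as being of type $H_{m'}$ or $P_{m'}\cap H_{m'}$ --- which is exactly what one extracts from \cite[Corollary 4.2]{MA15}; everything else is formal.

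For the equality when $\pi$ is generic (still unitary), I would note that $\Lambda^{FJ}$ is then a well-defined member of ${\rm Hom}_{P_{2n}\cap H_{2n}}(\pi,{\bf 1})$ by the convergence statement preceding Theorem \ref{Hm-unitary}, and that it is nonzero: Theorem \ref{Hm-unitary} evaluates $\Lambda^{FJ}(W^{\circ}_{\pi})$ as $L(1/2,\pi_{ur})L(1,\pi_{ur},\wedge^2)$ or as $L(1/2,\pi)L(1,\pi,\wedge^2)/L(n,\omega_{\pi})$, and for unitary generic $\pi$ this value is finite and non-zero --- the Langlands parameters satisfy $|\alpha_i(\varpi)|<q^{1/2}$, so $L(1/2,\pi_{ur})$ and $L(1,\pi_{ur},\wedge^2)$ have no pole at the relevant points and are never zero, while $|\omega_{\pi}(\varpi)|=1$ keeps $L(n,\omega_{\pi})$ finite and non-zero. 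Hence the dimension is exactly one. Finally, if in addition $\pi$ is $H_{2n}$-distinguished, then ${\rm Hom}_{H_{2n}}(\pi,{\bf 1}_{H_{2n}})$ is a nonzero subspace of the now one-dimensional ${\rm Hom}_{P_{2n}\cap H_{2n}}(\pi,{\bf 1})$, hence equals it and is spanned by $\Lambda^{FJ}$; this realizes, by mirabolic restriction and purely local means, the extension from $P_{2n}\cap H_{2n}$-invariant to $H_{2n}$-invariant forms discussed in \S\ref{Intro}.
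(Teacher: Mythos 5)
Your proposal is correct, and its skeleton matches the paper's: the dimension bound is ultimately the content of the proof of \cite[Corollary 4.2]{MA15} (which the paper simply invokes, as announced in the sentence preceding the proposition), and the final extension step is the same isomorphism argument --- since $\mathrm{Hom}_{H_{2n}}(\pi,{\bf 1}_{H_{2n}})$ injects into the at most one-dimensional $\mathrm{Hom}_{P_{2n}\cap H_{2n}}(\pi,{\bf 1}_{P_{2n}\cap H_{2n}})$ and is nonzero by distinction, the nonzero functional $\Lambda^{FJ}$ of Theorem \ref{H2n-generic-distinguished} spans both spaces and is therefore $H_{2n}$-invariant. Where you differ is in what you make explicit. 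For the bound you sketch the Bernstein--Zelevinsky/Mackey argument (your reduction via $w_{2n}$, which indeed normalizes $P_{2n}$ and turns $P_{2n}\cap H_{2n}$ into $P_{2n}\cap M_{2n}\simeq G_n\times P_n$, is exactly the conjugation used in Proposition \ref{Shalika-one-dimension}), but the decisive double-coset bookkeeping is left at the level of ``I would expect''; since you anchor that step to \cite[Corollary 4.2]{MA15}, this is no worse than the paper, which outsources the same step, though as a standalone proof it would still need that input. For the equality in the generic unitary case you give a genuinely self-contained argument the paper does not spell out: $\Lambda^{FJ}$ is well defined by the convergence proposition and nonzero because Theorem \ref{Hm-unitary} evaluates it at the essential vector as a finite, nonzero $L$-value (finiteness from the Jacquet--Shalika bounds on the parameters, nonvanishing because local $L$-factors are reciprocals of polynomials). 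This is consistent with the paper's logical order (Theorem \ref{Hm-unitary} precedes the proposition, so there is no circularity) and buys a constructive proof of ${\rm dim}=1$, whereas the paper attributes that equality implicitly to Matringe; the trade-off is that your route leans on the test-vector computation, while the citation route is independent of it. One minor slip: \cite[Proposition 4.3]{Ma14JNT} compares the Shalika and linear mirabolic functionals and is needed for Proposition \ref{Shalika-one-dimension}, not for the present statement, but nothing in your argument depends on it.
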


\begin{proof}
As alluded in the course of the proof of Proposition \ref{Shalika-one-dimension}, on account of the assumption ${\rm Hom}_{P_{2n} \cap H_{2n}}(\pi,{\bf 1}) \neq 0$, the embedding  
${\rm Hom}_{H_{2n}}(\pi,{\bf 1}_{H_{2n}}) \hookrightarrow {\rm Hom}_{P_{2n} \cap H_{2n}}(\pi,{\bf 1}_{P_{2n}\cap H_{2n}})$ induces an isomorphism 
\[
{\rm Hom}_{P_{2n} \cap H_{2n}}(\pi,{\bf 1}_{P_{2n} \cap H_{2n}}) \simeq  {\rm Hom}_{H_{2n}}(\pi,{\bf 1}_{H_{2n}})
\]
from which a non-trivial $P_{2n} \cap H_{2n}$-invariant linear functional in Theorem \ref{H2n-generic-distinguished} can be deemed the $H_{2n}$-invariant form.
\end{proof}

%\section{The Symmetric Square $L$-functions}

\section{The Bump-Ginzburg Period}
\label{BG}

From this section onward we declare that the characteristic of residue field of $F$ is odd. We do not strive for maximal generality, so sometime the hypothesis might not be necessary but which hold in all our applications. For the reader, who seeks for an expository description of symmetric square $L$-factors, we reiterate the crucial points from \cites{Takeda14,Yamana}. 
Banks, Levy, and Sepanski \cite[Section 3]{BLS} gave an explicit illustration of a 2-cocycle,
\[
  \sigma_m : G_m \times G_m \rightarrow \{ \pm 1 \}
\]
and the 2-cocycles $\{ \sigma_m \}_{m=1}^{\infty}$ satisfy a block compatibility formula on all standard Levi subgroups; if $m=m_1+m_2+\dotsm+m_t$ and $g_i, g'_i \in G_{m_i}$ for all $i=1,2,\dotsm,t$, then
\[
 \sigma_m \left( \begin{pmatrix} g_1 \vspace{-1ex} && \\ & \ddots& \vspace{-1ex} \\ &&g_t \end{pmatrix},\begin{pmatrix} g'_1 \vspace{-1ex} && \\ & \ddots& \vspace{-1ex} \\ &&g'_t \end{pmatrix}  \right)
 =\prod_{k=1}^t \sigma_{m_k}(g_k,g'_k) \prod_{1 \leq i < j \leq t} (\mathrm{det}(g_i), \mathrm{det}(g'_j)),
\]
where $(-,-)$ stands for the Hibert symbol for $F$ (We apologize the double use of $\sigma_m$ but we hope that the reader can separate them by context). The 2-cocycle $\sigma_1$ is trivial on $G_1$ and the 2-cocycle $\sigma_2$ is well-known as the Kubota 2-cocycle on $G_2$. We denote the central extension of $G_m$ associated to $\sigma_m$ by $^{\sigma_m}\widetilde{G}_m$. As a set, the two-fold cover $^{\sigma_m}\widetilde{G}_m$ is incarnated as $G_m \times \{ \pm 1 \}$ and the group law is defined by 
\[
  (g,\xi)\cdot(g',\xi')=(gg',\sigma_m(g,g')\xi\xi')  \;\;  \text{for} \;\; \xi,\xi' \in \{\pm 1 \} \;\; \text{and} \;\; g,g' \in G_m.
  \]
It is known that the maximal compact open subgroup $K_m$ splits in $^{\sigma_m}\widetilde{G}_m$ and hence we can find a continuous map $s_m : G_m \rightarrow \{ \pm 1 \}$ such that
\[
  \sigma_m(k,k')=s_m(k)s_m(k')s_m(kk')  \;\;  \text{for} \;\; k,k' \in K_m.
\]
For the global application, we need to use a different 2-cocycle $\tau_m$ which satisfies $\tau_m(k,k')=1$ for $k,k' \in K_m$. Then we define the 2-cocycle $\tau_m$ by
\[
 \tau_m(g,g')=\sigma_m(g,g')s_m(g)s_m(g')s_m(gg')
\]
for $g,g' \in G_m$. The choice of $s_m$ and $\tau_m$ is not unique. We shall fix $s_m$ consistent with ones in \cite{Takeda14,Yamana}, which all stem from what is called the canonical lift of Kazhdan and Patterson. We define the metaplectic cover of $G_m$ to be $\widetilde{G}_m=G_m \rtimes \{ \pm 1\}$ which sits in the central extension of $G_m$ associated to $\tau_m$
\[
  1  \longrightarrow \{ \pm 1 \} \longrightarrow \widetilde{G}_m \longrightarrow G_m \longrightarrow 1
\]
and the group law is given by
\[
  (g,\xi)\cdot(g',\xi')=(gg',\tau_m(g,g')\xi\xi')   \;\;  \text{for} \;\; \xi,\xi' \in \{\pm 1 \} \;\; \text{and} \;\; g,g' \in G_m.
  \]
We define the canonical projection $p_m : \widetilde{G}_m \rightarrow G_m$ by $p_m((g,\xi))=g$ for $g \in G_m$ and $\xi \in \{ \pm 1 \}$.
For any subgroup $H$ of $G_m$, we write $\widetilde{H}$ for its preimage $p_m^{-1}(H)$. We define a set theoretic section ${\bf s} : G_m \rightarrow \widetilde{G}_m$ by ${\bf s}(g)=(g,s_m(g))$ for $g \in G_m$. Any element can be uniquely written in the form $\xi {\bf s}(g)$ for $g \in G_m$ and $\xi \in \{ \pm 1 \}$ and it is worthwhile noting that
\[
  {\bf s}(g){\bf s}(g')={\bf s}(gg') \sigma_m(g,g')
\]
for $g, g' \in G_m$. We define another set theoretic section $\kappa : G_m \rightarrow \widetilde{G}_m$ by $\kappa(g)=(g,1)$ for $g \in G_m$. 
For a subgroup $H \leq  G_m$, whenever the cocycle $\sigma_m$ is trivial on $H \times H$, the section $\bf s$ splits $H$, and we denote by $H^{\ast}$ the image ${\bf s}(H)$.

\par
Let $\pi$ be an admissible representation of a subgroup $\widetilde{H} \leq \widetilde{G}_m$. The representation $\pi$ is said to be {\it genuine} if $\pi(\xi {\bf s}(h))v=\xi \pi({\bf s}(h))v$
for all $h \in H$, $\xi \in \{ \pm 1 \}$, and $v \in V$, that is, each element in $\xi \in \widetilde{H}$ acts as a multiplication by $\xi$. On the other hand, any representation $\pi$ of $H$ can be pulled back to a non-genuine representation of $\widetilde{H}$ by composing it with the canonical projection $p_m : \widetilde{G}_m \rightarrow G_m$. In particular, for an upper parabolic subgroup $\rm P$, we view the modulus character $\delta_{\rm P}$ as an character on $\widetilde{\rm P}$ in this way.

\par
Let $A_m^{\rm e}$ be the subgroup of $A_m$ consisting of diagonal matrices $\mathrm{diag}(a_1,a_2,\dotsm,a_m)$ such that
\[
  \begin{cases}
   a_1a^{-1}_2, a_3a^{-1}_4, \dotsm, a_{m-1}a^{-1}_m \;\; \text{are squares,} & \text{if $m=2n$ is even,}\\
    a_2a^{-1}_3, a_4a^{-1}_5, \dotsm, a_{m-1}a^{-1}_m \;\; \text{are squares,} & \text{if $m=2n+1$ is odd.}\\
    \end{cases}
\]
We put $\mathscr{Z}_m=Z_m^{e(m)}$, where $e(m)=1$ or $2$ according as $m$ is odd or even. We remark that $\widetilde{\mathscr{Z}}_m$ is the center of $\widetilde{G}_m$. We set $\mu_{\psi}(a)=\gamma(\psi_a) / \gamma(\psi)$ for $a \in F^{\times}$, where $\psi_a(x)=\psi(ax)$ and $\gamma(\psi)$ is the Weil representation associated to $\psi$. We recall that
\[
 \mu_{\psi}(ab)=\mu_{\psi}(a)\mu_{\psi}(b)(a,b), \quad \mu_{\psi}(ab^2)=\mu_{\psi}(a)  \;\;  \text{for} \;\; a, b \in F^{\times}.
\]
These properties allow us to define a genuine character $\omega^{\psi}$ of $\widetilde{A}^{\rm e}_m$ by
\[
 \omega^{\psi}(\xi {\bf s}(a))=
 \begin{cases}
  \xi \mu^{-1}_{\psi}(a_m)\mu^{-1}_{\psi}(a_{m-2})\dotsm \mu^{-1}_{\psi}(a_2),    & \text{if $m=2n$ is even,}\\
   \xi \mu^{-1}_{\psi}(a_m)\mu^{-1}_{\psi}(a_{m-2})\dotsm \mu^{-1}_{\psi}(a_3),  & \text{if $m=2n+1$ is odd.}\\
    \end{cases}
\]
The {\it exceptional representation} $\theta^{\psi}_m$ of Kazhdan and Patterson is defined to be the unique irreducible quotient of the induced representation $\mathrm{Ind}^{\widetilde{G}_m}_{\widetilde{T}_m^{\rm e}N^{\ast}_m}(\omega^{\psi} \otimes \delta_{B_m}^{1/4})$ \cite[\S 2.1]{Takeda14}, where the induction is normalized in order that $\mathrm{Ind}^{\widetilde{G}_m}_{\widetilde{T}_m^{\rm e}N^{\ast}_m}(\omega^{\psi} \otimes \delta_{B_m}^{1/4})$ is unitarizable provided that $\omega^{\psi} \otimes \delta_{B_m}^{1/4}$. The representation $\theta^{\psi}_m$ is isomorphic to the unique irreducible subrepresentation of  $\mathrm{Ind}^{\widetilde{G}_m}_{\widetilde{T}_m^{\rm e}N^{\ast}_m}(\omega^{\psi} \otimes \delta_{B_m}^{-1/4})$ \cite[\S 1E]{Yamana}.

\par
For each character $\varrho$ of $F^{\times}$, we define a genuine character $\zeta^{\psi}_{\varrho}$ of $\widetilde{\mathscr{Z}}_m$ by
\[
  \zeta^{\psi}_{\varrho} \left(\xi {\bf s} \begin{pmatrix} z \vspace{-2ex} && \\ & \ddots& \vspace{-2ex} \\ &&z \end{pmatrix} \right)
  = \xi \varrho(z)\mu^n_{\psi}(z)
\]
for $\xi \in \{\pm 1 \}$ and $z \in F^{\times e(m)}$ with $m=2n$ even or $m=2n+1$ odd. We extend $\theta^{\psi}_{m-1}$ to the representation $\theta^{\psi}_{m-1} \boxtimes \zeta^{\psi}_{\varrho}$ of the semidirect product $(\widetilde{G}_{m-1}\times \widetilde{\mathscr Z}_m) \ltimes U_m$ by letting $\widetilde{G}_{m-1}\times \widetilde{\mathscr Z}_m$ act by $\theta^{\psi}_{m-1} \boxtimes \zeta^{\psi}_{\varrho}$ and letting $U^{\ast}_m$ act trivially. We consider the normalized induced representation
\[
  I_{\psi}(s,\varrho)=\mathrm{Ind}^{\widetilde{G}_m}_{\widetilde{P}_m\widetilde{\mathscr{Z}}_m}(\theta^{\psi}_{m-1} \boxtimes \zeta^{\psi}_{\varrho}) \otimes \delta^{s/4}_{P_{n-1,1}}.
\]

\par
We present the involution $g \mapsto {^{\iota}g}$ of $G_m$ defined by ${^{\iota}g}=w_m {^tg^{-1}} w_m$, where
\[ 
  w_m:=\begin{pmatrix}  \vspace{-2ex} && 1 \\ & \iddots& \vspace{-2ex} \\ 1 && \end{pmatrix}
\]
is the long Weyl element. Kable and Yamana \cite[Proposition 1.3]{Yamana} extend the automorphism of $G_m$ to a lift $\widetilde{g} \mapsto {^{\iota}\widetilde{g}}$ of $\widetilde{G}_m$ satisfying
\[
  {^{\iota}{\bf s}}(a)={\bf s}({^{\iota}a})\prod_{i > j}(a_i,a_j), \quad\quad {^{\iota}\widetilde{z}}=\widetilde{z}^{-1}, \quad\quad {^{\iota}({^{\iota}\widetilde{g})}}=\widetilde{g}, \quad \quad
   {^{\iota}{\bf s}}(n)={\bf s}({^{\iota}n})
\]
for all $a=\mathrm{diag}(a_1,a_2,\dotsm,a_m) \in A_m$, $\widetilde{z} \in \widetilde{\mathscr Z}_m$, $\widetilde{g} \in \widetilde{G}_m$, and $n \in N_m$.
Furthermore if $h : K_m \rightarrow \widetilde{G}_m$ is a homomorphism, then $h$ is compatible with the involution $\iota$, that is to say, $h({^{\iota}k})={^{\iota}h}(k)$ for all $k \in K_m$. We define the normalized operator $N(s,\varrho,\psi) :  I_{\psi}(s,\varrho) \longrightarrow  I_{\psi^{-1}}(-s,\varrho^{-1})$ by
\[
 N(s,\varrho,\psi)(\widetilde{g}):=\gamma(s,\varrho^{-2},\psi)[M(s,\varrho)f_{(s)}](^{\iota}\widetilde{g})
\]
where the unnormalized intertwining operator is given for $\mathrm{Re}(s) \gg 0$ by the integral
\[
  [M(s,\varrho)f_{(s)}](\widetilde{g}):=\int_{F^{m-1}} f_{(s)} \left( {\bf s}  \begin{pmatrix} & I_{m-1} \\ 1 & \end{pmatrix} {\bf s}  \begin{pmatrix} 1 & x \\ &  I_{m-1} \end{pmatrix} \widetilde{g} \right)dx
  \]
and by meromorphic continuation otherwise for $f_{(s)} \in I_{\psi}(s,\varrho)$.

\par 
A $\widetilde{K}_m$-finite function $f : \mathbb{C} \times \widetilde{G}_m \rightarrow \mathbb{C}$ is called a {\it section} if the mapping $\widetilde{g} \mapsto f(s,\widetilde{g})$
belong to $I_{\psi}(s,\varrho)$ for all $s$. A section $f_{(s)} \in I_{\psi}(s,\varrho)$ is said to be a {\it standard (flat) section} if its restriction to $\widetilde{K}_m$ is independent of $s$. We denote by $V_{std}(s,\varrho,\psi)$ the space of standard sections. The space of {\it holomorphic sections} is defined by $V_{hol}(s,\varrho,\psi):=\mathbb{C}[q^{\pm s/4}]\otimes V_{std}(s,\varrho,\psi)$. The elements of $V_{rat}(s,\varrho,\psi):=\mathbb{C}(q^{-s/4})\otimes V_{std}(s,\varrho,\psi)$ is said to be {\it rational sections}. The space of {\it good sections} $V_{good}(2s-1,\varrho,\psi)$ in the sense of Piatetski-Shapiro and Rallis is defined to consist of the followings:
\begin{enumerate}[label=$(\mathrm{\roman*})$]
\item\label{Good-1}  $V_{hol}(2s-1,\varrho,\psi)$
\item\label{Good-2}  $N(1-2s,\varrho^{-1},\psi^{-1})[V_{hol}(1-2s,\varrho^{-1},\psi^{-1})]$.
\end{enumerate}
The normalized operator stabilizes the good section in the sense of 
\[
N(2s-1,\varrho,\psi) [V_{good}(2s-1,\varrho,\psi)] \subseteq V_{good}(1-2s,\varrho^{-1},\psi^{-1})
\]
(cf. \cite[Lemma 3.5]{Yamana}), so this is indeed a good family of sections for normalized intertwining operator. The normalized $\kappa(K_1(\mathfrak{p}^c))$-invariant section (a test function) $f^{K_1(\mathfrak{p}^c)}_{(2s-1)}$ is given in the following fashion (cf. \cite[Proof of Lemma 2.2]{KaYa});
\begin{enumerate}[label=$(\mathrm{\roman*})$]
\item\label{TestFunction-1} When $\pi$ is ramified $(c > 0)$,
\[
f^{K_1(\mathfrak{p}^c)}_{(2s-1)}(\widetilde{g})=
\begin{cases}
\zeta^{\psi}_{\omega^{-1}_{\pi}}(\widetilde{z}) \delta^{\frac{2s+1}{4}}_{P_{m-1,1}}(\widetilde{p}) \theta_{m-1}^{\psi}(\widetilde{p}) & \text{if $\widetilde{g}=\widetilde{z}\widetilde{p}k$,\;\; $\widetilde{z} \in \widetilde{\mathscr{Z}}_m, \widetilde{p} \in \widetilde{P}_{m},k \in \kappa( K_1(\mathfrak{p}^c))$,} \\
0 & \text{if $\widetilde{g} \notin \widetilde{\mathscr{Z}}_m\widetilde{P}_{m} \kappa(K_1(\mathfrak{p}^c))$.}
\end{cases}
\] 
\item\label{TestFunction-2} When $\pi$ is unramified $(c=0)$,
\[
f^{K_m}_{(2s-1)}(\widetilde{g})=
\begin{cases}
L(ms,\omega^2_{\pi}) \zeta^{\psi}_{\omega^{-1}_{\pi}}(\widetilde{z})  \delta^{\frac{2s+1}{4}}_{P_{m-1,1}}(\widetilde{p}) \theta_{m-1}^{\psi}(\widetilde{p}) & \text{if $\widetilde{g}=\widetilde{z}\widetilde{p}k$,\;\; $\widetilde{z} \in \widetilde{\mathscr{Z}}_m, \widetilde{p} \in \widetilde{P}_{m},k \in \kappa(K_m)$}, \\
0 & \text{if $\widetilde{g} \notin \widetilde{\mathscr{Z}}_m\widetilde{P}_{m} \kappa(K_m)$.}
\end{cases}
\]
\end{enumerate}
The following lemma is briefly mentioned in the study of unramified calculation \cite[\S 3H]{Yamana}. We take this occasion to refine this matter thoroughly.

\begin{lemma}
$f^{K_1(\mathfrak{p}^c)}_{(2s-1)}$ is a good section.
\end{lemma}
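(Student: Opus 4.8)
The plan is to match $f^{K_1(\mathfrak{p}^c)}_{(2s-1)}$, an element of $I_{\psi}(2s-1,\omega^{-1}_{\pi})$, with one of the two families \ref{Good-1}, \ref{Good-2} that make up $V_{good}(2s-1,\omega^{-1}_{\pi},\psi)$, splitting into the ramified and unramified cases.

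First, for $c>0$, I would show that $f^{K_1(\mathfrak{p}^c)}_{(2s-1)}$ is actually a \emph{standard} section, so that it belongs to $V_{hol}(2s-1,\omega^{-1}_{\pi},\psi)$ and hence to $V_{good}$ by \ref{Good-1}. By the construction recalled from \cite[Proof of Lemma 2.2]{KaYa}, the formula in \ref{TestFunction-1} does define a genuine section supported exactly on the cell $\widetilde{\mathscr{Z}}_m\widetilde{P}_m\kappa(K_1(\mathfrak{p}^c))$ and right-invariant under $\kappa(K_1(\mathfrak{p}^c))$; in particular it is $\widetilde{K}_m$-finite. The crux is that its restriction to $\widetilde{K}_m$ does not depend on $s$: it vanishes off the cell, and on $\widetilde{K}_m\cap\widetilde{\mathscr{Z}}_m\widetilde{P}_m\kappa(K_1(\mathfrak{p}^c))$ any decomposition $\widetilde{k}=\widetilde{z}\,\widetilde{p}\,\kappa(k')$ with $k'\in K_1(\mathfrak{p}^c)$ forces $\widetilde{z}\widetilde{p}\in\widetilde{K}_m$, on which the modulus character $\delta_{P_{m-1,1}}$ is trivial, so only the $s$-free factor $\zeta^{\psi}_{\omega^{-1}_{\pi}}(\widetilde{z})\theta^{\psi}_{m-1}(\widetilde{p})$ remains. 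Being $\widetilde{K}_m$-finite with $s$-independent restriction to $\widetilde{K}_m$, it is standard, hence holomorphic, hence good.

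Next, for $c=0$, the factor $L(ms,\omega^2_{\pi})$ in \ref{TestFunction-2} has a pole, so $f^{K_m}_{(2s-1)}$ is not holomorphic and must come from \ref{Good-2}. Writing $\phi^{\circ}_{(2s-1)}$ for the normalized standard spherical section of $I_{\psi}(2s-1,\omega^{-1}_{\pi})$, one has $f^{K_m}_{(2s-1)}=L(ms,\omega^2_{\pi})\phi^{\circ}_{(2s-1)}$, while $\phi^{\circ}_{(2s-1)}$ itself is standard by the same triviality-on-compacts argument. Let $\phi^{\circ}_{(1-2s)}$ be the normalized standard spherical section of $I_{\psi^{-1}}(1-2s,\omega_{\pi})$, which is holomorphic. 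Since the involution $\iota$ preserves $\widetilde{K}_m$ and $M(1-2s,\omega_{\pi})$ carries right $\widetilde{K}_m$-fixed vectors to right $\widetilde{K}_m$-fixed vectors, $N(1-2s,\omega_{\pi},\psi^{-1})[\phi^{\circ}_{(1-2s)}]$ is again spherical in $I_{\psi}(2s-1,\omega^{-1}_{\pi})$, hence a rational multiple $c(s)\phi^{\circ}_{(2s-1)}$. Evaluating at the identity, $c(s)=\gamma(1-2s,\omega^{-2}_{\pi},\psi^{-1})\int_{F^{m-1}}\phi^{\circ}_{(1-2s)}\big({\bf s}(w'){\bf s}(n(x))\big)dx$, the product of the $\gamma$-normalization with the Gindikin--Karpelevich integral for the two-fold cover ($w',n(x)$ being the Weyl element and unipotent in the integral defining $M$). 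By the explicit unramified computation in \cite[\S 3H]{Yamana} (compare \cite{BuGi92,Takeda14}) this equals $L(ms,\omega^2_{\pi})$, possibly after rescaling the spherical vectors by a monomial in $q^{\pm s/4}$; absorbing that monomial into the holomorphic section gives $f^{K_m}_{(2s-1)}\in N(1-2s,\omega_{\pi},\psi^{-1})[V_{hol}(1-2s,\omega_{\pi},\psi^{-1})]$, which is \ref{Good-2}.

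The only laborious point is the last one, namely the metaplectic Gindikin--Karpelevich evaluation: it has to be carried through carefully enough --- keeping track of the Weil-index factors $\mu_{\psi}$, the quarter-powers of $\delta_{B_m}$ built into $\theta^{\psi}_{m-1}$, and the $\gamma$-factor $\gamma(1-2s,\omega^{-2}_{\pi},\psi^{-1})$ --- for the product to collapse precisely to $L(ms,\omega^2_{\pi})$. Everything else is formal once the cell decompositions behind \ref{TestFunction-1} and \ref{TestFunction-2} and the mapping properties of $N$ stated just before the lemma are in place.
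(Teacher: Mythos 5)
Your ramified case is fine and coincides with what the paper does (the paper simply asserts that for $c>0$ the test function is a standard section; your check that the $s$-dependence through $\delta_{P_{m-1,1}}^{(2s+1)/4}$ disappears upon restriction to $\widetilde{K}_m$ is the correct justification). The unramified case, however, has a genuine gap, and it sits exactly at the step you flagged as the ``only laborious point.'' The normalized operator does not carry the bare spherical section to $L(ms,\omega_{\pi}^{2})$ times the spherical section: the unramified evaluation (this is the identity the paper quotes, in the form $L(ms,\omega^2_{\pi})\,N(2s-1,\omega^{-1}_{\pi},\psi) f^{K_m}_{2s-1}=L(m-ms,\omega^{-2}_{\pi})\, f^{K_m}_{1-2s}$) gives, in your notation,
\[
N(1-2s,\omega_{\pi},\psi^{-1})\,\phi^{\circ}_{(1-2s)}
=\frac{L(ms,\omega_{\pi}^{2})}{L(m-ms,\omega_{\pi}^{-2})}\,\phi^{\circ}_{(2s-1)};
\]
equivalently, $N$ maps the normalized test section $L(m-ms,\omega_{\pi}^{-2})\phi^{\circ}_{(1-2s)}$ to $f^{K_m}_{(2s-1)}=L(ms,\omega_{\pi}^{2})\phi^{\circ}_{(2s-1)}$. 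So your constant $c(s)$ is a ratio of two $L$-factors, not $L(ms,\omega_{\pi}^{2})$ up to a monomial, and the discrepancy $L(m-ms,\omega_{\pi}^{-2})$ cannot be absorbed into the holomorphic section: the preimage you would need, $L(m-ms,\omega_{\pi}^{-2})\phi^{\circ}_{(1-2s)}$, does not lie in $V_{hol}(1-2s,\omega_{\pi},\psi^{-1})=\mathbb{C}[q^{\pm s/4}]\otimes V_{std}(1-2s,\omega_{\pi},\psi^{-1})$. Hence membership of $f^{K_m}_{(2s-1)}$ in family \ref{Good-2} is not established by your computation, and the direct-membership strategy collapses.

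The paper argues differently: it first reduces to unitary $\omega_{\pi}$ by an unramified twist (a step you omit, and which is what localizes the poles of the two $L$-factors), and then invokes a holomorphy criterion for good sections, namely that $f^{K_m}_{(2s-1)}=L(ms,\omega_{\pi}^{2})\phi^{\circ}_{(2s-1)}$ is holomorphic for $\mathrm{Re}(s)>0$ while its image under $N(2s-1,\omega_{\pi}^{-1},\psi)$, which equals $L(m-ms,\omega_{\pi}^{-2})\phi^{\circ}_{(1-2s)}$, is holomorphic for $\mathrm{Re}(s)<1/2$; goodness then follows from \cite[Proposition 3.1.(5)]{Yamana14} and \cite[Proposition 3.11.(4)]{Yamana}. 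If you insist on a membership-type argument, you would have to exploit that $1-\omega_{\pi}^{2}(\varpi)q^{-ms}$ and $1-\omega_{\pi}^{-2}(\varpi)q^{ms-m}$ are coprime in $\mathbb{C}[q^{\pm s/4}]$ to write $f^{K_m}_{(2s-1)}$ as a $\mathbb{C}[q^{\pm s/4}]$-combination of $\phi^{\circ}_{(2s-1)}$ and $N(1-2s,\omega_{\pi},\psi^{-1})\phi^{\circ}_{(1-2s)}$, but that presupposes $V_{good}$ is closed under such combinations rather than being the bare union of \ref{Good-1} and \ref{Good-2}; either way, the evaluation you assert is not what the intertwining operator actually yields, so the unramified half of your proof needs to be replaced.
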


\begin{proof}
The unitary hypothesis on $\omega_{\pi}$ does not regulate the generality. An arbitrary character $\omega_{\pi}$ is of the form $\omega^{\circ}_{\pi} \nu^t$, where $t \in \mathbb{R}$ and $\omega_{\pi}^{\circ}$ is a unitary character. Since $ I_{\psi}(2s-1+\frac{4t}{m},{\omega^{ \circ}_{\pi}}^{-1}) \otimes \nu^{-\frac{t}{m}} \simeq I_{\psi}(2s-1, {\omega^{ \circ}_{\pi}}^{-1}\nu^{-t})$, we enforce that $\omega_{\pi}$ is unitary at the compensation of shifting $s$ and twisting $\pi$ by unramified character.
With $\pi$ unramified, we have
\[
 L(ms,\omega^2_{\pi}) N(2s-1,\omega^{-1}_{\pi},\psi) f^{K_m}_{2s-1}=L(m-ms,\omega^{-2}_{\pi}) f^{K_m}_{1-2s}.
\]
In particular $L(ms,\omega^2_{\pi}) f^{K_m}_{2s-1}$ is a good section (cf. \cite[Proposition 3.1.(5)]{Yamana14}), because $L(ms,\omega^2_{\pi}) f^{K_m}_{2s-1}$ is holomorphic for $\mathrm{Re}(s) > 0$ and $L(m-ms,\omega^{-2}_{\pi}) f^{K_m}_{1-2s}$ is holomorphic for $\mathrm{Re}(s) < 1/2$. We apply \cite[Proposition 3.11.(4)]{Yamana} to arrive at the conclusion.
When $\pi$ is ramified, $f^{K_1(\mathfrak{p}^c)}_{(2s-1)}$ is just a standard section.
\end{proof}

\par
We define the degenerate character of $N_{m}$ by
\[
\bullet \;\; \psi_{\textbf{e}}(n)=\psi(n_{1,2}+n_{3,4}+ \dotsm + n_{m-1,m}) \quad
\bullet \;\; \psi_{\textbf{o}}(n)=\psi(n_{2,3}+n_{4,5}+\dotsm+n_{m-2,m-1}) 
\]
when $m=2n$ is even. When $m=2n+1$ is odd, we define the degenerate character of $N_{m}$ by
\[
\bullet \;\; \psi_{\textbf{e}}(n)=\psi(n_{2,3}+n_{4,5}+\dotsm+n_{m-1,m})\quad
\bullet \;\; \psi_{\textbf{o}}(n)=\psi(n_{1,2}+n_{3,4}+ \dotsm + n_{m-2,m-1}). 
\]
The unique $\psi^{-1}_{\textbf{e}}$-semi-Whittaker functional corresponds to a $\widetilde{G}_m$-intertwining embedding
$
\lambda : V_{\theta^{\psi}_m} \rightarrow \mathrm{Ind}^{\widetilde{G}_m}_{N^{\ast}_m}(\psi^{-1}_{\textbf{e}})
$ 
satisfying
\[
 \lambda \left[ \theta^{\psi}_m \left( \xi  {\bf s}(n) \widetilde{g}\right) v \right]=\xi \psi^{-1}_{\textbf{e}}(n) \lambda(\theta^{\psi}_m(g)v)
\]
for $n \in N_m$, $\widetilde{g} \in \widetilde{G}_m, \xi \in \{ \pm 1 \}$, and $v \in V_{\theta^{\psi}_m}$ \cite[Proposition 2.5]{Takeda14}. Then we obtain the semi-Whittaker model $\mathcal{W}^{\rm e}(\theta_m^{\psi},\psi_{\bf e}^{-1})$ by setting $(W^{\rm e}_{\theta_m^{\psi}})_v(\widetilde{g}):=\lambda(\theta_m^{\psi}(\widetilde{g})v)$ for $v \in V_{\theta^{\psi}_m}$. For convenience, we will suppress the subscript $v$.

\par
For $W_{\pi} \in \mathcal{W}(\pi,\psi)$, $W^{\rm e}_{\theta_m^{\psi}} \in \mathcal{W}^{\rm e}(\theta_m^{\psi},\psi_{\bf e}^{-1})$ and $f_{(2s-1)} \in V_{good}(2s-1,\omega^{-1}_{\pi},\psi)$, we define the local Bump-Ginzburg integral by
\[
 Z(W_{\pi},W^{\rm e}_{\theta_m^{\psi}},f_{(2s-1)})=\int_{\mathscr{Z}_mN_m \backslash G_m} W_{\pi}(g) W^{\rm e}_{\theta_m^{\psi}}({\bf s}(g))f_{(2s-1)} ({\bf s}(g)) dg.
\]
The integral converges absolutely for $\mathrm{Re}(s) \gg 0$. It is noteworthy that the actual choice of the section $\bf s$ above does not matter, because $W^{\rm e}_{\theta_m^{\psi}}$ and $f_{(2s-1)}$ are both genuine. Hence we omit it from the notation most of time. Let $\mathcal{I}(\pi,\mathrm{Sym}^2)$ be the subspace of $\mathbb{C}(q^{-s/2})$ spanned by local integrals $Z(W_{\pi},W^{\rm e}_{\theta_m^{\psi}},f_{(2s-1)})$, where $W_{\pi} \in \mathcal{W}(\pi,\psi)$, $W^{\rm e}_{\theta_m^{\psi}} \in \mathcal{W}^{\rm e}(\theta_m^{\psi},\psi_{\bf e}^{-1})$ and $f_{(2s-1)} \in V_{good}(2s-1,\omega^{-1}_{\pi},\psi)$. Each $Z(W_{\pi},W^{\rm e}_{\theta_m^{\psi}},f_{(2s-1)})$ is a rational function of $q^{-s/2}$ and hence admits a meromorphic continuation to all of $\mathbb{C}$. The space $\mathcal{I}(\pi,\mathrm{Sym}^2)$ is a $\mathbb{C}[q^{\pm s/2}]$-ideal containing $1$. There exists a normalized generator 
of the form $P(q^{-s/2})^{-1}$ with $P(X) \in \mathbb{C}[X]$ and $P(0)=1$ so that $\mathcal{I}(\pi,\mathrm{Sym}^2)=\left\langle P(q^{-s/2})^{-1} \right\rangle$. We define the local symmetric square $L$-functions by
\[
  L(s,\pi,\mathrm{Sym}^2)=\frac{1}{P(q^{-s/2})}.
\]
As in \cite[p.244]{Yamana}, we expect that $P(q^{-s/2})$ is a polynomial of $q^{-s}$.

\par
Let $\{ \alpha_i \}_{i=1}^r$ denote the Langlands parameter of $\pi$. Combining Proposition \ref{Langlands-RS} with Proposition \ref{formal-exterior}, the formal symmetric square $L$-factor can be paraphrased as
\[
  \mathcal{L}(s,\pi_{ur},\mathrm{Sym}^2)= \prod_{1 \leq i \leq j \leq r} (1-\alpha_i(\varpi)\alpha_j(\varpi)q^{-s})^{-1}.
\]

\begin{remark}
At this moment, the equality $\mathcal{L}(s,\pi_{ur},\mathrm{Sym}^2)=L(s,\pi_{ur},\mathrm{Sym}^2)$ is not known but for $G_2$ \cite{Jo21}. To get around this, we obtain the ``formal symmetric square $L$-factor" $\mathcal{L}(s,\pi_{ur},\mathrm{Sym}^2)$, which is enough at least for describing the main result of our interest. 
\end{remark}

When $m=2n$ is even, we take the Haar measure on $Z^2_m$ so that we assign the volume of $Z^2_m \cap K_1(\mathfrak{p}^c)$ to $1$. The goal is to determine the triple of test vectors which gives rise to the formal $L$-factors $\mathcal{L}(s,\pi_{ur},\mathrm{Sym}^2)$.

\begin{theorem}
Let $\pi \in \mathcal{A}_F(m)$ be a generic representation. We set $c=c(\pi)$. Then we have
\begin{equation}
\label{Sym-Test}
  \mathcal{L}(s,\pi_{ur},\mathrm{Sym}^2)=Z(W^{\circ}_{\pi}, W^{{\rm e}^{\circ}}_{\theta_m^{\psi}}, f^{K_1(\mathfrak{p}^c)}_{(2s-1)}).
\end{equation}
\end{theorem}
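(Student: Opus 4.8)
The plan is to run the same machine as in the proofs of Theorems~\ref{RSNewVector}, \ref{test-Asai} and \ref{JSEQ}: unfold the degenerate test section, collapse the integral onto a mirabolic subgroup and then onto a torus by partial Iwasawa decompositions, feed in the Matringe--Miyauchi formula of Theorem~\ref{NewV} for $W^{\circ}_{\pi}$, and match what survives with the fully unramified Bump--Ginzburg integral attached to $\pi_{ur}$. Since both sides of \eqref{Sym-Test} are rational functions of $q^{-s/2}$, it suffices to argue for $\mathrm{Re}(s)\gg 0$, where the integral converges absolutely. The case $c=0$ (that is, $\pi$ unramified) is the explicit unramified Bump--Ginzburg computation recorded in \cite{Takeda14} and \cite[\S 3H]{Yamana} (cf.\ \cite{KaYa}); there the normalizing factor $L(ms,\omega^{2}_{\pi})$ built into $f^{K_m}_{(2s-1)}$ is precisely what makes the answer come out as $\prod_{1 \leq i \leq j \leq r}(1-\alpha_i(\varpi)\alpha_j(\varpi)q^{-s})^{-1}=\mathcal{L}(s,\pi_{ur},\mathrm{Sym}^2)$ with no spurious factor, so from now on I would assume $c>0$.

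First I would unfold the section. Since $f^{K_1(\mathfrak{p}^c)}_{(2s-1)}$ vanishes off $\widetilde{\mathscr{Z}}_m\widetilde{P}_m\kappa(K_1(\mathfrak{p}^c))$ and equals $\zeta^{\psi}_{\omega^{-1}_{\pi}}(\widetilde{z})\,\delta^{(2s+1)/4}_{P_{m-1,1}}(\widetilde{p})\,\theta_{m-1}^{\psi}(\widetilde{p})$ on that set, I would write the outer integral over $\mathscr{Z}_mN_m\backslash G_m$ in Iwasawa form relative to $G_m=N_mP_mK_m$, use that $W^{\circ}_{\pi}$ is right-$K_1(\mathfrak{p}^c)$-invariant and $W^{{\rm e}^{\circ}}_{\theta_m^{\psi}}$ right-$\kappa(K_m)$-invariant, and normalize $\mathrm{vol}(K_1(\mathfrak{p}^c))=1$, so that the $K_m$-integration collapses exactly as in the proof of Theorem~\ref{test-Asai}. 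The genuineness of $W^{{\rm e}^{\circ}}_{\theta_m^{\psi}}$ and of $f^{K_1(\mathfrak{p}^c)}_{(2s-1)}$ makes the choice of the section ${\bf s}$ immaterial (one may work with $\kappa$), and the genuine central characters $\omega^{\psi}$ and $\zeta^{\psi}_{\omega^{-1}_{\pi}}$ are compatible with $\omega_{\pi}$, so passing to the quotient by $\mathscr{Z}_m$ is legitimate. What is left is an integral over $\mathscr{Z}_mN_m\backslash\mathscr{Z}_mP_m\simeq N_{m-1}\backslash G_{m-1}$ of $W^{\circ}_{\pi}\begin{pmatrix} g & \\ & 1 \end{pmatrix}\delta^{(2s+1)/4}_{P_{m-1,1}}$ against the kernel obtained by pairing the semi-Whittaker datum of $\theta_m^{\psi}$ with the $\theta_{m-1}^{\psi}$-part of the section; this is exactly the unfolding step of the Bump--Ginzburg method, and for the spherical data the kernel is the explicit function computed in \cite{Takeda14} and \cite{Yamana} (cf.\ \cite{KaYa}).

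Next I would push everything onto the torus. Applying the Iwasawa decomposition $G_{m-1}=N_{m-1}A_{m-1}K_{m-1}$, the $K_{m-1}$-integral drops out since every factor of the integrand is right-$K_{m-1}$-invariant, leaving an integral over $A_{m-1}$ against $\delta^{-1}_{B_{m-1}}$. Then I would insert Theorem~\ref{NewV}: with $a=\mathrm{diag}(a_1,\dots,a_{m-1})$ and truncation $a'=\mathrm{diag}(a_1,\dots,a_r)\in A_r$, the factor ${\bf 1}_{\mathcal{O}}(a_r)\prod_{r<k<m}{\bf 1}_{\mathcal{O}^{\times}}(a_k)$ collapses the $A_{m-1}$-integral down to $A_r$, while the explicit spherical semi-Whittaker function of $\theta_m^{\psi}$, restricted to the block torus $\mathrm{diag}(a',I_{m-r})$, degenerates --- up to an explicit power of $\nu(a')$ that absorbs the $\nu^{(m-r)/2}(a')$ of Theorem~\ref{NewV} together with the residual part of $\delta^{(2s+1)/4}_{P_{m-1,1}}$ --- to the spherical semi-Whittaker datum of the exceptional representation of $G_r$. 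Collecting the surviving terms would identify the integral with the unramified Bump--Ginzburg integral $Z(W^{\circ}_{\pi_{ur}},W^{{\rm e}^{\circ}}_{\theta_r^{\psi}},f^{K_r}_{(2s-1)})$ on $G_r$, whose value is $\prod_{1 \leq i \leq j \leq r}(1-\alpha_i(\varpi)\alpha_j(\varpi)q^{-s})^{-1}=\mathcal{L}(s,\pi_{ur},\mathrm{Sym}^2)$ by the formula displayed just before the theorem; \eqref{Sym-Test} follows.

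The analytic input I need --- absolute convergence for $\mathrm{Re}(s)\gg 0$ and the good-section property of $f^{K_1(\mathfrak{p}^c)}_{(2s-1)}$ --- is already in hand, so the hard part will be the bookkeeping inside the metaplectic cover: carrying the $2$-cocycle $\sigma_m$, the interplay of the sections ${\bf s}$ and $\kappa$, and the genuine characters $\omega^{\psi}$ and $\zeta^{\psi}_{\omega^{-1}_{\pi}}$ through the two reductions above. The genuinely new point, with no counterpart in the linear-group arguments of the earlier sections, is controlling the spherical semi-Whittaker function of $\theta_m^{\psi}$ under truncation of the torus and under the block embedding $G_r\hookrightarrow G_{m-1}$: one must show that the $G_{m-r}$-torus directions, forced by the characteristic factors of Theorem~\ref{NewV} to lie in $\mathcal{O}^{\times}$ (respectively in $\mathcal{O}$), contribute only an explicit unit-supported modulus that cancels exactly, so that what remains is the spherical datum of $\theta_r^{\psi}$. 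Establishing this compatibility, and lining it up against the precise form of the unramified Bump--Ginzburg kernel, is where the care is needed.
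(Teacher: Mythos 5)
Your proposal is correct and follows essentially the same route as the paper: exploit the support of $f^{K_1(\mathfrak{p}^c)}_{(2s-1)}$ to collapse the integral onto $N_m\backslash P_m$ and then, via Iwasawa decompositions and Theorem \ref{NewV}, onto a sum over the truncated torus $A_r$, with the unramified case quoted from the literature. The only difference is presentational: where you package the surviving torus sum as the unramified Bump--Ginzburg integral on $G_r$, the paper inserts the explicit torus values of the exceptional semi-Whittaker function from \cite[P. 170]{BuGi92} together with Shintani's formula and concludes by the even-partition Schur-polynomial identity found in \cite{BuGi92}, \cite{Takeda14}, and \cite{KaYa}, which amounts to the same computation.
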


\begin{proof}
Although the proof presented seems to be well-known, for lack of a precise reference apart from unramifed cases \cite[\S 3H]{Yamana}, we provide all the detailed accounts. Since the integrand is left $N_m$-invariant and right $K_1(\mathfrak{p}^c)$-invariant, we have
\begin{equation}
\label{sym-Iwasawa}
\begin{split}
  Z(W^{\circ}_{\pi}, W^{{\rm e}^{\circ}}_{\theta_m^{\psi}}, f^{K_1(\mathfrak{p}^c)}_{(2s-1)})
  &=\int_{N_m \backslash P_m} W^{\circ}_{\pi} (p) W^{{\rm e}^{\circ}}_{\theta_m^{\psi}} (p) f^{K_1(\mathfrak{p}^c)}_{(2s-1)}(p) |\mathrm{det}(p)|^{-1} \,dp\\
  &=\int_{N_{m-1} \backslash G_{m-1}} W^{\circ}_{\pi} \begin{pmatrix} g & \\ & 1 \end{pmatrix} W^{{\rm e}^{\circ}}_{\theta_m^{\psi}} \begin{pmatrix} g & \\ & 1 \end{pmatrix} \theta_{m-1}^{\psi}(g)
  |\mathrm{det}(g)|^{\frac{2s+1}{4}-1} \,dg,
  \end{split}
\end{equation}
taking into account the support of $f^{K_1(\mathfrak{p}^c)}_{(2s-1)}$ inside $\mathscr{Z}_mP_mK_1(\mathfrak{p}^c)$. In the aspect of Iwasawa decomposition, this can be expressed as
\[
 Z(W^{\circ}_{\pi}, W^{{\rm e}^{\circ}}_{\theta_m^{\psi}}, f^{K_1(\mathfrak{p}^c)}_{(2s-1)}) 
 =\int_{A_{m-1}} W^{\circ}_{\pi} \begin{pmatrix} a & \\ & 1 \end{pmatrix} W^{{\rm e}^{\circ}}_{\theta_m^{\psi}} \begin{pmatrix} a & \\ & 1 \end{pmatrix} \theta_{m-1}^{\psi}(a) \delta_{B_{m-1}}^{-1}(a)
  |\mathrm{det}(a)|^{\frac{2s+1}{4}-1} \,da.
\]
In view of Theorem \ref{NewV} coupled with \cite[P. 170]{BuGi92}, we may infer that
\[
\begin{split}
 Z(W^{\circ}_{\pi}, W^{{\rm e}^{\circ}}_{\theta_m^{\psi}}, f^{K_1(\mathfrak{p}^c)}_{(2s-1)}) 
& =\int_{A_r}  W^{\circ}_{\pi_{ur}}(a^{\prime}) \nu^{\frac{m-r}{2}}(a^{\prime}) {\bf 1}_{\mathcal{O}}(a_r) 
\delta_{B_{m}}^{\frac{1}{4}}\begin{pmatrix} a^{\prime} & \\ & I_{m-r}\end{pmatrix} 
  \delta_{B_{m-1}}^{\frac{1}{4}-1}\begin{pmatrix} a^{\prime} & \\ & I_{m-r-1}\end{pmatrix}\\
  &\quad \times  |\mathrm{det}(a^{\prime})|^{\frac{2s+1}{4}-1} \,da^{\prime}\\
  &=\int_{A_r}  W^{\circ}_{\pi_{ur}}(a^{\prime}) \nu^{\frac{m-r}{2}}(a^{\prime}) {\bf 1}_{\mathcal{O}}(a_r) \delta_{B_{r}}^{\frac{1}{4}}(a^{\prime}) \nu^{\frac{m-r}{4}}(a^{\prime})
  \delta_{B_{r-1}}^{\frac{1}{4}}(a'') \nu^{\frac{m-r}{4}}(a^{\prime})\nu^{-\frac{r}{4}}(a_r)
  \\
   &\quad \times  \delta_{B_{r}}^{-1}(a^{\prime})  \nu^{r-(m-1)}(a^{\prime}) |\mathrm{det}(a^{\prime})|^{\frac{2s+1}{4}-1} \,da^{\prime},\\
\end{split}
\]
where $a'=(a'',a_r)$. For each $\lambda=(\lambda_1,\dotsm,\lambda_r) \in \mathbb{Z}^r$,  we write $a^{\prime}_{\lambda}={\rm diag}(\varpi^{\lambda_1},\dotsm,\varpi^{\lambda_r})=(a''_{\lambda},a_r)$. We call $\lambda$ even if all components of $\lambda_i$ are even and we let $\alpha=(\alpha_1,\alpha_2,\dotsm,\alpha_r)$ denote the Langlands parameter of $\pi$. Then the integral is equal to
\[
 Z(W^{\circ}_{\pi}, W^{{\rm e}^{\circ}}_{\theta_m^{\psi}}, f^{K_1(\mathfrak{p}^c)}_{(2s-1)})
 =\underset{\lambda_1 \geq \dotsm \geq \lambda_r \geq 0}{\sum_{{\rm even}\;\; \lambda \in \mathbb{Z}^{r}}} \delta^{1/2}_{B_r}(a'_{\lambda}) s_{\lambda}(\alpha)
 \delta^{1/4}_{B_r}(a'_{\lambda}) \delta^{1/4}_{B_{r-1}}(a''_{\lambda})\nu^{-\frac{r}{4}}(a_r)\delta_{B_{r}}^{-1}(a^{\prime}) |\mathrm{det}(a_{\lambda}^{\prime})|^{\frac{2s+1}{4}}, 
\]
where we use Shintani formula and  $s_{\lambda}$ denotes the Schur polynomial defined in \cite[Lemma 3.1]{BKL}. Observing that $\delta_{B_r}=\delta_{B_{r-1}}\delta_{P_{r-1,1}}$, this clearly justifies that
\[
 Z(W^{\circ}_{\pi}, W^{{\rm e}^{\circ}}_{\theta_m^{\psi}}, f^{K_1(\mathfrak{p}^c)}_{(2s-1)})
 =\underset{\lambda_1 \geq \dotsm \geq \lambda_r \geq 0}{\sum_{{\rm even}\;\; \lambda \in \mathbb{Z}^{r}}} |\mathrm{det}(a_{\lambda}^{\prime})|^{\frac{s}{2}} s_{\lambda}(\alpha).
 \]
 The remaining part of the proof continues as in \cite[Theorem 4.1]{BuGi92} and \citelist{\cite{Takeda14}*{Proposition 3.16} \cite{KaYa}*{Proposition 3.4}}
\end{proof}

The equality of $\mathcal{L}(s,\pi_{ur},\mathrm{Sym}^2)$ and $L(s,\pi,\mathrm{Sym}^2)$ fails in general.

\begin{corollary} We have
\[ 
\mathcal{L}(s,\pi_{ur},\mathrm{Sym}^2)=P(q^{-s/2})L(s,\pi,\mathrm{Sym}^2)
\]
for a polynomial $P(X) \in \mathbb{C}[X]$ satisfying $P(0)=1$.
\end{corollary}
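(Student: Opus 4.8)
The plan is to reproduce, in the metaplectic setting, the short argument already used for the analogous divisibility statements---Corollary~\ref{RS-divisibility} and its counterparts for the Asai, exterior square and Bump--Friedberg $L$-factors: once one knows that the formal factor lies in the relevant fractional ideal, the conclusion is forced by the definition of the local $L$-function as a normalized generator of that ideal.

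First I would record that the triple $(W^{\circ}_{\pi},W^{{\rm e}^{\circ}}_{\theta_m^{\psi}},f^{K_1(\mathfrak{p}^c)}_{(2s-1)})$ appearing in \eqref{Sym-Test} is an admissible choice in the definition of $\mathcal{I}(\pi,\mathrm{Sym}^2)$. This is precisely the content of the lemma established just above, which asserts that $f^{K_1(\mathfrak{p}^c)}_{(2s-1)}$ is a good section; hence $Z(W^{\circ}_{\pi},W^{{\rm e}^{\circ}}_{\theta_m^{\psi}},f^{K_1(\mathfrak{p}^c)}_{(2s-1)})$ is one of the local integrals spanning $\mathcal{I}(\pi,\mathrm{Sym}^2)$, and \eqref{Sym-Test} then gives
\[
 \mathcal{L}(s,\pi_{ur},\mathrm{Sym}^2)\in\mathcal{I}(\pi,\mathrm{Sym}^2).
\]
Since $\mathcal{I}(\pi,\mathrm{Sym}^2)$ is the $\mathbb{C}[q^{\pm s/2}]$-ideal generated by $L(s,\pi,\mathrm{Sym}^2)$, this membership yields a Laurent polynomial $P\in\mathbb{C}[q^{\pm s/2}]$ with $\mathcal{L}(s,\pi_{ur},\mathrm{Sym}^2)=P(q^{-s/2})\,L(s,\pi,\mathrm{Sym}^2)$.

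It then remains to upgrade $P$ to a genuine polynomial normalized by $P(0)=1$. For this I would feed in the explicit shape of both factors. Writing $X=q^{-s/2}$, the normalized generator is $L(s,\pi,\mathrm{Sym}^2)=P_0(X)^{-1}$ with $P_0\in\mathbb{C}[X]$ and $P_0(0)=1$ by definition, while the formula $\mathcal{L}(s,\pi_{ur},\mathrm{Sym}^2)=\prod_{1\le i\le j\le r}(1-\alpha_i(\varpi)\alpha_j(\varpi)q^{-s})^{-1}$ recorded above reads $\mathcal{L}(s,\pi_{ur},\mathrm{Sym}^2)=R(X)^{-1}$ with $R(X)=\prod_{1\le i\le j\le r}\bigl(1-\alpha_i(\varpi)\alpha_j(\varpi)X^2\bigr)\in\mathbb{C}[X]$ and $R(0)=1$. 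The relation then becomes $P(X)=P_0(X)/R(X)$ as an identity in $\mathbb{C}(X)$; since $P_0$ and $R$ are polynomials with nonzero constant term and $P$ is a priori a Laurent polynomial, comparison of orders of vanishing at $X=0$ forces $P$ to have no negative powers, so $P\in\mathbb{C}[X]$, and evaluation at $X=0$ gives $P(0)=P_0(0)/R(0)=1$.

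I do not expect a genuine obstacle: the only step that is not purely formal is the last one, and it reduces to the elementary fact that a Laurent polynomial equal to a quotient of two polynomials with invertible constant terms is itself a polynomial with the corresponding constant term. If one prefers, the whole argument can instead be packaged as a divisibility statement for the inverse $L$-factors in the principal ideal domain $\mathbb{C}[q^{\pm s/2}]$, exactly mirroring the proof of Corollary~\ref{RS-divisibility}; I would keep whichever phrasing is shorter in the final write-up.
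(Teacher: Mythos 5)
Your proposal is correct and follows exactly the route the paper (implicitly, as for Corollary~\ref{RS-divisibility} and its Asai, exterior square, and Bump--Friedberg analogues) intends: the lemma that $f^{K_1(\mathfrak{p}^c)}_{(2s-1)}$ is a good section makes the identity \eqref{Sym-Test} exhibit $\mathcal{L}(s,\pi_{ur},\mathrm{Sym}^2)$ as an element of the fractional ideal $\mathcal{I}(\pi,\mathrm{Sym}^2)$ generated by $L(s,\pi,\mathrm{Sym}^2)$. Your closing step, upgrading the resulting Laurent polynomial in $q^{-s/2}$ to a genuine polynomial with $P(0)=1$ by regularity at $X=0$, is a routine normalization the paper leaves unstated, and it is argued correctly.
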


Let us now turn our attention to $G_m$-invariant trilinear forms and Bump-Ginzburg period integrals.

\begin{proposition}\cite[Theorem 2.14.(2)]{Yamana}
Let $\pi \in \mathcal{A}_F(m)$ be a unitary generic representations. For any $W_{\pi} \in \mathcal{W}(\pi,\psi)$, $W^{\rm e}_{\theta_m^{\psi}} \in \mathcal{W}^{\rm e}(\theta_m^{\psi},\psi_{\bf e}^{-1})$, and $f_{(1)} \in V_{good}(1,\omega^{-1}_{\pi},\psi)$, the integral
\[
T(W_{\pi},W^{{\rm e}}_{\theta_m^{\psi}}, f_{(1)}):=\int_{\mathscr{Z}_mN_m \backslash G_m} W_{\pi} (g) W^{\rm e}_{\theta_m^{\psi}} (g) f_{(1)}(g) dg
  \]
%\[
%\begin{split}
%T(W_{\pi},W^{s}_{\theta_m^{\psi}}, f_{(1)})
%&=\int_{N_m \backslash P_m} W_{\pi} (p) W^{s}_{\theta_m^{\psi}} (p) f_{(1)}(p) |\mathrm{det}(p)|^{-1} \,dp\\
%&=\int_{N_{m-1} \backslash G_{m-1}} W_{\pi}  \begin{pmatrix} g & \\ & 1 \end{pmatrix}  W^{s}_{\theta^{\psi}} \begin{pmatrix} g & \\ & 1 \end{pmatrix}
%(\theta_{m-1}^{\psi} \otimes \nu^{1/2})(g) \nu^{-3/4}(g) \,dg\\
%\end{split}
%\]
converges absolutely and defines a $G_m$-trilinear functional on the space $\mathcal{W}(\pi,\psi) \otimes \mathcal{W}^{\rm e}(\theta_m^{\psi},\psi_{\bf e}^{-1}) \otimes V_{good}(1,\omega^{-1}_{\pi},\psi)$.
\end{proposition}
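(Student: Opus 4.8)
This is \cite[Theorem~2.14.(2)]{Yamana}, whose proof we outline. The plan is to establish the absolute convergence of $T$; the $G_m$-invariance is then formal, since once the integral converges the substitution $g\mapsto gh$ in $\int_{\mathscr{Z}_mN_m\backslash G_m}$ is legitimate (the quotient measure being right $G_m$-invariant), and since the product $W_\pi\cdot W^{\rm e}_{\theta_m^\psi}\cdot f_{(1)}$ descends to a function on $G_m$ because the genuineness of $W^{\rm e}_{\theta_m^\psi}$ and of $f_{(1)}$ cancel; this exhibits $T$ as invariant under the diagonal right $G_m$-action on $\mathcal{W}(\pi,\psi)\otimes\mathcal{W}^{\rm e}(\theta_m^\psi,\psi_{\bf e}^{-1})\otimes V_{good}(1,\omega^{-1}_\pi,\psi)$.

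For the convergence I would first reduce to a torus integral. Since $V_{good}(1,\omega^{-1}_\pi,\psi)$ is generated by holomorphic (flat) sections together with their images under the normalized intertwining operator, and the local integral intertwines with that operator through the functional equation of the Bump-Ginzburg integral (cf. \cite[\S3]{Yamana}), it suffices to treat a flat section $f_{(1)}$; such a section is supported on $\widetilde{\mathscr{Z}}_m\widetilde{P}_m\kappa(K_m)$ and transforms on $\widetilde{P}_m$, at $2s-1=1$, by $\delta_{P_{m-1,1}}^{(2s+1)/4}\theta_{m-1}^\psi$. Carrying out the Iwasawa reduction of \eqref{sym-Iwasawa} verbatim at $s=1$ --- first $G_m=\mathscr{Z}_mP_mK_m$ (up to the finitely many $Z_m$-cosets that arise when $m$ is even), then $G_{m-1}=N_{m-1}A_{m-1}K_{m-1}$, using the compactness of $K_m$ and $K_{m-1}$ --- turns $T$ into a finite sum of integrals each dominated by one of the form
\[
\int_{A_{m-1}}|W_\pi(\mathrm{diag}(a,1))|\,|W^{\rm e}_{\theta_m^\psi}(\mathrm{diag}(a,1))|\,|\theta_{m-1}^\psi(a)|\,|\mathrm{det}(a)|^{-1/4}\,\delta_{B_{m-1}}^{-1}(a)\,da,
\]
with $W_\pi$ and $W^{\rm e}_{\theta_m^\psi}$ possibly replaced by fixed right $K_m$-translates.

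The decisive --- and only delicate --- step is the gauge estimate on $A_{m-1}$, and this is where the unitarity of $\pi$ enters. By the asymptotic expansion of Whittaker functions along the diagonal torus (Jacquet-Piatetski-Shapiro-Shalika; cf. \cite[\S7.1]{JaSh88}) and the parallel expansion for the semi-Whittaker function of the exceptional representation $\theta_m^\psi$ (as in \cite[\S7]{BuGi92} and \cite[\S1]{Yamana}), each of $|W_\pi(\mathrm{diag}(a,1))|$ and $|W^{\rm e}_{\theta_m^\psi}(\mathrm{diag}(a,1))|$ is, on the positive cone $\{\,|a_i/a_{i+1}|\leq C\,\}$, bounded by a finite sum of characters of $A_{m-1}$ whose exponents are controlled from below by the exponents of the Bernstein-Zelevinsky derivatives, and vanishes off that cone. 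The unitarity of $\pi$ and the unitarizability of $\theta_m^\psi$ force the totality of exponents occurring, after one multiplies by $\delta_{B_{m-1}}^{-1}(a)$ and by the fixed unramified twist $|\mathrm{det}(a)|^{-1/4}$, to lie strictly inside the cone of convergence, so that the series over $A_{m-1}/(A_{m-1}\cap K_{m-1})\simeq\mathbb{Z}^{m-1}$ is dominated termwise by a convergent geometric series; this gives the absolute convergence. I expect the main obstacle to be precisely this bookkeeping: establishing the gauge estimate for $W^{\rm e}_{\theta_m^\psi}$ on the mirabolic as sharply as for ordinary Whittaker functions, and verifying that under the unitarity hypothesis every relevant exponent falls strictly in the domain of convergence, uniformly over the finitely many $K_m$-translates produced by the Iwasawa reduction.
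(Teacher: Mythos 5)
Note first that the paper offers no argument for this proposition at all: it is imported wholesale from \cite[Theorem 2.14.(2)]{Yamana}, so there is no internal proof to compare yours against. Your outline does follow the route that the cited proof (and its Rankin--Selberg prototype \cite[Proposition 3.17]{JaSh81}) takes: descend to a function on $G_m$ using that the two genuine factors cancel, perform the Iwasawa reduction exactly as in \eqref{sym-Iwasawa} at the point $2s-1=1$, and control the resulting $A_{m-1}$-integral by gauge estimates, with the unitarity of $\pi$ and the explicit exponents of the exceptional representations forcing convergence. In that sense you are reconstructing the cited argument rather than replacing it.

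Two points, however, need repair. First, the reduction ``it suffices to treat a flat section'' is both invalid as stated and unnecessary. The functional equation is an identity of meromorphically continued rational functions and cannot transfer \emph{absolute convergence} from flat sections to sections of the form $N(1-2s,\omega_{\pi},\psi^{-1})[V_{hol}(1-2s,\omega_{\pi},\psi^{-1})]$ evaluated at $2s-1=1$; moreover a general flat section is not supported in $\widetilde{\mathscr{Z}}_m\widetilde{P}_m\kappa(K_m)$ --- only the particular test section $f^{K_1(\mathfrak{p}^c)}_{(2s-1)}$ enjoys that support property. What actually makes the argument uniform is that every $f_{(1)}\in V_{good}(1,\omega_{\pi}^{-1},\psi)$ is a smooth vector of $I_{\psi}(1,\omega_{\pi}^{-1})$: it is bounded on the compact $\widetilde{K}_m$ and transforms on the inducing subgroup through $\zeta^{\psi}_{\omega_{\pi}^{-1}}\,\delta^{3/4}_{P_{m-1,1}}\,\theta^{\psi}_{m-1}$, so your Iwasawa majorization applies to all good sections at once, with no appeal to the intertwining operator. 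Second, the step you yourself flag as decisive --- the gauge bound for the semi-Whittaker functions of $\theta^{\psi}_m$ (and of the inducing $\theta^{\psi}_{m-1}$) on the torus, and the verification that unitarity of $\pi$ pushes every resulting exponent strictly into the domain of convergence after multiplying by $\delta_{B_{m-1}}^{-1}(a)\,|\mathrm{det}(a)|^{-1/4}$ --- is only asserted, not carried out; that bookkeeping \emph{is} the content of \cite[Theorem 2.14]{Yamana} (resting on the exponent computations for $\theta^{\psi}_m$ in \cite{BuGi92} and \cite[\S 1E]{Yamana} and on the unitary exponent bounds going back to \cite{JaSh81}). As written, your proposal is an acceptable gloss on the citation the paper makes, but not yet an independent proof of the convergence claim.
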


The $G_m$-trilinear form $T(W_{\pi},W^{{\rm e}}_{\theta_m^{\psi}}, f_{(1)})$ is not actually vanished.

\begin{theorem}
\label{unitary-distinguished}
Let $\pi \in \mathcal{A}_F(m)$ be a unitary generic representation. We set $c=c(\pi)$. Then we have
\[
  \int_{\mathscr{Z}_mN_m \backslash G_m} W^{\circ}_{\pi} (g) W^{{\rm e}^{\circ}}_{\theta_m^{\psi}} (g) f^{K_1(\mathfrak{p}^c)}_{(1)}(g) dg=\mathcal{L}(1,\pi_{ur},\mathrm{Sym}^2).
  \]
\end{theorem}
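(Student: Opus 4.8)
The plan is to specialise the identity \eqref{Sym-Test} at $s=1$, in the same spirit in which Theorem~\ref{RS-priods} is deduced from Theorem~\ref{RSNewVector}: the only genuine issue is the convergence of the zeta integral at the edge point $s=1$, and this is exactly where the unitarity hypothesis enters.

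First I would record the crude bound on Langlands parameters. Since $\pi$ is unitary and generic, the classification of the unitary dual of ${\rm GL}$ forces $|\alpha_i(\varpi)|<q^{1/2}$ for every $i$, where $\{\alpha_i(\varpi)\}_{i=1}^{r}$ is the Langlands parameter of $\pi$ (equivalently the Satake parameter of $\pi_{ur}$): a unitary generic representation is a product of unitary essentially square--integrable representations and complementary series $\delta\nu^{\alpha}\times\delta\nu^{-\alpha}$ with $0<\alpha<1/2$, so every unramified cuspidal parameter occurring in it has absolute value at $\varpi$ strictly less than $q^{1/2}$ (for the square--integrable part it is even $\le 1$). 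Hence $|\alpha_i(\varpi)\alpha_j(\varpi)q^{-1}|<1$ for all $1\le i\le j\le r$, so
\[
 \mathcal{L}(s,\pi_{ur},\mathrm{Sym}^2)=\prod_{1\le i\le j\le r}\bigl(1-\alpha_i(\varpi)\alpha_j(\varpi)q^{-s}\bigr)^{-1}
\]
is holomorphic on the closed half--plane $\mathrm{Re}(s)\ge 1$, and in particular has a finite value at $s=1$.

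Next I would rerun the computation from the proof of the theorem establishing \eqref{Sym-Test}, but now at $s=1$. There the partial Iwasawa decomposition $G_m=\mathscr{Z}_mN_mP_mK_1(\mathfrak{p}^c)$, the identity \eqref{sym-Iwasawa}, the decomposition $P_m=N_mA_mK_m$, and Theorem~\ref{NewV} together reduce $Z(W^{\circ}_{\pi}, W^{{\rm e}^{\circ}}_{\theta_m^{\psi}}, f^{K_1(\mathfrak{p}^c)}_{(2s-1)})$ to the Shintani--type sum
\[
 \underset{\lambda_1\ge\dots\ge\lambda_r\ge 0}{\sum_{\mathrm{even}\ \lambda\in\mathbb{Z}^{r}}}\,|\mathrm{det}(a'_\lambda)|^{s/2}\,s_\lambda(\alpha).
\]
Estimating $|s_\lambda(\alpha)|$ by (the number of semistandard tableaux of shape $\lambda$, which is polynomial in $\lambda$) times $\bigl(\max_i|\alpha_i(\varpi)|\bigr)^{|\lambda|}$, this sum converges absolutely and locally uniformly on $\mathrm{Re}(s)\ge 1$, precisely because $\max_i|\alpha_i(\varpi)|<q^{1/2}$. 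Therefore all the manipulations in that proof — the two unfoldings, Fubini, and the use of the support of $f^{K_1(\mathfrak{p}^c)}_{(2s-1)}$ inside $\mathscr{Z}_mP_mK_1(\mathfrak{p}^c)$ — remain valid after setting $s=1$, and one concludes
\[
 \int_{\mathscr{Z}_mN_m\backslash G_m}W^{\circ}_{\pi}(g)\,W^{{\rm e}^{\circ}}_{\theta_m^{\psi}}(g)\,f^{K_1(\mathfrak{p}^c)}_{(1)}(g)\,dg
 =\underset{\lambda_1\ge\dots\ge\lambda_r\ge 0}{\sum_{\mathrm{even}\ \lambda}}\,|\mathrm{det}(a'_\lambda)|^{1/2}\,s_\lambda(\alpha)
 =\mathcal{L}(1,\pi_{ur},\mathrm{Sym}^2),
\]
the last equality being the $s=1$ instance of the Shintani evaluation carried out there. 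As a cross--check, the preceding Proposition already guarantees absolute convergence of the left--hand integral — once one observes, via the preceding Lemma, that $f^{K_1(\mathfrak{p}^c)}_{(1)}$ is a good section in $V_{good}(1,\omega^{-1}_{\pi},\psi)$ — and then the two functions $s\mapsto Z(W^{\circ}_{\pi}, W^{{\rm e}^{\circ}}_{\theta_m^{\psi}}, f^{K_1(\mathfrak{p}^c)}_{(2s-1)})$ and $s\mapsto\mathcal{L}(s,\pi_{ur},\mathrm{Sym}^2)$, being holomorphic on $\mathrm{Re}(s)\ge 1$ and equal for $\mathrm{Re}(s)\gg 0$, agree at $s=1$.

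The main obstacle is precisely this passage to $s=1$: one must know that the Shintani sum (equivalently the Bump--Ginzburg integral) still converges at the edge $s=1$ rather than only in the region $\mathrm{Re}(s)\gg 0$, and this rests entirely on the bound $|\alpha_i(\varpi)|<q^{1/2}$, i.e. on unitarity. For a general generic $\pi$ the sum can genuinely diverge at $s=1$ and $\mathcal{L}(s,\pi_{ur},\mathrm{Sym}^2)$ can acquire a pole there, which is why the statement — like the companion results for $(S_{2n},\Theta)$-, $H_{2n}$- and $\theta$-distinguished representations — is confined to unitary (respectively distinguished) representations. A minor secondary point is the bookkeeping at the specialisation $2s-1=1$: one should check that the support of $f^{K_1(\mathfrak{p}^c)}_{(2s-1)}$ and the factor $\delta_{P_{m-1,1}}^{(2s+1)/4}(\widetilde p)\,\theta^{\psi}_{m-1}(\widetilde p)$ depend continuously on $s$ near $s=1$, so that the reduction to the integral over $N_{m-1}\backslash G_{m-1}$ in \eqref{sym-Iwasawa} is legitimate there.
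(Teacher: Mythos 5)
Your proposal is correct, and its skeleton is the same as the paper's: both arguments consist of specialising the test-vector identity \eqref{Sym-Test} at $s=1$, the whole content being the regularity of the two sides there, which is where unitarity enters. The difference lies in how that regularity is justified. The paper does it in one line: from the factorisation $L(s,\pi_{ur}\times\pi_{ur})=L(s,\pi_{ur},\wedge^2)\,\mathcal{L}(s,\pi_{ur},\mathrm{Sym}^2)$ together with \cite[Proposition 3.17]{JaSh81} (holomorphy of the Rankin--Selberg $L$-function for unitary generic representations in $\mathrm{Re}(s)\ge 1$), the factor $\mathcal{L}(s,\pi_{ur},\mathrm{Sym}^2)$ has no pole at $s=1$, and the left-hand side is meaningful at $s=1$ because of the absolute-convergence statement in the proposition quoted from \cite[Theorem 2.14.(2)]{Yamana} just before the theorem; one then evaluates both sides of \eqref{Sym-Test} at $s=1$. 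You instead extract the bound $|\alpha_i(\varpi)|<q^{1/2}$ from the classification of the generic unitary dual and verify directly that the Shintani-type sum converges absolutely on $\mathrm{Re}(s)\ge 1$, so that the unfolding itself is legitimate at the edge point. This is the same analytic input in disguise (Jacquet--Shalika's proposition rests on exactly such parameter bounds), but your version is more self-contained and makes explicit a convergence point the paper leaves implicit; on the other hand, your final ``cross-check'' (absolute convergence of $T(W^{\circ}_{\pi},W^{{\rm e}^{\circ}}_{\theta_m^{\psi}},f^{K_1(\mathfrak{p}^c)}_{(1)})$ from the preceding proposition, plus agreement of the two rational functions for $\mathrm{Re}(s)\gg 0$) is precisely the paper's route and already suffices on its own, so the re-estimation of the Schur-polynomial sum is a useful but optional reinforcement rather than a needed new idea.
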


\begin{proof}
With help of the identity $L(s,\pi_{ur} \times \pi_{ur})=L(s,\pi_{ur},\wedge^2)\mathcal{L}(s,\pi_{ur},\mathrm{Sym}^2)$ along with \cite[Proposition 3.17]{JaSh81}, $\mathcal{L}(s,\pi_{ur},\mathrm{Sym}^2)$ does not afford any poles at $s=1$. This can be achieved by evaluating at $s=1$ on the both sides of \eqref{Sym-Test}.
\end{proof}

We say that a representation $\pi$ of $G_m$ is $\theta$-{\it distinguished}, if $\mathrm{Hom}_{G_m}(\pi \otimes \theta^{\psi}_m \otimes \theta^{{\psi}^{-1}}_m,{\bf 1}_{G_m}) \neq 0$.

\begin{proposition}
\label{Yamana-mulone}
 Let $\pi \in \mathcal{A}_F(m)$ be a unitary representation. Then
\[
  {\rm dim}_{\mathbb{C}} {\rm Hom}_{G_m}( \pi \otimes \theta_m^{\psi}  \otimes I_{\psi}(1,\omega^{-1}_{\pi}),{\bf 1}_{G_m}   ) \leq 1.
\]
The equality holds when $\pi$ is generic. In particular if $\pi$ is $\theta$-distinguished, then $(W_{\pi},W^{{\rm e}}_{\theta_m^{\psi}}, f_{(1)}) \mapsto T(W_{\pi},W^{{\rm e}}_{\theta_m^{\psi}}, f_{(1)})$ gives a non-trivial trilinear form belonging to $\mathrm{Hom}_{G_m}(\pi \otimes \theta^{\psi}_m \otimes \theta^{{\psi}^{-1}}_m,{\bf 1}_{G_m})$.

\end{proposition}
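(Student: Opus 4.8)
The plan is to establish the bound ${\rm dim}_{\mathbb{C}}\, {\rm Hom}_{G_m}(\pi \otimes \theta^{\psi}_m \otimes I_{\psi}(1,\omega^{-1}_{\pi}),{\bf 1}_{G_m}) \leq 1$ first, then to exhibit an explicit nonzero element of this space when $\pi$ is generic, and finally to transfer the statement to ${\rm Hom}_{G_m}(\pi \otimes \theta^{\psi}_m \otimes \theta^{{\psi}^{-1}}_m,{\bf 1}_{G_m})$ under the $\theta$-distinction hypothesis.

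For the upper bound I would run the argument of Proposition \ref{RS-multiplicityone} in the metaplectic setting. After an Iwasawa decomposition the integration variable on $\mathscr{Z}_m N_m \backslash G_m$ runs over $N_{m-1} \backslash G_{m-1}$, so any such functional factors through the restrictions of $\pi$, of $\theta^{\psi}_m$, and of $I_{\psi}(1,\omega^{-1}_{\pi})$ to (the metaplectic cover of) $P_m$; these restrictions carry Bernstein--Zelevinsky filtrations, and comparing shifted derivatives reduces the dimension count to a lower-rank ${\rm Hom}$ space together with the contribution of the top derivative. Bernstein's positivity criterion \cite[\S 7.3]{Bern} — which applies precisely because $\pi$, the exceptional representation, and the inducing data at $s=1$ are unitary — annihilates every non-top term, leaving dimension at most one. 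This is the unitary-range multiplicity statement of Yamana \cite[Theorem 2.14]{Yamana}, which I would invoke directly; the failure of positivity outside the unitary range is exactly why the hypothesis cannot be relaxed here.

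For the lower bound when $\pi$ is generic, Theorem \ref{unitary-distinguished} yields $T(W^{\circ}_{\pi}, W^{{\rm e}^{\circ}}_{\theta_m^{\psi}}, f^{K_1(\mathfrak{p}^c)}_{(1)}) = \mathcal{L}(1,\pi_{ur},\mathrm{Sym}^2)$. Since $\pi$ is unitary, the factorization $L(s,\pi_{ur} \times \pi_{ur}) = L(s,\pi_{ur},\wedge^2)\,\mathcal{L}(s,\pi_{ur},\mathrm{Sym}^2)$ together with \cite[Proposition 3.17]{JaSh81} shows that $\mathcal{L}(s,\pi_{ur},\mathrm{Sym}^2)$ is holomorphic at $s=1$, hence equals the finite, manifestly nonzero product $\prod_{1 \leq i \leq j \leq r}(1-\alpha_i(\varpi)\alpha_j(\varpi)q^{-1})^{-1}$. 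Recalling that $f^{K_1(\mathfrak{p}^c)}_{(2s-1)}$ is a good section (established above), so that $f^{K_1(\mathfrak{p}^c)}_{(1)} \in V_{good}(1,\omega^{-1}_{\pi},\psi)$, this displays the zeta functional $T$ as a nonzero member of the ${\rm Hom}$ space of the first step; combined with that step, the space is exactly one-dimensional and is spanned by $T$.

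For the $\theta$-distinguished case I would invoke the structure of the degenerate principal series at this point, due to Yamana (cf. \cite{KaYa}): the normalized intertwining operator realizes a surjection $q \colon I_{\psi}(1,\omega^{-1}_{\pi}) \twoheadrightarrow \theta_m^{\psi^{-1}}$. Precomposition with ${\rm id}_{\pi \otimes \theta_m^{\psi}} \otimes q$ then embeds ${\rm Hom}_{G_m}(\pi \otimes \theta^{\psi}_m \otimes \theta^{{\psi}^{-1}}_m,{\bf 1}_{G_m})$ into ${\rm Hom}_{G_m}(\pi \otimes \theta^{\psi}_m \otimes I_{\psi}(1,\omega^{-1}_{\pi}),{\bf 1}_{G_m})$; the source is nonzero by the distinction hypothesis while the target is one-dimensional by the previous two steps, so the embedding is an isomorphism, $T$ factors through $q$, and the resulting trilinear form is the desired nonzero element of ${\rm Hom}_{G_m}(\pi \otimes \theta^{\psi}_m \otimes \theta^{{\psi}^{-1}}_m,{\bf 1}_{G_m})$. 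The main obstacle is the upper bound: the Bernstein--Zelevinsky bookkeeping for a genuine representation of the two-fold cover and for the degenerate induced module $I_{\psi}(1,\omega^{-1}_{\pi})$ is considerably more delicate than in the linear cases of Propositions \ref{RS-multiplicityone}--\ref{linear-dimone}, and one must verify that positivity genuinely applies to every intermediate derivative — which is where Yamana's theorem does the real work. A secondary point to pin down is the precise direction (embedding versus surjection) of the morphism linking $I_{\psi}(1,\omega^{-1}_{\pi})$ and $\theta_m^{\psi^{-1}}$, together with the contragredient identification $(\theta_m^{\psi})^{\vee} \simeq \theta_m^{\psi^{-1}}$, which one would use to pass to duals if the morphism turns out to be an embedding rather than a quotient map.
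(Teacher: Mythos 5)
Your proposal is correct and follows essentially the same route as the paper: the multiplicity bound is quoted from Yamana (the paper cites \cite[Theorem 3.8.(5)]{Yamana} rather than Theorem 2.14, but the substance is the same), the nonvanishing comes from Theorem \ref{unitary-distinguished} via $\mathcal{L}(1,\pi_{ur},\mathrm{Sym}^2)\neq 0$, and the $\theta$-distinguished case uses exactly the paper's injection of $\mathrm{Hom}_{G_m}(\pi\otimes\theta^{\psi}_m\otimes\theta^{\psi^{-1}}_m,{\bf 1}_{G_m})$ into $\mathrm{Hom}_{G_m}(\pi\otimes\theta^{\psi}_m\otimes I_{\psi}(1,\omega^{-1}_{\pi}),{\bf 1}_{G_m})$ (Yamana's (2.3)), upgraded to an isomorphism by one-dimensionality.
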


\begin{proof}
The first assertion is simply a statement from \cite[Theorem 3.8.(5)]{Yamana}. In light of Theorem \ref{unitary-distinguished}, the injection map  (see \cite[(2.3)]{Yamana})
 \[
 \mathrm{Hom}_{G_m}(\pi \otimes \theta^{\psi}_m \otimes \theta^{{\psi}^{-1}}_m,{\bf 1}_{G_m}) \hookrightarrow 
 \mathrm{Hom}_{G_m}(\pi \otimes \theta^{\psi}_m \otimes I_{\psi}(1,\omega^{-1}_{\pi}),{\bf 1}_{G_m})
 \]
 becomes the isomorphism
 \[
 \mathrm{Hom}_{G_m}(\pi \otimes \theta^{\psi}_m \otimes \theta^{{\psi}^{-1}}_m,{\bf 1}_{G_m}) \simeq
 \mathrm{Hom}_{G_m}(\pi \otimes \theta^{\psi}_m \otimes I_{\psi}(1,\omega^{-1}_{\pi}),{\bf 1}_{G_m}).
 \]
 Afterwards, $(W_{\pi},W^{\rm e}_{\theta_m^{\psi}},f_{(1)}) \mapsto T(W_{\pi},W^{\rm e}_{\theta_m^{\psi}},f_{(1)})$ gives a non-trivial $G_{m}$-invariant trilinear form in the space $\mathrm{Hom}_{G_m}(\pi \otimes \theta^{\psi}_m \otimes \theta^{{\psi}^{-1}}_m,{\bf 1}_{G_m})$.
\end{proof}

We close this section with two remarks which were shortly mentioned in \S \ref{Intro}. Introduction.

\begin{remark}
\label{rmk1}
In principle the Frobenius reciprocity (cf. \cite[Proof of Theorem 2.14]{Yamana}) demonstrates the isomorphism of the space
\[
 {\rm Hom}_{G_m}( \pi \otimes \theta_m^{\psi}  \otimes I_{\psi}(1,\omega^{-1}_{\pi}) ,{\bf 1}_{G_m} ) \simeq
  {\rm Hom}_{P_m}( \pi|_{P_m} \otimes \theta_m^{\psi}|_{P_m} \otimes \Psi^+(\theta_{m-1}^{\psi}) \otimes \nu^{-3/4},{\bf 1}_{G_m} ).
\]
Knowing that $\theta_m^{\psi}$ affords Kirillov model $\theta_m^{\psi}|_{P_m} \simeq \mathcal{W}^{\rm e}(\theta_m^{\psi},\psi_{\bf e}^{-1})|_{P_m}$ \cite[Theorem 4.3]{Kable},
this isomorphism can be phrased in terms of integral representations. Performing the same steps repeatedly as in \eqref{sym-Iwasawa}, the integration over $K_n$ can be absorbed to get
\begin{equation}
\label{mirabolic-theta-distinct}
\int_{N_m \backslash P_m}  W^{\circ}_{\pi} (p) W^{{\rm e}^{\circ}}_{\theta_m^{\psi}} (p) f^{K_1(\mathfrak{p}^c)}_{(1)}(p) |\mathrm{det}(p)|^{-1} dp
 =\begin{cases}
    \mathcal{L}(1,\pi_{ur},\mathrm{Sym}^2)
   & \text{if $\pi$ is ramified,} \\
 \dfrac{ \mathcal{L}(1,\pi,\mathrm{Sym}^2)   }{L(m,\omega^2_{\pi})}  & \text{if $\pi$ is unramified,} 
  \end{cases}
 \end{equation}
 which is easily seen to be equal to
 \[
 \int_{N_{m-1} \backslash G_{m-1}} W^{\circ}_{\pi}  \begin{pmatrix} g & \\ & 1 \end{pmatrix}  W^{{\rm e}^{\circ}}_{\theta_m^{\psi}} \begin{pmatrix} g & \\ & 1 \end{pmatrix}
(\theta_{m-1}^{\psi} \otimes \nu^{1/2})(g) \nu^{-3/4}(g) dg.
 \]
 The last integral is evidently an element of ${\rm Hom}_{P_m}( \pi|_{P_m} \otimes \theta_m^{\psi}|_{P_m} \otimes \Psi^+(\theta_{m-1}^{\psi}) \otimes \nu^{-3/4},{\bf 1}_{G_m} )$ and the expression \eqref{mirabolic-theta-distinct} is parallel to what we have went through before; Theorem \ref{RS-priods}, Theorem \ref{unitary-flicker}, Theorem \ref{Shalika-unitary}, and Theorem \ref{Hm-unitary}.
 \end{remark}

\begin{remark} 
\label{rmk2}
Let $\pi \in \mathcal{A}_F(m)$ be a generic representation which is $\theta$-distinguished. By \cite[Corollary 4.19]{Kaplan17}, it follows that $\pi$ is self dual. Owing to Proposition \ref{Bernstein-s=1}, we find that $L(s,\pi \times \pi)$ is holomorphic at $s=1$. At this point, we fail to formulate the agreement $\mathcal{L}(s,\pi, \mathrm{Sym}^2)=L(s,\pi, \mathrm{Sym}^2)$, which implies the identity
\[
L(s,\pi \times \pi)=L(s,\pi, \wedge^2)L(s,\pi, \mathrm{Sym}^2)
\]
with $\mathcal{L}(s,\pi, \mathrm{Sym}^2)$ appearing in \eqref{Ext-Sym-identity}. Nevertheless the statement has been recently confirmed for $m=2$ \cite{Jo21}. Taking it for granted, it can be seen that $L(s,\pi,\mathrm{Sym}^2)$ is holomorphic at $s=1$. 
Evaluating at $s=1$, both sides of \eqref{Sym-Test} read
\[
  \int_{\mathscr{Z}_mN_m \backslash G_m} W^{\circ}_{\pi} (g) W^{{\rm e}^{\circ}}_{\theta_m^{\psi}} (g) f^{K_1(\mathfrak{p}^c)}_{(1)}(g) dg=L(1,\pi_{ur},\mathrm{Sym}^2).
  \]
  Any $\theta$-distinguished representation always satisfies $\omega^2_{\pi}={\bf 1}_{F^{\times}}$ \cite[\S 4]{Kaplan17}. With this said, \eqref{mirabolic-theta-distinct} is reshaped as follows;
  \[
\int_{N_m \backslash P_m}  W^{\circ}_{\pi} (p) W^{{\rm e}^{\circ}}_{\theta_m^{\psi}} (p) f^{K_1(\mathfrak{p}^c)}_{(1)}(p) |\mathrm{det}(p)|^{-1} dp
 =\begin{cases}
    L(1,\pi_{ur},\mathrm{Sym}^2)
   & \text{if $\pi$ is ramified,} \\
 \dfrac{ L(1,\pi,\mathrm{Sym}^2)   }{L(m,{\bf 1}_{F^{\times}})}  & \text{otherwise.} 
  \end{cases}
 \]
\end{remark}

\begin{acknowledgments}
This project is inspired by the response to a question raised by James Cogdell as to whether the space of $(P_{2n}\cap S_{2n},\Theta)$-invariant Shalika functionals in \cite[Lemma 3.2]{Jo20} is trivial or not. The author is indebted to J. Cogdell for drawing the author's attention to this problem. We would like to thank Muthu Krishnamurthy for kindly explaining their joint work \cite{BKL} to me, encouraging me to write this manuscript, and giving many invaluable comments over the year. We are also grateful to Peter Humphries for many helpful suggestions on earlier versions of this article. 
Finally, we express our sincere appreciation to the referee for a number of constructive comments, which significantly improves exposition of literature in the paper.  
\end{acknowledgments}

%\begin{align*} 
%   \int_{N_n \backslash P_n} W^{\circ}_{\pi}(p) W^{\circ}_{\sigma}(p) \;dp
%   &=\int_{A_r} W^{\circ}_{\pi_{ur}}(a^{\prime})W^{\circ}_{\sigma_{ur}}(a^{\prime})  \delta_{B_{n-1}}^{-1}\begin{pmatrix} a^{\prime} & \\ & I_{n-r-1}\end{pmatrix} \nu(a^{\prime})^{n-r} {\bf 1}_{\mathcal{O}}(a^{\prime}_r) da^{\prime}  \\ 
%   &=\int_{A_r} W^{\circ}_{\pi_{ur}}(a^{\prime})W^{\circ}_{\sigma_{ur}}(a^{\prime})  \delta_{B_{r}}^{-1}(a^{\prime})  \nu(a^{\prime})^{r-(n-1)} \nu(a^{\prime})^{n-r} {\bf 1}_{\mathcal{O}}(a^{\prime}_r) da^{\prime}.  
% \end{align*} 

%  confront with

 \bibliographystyle{amsplain}

\begin{bibdiv}
\begin{biblist}

\bib{AM}{article}{
   author={Anandavardhanan, U. K.},
   author={Matringe, Nadir},
   title={Test vectors for local periods},
   journal={Forum Math.},
   volume={29},
   date={2017},
   number={6},
   pages={1245--1260},
 }
 
 \bib{AKT}{article}{
   author={Anandavardhanan, U. K.},
   author={Kable, Anthony C.},
   author={Tandon, R.},
   title={Distinguished representations and poles of twisted tensor
   $L$-functions},
   journal={Proc. Amer. Math. Soc.},
   volume={132},
   date={2004},
   number={10},
   pages={2875--2883},
}

 \bib{AKMSS}{article}{
   author={Anandavardhanan, U. K.},
   author={Kurinczuk, R.},
   author={Matringe, N.},
   author={S\'{e}cherre, V.},
   author={Stevens, S.},
   title={Galois self-dual cuspidal types and Asai local factors},
   journal={J. Eur. Math. Soc. (JEMS)},
   volume={23},
   date={2021},
   number={9},
   pages={3129--3191},
  }

\bib{AR05}{article}{
   author={Anandavardhanan, U. K.},
   author={Rajan, C. S.},
   title={Distinguished representations, base change, and reducibility for
   unitary groups},
   journal={Int. Math. Res. Not.},
   date={2005},
   number={14},
   pages={841--854},
   }

\bib{BLS}{article}{
   author={Banks, William D.},
   author={Levy, Jason},
   author={Sepanski, Mark R.},
   title={Block-compatible metaplectic cocycles},
   journal={J. Reine Angew. Math.},
   volume={507},
   date={1999},
   pages={131--163},
  }
 
 \bib{Bern}{article}{
   author={Bernstein, Joseph N.},
   title={$P$-invariant distributions on ${\rm GL}(N)$ and the
   classification of unitary representations of ${\rm GL}(N)$
   (non-Archimedean case)},
   conference={
      title={Lie group representations, II},
      address={College Park, Md.},
      date={1982/1983},
   },
   book={
      series={Lecture Notes in Math.},
      volume={1041},
      publisher={Springer, Berlin},
   },
   date={1984},
   pages={50--102},
}
 
 \bib{BeZe}{article}{
   author={Bernstein, I. N.},
   author={Zelevinsky, A. V.},
   title={Induced representations of reductive ${\germ p}$-adic groups. I},
   journal={Ann. Sci. \'{E}cole Norm. Sup. (4)},
   volume={10},
   date={1977},
   number={4},
   pages={441--472},
   }

 \bib{BKL}{article}{
   author={Booker, Andrew R.},
   author={Krishnamurthy, M.},
   author={Lee, Min},
   title={Test vectors for Rankin-Selberg $L$-functions},
   journal={J. Number Theory},
   volume={209},
   date={2020},
   pages={37--48},
  }
  
  \bib{BF}{article}{
   author={Bump, Daniel},
   author={Friedberg, Solomon},
   title={The exterior square automorphic $L$-functions on ${\rm GL}(n)$},
   conference={
      title={Festschrift in honor of I. I. Piatetski-Shapiro on the occasion
      of his sixtieth birthday, Part II},
      address={Ramat Aviv},
      date={1989},
   },
   book={
      series={Israel Math. Conf. Proc.},
      volume={3},
      publisher={Weizmann, Jerusalem},
   },
   date={1990},
   pages={47--65},
}

  \bib{BuGi92}{article}{
   author={Bump, Daniel},
   author={Ginzburg, David},
   title={Symmetric square $L$-functions on ${\rm GL}(r)$},
   journal={Ann. of Math. (2)},
   volume={136},
   date={1992},
   number={1},
   pages={137--205},
 }
 
 \bib{Casselman}{article}{
   author={Casselman, William},
   title={On some results of Atkin and Lehner},
   journal={Math. Ann.},
   volume={201},
   date={1973},
   pages={301--314},
  }
 
  \bib{Cogdell}{article}{
   author={Cogdell, J. W.}, 
   title={\it Lectures on integral representations of $L$-functions},
    date={2006},
  pages={available at \url{https://people.math.osu.edu/cogdell.1/columbia-www.pdf}},
  }

 \bib{Fil88}{article}{
   author={Flicker, Yuval Z.},
   title={Twisted tensors and Euler products},
   language={English, with French summary},
   journal={Bull. Soc. Math. France},
   volume={116},
   date={1988},
   number={3},
   pages={295--313},
  }
 
 \bib{Fli91}{article}{
   author={Flicker, Yuval Z.},
   title={On distinguished representations},
   journal={J. Reine Angew. Math.},
   volume={418},
   date={1991},
   pages={139--172},
  }
  
  \bib{Fil93}{article}{
   author={Flicker, Yuval Z.},
   title={On zeroes of the twisted tensor $L$-function},
   journal={Math. Ann.},
   volume={297},
   date={1993},
   number={2},
   pages={199--219},
}
  
  \bib{FLO}{article}{
   author={Feigon, Brooke},
   author={Lapid, Erez},
   author={Offen, Omer},
   title={On representations distinguished by unitary groups},
   journal={Publ. Math. Inst. Hautes \'{E}tudes Sci.},
   volume={115},
   date={2012},
   pages={185--323},
  }
  
  \bib{FriJac}{article}{
   author={Friedberg, Solomon},
   author={Jacquet, Herv\'{e}},
   title={Linear periods},
   journal={J. Reine Angew. Math.},
   volume={443},
   date={1993},
   pages={91--139},
}
  
  \bib{GK}{article}{
   author={Gel\cprime fand, I. M.},
   author={Kajdan, D. A.},
   title={Representations of the group ${\rm GL}(n,K)$ where $K$ is a local
   field},
   conference={
      title={Lie groups and their representations},
      address={Proc. Summer School, Bolyai J\'{a}nos Math. Soc., Budapest},
      date={1971},
   },
   book={
      publisher={Halsted, New York},
   },
   date={1975},
   pages={95--118},
  }

  \bib{Grobner}{article}{
   author={Grobner, Harald}, 
   title={\it Non-vanishing of Shalika Newvectors at the identity},
    date={2020},
  pages={available at \url{https://homepage.univie.ac.at/harald.grobner/papers/ShalikaNewvectors.pdf}},
  }
  
  \bib{Humphries-Preprint}{article}{
   author={Humphries, Peter},
   title={\it Archimedean Newform Theory for ${\rm GL}_n$},
    date={2020},
  pages={available at \url{https://arxiv.org/abs/2008.12406}},
  }

  \bib{Humphries}{article}{
   author={Humphries, Peter},
   title={Test vectors for non-Archimedean Godement-Jacquet zeta integrals},
   journal={Bull. Lond. Math. Soc.},
   volume={53},
   date={2021},
   number={1},
   pages={92--99},
}

 \bib{HJ21}{article}{
   author={Humphries, Peter},
   author={Jo, Yeongseong},
   title={\it Test vectors for Archimedean period integrals},
    date={2021},
  pages={available at \url{https://arxiv.org/abs/2112.06860}},
  }

  \bib{Jacquet}{article}{
   author={Jacquet, Herv\'{e}},
   title={Principal $L$-functions of the linear group},
   conference={
      title={Automorphic forms, representations and $L$-functions},
      address={Proc. Sympos. Pure Math., Oregon State Univ., Corvallis,
      Ore.},
      date={1977},
   },
   book={
      series={Proc. Sympos. Pure Math., XXXIII},
      publisher={Amer. Math. Soc., Providence, R.I.},
   },
   date={1979},
   pages={63--86},
   }
   
   \bib{JacquetCorrection}{article}{
   author={Jacquet, Herv\'{e}},
   title={A correction to {\it Conducteur des repr\'{e}sentations du groupe
   lin\'{e}aire}},
   journal={Pacific J. Math.},
   volume={260},
   date={2012},
   number={2},
   pages={515--525},
}

  \bib{JaRa}{article}{
   author={Jacquet, Herv\'{e}},
   author={Rallis, Stephen},
   title={Uniqueness of linear periods},
   journal={Compositio Math.},
   volume={102},
   date={1996},
   number={1},
   pages={65--123},
}
  
  \bib{JaSh81}{article}{
   author={Jacquet, H.},
   author={Shalika, J. A.},
   title={On Euler products and the classification of automorphic
   representations. I},
   journal={Amer. J. Math.},
   volume={103},
   date={1981},
   number={3},
   pages={499--558},
   }
   
   \bib{JaSh83}{article}{
   author={Jacquet, Herv\'{e}},
   author={Shalika, Joseph},
   title={The Whittaker models of induced representations},
   journal={Pacific J. Math.},
   volume={109},
   date={1983},
   number={1},
   pages={107--120},
}
   
   \bib{JaSh88}{article}{
   author={Jacquet, Herv\'{e}},
   author={Shalika, Joseph},
   title={Exterior square $L$-functions},
   conference={
      title={Automorphic forms, Shimura varieties, and $L$-functions, Vol.
      II},
      address={Ann Arbor, MI},
      date={1988},
   },
   book={
      series={Perspect. Math.},
      volume={11},
      publisher={Academic Press, Boston, MA},
   },
   date={1990},
   pages={143--226},
 }

 \bib{JP-SS81}{article}{
   author={Jacquet, H.},
   author={Piatetski-Shapiro, I. I.},
   author={Shalika, J.},
   title={Conducteur des repr\'{e}sentations du groupe lin\'{e}aire},
   language={French},
   journal={Math. Ann.},
   volume={256},
   date={1981},
   number={2},
   pages={199--214},
  }
  
  \bib{JPSS3}{article}{
   author={Jacquet, H.},
   author={Piatetskii-Shapiro, I. I.},
   author={Shalika, J. A.},
   title={Rankin-Selberg convolutions},
   journal={Amer. J. Math.},
   volume={105},
   date={1983},
   number={2},
   pages={367--464},
  }
  
  \bib{Jo20}{article}{
   author={Jo, Yeongseong},
   title={Derivatives and exceptional poles of the local exterior square
   $L$-function for $GL_m$},
   journal={Math. Z.},
   volume={294},
   date={2020},
   number={3-4},
   pages={1687--1725},
}

\bib{Jo20-2}{article}{
   author={Jo, Yeongseong},
   title={Factorization of the local exterior square $L$-function of $GL_m$},
   journal={Manuscripta Math.},
   volume={162},
   date={2020},
   number={3-4},
   pages={493--536},
  }

\bib{Jo21}{article}{
   author={Jo, Yeongseong},
   title={Rankin-Selberg integrals for local symmetric square factors on
   $GL(2)$},
   journal={Mathematika},
   volume={67},
   date={2021},
   number={2},
   pages={388--421},
}

\bib{Kable}{book}{
   author={Kable, Anthony Charlton},
   title={Exceptional representations of the metaplectic double cover of the
   general linear group},
   note={Thesis (Ph.D.)--Oklahoma State University},
   publisher={ProQuest LLC, Ann Arbor, MI},
   date={1997},
   pages={213},
}

\bib{Kal04}{article}{
   author={Kable, Anthony C.},
   title={Asai $L$-functions and Jacquet's conjecture},
   journal={Amer. J. Math.},
   volume={126},
   date={2004},
   number={4},
   pages={789--820},
  }
  
  \bib{Kaplan17}{article}{
   author={Kaplan, Eyal},
   title={The characterization of theta-distinguished representations of
   ${\rm GL}(n)$},
   journal={Israel J. Math.},
   volume={222},
   date={2017},
   number={2},
   pages={551--598},
   }
   
   \bib{KaYa}{article}{
   author={Kaplan, Eyal},
   author={Yamana, Shunsuke},
   title={Twisted symmetric square $L$-functions for ${\rm GL}_n$ and
   invariant trilinear forms},
   journal={Math. Z.},
   volume={285},
   date={2017},
   number={3-4},
   pages={739--793},
   }
   
   \bib{Kim}{book}{
   author={Kim, Kyung-Mi},
   title={Test vectors of Rankin-Selberg convolutions for general linear
   groups},
   note={Thesis (Ph.D.)--The Ohio State University},
   publisher={ProQuest LLC, Ann Arbor, MI},
   date={2010},
   pages={125},
   }
   
\bib{KuMa}{article}{
   author={Kurinczuk, R.},
   author={Matringe, N.},
   title={Extension of Whittaker functions and test vectors},
   journal={Res. Number Theory},
   volume={4},
   date={2018},
   number={3},
   pages={Paper No. 31, 18 pp},
}
  
  \bib{LapidMao}{article}{
   author={Lapid, Erez},
   author={Mao, Zhengyu},
   title={Model transition for representations of metaplectic type},
   note={With an appendix by Marko Tadi\'{c}},
   journal={Int. Math. Res. Not.},
   date={2015},
   number={19},
   pages={9486--9568},
 }

\bib{Ma09}{article}{
   author={Matringe, Nadir},
   title={Conjectures about distinction and local Asai $L$-functions},
   journal={Int. Math. Res. Not. IMRN},
   date={2009},
   number={9},
   pages={1699--1741},
   issn={1073-7928},
  }
  
  \bib{Ma10}{article}{
   author={Matringe, Nadir},
   title={Distinguished representations and exceptional poles of the
   Asai-$L$-function},
   journal={Manuscripta Math.},
   volume={131},
   date={2010},
   number={3-4},
   pages={415--426},
   }
   
   \bib{Mat11}{article}{
   author={Matringe, Nadir},
   title={Distinguished generic representations of ${\rm GL}(n)$ over
   $p$-adic fields},
   journal={Int. Math. Res. Not. IMRN},
   date={2011},
   number={1},
   pages={74--95},
   }
   
   \bib{Ma13}{article}{
   author={Matringe, Nadir},
   title={Essential Whittaker functions for $GL(n)$},
   journal={Doc. Math.},
   volume={18},
   date={2013},
   pages={1191--1214},
   issn={1431-0635},
  }

   \bib{Ma14}{article}{
   author={Matringe, Nadir},
   title={Unitary representations of ${\rm GL}(n,K)$ distinguished by a
   Galois involution for a $p$-adic field $K$},
   journal={Pacific J. Math.},
   volume={271},
   date={2014},
   number={2},
   pages={445--460},
  }

  \bib{Ma14JNT}{article}{
   author={Matringe, Nadir},
   title={Linear and Shalika local periods for the mirabolic group, and some
   consequences},
   journal={J. Number Theory},
   volume={138},
   date={2014},
   pages={1--19},
   }
  
    \bib{MA15}{article}{
   author={Matringe, Nadir},
   title={On the local Bump-Friedberg $L$-function},
   journal={J. Reine Angew. Math.},
   volume={709},
   date={2015},
   pages={119--170},
 }
 
 \bib{MA17}{article}{
   author={Matringe, Nadir},
   title={On the local Bump-Friedberg $L$-function II},
   journal={Manuscripta Math.},
   volume={152},
   date={2017},
   number={1-2},
   pages={223--240},
  }

\bib{MO}{article}{
   author={Matringe, Nadir},
   author={Offen, Omer},
   title={Gamma factors, root numbers, and distinction},
   journal={Canad. J. Math.},
   volume={70},
   date={2018},
   number={3},
   pages={683--701},
  }

\bib{Miy18}{article}{
   author={Miyauchi, Michitaka},
   title={On $L$-factors attached to generic representations of unramified
   $\rm U(2,1)$},
   journal={Math. Z.},
   volume={289},
   date={2018},
   number={3-4},
   pages={1381--1408},
}

 \bib{Miy}{article}{
   author={Miyauchi, Michitaka},
   title={Whittaker functions associated to newforms for $GL(n)$ over
   $p$-adic fields},
   journal={J. Math. Soc. Japan},
   volume={66},
   date={2014},
   number={1},
   pages={17--24},
   }
 
 \bib{MY}{article}{
   author={Miyauchi, Michitaka},
   author={Yamauchi, Takuya},
   title={Local newforms and formal exterior square $L$-functions},
   journal={Int. J. Number Theory},
   volume={9},
   date={2013},
   number={8},
   pages={1995--2010},
  }

%    \bib{Offen}{article}{
%   author={Offen, Omer},
%   title={Period integrals of automorphic forms and local distinction},
%   conference={
%      title={Relative aspects in representation theory, Langlands
%      functoriality and automorphic forms},
%   },
%   book={
%      series={Lecture Notes in Math.},
%      volume={2221},
%      publisher={Springer, Cham},
%   },
%   date={2018},
%   pages={159--195},
%  }

  \bib{RobertsShmidt}{book}{
   author={Roberts, Brooks},
   author={Schmidt, Ralf},
   title={Local newforms for GSp(4)},
   series={Lecture Notes in Mathematics},
   volume={1918},
   publisher={Springer, Berlin},
   date={2007},
   pages={viii+307},
   }
  
  \bib{Takeda14}{article}{
   author={Takeda, Shuichiro},
   title={The twisted symmetric square $L$-function of ${\rm GL}(r)$},
   journal={Duke Math. J.},
   volume={163},
   date={2014},
   number={1},
   pages={175--266},
   }
  
  \bib{Venkatesh}{article}{
   author={Venkatesh, Akshay},
   title={Large sieve inequalities for ${\rm GL}(n)$-forms in the conductor
   aspect},
   journal={Adv. Math.},
   volume={200},
   date={2006},
   number={2},
   pages={336--356},
  }
  
  \bib{Yamana14}{article}{
   author={Yamana, Shunsuke},
   title={L-functions and theta correspondence for classical groups},
   journal={Invent. Math.},
   volume={196},
   date={2014},
   number={3},
   pages={651--732},
  }

   \bib{Yamana15}{article}{
   author={Yamana, Shunsuke},
   title={Periods of residual automorphic forms},
   journal={J. Funct. Anal.},
   volume={268},
   date={2015},
   number={5},
   pages={1078--1104},
}

  \bib{Yamana}{article}{
   author={Yamana, Shunsuke},
   title={Local symmetric square $L$-factors of representations of general
   linear groups},
   journal={Pacific J. Math.},
   volume={286},
   date={2017},
   number={1},
   pages={215--256},
   issn={0030-8730},
}

\bib{Zhang}{article}{
   author={Zhang, Wei},
   title={Automorphic period and the central value of Rankin-Selberg
   L-function},
   journal={J. Amer. Math. Soc.},
   volume={27},
   date={2014},
   number={2},
   pages={541--612},
  }
 
 \bib{Zel80}{article}{
   author={Zelevinsky, A. V.},
   title={Induced representations of reductive ${\germ p}$-adic groups. II.
   On irreducible representations of ${\rm GL}(n)$},
   journal={Ann. Sci. \'{E}cole Norm. Sup. (4)},
   volume={13},
   date={1980},
   number={2},
   pages={165--210},
 }

\end{biblist}
\end{bibdiv}

\end{document}